\newtheorem{theorem}{Theorem}
\newtheorem{lemma}[theorem]{Lemma}
\newtheorem{corollary}[theorem]{Corollary}
\newtheorem{proposition}[theorem]{Proposition}
\theoremstyle{definition}
\newtheorem{definition}[theorem]{Definition}
\newtheorem{remark}[theorem]{Remark}
\newtheorem{example}[theorem]{Example}
\numberwithin{equation}{section}
\numberwithin{theorem}{section}
\DeclareMathOperator{\diag}{diag}
\DeclareMathOperator{\Dom}{Dom}
\DeclareMathOperator{\dn}{dn}
\DeclareMathOperator{\cn}{cn}
\newcommand{\bbR}{\ensuremath{\mathbb{R}}} 
\newcommand{\bbC}{\ensuremath{\mathbb{C}}} 
\newcommand{\sech}{\ensuremath{\text{sech}}}
\DeclareMathOperator{\Real}{Re}
\newcommand{\bigO}[1]{\mathcal{O}\left(#1\right)}
\newcommand{\Graph}{\Gamma} 
\newcommand{\DTN}{\mathrm{M}} 
\newcommand{\Neu}{\mathcal{N}} 
\newcommand{\Lmin}{\ell_{\mathrm{min}}}
\newcommand{\vecp}{\mathbf{p}} 
\newcommand{\vecq}{\mathbf{q}} 
\newcommand{\tpsi}{\psi} 
\newcommand{\vecc}{\mathbf{c}} 
\title[Edge-localized states in the limit of large mass]{Edge-localized states on quantum graphs \\ in the limit of large mass}
\author[G.~Berkolaiko]{Gregory Berkolaiko}
\address{Department of Mathematics, Texas A \& M University, College Station, TX 77843-3368, USA}
\email{berko@math.tamu.edu}
\author[J.L.~Marzuola]{Jeremy L. Marzuola}
\address{Department of Mathematics, University of North Carolina - Chapel Hill, Chapel Hill, NC 27599, USA}
\email{marzuola@math.unc.edu}
\author[D.E.~Pelinovsky]{Dmitry E. Pelinovsky}
\address{Department of Mathematics, McMaster University, Hamilton, Ontario, L8S 4K1, Canada}
\email{dmpeli@math.mcmaster.ca}
\begin{document}

\begin{abstract}
  In this work, we construct and quantify asymptotically in the limit
  of large mass a variety of edge-localized stationary states of the
  focusing nonlinear Schr\"odinger equation on a quantum
  graph.  The method is applicable to general bounded and unbounded
  graphs.  The solutions are constructed by matching a localized large
  amplitude elliptic function on a single edge with an exponentially
  smaller remainder on the rest of the graph.  This is done by
  studying the intersections of Dirichlet-to-Neumann manifolds
  (nonlinear analogues of Dirichlet-to-Neumann maps) corresponding to
  the two parts of the graph. For the quantum graph with a given set
  of pendant, looping, and internal edges, we find the edge on which
  the state of smallest energy at fixed mass is localized.  Numerical
  studies of several examples are used to illustrate the analytical
  results.
\end{abstract}

\maketitle

\section{Introduction}

Here we study stationary states of the focusing cubic nonlinear
Schr\"{o}dinger (NLS) equation on a quantum graph $\Graph$. The cubic
NLS equation can be written in the normalized form:
\begin{equation}
  i U_t + \Delta U + 2 |U|^2 U = 0,
  \label{nls-time}
\end{equation}
where $U(x,t) : \Graph \times \mathbb{R} \mapsto \mathbb{C}$ is the
wave function and $\Delta$ is the Laplacian operator on the quantum
graph $\Graph$. We assume that the graph $\Graph$ has finitely many vertex points
and finitely many edges (which are either line segments or half-lines).
Neumann--Kirchhoff (sometimes called
``standard'' or ``natural'') boundary conditions are used at the
vertices of the graph: at each vertex the wave function is
continuous and the sum of its outgoing derivatives is zero.  For
general terminology concerning differential operators on graphs the
reader is invited to consult \cite{BerKuc_graphs,Exner}.

The NLS equation is used to describe two distinct
physical phenomena that are studied on networks of nano-wires:
propagation of optical (electromagnetic) pulses and Bose-Einstein
condensation.  A thorough discussion of the physics literature from
mathematical point of view can be found in \cite{Noj_ptrsl07}.
The most important class of solutions for applications are {\em the stationary
states} which are characterized by solutions of the following elliptic problem:
\begin{equation}
  \label{statNLS}
  -\Delta \Phi - 2 |\Phi|^{2} \Phi =  \Lambda \Phi,
\end{equation}
where $\Lambda \in \mathbb{R}$ is the spectral parameter and the
Laplacian $\Delta$ is extended to a self-adjoint operator in
$L^2(\Graph)$ with the domain
$$
H^2_{\Graph} = \left\{ U \in H^2(\Graph) : \  {\rm Neumann-Kirchhoff \; conditions \; at \; vertices} \right\}.
$$
Since $-\Delta$ is positive, it makes sense to restrict the range of
$\Lambda$ in (\ref{statNLS}) to negative values, hence $\Lambda <
0$. The stationary NLS equation (\ref{statNLS}) is the Euler--Lagrange
equation of the action functional
$H_{\Lambda}(U) := \mathcal{E}(U) - \Lambda \mathcal{Q}(U)$, where
$\mathcal{Q}(U)$ and $\mathcal{E}(U)$ are the conserved mass and
energy of the cubic NLS equation:
\begin{equation}
  \label{energy}
    \mathcal{Q}(U) = \int_{\Graph} |U|^2 dx,
  \qquad
  \mathcal{E}(U) = \int_{\Graph} \left( |\partial_x U|^2 - |U|^4 \right) dx.
\end{equation}
The conserved quantities $\mathcal{E}(U)$ and $\mathcal{Q}(U)$ are defined in the weaker space
$$
H^1_{\Graph} = \left\{ U \in H^1(\Graph) \colon \quad U \mbox{ is continuous at vertices}\right\}.
$$
We use consistently notations $H^k(\Graph)$ with $k = 1,2$ to denote Sobolev spaces of
component-wise $H^k$ functions and $H^k_\Graph$ to include vertex boundary conditions
for component-wise $H^k$ functions.

Among stationary states, we single out the standing wave of smallest
energy at fixed mass which, if it exists, coincides with a
solution of the following constrained minimization problem:
\begin{equation}
\label{minimizer}
E_q = \inf_{U \in H^1_{\Graph}} \left\{ \mathcal{E}(U) :  \ \ \mathcal{Q}(U) = q \right\},
\end{equation}
if such a minimizer exists.
In the variational setting, $\Lambda$ is the Lagrange multiplier of the constrained minimization problem (\ref{minimizer}).
Thanks to the Gagliardo--Nirenberg inequality on the graph $\Graph$ (see Proposition 2.1 in \cite{AdaSerTil_jfa16}),
\begin{equation}
  \| U \|_{L^4(\Graph)}^4
  \leq C_{\Graph} \| U \|_{L^2(\Graph)}^3 \| U \|_{H^1(\Graph)},
  \quad U \in H^1_{\Graph},
\label{GN-inequality}
\end{equation}
the infimum in (\ref{minimizer}) is bounded from below, hence $E_q > -\infty$.

If the infimum in (\ref{minimizer}) is attained, the global minimizer
is called the \emph{ground state} of the cubic NLS equation
(\ref{nls-time}) and it coincides with the stationary state of
the Euler--Lagrange equation (\ref{statNLS}) with the smallest energy $E_q$ at fixed mass $q$.
The infimum is always attained in the case of bounded graphs. However, the infimum may not
be attained in the case of unbounded graphs due to the lack of compactness:
$E_q$ could be approached by a minimizing sequence ``escaping'' to
infinity along one of the unbounded edge of the quantum graph \cite{AdaSerTil_cvpde15,AdaSerTil_jfa16}.
See \cite{Ada_mmnp16} for  a review of various techniques used to analyze
the existence and non-existence of the ground state.

In this work, we study existence and properties of the stationary states
that localize exponentially on a single edge of a graph in the limit
of large mass $q$.  We call such states
{\em the edge-localized states}. The relevant
asymptotic approach was pioneered in \cite{MarPel_amrx16} for a
particular bounded graph, the dumbbell graph.  Here we generalize
and formalize this approach for any bounded and unbounded graph.  We
summarize the properties of these states below.

\begin{theorem}
  \label{thm:main_modest1}
  Let $\Graph$ be a graph with finitely many edges and
  Neumann--Kirchhoff conditions at vertices.  Then for any edge $e$ of
  finite length $\ell$ and for large enough $\mu := \sqrt{-\Lambda}$
  there exists a solution $\Psi$ with the following properties
  \begin{enumerate}
  \item $\Psi$ is positive,
  \item $\Psi$ has a single local maximum on $\Graph$; this maximum is
    located on $e$; $\Psi$ monotone between its maximum and
    the end-vertices of $e$,
  \item $\Psi$ concentrates on $e$ in the following sense,
    \begin{equation}
      \label{eq:L2_proportion}
      \frac{\left\|\Psi \right\|_{L^2(e)}}
      {\left\|\Psi\right\|_{L^2(\Graph)}}
      \geq 1-Ce^{-2\mu\ell},
    \end{equation}
    where the constant $C$ is independent of $\mu$.
  \end{enumerate}
\end{theorem}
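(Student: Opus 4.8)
The plan is to build $\Psi$ by gluing a large-amplitude elliptic (soliton-like) profile living on the edge $e$ to an exponentially small tail on the rest of the graph $\Graph\setminus e$, using the Dirichlet-to-Neumann matching that the abstract advertises. First I would set up coordinates on $e \cong [0,\ell]$ and recall the explicit one-dimensional solutions of $-\phi'' - 2\phi^3 = \Lambda\phi$ with $\mu = \sqrt{-\Lambda}$. On the line these are $\mu\,\sech(\mu x)$; on a finite interval the relevant family is given by translates/dilations of the Jacobi elliptic functions $\dn$ and $\cn$ (the macros $\dn,\cn,\sn,\sech$ are already declared), which for large $\mu$ are exponentially close to a $\sech$ pulse. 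The key quantitative fact is that such a pulse centered at an interior point $x_0 \in (0,\ell)$ has value and derivative at the endpoints that are $\bigO{e^{-\mu x_0}}$ and $\bigO{e^{-\mu(\ell-x_0)}}$; this gives the Dirichlet-to-Neumann data on $e$ as an explicit, exponentially small two-parameter manifold.

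Next I would describe the complementary side. On $\Graph \setminus e$ one looks for a small positive solution of the same stationary equation with prescribed boundary values at the two vertices $v_0, v_1$ incident to $e$. For $\mu$ large, the cubic term is a higher-order perturbation and the linear problem $(-\Delta + \mu^2)\Phi = 0$ on $\Graph\setminus e$ with inhomogeneous Dirichlet data at $v_0,v_1$ is uniquely solvable with $\|\Phi\|_\infty \lesssim$ (boundary data), by maximum-principle / resolvent estimates uniform in $\mu$; a contraction-mapping argument then handles the nonlinearity. This produces the Dirichlet-to-Neumann manifold for the complement, which to leading order is linear and has $\bigO{\mu}$ slope (outgoing derivative $\sim -\mu \times$ value). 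Matching the Neumann--Kirchhoff condition at $v_0$ and $v_1$ amounts to intersecting the two Dirichlet-to-Neumann manifolds; because one is exponentially flat and the other has $\bigO{\mu}$ slope through a neighborhood of the origin, the intersection is transversal and can be solved by the implicit function theorem, fixing the two free parameters (amplitude and center $x_0$) of the elliptic pulse. The center $x_0$ comes out within $\bigO{e^{-c\mu\ell}}$ of $\ell/2$, which forces the single interior maximum.

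From the solution so constructed, the three asserted properties follow. Positivity: the pulse on $e$ is positive by construction, the tail is positive by the maximum principle (the equation is $-\Delta\Psi = 2\Psi^3 + \Lambda\Psi$ with $\Psi$ small on the complement, so $-\Delta\Psi + \mu^2\Psi \geq 0$ there), and continuity at the vertices is built in. Single maximum and monotonicity on $e$: the ODE $-\psi'' = 2\psi^3 + \Lambda\psi$ is conservative, so on $e$ the profile has exactly one turning point (the maximum), located at $x_0$; on the complement $\Psi$ is small and decaying away from $v_0,v_1$, and since its boundary values at $v_0,v_1$ are already exponentially below the peak value there is no competing maximum. The $L^2$ concentration bound \eqref{eq:L2_proportion}: compute $\|\Psi\|_{L^2(e)}^2 = \mu + \bigO{e^{-2\mu\ell}}$ from the $\sech$ profile (the $\dn/\cn$ corrections and the endpoint truncation each contribute $\bigO{e^{-2\mu\ell}}$), while $\|\Psi\|_{L^2(\Graph\setminus e)}^2 \lesssim$ (boundary data)$^2 \times$ (total length of complement) $= \bigO{e^{-2\mu\ell}}$ since the boundary values at $v_0,v_1$ are $\bigO{e^{-\mu\ell}}$; dividing gives the stated ratio.

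The main obstacle is the transversal-intersection step: one must establish the Dirichlet-to-Neumann manifold for the complement $\Graph\setminus e$ as a genuine $C^1$ (indeed analytic) manifold near the origin, \emph{with estimates uniform in $\mu$}, handling the possibly complicated topology of the complement — multiple vertices, loops, pendant and unbounded edges — all at once. This requires a clean abstract formulation of the nonlinear Dirichlet-to-Neumann map on an arbitrary graph-with-boundary, a uniform invertibility statement for the linearization at $0$ (which is where the $\bigO{\mu}$ slope and its sign come from, and which must be shown not to degenerate), and care that the contraction argument for the cubic term closes in the exponentially small ball where the matching lives. Everything else — the elliptic-function asymptotics, the maximum-principle positivity, and the $L^2$ bookkeeping — is routine once that structural result is in place.
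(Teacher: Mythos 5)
Your overall strategy is the one the paper follows: an elliptic (near-$\sech$) bump on $e$, a small solution on the complement obtained by a contraction argument with uniform-in-$\mu$ resolvent bounds, a Dirichlet-to-Neumann matching at the attachment vertices, a maximum principle for positivity, and $L^2$ bookkeeping for \eqref{eq:L2_proportion}. However, the step you yourself identify as the crux --- the transversal intersection of the two DtN manifolds --- is justified by a geometric picture that is quantitatively wrong, and this matters. The single-bump DtN manifold of the edge is \emph{not} exponentially flat: parameterizing by the elliptic modulus $k$ near $1$, one finds (Lemma~\ref{lem:single_bump_DTN}) $p_L = 2e^{-L} - \tfrac14(k-1)e^{L}+\dots$ and $q_L = -2e^{-L} - \tfrac14(k-1)e^{L}+\dots$ with $L=\mu\ell$, i.e.\ a line of slope $\pm 1$ passing exponentially close to, but not through, the origin. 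In the correctly $\mu$-rescaled variables the complement's DtN map has slope equal to the vertex degree $N$ (an $O(1)$ number, not ``$O(\mu)$ versus flat''), so solvability is the non-degeneracy $N+1\neq 0$ for a pendant and, for an internal edge, the invertibility of an explicit $2\times 2$ system in $(k,a)$ whose solution is $a_*=\tfrac12\tanh^{-1}\bigl(\tfrac{N_--N_+}{N_+N_--1}\bigr)$ and $k-1\sim 8\sqrt{\tfrac{(N_--1)(N_+-1)}{(N_-+1)(N_++1)}}e^{-2\mu\ell}$. Your ``flat meets steep'' picture would place the intersection at the wrong point (near $q=0$, i.e.\ $k<1$, independent of $N$), and your claim that the maximum is exponentially close to the midpoint is false when $N_-\neq N_+$: the displacement is $a_*=O(1)$ on the rescaled edge, hence $O(1/\mu)$ on the original edge. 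None of this destroys \emph{existence}, but the argument as written does not actually locate or verify the intersection; you need the genuine $k$-asymptotics of the elliptic boundary data, not just the statement that they are exponentially small.

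Two further gaps. First, your positivity argument is circular: from $-\Delta\Psi+\mu^2\Psi = 2\Psi^3$ you cannot conclude the right-hand side is nonnegative without already knowing $\Psi\geq 0$. The repair (as in Lemma~\ref{lem:suppressed_solution}) is to write the equation as $(-\Delta+V)\Psi=0$ with $V=\mu^2-2\Psi^2>0$, which uses only the smallness of $|\Psi|$, and to apply the maximum principle to $\max(\Psi,0)$ and $\max(-\Psi,0)$ separately to exclude interior extrema before invoking the nonnegative boundary data. Second, your construction addresses only an internal edge with two distinct attachment vertices; the theorem covers any finite edge, and pendants (one matching condition, maximum at the free vertex of degree one) and loops (a symmetric ansatz with a single matching condition carrying a factor of $2$) require separate, if routine, treatment.
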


Full description of solutions $\Psi$, including the location of the
maximum, uniqueness properties and asymptotics of $\mathcal{E}(\Psi)$
and $\mathcal{Q}(\Psi)$ can be found in
Theorems~\ref{thm:localized_pendant}, \ref{thm:localized_loop}, and
\ref{thm:generic_edge} and their proofs.  The asymptotics allow us to
compare solutions localized on different edges and to choose (in most
cases) the edge-localized solution of the smallest energy
$\mathcal{E}(\Psi)$ with a given mass $\mathcal{Q}(\Psi)$.  Since
edge-localized solutions are good candidates for the role of the
ground state (see Proposition~\ref{prop-ground-state} for a
description of known properties of a ground state) this comparison is
an important step towards the ultimate goal of fully describing the
ground state for any graph.

\begin{figure}
  \centering
  \includegraphics{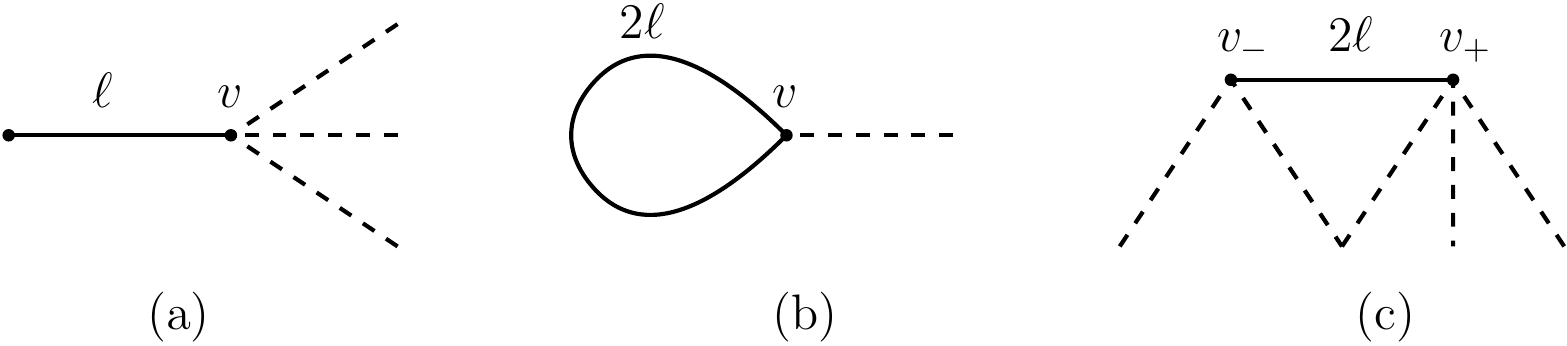}
  \caption{A single edge of a finite length can be connected to the remainder of
    the graph (shown in dashed lines) in three different ways.}
  \label{fig:edge_cases}
\end{figure}

To summarize the answers to the question of comparison, we introduce
the necessary terminology.  We distinguish three types of edges,
illustrated in Fig.~\ref{fig:edge_cases}: a \emph{pendant edge} (or
simply a ``pendant'') is an edge with one vertex of degree one, a
\emph{looping edge} (or simply a ``loop'') is an edge whose
end-vertices coincide, and an \emph{internal edge} is an edge not
belonging to the above classes --- one with distinct end-vertices,
each of degree greater than one.  When we say an edge is incident to
$N$ other edges (at an end-vertex), the number $N$ counts the edges in
the remainder of the graph connected to the end-vertex. In
Fig.~\ref{fig:edge_cases} the pendant edge is incident to $N=3$ edges
at vertex $v$, the looping edge is incident to $N=1$ edge at vertex
$v$, and the internal edge is incident to $N_-=2$ and $N_+=3$ edges at
its end-vertices $v_-$ and $v_+$ correspondingly. The following
theorem gives comparison between the energy levels at a fixed (large)
mass among the edge-localized states of Theorem
\ref{thm:main_modest1}.

\begin{theorem}
  \label{thm:main_modest2}
  Let $\Gamma$ be a compact graph (a graph with finitely many edges,
  all of finite length) and Neumann--Kirchhoff conditions at vertices.
  Among the edge-localized states of Theorem~\ref{thm:main_modest1}
  with a given sufficiently large $\mathcal{Q}(\Psi)=q$, the state
  with the smallest energy localizes on the following edge of the
  graph $\Graph$:
  \begin{itemize}
  \item[(i)] The longest among pendants; in the case of a tie, the
    pendant incident to fewest edges.

  \item[(ii)] If (i) is void, the shortest among loops incident to
    a single edge.

  \item[(iii)] If (i)--(ii) are void, a loop incident to two edges.

  \item[(iv)] If (i)--(iii) are void, the longest edge among the following:
    loops incident to $N \geq 3$ edges, or internal
    edges incident to $N_- \geq 2$ and $N_+ \geq 2$ other edges; in the
    case of two edges of the same length, the edge for which the
    quantity
    \begin{equation*}
      \begin{cases}
        \frac{N-2}{N+2} & \mbox{for a loop}\\[5pt]
        \sqrt{\frac{(N_--1)(N_+-1)}{(N_-+1)(N_++1)}} & \mbox{for an
          internal edge}
      \end{cases}
    \end{equation*}
    is the smallest.
  \end{itemize}
\end{theorem}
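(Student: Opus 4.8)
The plan is to read the comparison off the asymptotic expansions of $\mathcal{Q}(\Psi)$ and $\mathcal{E}(\Psi)$ established for the three edge types in Theorems~\ref{thm:localized_pendant}, \ref{thm:localized_loop} and \ref{thm:generic_edge}, and then to re-express $\mathcal{E}$ as a function of the fixed mass. For the state localized on an edge of geometric length $\ell$ those theorems give expansions of the shape $\mathcal{Q}(\Psi)=a\mu+b_e\,\mu\,e^{-\gamma\mu\ell}+\dots$ and $\mathcal{E}(\Psi)=-\tfrac{a}{3}\mu^3+c_e\,\mu^3e^{-\gamma\mu\ell}+\dots$, the dots denoting terms of strictly higher exponential order, with $(a,\gamma)=(1,2)$ on a pendant (the profile being a half-soliton pinned by the Neumann condition at the degree-one vertex) and $(a,\gamma)=(2,1)$ on a loop or internal edge (a full interior soliton). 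Since $a>0$, the equation $\mathcal{Q}(\Psi)=q$ has for large $q$ a smooth solution $\mu=\mu(q)=q/a+O\!\left(q\,e^{-\gamma q\ell/a}\right)$, and substituting it into the energy expansion yields $\mathcal{E}(\Psi)=\mathcal{E}_e(q)$ with leading term $-\tfrac13q^3$ on a pendant and $-\tfrac1{12}q^3$ on a loop or internal edge. This $O(q^3)$ dichotomy already gives the first half of (i): for $q$ large every pendant-localized state has strictly smaller energy than every loop- or internal-edge-localized state, so the minimizer sits on a pendant whenever $\Graph$ has one.

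The remaining comparisons use the first exponentially small correction. Among the pendants, which all share the leading term $-\tfrac13q^3$, the correction to $\mathcal{E}_e(q)$ is positive, of the form $C_e\,q^3e^{-2q\ell}(1+o(1))$, with $C_e$ depending only on the number $N$ of incident edges and increasing in $N$ (both the energy carried onto the other $N$ edges and the profile deformation needed to enforce the Kirchhoff condition at the non-pendant vertex contribute, the latter with strength growing in $N$). Hence $\mathcal{E}_e(q)$ is smallest for the longest pendant and, among pendants of equal length, for the one incident to the fewest edges, which is (i).

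When $\Graph$ has no pendant, the surviving states all have leading term $-\tfrac1{12}q^3$ and are separated by $\mathcal{E}_e(q)=-\tfrac1{12}q^3+K\,R_e\,q^3e^{-q\ell/2}(1+o(1))$, where $K>0$ is a constant independent of the edge and $R_e$ is the ratio produced by matching the Dirichlet-to-Neumann manifold of $e$ with that of the rest of the graph: $R_e=\frac{N-2}{N+2}$ for a loop incident to $N$ edges, and $R_e=\sqrt{\frac{(N_--1)(N_+-1)}{(N_-+1)(N_++1)}}$ for an internal edge incident to $N_-$ and $N_+$ edges. The whole case division is governed by the sign of $R_e$: it is negative exactly for a loop incident to one edge, it vanishes exactly for a loop incident to two edges, and it is positive for every loop incident to $N\ge3$ edges and every internal edge (there $N_-,N_+\ge2$). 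A loop incident to one edge thus carries a \emph{negative} first correction of size $|K R_e|\,q^3e^{-q\ell/2}$, largest when $\ell$ is smallest; combined with the fact that this correction is below the level $-\tfrac1{12}q^3$ reached from above by all states in cases (iii)--(iv), this yields (ii). When $R_e=0$, i.e.\ for a loop incident to two edges, the $O(q^3e^{-q\ell/2})$ corrections to $\mathcal{Q}(\Psi)$ and $\mathcal{E}(\Psi)$ cancel (the mass and energy lost by truncating the soliton are exactly balanced by those carried onto the two other edges when $N=2$), and one descends to the next term in the expansion of Theorem~\ref{thm:localized_loop}, which places these states strictly below all of case (iv) and above case (ii); this is (iii). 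Finally, when only loops with $N\ge3$ and internal edges with $N_-,N_+\ge2$ remain, every first correction is \emph{positive}, equal to $K R_e\,q^3e^{-q\ell/2}(1+o(1))$ with the same $K$, so $\mathcal{E}_e(q)$ is made smallest by taking $\ell$ as large as possible and, among edges of equal length, by taking $R_e$ as small as possible; this is (iv).

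The main obstacle is precision and bookkeeping rather than any single hard estimate. One must carry the expansions of $\mathcal{Q}(\Psi)$ and $\mathcal{E}(\Psi)$ to enough orders — two exponential orders for the loop incident to two edges, one order elsewhere — that each comparison above is genuinely decided by the displayed leading correction, and one must control the error in inverting $\mathcal{Q}(\Psi)=q$ uniformly at the level of those corrections. The subtlest part is to verify that the constant $K$ multiplying $R_e$ in $\mathcal{E}_e(q)$ is \emph{the same} for loops and for internal edges, and that in both cases the exponential rate is $e^{-q\ell/2}$ in the geometric length $\ell$, so that the two families in (iv) sit on one common scale; this amounts to computing the $O(q^3e^{-q\ell/2})$ coefficient from the Dirichlet-to-Neumann manifold data uniformly in the edge type. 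One must also check that the sub-leading correction for the boundary case $N=2$ has the sign and decay needed for the ordering of the four tiers to hold for every configuration of edge lengths once $q$ is large. Since $\Graph$ is compact with finitely many edges of fixed lengths, the exponential rates $e^{-2q\ell_j}$ and $e^{-q\ell_j/2}$ appearing in the expansions form a finite collection, so a single threshold $q_0$ suffices for the ordering of all of them — hence for all of (i)--(iv) — to hold.
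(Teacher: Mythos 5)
Your leading-order dichotomy (pendant states satisfy $\mathcal{E}\sim-\tfrac13 q^3$, non-pendant states $\mathcal{E}\sim-\tfrac1{12}q^3$) matches the paper and settles the pendant-vs-non-pendant comparison. But the mechanism you propose for all the refined comparisons --- invert $\mathcal{Q}(\mu)=q$ and read the answer off a signed first correction $C_e\,q^3e^{-\gamma q\ell/a}$ to $\mathcal{E}_e(q)$ --- has a genuine gap. The expansions actually established in Theorems~\ref{thm:localized_pendant}--\ref{thm:generic_edge} are $\mathcal{Q}=a\mu+b_e\,\mu^2\ell e^{-2\mu\ell}+\bigO{\mu e^{-2\mu\ell}}$ and $\mathcal{E}=-\tfrac{a}{3}\mu^3+\bigO{\mu^4 e^{-2\mu\ell}}$: the error terms are of the \emph{same} exponential order as the corrections, only lower polynomial order in $\mu$, not ``strictly higher exponential order'' as you assume, and no signed coefficient for the energy correction is ever computed. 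Worse, the substitution $\mu(q)=q/a+\delta(q)/a+\dots$ into $-\tfrac{a}{3}\mu^3$ produces a term $-q^2\delta/a^2$ that cancels against the leading part of the energy correction (this is forced by $d\mathcal{E}/d\Lambda=\Lambda\,d\mathcal{Q}/d\Lambda$), so the surviving correction to $\mathcal{E}_e(q)$ is of order $q^3e^{-2q\ell/a}$ --- one polynomial order \emph{below} the $\bigO{\mu^4e^{-2\mu\ell}}$ error bound available for $\mathcal{E}(\mu)$. Your route therefore cannot be closed from the stated estimates without computing the energy expansion to a precision the paper never establishes.

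The paper circumvents exactly this by proving a comparison lemma (Lemma~\ref{lem:comparison}): using the Euler--Lagrange identity $d\mathcal{E}/d\Lambda=\Lambda\,d\mathcal{Q}/d\Lambda$, monotonicity of $\mu\mapsto\mathcal{Q}$, and only the \emph{mass} expansions (which are known with a signed first correction, since $\mu^2\ell e^{-2\mu\ell}$ dominates the $\bigO{\mu e^{-2\mu\ell}}$ error), it shows that the branch with the larger $\mathcal{Q}$ at fixed $\mu$ has the strictly smaller $\mathcal{E}$ at fixed mass. This direction of transfer is the delicate point --- the paper notes it was the source of a sign error in \cite{MarPel_amrx16} --- and your write-up never engages with it; your signs happen to come out right, but they are asserted rather than derived. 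Two smaller issues: your claim that for an $N=2$ loop one can ``descend to the next term'' of Theorem~\ref{thm:localized_loop} is unsupported (no such term is computed, and the paper explicitly leaves case (iii) without a tie-breaker); and the inter-tier ordering of the $N=2$ loop against cases (ii) and (iv) follows already from $\mathcal{Q}\approx 2\mu$ versus $\mathcal{Q}\gtrless 2\mu\pm c\mu^2\ell e^{-2\mu\ell}$ via the comparison lemma, not from any higher-order term.
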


In the case of unbounded graph, we can enlarge the class of graphs for
which we guarantee the \emph{existence} of a ground state.  This
result builds upon \cite[Cor.~3.4 and Prop.~4.1]{AdaSerTil_jfa16}.

\begin{corollary}
  \label{cor:main_modest3}
  Consider an unbounded graph $\Graph$ with Neumann--Kirchhoff
  conditions and with finitely many edges (and thus at least one edge
  as a half-line).  The ground state of the constrained minimization
  problem (\ref{minimizer}) exists for sufficiently large $q$ if
  $\Graph$ has at least one pendant or a loop incident to a single
  edge.  If the graph $\Graph$ has no pendants and no loops incident
  to \emph{one or two edges}, the ground state does not exist among
  the edge-localized states of Theorem \ref{thm:main_modest1}.
\end{corollary}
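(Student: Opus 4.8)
The plan is to combine the asymptotic construction of edge-localized states from Theorem~\ref{thm:main_modest1} (and its refinements in the energy comparison of Theorem~\ref{thm:main_modest2}) with the known sufficient and necessary conditions for existence of a ground state on unbounded graphs recorded in \cite[Cor.~3.4 and Prop.~4.1]{AdaSerTil_jfa16}. Recall from that reference that on an unbounded graph the threshold value is $E_q \le E_q^{\mathbb{R}}$, where $E_q^{\mathbb{R}}$ is the energy of the NLS soliton of mass $q$ on the whole line; if a trial function achieves energy strictly below $E_q^{\mathbb{R}}$ then the minimizing sequence cannot escape to infinity and, by the standard concentration-compactness dichotomy, a minimizer exists. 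Conversely, \cite[Prop.~4.1]{AdaSerTil_jfa16} gives a topological/geometric criterion (essentially the absence of a pendant or of certain low-degree loops, i.e. the graph admitting a symmetric ``doubling'' comparison) under which $E_q = E_q^{\mathbb{R}}$ and the infimum is \emph{not} attained.

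First I would treat the positive direction. Suppose $\Graph$ has a pendant $e$ of length $\ell$, or a loop $e$ of length $\ell$ incident to a single edge. Theorem~\ref{thm:main_modest1} produces, for large $\mu=\sqrt{-\Lambda}$, a positive edge-localized state $\Psi$ concentrated on $e$ with mass $\mathcal{Q}(\Psi)$ that can be made equal to any sufficiently large $q$ by choosing $\mu$ appropriately (the map $\mu\mapsto \mathcal{Q}(\Psi)$ is asymptotically monotone, as follows from the mass asymptotics in Theorems~\ref{thm:localized_pendant} and~\ref{thm:localized_loop}). The key point is the energy asymptotics: on a half-line, the soliton of mass $q$ ``cut in half'' at its peak has energy exactly $E_q^{\mathbb{R}}$ up to the Dirichlet-to-Neumann correction, and the pendant/single-edge-loop geometry forces the leading correction term in $\mathcal{E}(\Psi)-E_q^{\mathbb{R}}$ to be \emph{negative} and exponentially small — precisely the content extracted in the proof of Theorem~\ref{thm:main_modest2}(i)--(ii), where pendants and single-edge loops sit at the bottom of the energy hierarchy and, crucially, below the half-line value. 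Hence $\mathcal{E}(\Psi) < E_q^{\mathbb{R}}$ for large $q$, so $E_q < E_q^{\mathbb{R}}$, and \cite[Cor.~3.4]{AdaSerTil_jfa16} yields the existence of a ground state.

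For the negative direction, suppose $\Graph$ has no pendant and no loop incident to one or two edges. Then every edge-localized state of Theorem~\ref{thm:main_modest1} lives on a loop incident to $N\ge 3$ edges or on an internal edge with $N_\pm\ge 2$, i.e. the cases in Theorem~\ref{thm:main_modest2}(iv). For all of these the energy asymptotics worked out in the proof of Theorem~\ref{thm:main_modest2} show that $\mathcal{E}(\Psi) \ge E_q^{\mathbb{R}}$ (the relevant Dirichlet-to-Neumann correction has the ``wrong'' sign, because the localizing edge is attached to at least two external edges on each side, mimicking — at leading order — the symmetric line soliton rather than improving on it). Therefore none of the edge-localized states of Theorem~\ref{thm:main_modest1} has energy below the half-line threshold, so none of them can be a ground state, which is the stated conclusion. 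I would phrase this carefully: we are \emph{not} claiming non-existence of a ground state outright (that would require ruling out non-edge-localized competitors, which is exactly \cite[Prop.~4.1]{AdaSerTil_jfa16}'s domain), only that the ground state, if it exists, is not among the constructed family.

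The main obstacle is the bookkeeping of signs and constants in the energy expansion near the half-line threshold: I must verify that the exponentially small correction to $\mathcal{E}(\Psi)$ relative to $E_q^{\mathbb{R}}$ is genuinely negative for pendants and single-edge loops and genuinely nonnegative in case (iv), uniformly for large $q$, rather than merely comparing the edge-localized states to one another as in Theorem~\ref{thm:main_modest2}. Concretely, this means re-examining the Dirichlet-to-Neumann manifold intersection argument with the half-line soliton energy $E_q^{\mathbb{R}}$ as the explicit reference point and tracking the first non-vanishing term; the pendant case is essentially immediate since the complementary graph only raises the energy by confining the tail, but the single-edge loop requires checking that folding the edge into a loop still undercuts the line. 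Once those sign computations are in hand — and they are already implicitly present in the proofs of Theorems~\ref{thm:localized_pendant} and~\ref{thm:localized_loop} — the corollary follows by invoking the two cited results of \cite{AdaSerTil_jfa16}.
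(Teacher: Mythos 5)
Your overall architecture is the right one and matches the paper's: invoke the threshold criterion of \cite[Cor.~3.4]{AdaSerTil_jfa16} (existence of a ground state follows from a stationary state with $\mathcal{E} \leq -\tfrac{1}{12}\mathcal{Q}^3$, the line-soliton energy), handle the pendant by the leading-order comparison, and recognize that the single-edge loop versus the line soliton is the delicate, exponentially-close case. Two corrections are needed, one cosmetic and one substantive. The cosmetic one: for a pendant the gap to the line-soliton value is \emph{not} exponentially small --- it is the leading-order difference $\mathcal{E}_p \sim -\tfrac13 \mathcal{Q}^3$ versus $-\tfrac1{12}\mathcal{Q}^3$, so that case is settled without any correction terms (as you note later).

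The substantive gap is in how you propose to get the sign of the correction for the loop with $N=1$ and for the case (iv) edges. You assert that the required sign computations for $\mathcal{E}(\Psi)-E_q^{\mathbb{R}}$ ``are already implicitly present in the proofs of Theorems~\ref{thm:localized_pendant} and~\ref{thm:localized_loop}.'' They are not: those theorems only give $\mathcal{E} = -\tfrac{2}{3}\mu^3 + \bigO{\mu^4 e^{-2\mu\ell}}$ with the sign of the correction left undetermined, so a direct energy comparison against $E_q^{\mathbb{R}}$ stalls exactly at the point you flag as ``the main obstacle.'' What the theorems \emph{do} determine is the sign of the exponentially small \emph{mass} correction at fixed $\mu$, namely $\mathcal{Q} = 2\mu - 16\tfrac{N-2}{N+2}\mu^2\ell e^{-2\mu\ell} + \bigO{\mu e^{-2\mu\ell}}$, which exceeds $2\mu$ for $N=1$ and falls below $2\mu$ for $N\geq 3$ and for internal edges. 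The missing bridge is Lemma~\ref{lem:comparison}: applying it with the exact line soliton ($\mathcal{Q}=2\mu$, $\mathcal{E}=-\tfrac23\mu^3$, i.e.\ $\mathcal{E}=-\tfrac1{12}\mathcal{Q}^3$) as the second branch converts the energy criterion at fixed mass into the mass criterion $\mathcal{Q}\geq 2\mu$ at fixed $\Lambda$, which is then read off from the mass expansions. This transfer is not a formality --- the ordering of branches swaps between the $(\Lambda,\mathcal{Q})$ and $(\mathcal{Q},\mathcal{E})$ diagrams, and the paper's remark after Lemma~\ref{lem:comparison} records that overlooking exactly this caused a sign error in \cite{MarPel_amrx16}. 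With that lemma inserted where you currently invoke ``re-examining the DtN intersection argument with $E_q^{\mathbb{R}}$ as reference,'' your proof closes; without it, the sign you need is simply not available from the stated asymptotics.
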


\begin{remark}
  If the
  graph $\Graph$ has a loop connected to two edges,
  the existence of the ground state is inconclusive and needs separate
  consideration.  For the same reasons, there is no ``tie-breaker'' in
  case (iii) of Theorem~\ref{thm:main_modest2}.  This issue has been
  pointed out before, in \cite[Theorem 2.5]{AdaSerTil_cvpde15}.
\end{remark}

\begin{figure}
\begin{tabular}{cc}
\includegraphics[trim=50 50 30 120,clip=true,width=6cm]{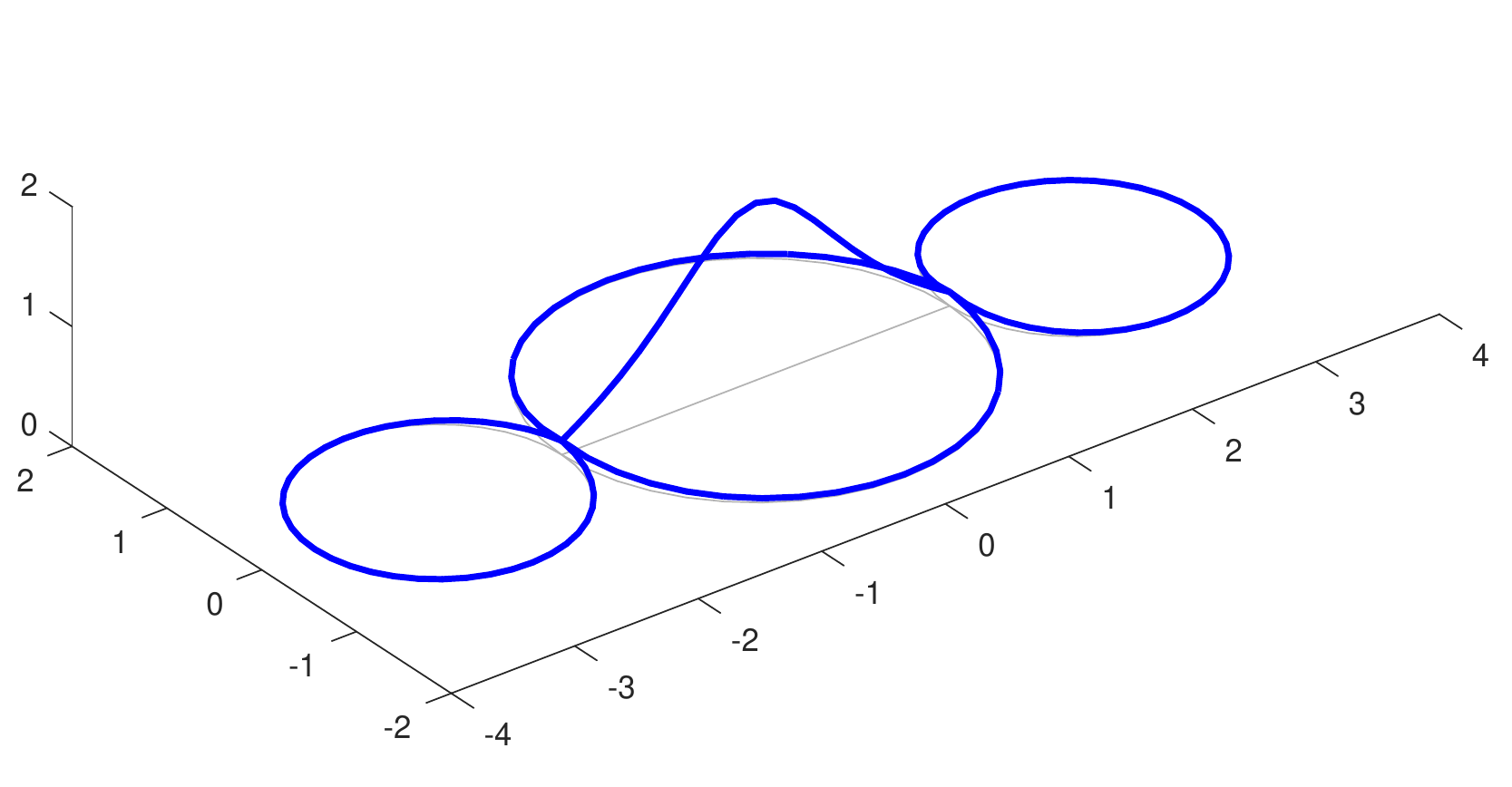}
  & \includegraphics[trim=50 50 30 120,clip=true,width=6cm]{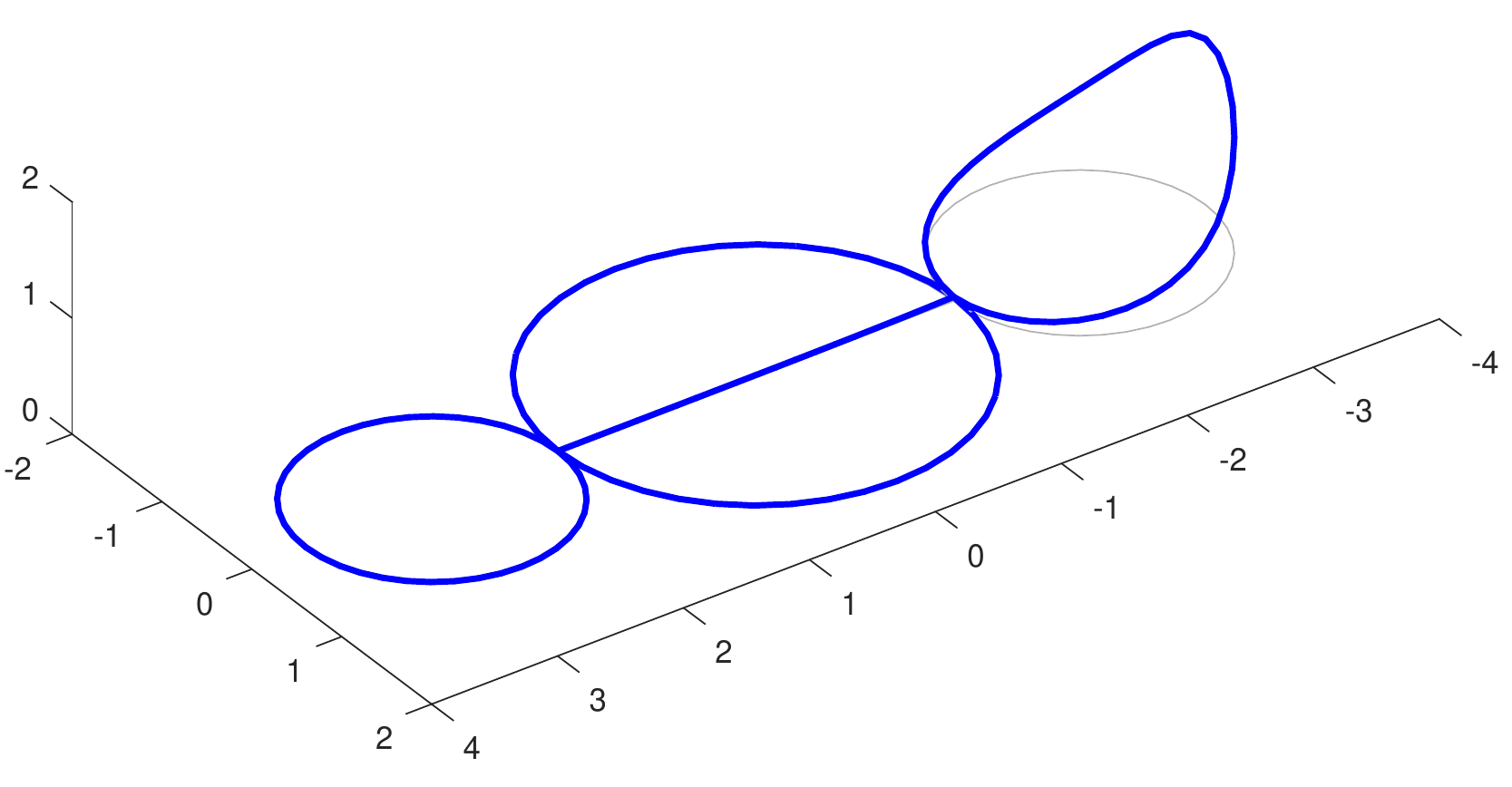} \\
 \includegraphics[width=6cm]{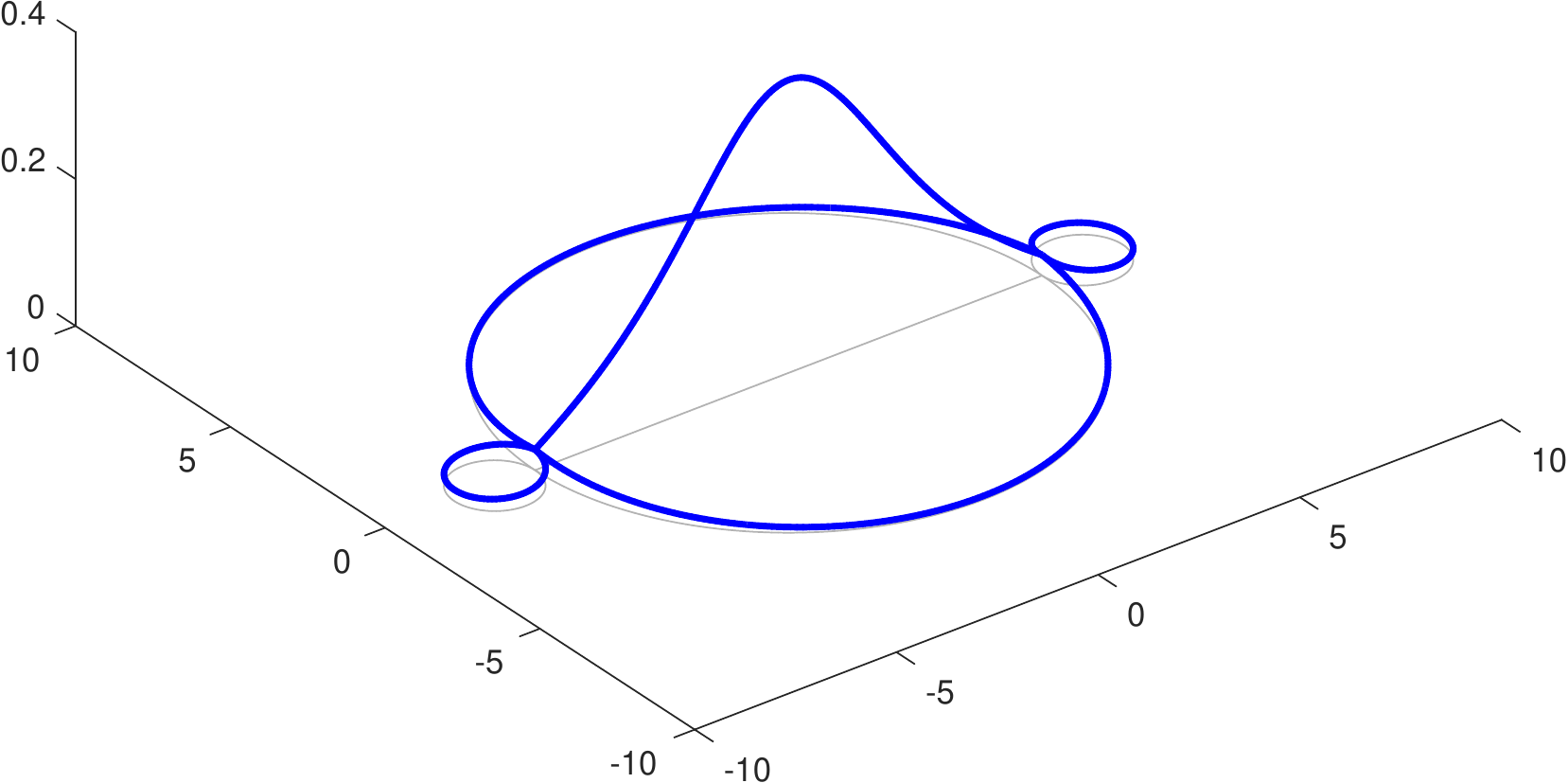}
  & \includegraphics[width=6cm]{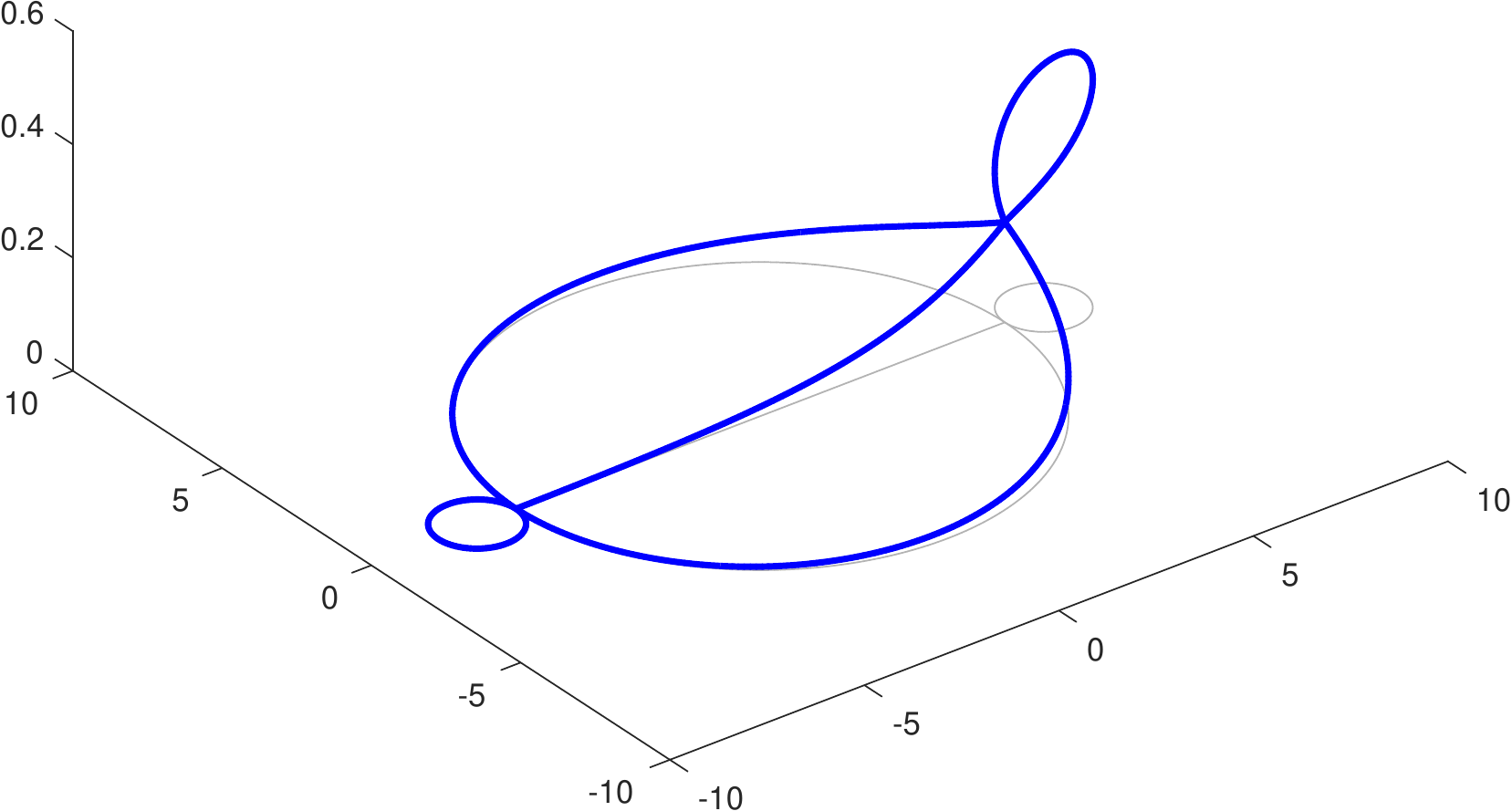}
\end{tabular}
\caption{Stationary states from Theorem \ref{thm:main_modest2} in a graph with
 two loops and three internal edges.  We demonstrate an edge-centered states (left) and
  a loop-centered states (right) for short internal edges relative to the loops  (top) and for long
  internal edges relative to the loops (bottom).  Note that lengths of all internal edges are the same in each of these
  plots, we have just drawn the upper and lower edges as semi-circles
  for visualization.}
\label{db_Kgeq2_sv_fig}
\end{figure}

\begin{remark}
  To illustrate Theorem \ref{thm:main_modest2}$(iv)$, in Figure
  \ref{db_Kgeq2_sv_fig} we show relevant states from an example of the
  graph $\Gamma$ with two loops and three internal edges.  For the
  states plotted on the top, the internal edges are short relative to
  the loops, and the loop state (top right) has smaller energy at
  large mass. For the states plotted on the bottom, the internal edges
  are long relative to the loops, and hence the edge state (bottom
  left) has smaller energy at larger mass.
\end{remark}

The main results of this work are comparable and complimentary to the recent work
\cite{AST2019} on existence of stationary states for the subcritical NLS equation
(which includes, as a particular case, the cubic NLS equation). In \cite[Theorem 3.3]{AST2019},
the existence of local energy minimizers in the limit of large fixed
mass was proven
in the restricted space of functions that attain their maximum on a given edge.
Because we are using elliptic functions, our results are only limited to the cubic NLS
equation compared to the subcritical NLS equation in \cite{AST2019}. On the other hand,
our work extends to both bounded and unbounded graphs. Moreover,
we are computing the exponentially small corrections to the mass of each edge-localized
state in terms of large negative Lagrange multiplier $\Lambda$. With the help of
the main comparison result (Lemma \ref{lem:comparison}), this tool allows us to
compare different edge-localized states and identify the state of minimal energy
in the limit of large fixed mass. One important result which follows from
\cite{AST2019} is that every edge-localized state constructed in our work
is a local minimizer of energy (at least in the case of unbounded graphs considered in \cite{AST2019}),
hence it is orbitally stable in the time evolution of the cubic NLS equation (\ref{nls-time}).

Existence and stability of stationary states in the NLS equation defined on a metric graph
have been recently investigated in great detail  \cite{Noj_ptrsl07}. Existence and variational characterization
of standing waves was developed for star graphs \cite{AdamiAH,AdamiJDE1,AdamiJDE2,Angulo1,Angulo2,Kairzhan,KP1,KP2} and
for general metric graphs \cite{AdaSerTil_cvpde15,AdaSerTil_jfa16,CDS,D18}. Bifurcations and stability of standing waves
were further explored for tadpole graphs \cite{NPS}, dumbbell graphs \cite{G19,MarPel_amrx16}, double-bridge graphs
\cite{NRS}, and periodic ring graphs \cite{D19,GilgPS,P18,PS17}. A variational characterization of standing waves was developed
for graphs with compact nonlinear core \cite{T1,T2,T3}. Some of these examples will be reviewed in the limit
of large mass as applications of our general results.

The paper is organized as follows. The rigorous formulation of the
asymptotic approach is achieved by defining a nonlinear analogue of
the well-known Dirichlet-to-Neumann (DtN) map, an object we call {\em
  the DtN manifold}.  The properties of the DtN manifold are described
in Section \ref{sec:DTN}, first in the linear theory and then for the
stationary NLS equation in the limit of large mass.  Edge-localized
states are constructed by matching a strongly localized
large-amplitude elliptic function constructed on a single edge of the
graph with a small amplitude solution on the rest of the graph and the
matching is done by finding an intersection of two relevant DtN
manifolds.  This is performed in Section \ref{sec:constructing} for
the three types of edges (a pendant, a loop, and an internal edge).
In Section~\ref{sec:comparing} we prove the comparison lemma and apply
it to the proof of Theorem~\ref{thm:main_modest2} and
Corollary~\ref{cor:main_modest3}.  In Section \ref{sec:examples}, we
present numerical studies of generalized dumbbell graphs, generalized
tadpole graph and a periodic graph.

Appendix~\ref{section:DtN_appendix} contains a proof of the
asymptotic representation of the Dirichlet--to--Neumann map in the
linear theory. Appendix~\ref{sec:maximum_principle} quotes a maximum
principle that is useful to understanding the behavior of solutions
in the region where they are small.
Appendix~\ref{sec:contraction_mapping} collects together the
well-known results on the contraction mapping principle and the
implicit function theorem used in our work.
Appendix~\ref{sec:elliptic} reports on useful asymptotic expansions
for the elliptic functions and gives a ``reverse Sobolev inequality'':
an estimate of the $H^2$ norm of an apriori bounded solution of the
stationary NLS in terms of its small $L^\infty$ norm.

\vspace{0.25cm}

{\sc Acknowledgments.} The first author was
supported in part by National Science Foundation under Grants
DMS--1815075 and DMS--1410657.  The second author was supported in
part by U.S. NSF Grant DMS--1312874 and NSF CAREER Grant DMS-1352353.
The third author is supported in part by the NSERC Discovery Grant.

The authors wish to thank Riccardo Adami, Roy Goodman and Enrico Serra
for helpful conversations that led to the development of this work.
In particular, this article corrects a computational error that
occurred in the final proof of \cite{MarPel_amrx16} and was discovered
thanks to an observation of R. Adami and E. Serra.

\section{Graphs inside-out: Dirichlet-to-Neuman map}
\label{sec:DTN}

The main idea for constructing the edge-localized state satisfying the
stationary NLS equation (\ref{statNLS}) is to match a large solution
of the known form on a single edge of the graph $\Graph$ with a small
solution on the rest of the graph denoted by $\Graph^c$.  The
``feedback'' from the small solution on $\Graph^c$ to the large
solution on the single edge is encoded via the nonlinear analogue of
the Dirichlet-to-Neumann (DtN) map which is developed in this
section. For simplicity of notations in this section, we use the same
notation $\Graph$ instead of $\Graph^c$.

\subsection{Linear DtN map; asymptotics below spectrum}

We start by reviewing the linear DtN map. Consider a graph $\Graph$ with a finite number of vertices and a
finite number of edges, which either connect a pair of vertices and
have finite length or have only one vertex and are identified with the
half-line.  We impose Neumann--Kirchhoff (NK) conditions at every
vertex.  Declare a subset $B$ of the graph's vertices to be the
\emph{boundary}.  We are interested in the asymptotics of the DtN map
on the boundary $B$ for the operator $-\Delta + \mu^2$ as
$\mu \to \infty$.

Before we give a precise definition, a couple of remarks are in order.
The operator $-\Delta$ on $L^2(\Graph)$ with NK
conditions on the vertices is well known to be non-negative and therefore we are looking at asymptotics far
away from its spectrum.  By using a scaling transformation, the same question
can be interpreted as asymptotics for the operator $-\Delta + 1$ on $L^2(\Graph_{\mu})$
as $\mu \to \infty$, where the graph $\Graph_{\mu}$ is
obtained from $\Graph$ by scaling all the edge lengths by a large parameter $\mu$.
This is the point of view we will use in most of the manuscript.

Let the boundary vertices be denoted $v_1, \ldots, v_b$, $b=|B|$, and let
$\vecp = (p_1, \ldots, p_b)^T \in \bbR^b$ be a vector of ``Dirichlet
values'' on the vertices. Figure ~\ref{fig:dtn_map} (left) gives a schematic
representation of the graph $\Graph$ with boundary vertices.
The graph $\Graph_{\mu}$ is obtained from $\Graph$ by multiplying all edge lengths by the same value $\mu$.
The infinite edges of the graph $\Graph$ are unaffected by this transformation.

\begin{figure}[t]
  \centering
  \includegraphics{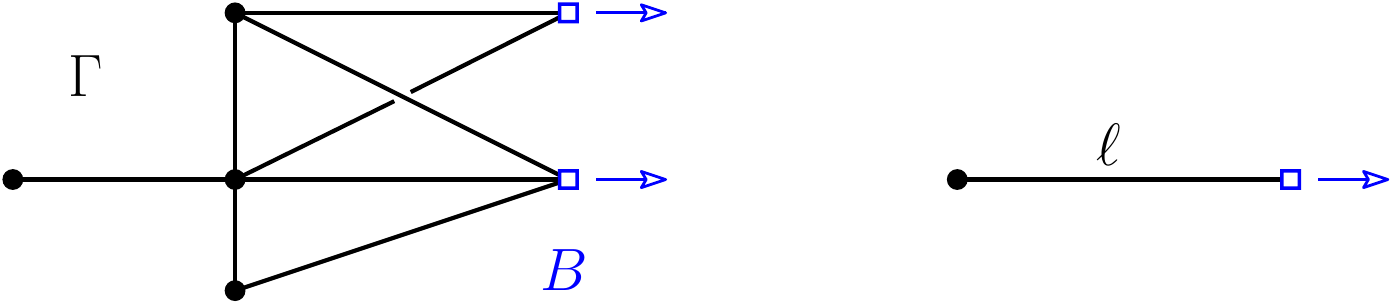}
  \caption{Left: a graph $\Graph$ with boundary vertices $B$ marked as empty
    squares. Arrows indicate the outgoing derivatives of the eigenfunction
    in the Neumann data. Right: a simple graph from Example~\ref{ex:DTN}.}
  \label{fig:dtn_map}
\end{figure}

Let $u \in H^2(\Graph_\mu)$ be a solution of the following boundary--value problem:
\begin{equation}
  \begin{cases}
    \left(-\Delta + 1\right) u = 0, \qquad
    & \mbox{on every }e\in\Graph_{\mu},\\
    u \mbox{ satisfies NK conditions}
    & \mbox{for every} \;\; v\in V\setminus B,\\
    u(v_j) = p_j,\qquad
    & \mbox{for every} \;\; v_j \in B.
  \end{cases}
  \label{eq:DtN_linear_rescaled}
\end{equation}
Existence and uniqueness of the solution $u$ follows from
invertibility of the operator $\left(-\Delta + 1\right)$ with
homogeneous vertex conditions; see, for
example, \cite[Section 3.5.2]{BerKuc_graphs}.
Note that $u$ is not required to satisfy the current
conservation conditions at $v_j \in B$.  Let
\begin{equation}
  \label{eq:def_Neumann_data}
  q_j = \Neu(u)_j := \sum_{e \sim v_j} \partial u_e(v_j),
\end{equation}
be the Neumann data of the function $u$ at the vertex $v_j \in B$,
where $\partial$ denotes the outward derivative from the vertex $v_j$.
The map $\DTN(\mu) : \vecp \mapsto {\bf q} := (q_1,\ldots,q_b)^T$ is
called the DtN map.  The following theorem (proved in Appendix A)
provides its asymptotics as $\mu\to\infty$.

\begin{theorem}
  \label{thm:asymptotic_dtn_lengths}
  The unique solution
  $u \in H^2(\Graph_{\mu})$ to the boundary value problem
  (\ref{eq:DtN_linear_rescaled}) satisfies asymptotically, as
  $\mu \to \infty$,
  \begin{equation}
    \label{eq:asymptotics_L2H2_norm_lengths}
    \left\| u \right\|_{H^2(\Graph_{\mu})}^2
    \sim \left\| u \right\|_{L^2(\Graph_{\mu})}^2 \leq C
    \left(1 + \mathcal{O}(\mu e^{-\mu \Lmin}) \right)\|\vecp\|^2
  \end{equation}
  and
  \begin{equation}
    \label{eq:asymptotics_dtn_lengths}
    \DTN(\mu) = \diag(d_j)_{j=1}^b
    + \mathcal{O}\left(e^{-\mu \Lmin}\right),
  \end{equation}
  where $\Lmin$ is the minimal edge length of the original graph $\Graph$.
\end{theorem}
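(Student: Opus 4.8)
The plan is to collapse the boundary value problem (\ref{eq:DtN_linear_rescaled}) to a finite-dimensional identity by solving $-u'' + u = 0$ explicitly on each edge, and then to extract the $\mu\to\infty$ asymptotics from the exponential smallness of the transfer coefficients on the long finite edges of $\Graph_\mu$.

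First I would treat a single edge. A finite edge of $\Graph$ of length $\ell$ has length $L := \mu\ell$ in $\Graph_\mu$ and can be parametrized by $[0,L]$; every solution of $-u''+u=0$ there is $u(x) = u(0)\cosh x + \beta\sinh x$, and since $\sinh L\neq 0$ the two endpoint values determine $\beta$, hence $u$. Computing the outgoing derivatives at the two ends shows that the edge contributes to the Neumann data the symmetric $2\times2$ block with diagonal entries $-\coth L$ and off-diagonal entries $(\sinh L)^{-1}$, acting on the pair of endpoint values; a half-line contributes $-u(0)$ at its vertex, since its only $L^2$ solution is $u(x)=u(0)e^{-x}$. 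Summing these contributions over all edges produces a symmetric matrix $\calL(\mu)$ on $\bbR^{|V|}$ such that the component of $\calL(\mu)\mathbf{u}$ at a vertex $v$ equals the sum of outgoing derivatives of $u$ at $v$; then (\ref{eq:DtN_linear_rescaled}) is exactly the statement that $\mathbf{u}|_B=\vecp$ and $(\calL(\mu)\mathbf{u})_v=0$ for $v\in V\setminus B$. Splitting $V=B\sqcup(V\setminus B)$ and writing $\calL(\mu)$ in block form with boundary block $A$, boundary--interior block $R$ and interior block $S$, I would solve for the interior values, $\mathbf{u}|_{V\setminus B}=-S^{-1}R^T\vecp$, to obtain the Schur-complement formula $\DTN(\mu)=A-RS^{-1}R^T$.

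Next I would feed in the elementary estimates $\coth L = 1+\bigO{e^{-2L}}$ and $(\sinh L)^{-1}=\bigO{e^{-L}}$, which hold with $L=\mu\ell_e\ge\mu\Lmin$ on every finite edge. These give $A=\diag(d_j)+\bigO{e^{-\mu\Lmin}}$ with $d_j$ equal to minus the degree of $v_j$ in $\Graph$ (loops and half-lines counted with the usual multiplicities), $R=\bigO{e^{-\mu\Lmin}}$, and $S=-\diag(\deg_\Graph w)+\bigO{e^{-\mu\Lmin}}$, which is negative definite for large $\mu$; hence $S^{-1}$ exists with $\|S^{-1}\|=\bigO{1}$, the correction $RS^{-1}R^T$ is $\bigO{e^{-2\mu\Lmin}}$, and (\ref{eq:asymptotics_dtn_lengths}) follows. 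I would also record the byproduct $\mathbf{u}|_{V\setminus B}=\bigO{e^{-\mu\Lmin}}\|\vecp\|$. For the norm statement I would use $u''=u$, so $\|u''\|_{L^2(\Graph_\mu)}=\|u\|_{L^2(\Graph_\mu)}$, and integrate the pointwise identity $(uu')'=(u')^2+u^2$ over each edge; summing and accounting for the orientation of the outgoing derivatives, the interior-vertex boundary terms cancel by the Neumann--Kirchhoff conditions and those at infinity vanish on half-lines, leaving the Green-type identity $\|u'\|_{L^2(\Graph_\mu)}^2+\|u\|_{L^2(\Graph_\mu)}^2=-\ip{\vecp,\DTN(\mu)\vecp}$, hence $\|u\|_{H^2(\Graph_\mu)}^2=\|u\|_{L^2(\Graph_\mu)}^2-\ip{\vecp,\DTN(\mu)\vecp}$. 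It then remains to evaluate $\|u\|_{L^2(\Graph_\mu)}^2$; an elementary computation built on the edgewise conserved quantity $(u')^2-u^2$ gives $\int_e u^2=\tfrac12\bigl(u(a)^2+u(b)^2\bigr)\bigl(1+\bigO{(1+L)e^{-L}}\bigr)$ on a finite edge with endpoint values $u(a),u(b)$, and $\int_e u^2=\tfrac12 u(v)^2$ on a half-line. Summing these, using $\sum_e\bigl(u(a_e)^2+u(b_e)^2\bigr)=\sum_v\deg_\Graph(v)u(v)^2$, the exponential smallness of the interior values, and $(1+\mu\ell_e)e^{-\mu\ell_e}=\bigO{\mu e^{-\mu\Lmin}}$, I obtain $\|u\|_{L^2(\Graph_\mu)}^2=\tfrac12\sum_{j=1}^b\deg_\Graph(v_j)\,p_j^2+\bigO{\mu e^{-\mu\Lmin}\|\vecp\|^2}$, which is the asserted upper bound and is also bounded below by a positive multiple of $\|\vecp\|^2$; together with (\ref{eq:asymptotics_dtn_lengths}) this yields $\|u\|_{H^2(\Graph_\mu)}^2=3\|u\|_{L^2(\Graph_\mu)}^2+\bigO{\mu e^{-\mu\Lmin}\|\vecp\|^2}$, so the two norms are comparable.

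I expect the step needing the most care to be the $L^2$-norm computation on a finite edge: one must check that the relative error there carries at most a linear-in-$L$ prefactor and nothing worse --- this is precisely the source of the $\bigO{\mu e^{-\mu\Lmin}}$, rather than $\bigO{e^{-\mu\Lmin}}$, in (\ref{eq:asymptotics_L2H2_norm_lengths}) --- and to carry the half-line edges, whose lengths are not rescaled, uniformly alongside the finite ones throughout the summations.
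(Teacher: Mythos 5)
Your argument is correct in substance and takes a genuinely different route from the paper. The paper never writes the solution explicitly on each edge: it first proves an asymptotic expansion of the (sub-unitary) scattering matrix of the compactified graph via the formula $\Sigma(\mu)=R+T_oe^{-\mu L}(I-\tilde U e^{-\mu L})^{-1}T_i$, converts infinite edges into leads by inserting dummy degree-two vertices, recovers the DtN map through the Cayley-type relation $\DTN_\Graph(\mu)=\mu(I-\Sigma)(I+\Sigma)^{-1}$, reads off the $L^2$ bound from the vector of wave amplitudes $\mathbf a$, and only at the end rescales from $\Graph$ to $\Graph_\mu$. Your proof instead works directly on $\Graph_\mu$ and reduces everything to the Schur complement of an explicit $\coth$/$\mathrm{csch}$ weighted vertex matrix (essentially the Titchmarsh--Weyl $M$-matrix per edge). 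What your route buys is elementarity and a sharper constant: you get the exact leading term $\frac12\sum_j d_jp_j^2$ for $\|u\|^2_{L^2(\Graph_\mu)}$ and the precise relation $\|u\|_{H^2}^2=3\|u\|_{L^2}^2+\bigO{\mu e^{-\mu\Lmin}}\|\vecp\|^2$, whereas the paper only invokes the interpolation inequality $\|u\|_{H^2}^2\le C(\|u\|_{L^2}^2+\|u''\|_{L^2}^2)$ together with $u''=u$. What the paper's route buys is a uniform treatment of general (non-Kirchhoff, scale-invariant) vertex conditions and a formula, \eqref{eq:formula_internal}, for the interior amplitudes that is reused elsewhere; your identification of the interior vertex values as $\bigO{e^{-\mu\Lmin}}\|\vecp\|$ plays the same role. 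Your attention to the loop multiplicities, the unrescaled half-lines, and the $Le^{-L}$ (rather than $e^{-L}$) relative error in $\int_e u^2$ --- the source of the $\mu$ prefactor in \eqref{eq:asymptotics_L2H2_norm_lengths} --- is exactly where the care is needed, and is handled correctly.

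One point must be fixed: your sign convention for the Neumann data. You take the derivatives pointing from the vertex into the edges, which forces the diagonal blocks $-\coth L$, the conclusion ``$d_j$ equals minus the degree,'' and the Green identity $\|u'\|^2+\|u\|^2=-\ip{\vecp,\DTN(\mu)\vecp}$. But the theorem (and its use downstream, e.g.\ $q_j\approx d_jp_j$ with $d_j=\deg v_j>0$ in Theorem~\ref{thm:nlin_DTN}, and Example~\ref{ex:DTN} where $\DTN(\mu)=\tanh(\mu\ell)\to+1$) requires the opposite convention: the Neumann trace is the derivative pointing out of the graph at the boundary vertex, so that $\DTN(\mu)$ is positive definite and $\|u'\|^2+\|u\|^2=+\ip{\vecp,\DTN(\mu)\vecp}$. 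The fix is purely notational --- negate every edge block, so $A\to+\diag(\deg v_j)$ and $S$ becomes positive definite --- and nothing else in your argument changes.
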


\begin{example}
  \label{ex:DTN}
  In the simplest case, the graph $\Graph$ is one edge of length
  $\ell$ with the boundary vertex at $x = \ell$ and the other vertex at $x=0$
  under the Neumann condition, as is shown on Fig.~\ref{fig:dtn_map} (right).
  It is straightforward to obtain the following solution of the
  boundary-value problem (\ref{eq:DtN_linear_rescaled}):
  \begin{equation}
  \label{dtn-0}
    u(z) = p \frac{\cosh(z)}{\cosh(\mu \ell)}, \quad z \in [0,\mu \ell].
  \end{equation}
  The DtN map $M(\mu) : p \mapsto q$ is one-dimensional with $q = u'(\mu \ell)$ and
  \begin{equation}
  \label{dtn-1}
    \DTN(\mu) = \tanh(\mu \ell),
  \end{equation}
  and the solution (\ref{dtn-0}) satisfies
  \begin{equation}
  \label{dtn-2}
    \left\| u \right\|_{L^2(0,\mu \ell)}^2
    = \frac{1}{2} p^2 \left[ \tanh(\mu \ell)
      + \mu\ell {\rm sech}^2(\mu \ell) \right].
  \end{equation}
  The latter quantities are expanded as $\mu \to \infty$,
  in agreement with \eqref{eq:asymptotics_L2H2_norm_lengths} and
  \eqref{eq:asymptotics_dtn_lengths}. Note that the error bound $\mathcal{O}(e^{-\mu \ell})$
  in \eqref{eq:asymptotics_L2H2_norm_lengths} and
  \eqref{eq:asymptotics_dtn_lengths} is larger compared to the error
  bound $\mathcal{O}(e^{-2\mu \ell})$ following from (\ref{dtn-1}) and (\ref{dtn-2}),
  due to cancellations specific to this simple example.
\end{example}

For future use we now establish a related auxiliary estimate for
the following non-homogeneous boundary--value problem:
\begin{equation}
    \label{eq:DtN_nonhom}
    \begin{cases}
      \big(-\Delta + 1 - W\big) f = g,
      & \mbox{on every } e\in\Graph_{\mu},\\
      f \mbox{ satisfies NK conditions}
      & \mbox{for every } v\in V\setminus B,\\
      f(v_j) = p_j,
      & \mbox{for every } v_j \in B,
    \end{cases}
  \end{equation}
where $g \in L^2(\Graph_{\mu})$ and $W \in L^{\infty}(\Graph_{\mu})$ are given
and $f \in H^2(\Graph_{\mu})$ is to be found.

\begin{lemma}
  \label{lem:Neumann_est_nonhom}
  For every $\mu > 0$, $g \in L^2(\Graph_\mu)$ and
  $W \in L^{\infty}(\Graph_{\mu})$ satisfying
  $\| W \|_{L^\infty(\Graph_{\mu})} \leq \alpha < 1$, there exists a
  unique solution $f \in H^2(\Graph_{\mu})$ to the boundary value
  problem (\ref{eq:DtN_nonhom}).  Asymptotically in $\mu \to \infty$
  (assuming that $\alpha$ is independent of $\mu$) we have
    \begin{equation}
    \label{eq:f_norm_nonhom}
    \| f \|_{L^2(\Graph_{\mu})} \leq C_{\alpha} \left( \|\vecp\| + \|g\|_{L^2(\Graph_{\mu})} \right),
  \end{equation}
  with the Neumann data of $f$ on $B$ satisfying
  \begin{equation}
    \label{eq:Neumann_est_nonhom}
    \|\Neu(f)\| \leq C_{\alpha} \left( \|\vecp\| + \|g\|_{L^2(\Graph_{\mu})} \right),
  \end{equation}
  where the constant $C_{\alpha}$ is independent of $\mu$.
\end{lemma}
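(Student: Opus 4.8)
The plan is to reduce the non-homogeneous problem (\ref{eq:DtN_nonhom}) to the homogeneous linear problem (\ref{eq:DtN_linear_rescaled}), already understood via Theorem~\ref{thm:asymptotic_dtn_lengths}, plus a problem with \emph{zero} Dirichlet data that is controlled purely by coercivity. I would write $f = u + h$, where $u \in H^2(\Graph_\mu)$ is the unique solution of (\ref{eq:DtN_linear_rescaled}) with Dirichlet values $\vecp$. Subtracting, $h := f-u$ is required to solve
\[
  \big(-\Delta + 1 - W\big) h = g + W u, \qquad h|_B = 0, \qquad h \text{ satisfies NK on } V\setminus B .
\]
Consider the bounded sesquilinear form $a(h,w) = \langle h', w'\rangle + \langle h, w\rangle - \langle Wh, w\rangle$ on the closed subspace $\mathcal{H} = \{h \in H^1(\Graph_\mu) : h \text{ continuous at vertices}, \ h|_B = 0\}$ of $H^1(\Graph_\mu)$. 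Since $\|W\|_{L^\infty}\le\alpha<1$, it is coercive: $\Real\, a(h,h) \ge \|h'\|_{L^2}^2 + (1-\alpha)\|h\|_{L^2}^2$ for every $\mu>0$. Hence Lax--Milgram gives a unique weak solution $h\in\mathcal{H}$, and edge-by-edge elliptic regularity (the one-dimensional ODE $-h''+h = g+Wh$ with $L^2$ right-hand side) promotes it to $H^2(\Graph_\mu)$, with the Neumann data on $B$ left free; testing against $w$ supported near an interior vertex recovers the NK conditions there. This establishes existence and uniqueness of $f$ for \emph{all} $\mu>0$; the integration by parts that kills the boundary terms uses precisely the NK conditions at $v\in V\setminus B$ and the vanishing of $h$ on $B$.

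For the $L^2$ bound, testing with $w=h$ gives $(1-\alpha)\|h\|_{L^2}^2 \le \Real\, a(h,h) = \Real\langle g+Wu, h\rangle \le (\|g\|_{L^2}+\alpha\|u\|_{L^2})\|h\|_{L^2}$, so $\|h\|_{L^2}\le(1-\alpha)^{-1}(\|g\|_{L^2}+\alpha\|u\|_{L^2})$. By Theorem~\ref{thm:asymptotic_dtn_lengths}, $\|u\|_{L^2(\Graph_\mu)}\le C\|\vecp\|$ uniformly for large $\mu$, and therefore $\|f\|_{L^2}\le\|u\|_{L^2}+\|h\|_{L^2}\le C_\alpha(\|\vecp\|+\|g\|_{L^2})$, which is (\ref{eq:f_norm_nonhom}).

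For the Neumann data I would split $\Neu(f)=\Neu(u)+\Neu(h) = \DTN(\mu)\vecp + \Neu(h)$, and note $\|\DTN(\mu)\|\le C$ uniformly by (\ref{eq:asymptotics_dtn_lengths}). To bound $\Neu(h)$, first upgrade the energy estimate to $H^2$: the identity $\Real\,a(h,h)=\|h'\|_{L^2}^2+\|h\|_{L^2}^2-\Real\langle Wh,h\rangle$ controls $\|h'\|_{L^2}$, and then the equation $h''=(1-W)h-g-Wu$ controls $\|h''\|_{L^2}$, so $\|h\|_{H^2(\Graph_\mu)}\le C_\alpha(\|\vecp\|+\|g\|_{L^2})$. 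Then, on any edge $e$ incident to a boundary vertex, identified with $[0,L]$ with that vertex at $0$ (the half-line case $L=\infty$ is allowed, and $L\ge1$ once $\mu\Lmin\ge1$), use $h'(0) = \int_0^1 h'(t)\,dt - \int_0^1\!\!\int_0^t h''(s)\,ds\,dt = h(1) - \int_0^1\!\!\int_0^t h''(s)\,ds\,dt$ together with the one-dimensional Sobolev inequality $|h(1)| \le C\big(\|h\|_{L^2(0,1)}+\|h'\|_{L^2(0,1)}\big)$ to get $|\partial h_e(0)|\le C\|h\|_{H^2(e)}$ with an absolute constant $C$; summing over the finitely many edges incident to $B$ yields $\|\Neu(h)\|\le C\|h\|_{H^2(\Graph_\mu)}$ and hence (\ref{eq:Neumann_est_nonhom}).

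The only genuinely delicate point is the uniformity in $\mu$ of the trace estimate for $\Neu(h)$: bounding a pointwise derivative crudely by the $H^2$ norm of a long edge could degrade the constant, so it is essential to localize the trace estimate to a unit-length segment adjacent to $B$, which is legitimate precisely because every finite edge of $\Graph_\mu$ has length $\ge \mu\Lmin$. Everything else is a soft coercivity argument combined with the already established asymptotics of the linear DtN map from Theorem~\ref{thm:asymptotic_dtn_lengths}.
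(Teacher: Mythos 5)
Your proof is correct and follows essentially the same route as the paper: both decompose $f = u + h$ with $u$ solving the homogeneous linear problem~\eqref{eq:DtN_linear_rescaled} and then invert $-\Delta + 1 - W$ with Dirichlet conditions on $B$, which works uniformly in $\mu$ because $1 - W \geq 1 - \alpha > 0$. The only difference is one of detail: the paper simply asserts the uniform $L^2 \to H^2$ boundedness of this inverse and the continuity of the Neumann trace, whereas you supply the Lax--Milgram coercivity argument and the unit-interval trace estimate that justify the uniformity in $\mu$ of those assertions.
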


\begin{proof}
  Represent $f = u + \xi$, where $u$ is the solution to the boundary-value
  problem~\eqref{eq:DtN_linear_rescaled}.  Let us define the operator
  \begin{equation}
  \label{invertible}
  -\Delta + 1 - W : \quad {\rm Dom}(\Graph^D_{\mu}) \subset L^2(\Graph_{\mu}) \mapsto L^2(\Graph_{\mu}),
  \end{equation}
  where ${\rm Dom}(\Graph^D_{\mu}) \subset H^2(\Graph_{\mu})$ is the domain of the Laplacian
  $-\Delta$ on the graph $\Graph_{\mu}$ with homogeneous Dirichlet conditions at the boundary $B$
  (the rest of the vertices retain their NK conditions). Since $1 - W(z) \geq 1 - \alpha > 0$
  the operator (\ref{invertible}) is invertible which implies that
  there cannot be more than one solution $f$.  Since $\big(-\Delta +
  1\big)u=0$ and $u$ takes care of the non-homogeneous boundary values, the remainder term $\xi$ is given by
  \begin{equation}
  \label{xi-term}
    \xi = \big(-\Delta + 1 - W\big)^{-1} (g + Wu).
  \end{equation}
  The inverse operator $\big(-\Delta + 1 - W\big)^{-1}$ is bounded as an
  operator from $L^2(\Graph_\mu)$ to $H^2(\Graph_\mu)$ uniformly in
  $\mu \geq \mu_0$.  Therefore,
  the $L^2(\Graph_{\mu})$ norm of $\xi$ and the Neumann trace $\Neu(\xi)$ are estimated from (\ref{xi-term}) as follows:
  \begin{equation*}
  \| \xi \|_{L^2(\Graph_{\mu})} + \| \Neu(\xi) \| \leq C_{\alpha} \|g+Wu\|_{L^2(\Graph_{\mu})} \leq C_{\alpha} \left( \|g\|_{L^2(\Graph_{\mu})}
    + \alpha \|u\|_{L^2(\Graph_{\mu})} \right),
  \end{equation*}
  where we have implicitly used the Sobolev embedding $\| \partial u \|_{L^\infty (\Graph)} \leq C \| u \|_{H^2( \Graph)}$.
  The $L^2(\Graph_{\mu})$ norm of $u$ and the Neumann data $\Neu(u)$
  is bounded by $C\|\vecp\|$ thanks to
  \eqref{eq:asymptotics_L2H2_norm_lengths} and
  \eqref{eq:asymptotics_dtn_lengths} respectively.  Using $f=u+\xi$
  and $\Neu(f) = \Neu(u) + \Neu(\xi)$ we obtain
  (\ref{eq:f_norm_nonhom}) and (\ref{eq:Neumann_est_nonhom}).
\end{proof}

\subsection{Definition of nonlinear DtN manifold}

The analogue of DtN map for the stationary NLS equation is what we call a ``nonlinear DtN manifold.''  The name
is chosen because in most cases of interest (such as
in the example we consider in Section~\ref{sec:nlinDTNexample}) this
object turns out to be a geometric manifold.  It is also not a
``map'' due to lack of uniqueness of the solution to the stationary NLS equation.

\begin{definition}
  \label{def:DTNvariety}
  Consider a $\mu$-scaled graph $\Graph_{\mu}$ with a boundary $B$.  The
  \emph{DtN manifold} is the set
  $N \subset \bbR^{|B|}\times\bbR^{|B|}$ of $(\vecp,\vecq)$ such that
  there is a solution $\Psi \in H^2(\Graph_{\mu})$ of the following
  nonlinear boundary value problem:
  \begin{equation}
  \left\{
  \begin{array}{ll}
    \left(-\Delta + 1 \right) \Psi = 2|\Psi|^2 \Psi,
      \qquad & \mbox{on every }e\in\Graph_{\mu},\\
    \Psi \mbox{ satisfies NK conditions}
             & \mbox{for every} \;\; v\in V\setminus B,\\
    \Psi(v_j) = p_j,\qquad & \mbox{for every }v_j \in B,\\
    \sum_{e \sim v} \partial\Psi_e(v_j) = q_j, \qquad
             & \mbox{for every }v_j \in B,
  \end{array}\right.
  \label{eq:DtN_nonlinear}
  \end{equation}
  where $\partial$ denotes the outward derivative at the vertex $v_j \in B$.
\end{definition}

In the same way that linear DtN map is intricately related to the
scattering matrix, the DtN manifold is related to the nonlinear
scattering map defined in \cite{GnuSmiDer_pra11}.  Exploring this
connection further lies outside the scope of this article.

\subsection{An example of DtN manifold}
\label{sec:nlinDTNexample}

We will now describe the nonlinear analogue of Example~\ref{ex:DTN}. To do so,
let us briefly recall the structure of the solutions of the stationary NLS equation on the line given by
\begin{equation}
  \label{eq:stationaryNLSstandard}
  -\Psi'' + \Psi = 2 |\Psi|^2 \Psi.
\end{equation}
Equation~\eqref{eq:stationaryNLSstandard} is translation- and
phase-invariant.  We will impose, for definiteness, the condition
$\Psi'(0)=0$ and $\Psi(0) \in \mathbb{R}$, obtaining
a list of real-valued solutions of the differential equation (\ref{eq:stationaryNLSstandard}).
All other real-valued solutions may be obtained from the listed ones by translations.
More general complex-valued solutions also exist but they are beyond the scope of this work.

There are three constant solutions to (\ref{eq:stationaryNLSstandard}):
$\Psi = 0$ and $\Psi = \pm \frac1{\sqrt{2}}$.  There exists a $H^2(\bbR)$ solution
called the {\em NLS soliton}:
\begin{equation}
  \label{eq:sech_soliton}
  \Psi(z) =  \sech(z), \quad x \in \bbR.
\end{equation}
This solution separates two families of periodic wave solutions
expressible in terms of Jacobian elliptic functions (see 8.14 in
\cite{GraRyzh_table}).  These are the sign-indefinite {\em cnoidal waves}
\begin{equation}
  \label{eq:cnoidal}
  \Psi_{\rm cn}(z) = \frac{\kappa}{\sqrt{2\kappa^2-1}}
  \cn\left(\frac{z}{\sqrt{2\kappa^2-1}};\kappa\right), \quad \kappa \in \left(\frac{1}{\sqrt{2}},1\right)
\end{equation}
and the sign-definite {\em dnoidal waves}
\begin{equation}
  \label{eq:dnoidal}
  \Psi_{\rm dn}(z) = \frac{1}{\sqrt{2-k^2}}
  \dn\left(\frac{z}{\sqrt{2-k^2}};k\right), \quad k \in (0,1),
\end{equation}
where $\kappa$ (corresp. $k$) is the elliptic modulus.  These
solutions are illustrated in Fig.~\ref{fig:phase_port_nls}.

\begin{figure}[t]
  \centering
  \includegraphics[scale=0.75]{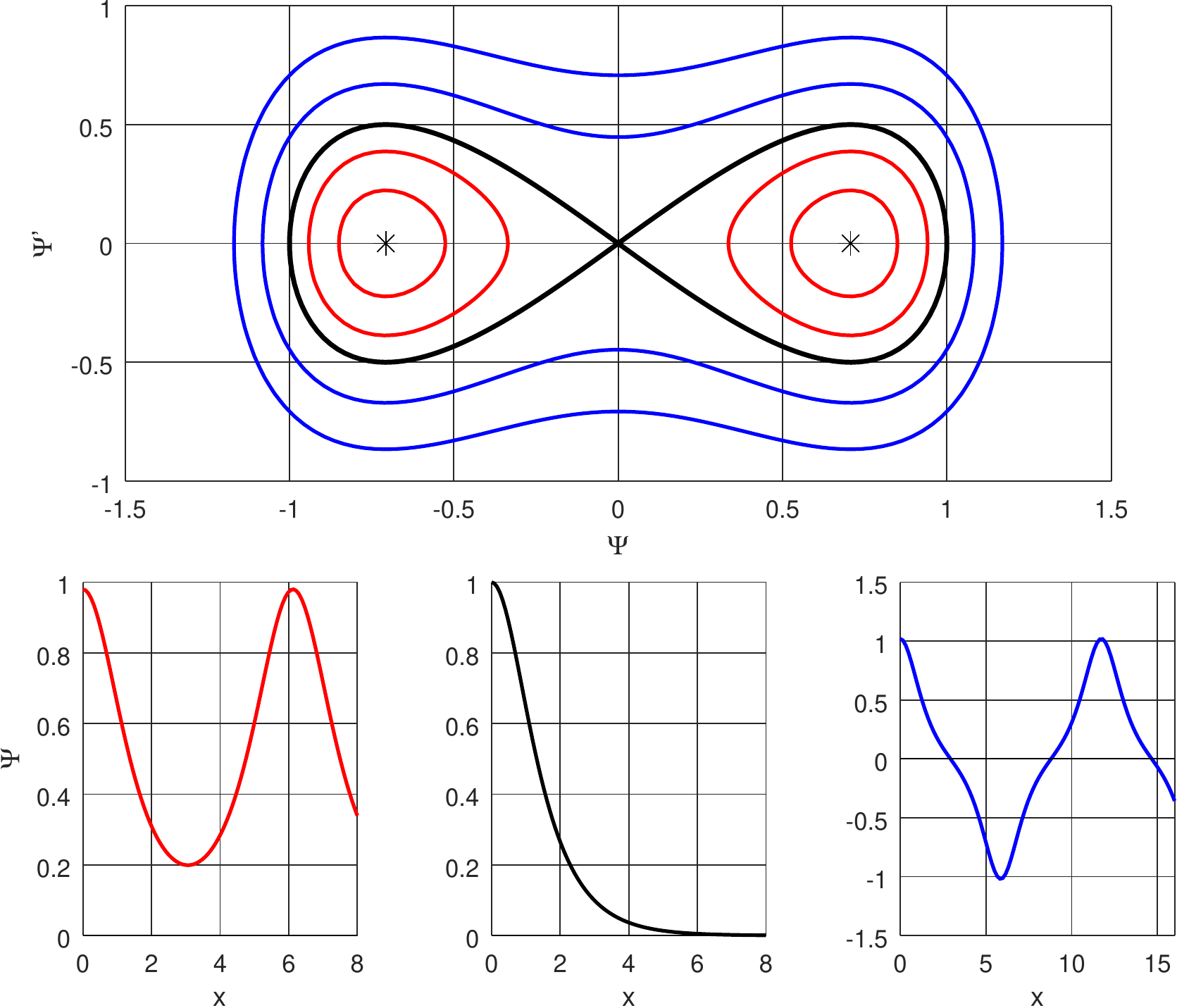}
  \caption{Top: phase portrait for the second-order equation
    $-\Psi''+\Psi-2\Psi^3=0$.  Bottom: typical solutions with initial
    conditions $\Psi'(0)=0$ and, from left to right, $\Psi(0) = 0.98$
    (``dnoidal wave''), $\Psi(0)=1$ (``NLS soliton'') and
    $\Psi(0)=1.02$ (``cnoidal wave'').}
  \label{fig:phase_port_nls}
\end{figure}

The Jacobi real transformation implies that letting
$\kappa = 1/k$ in equation \eqref{eq:cnoidal} transforms it into
equation \eqref{eq:dnoidal} with $k>1$ (see 8.153.5-6 in
\cite{GraRyzh_table}).  We will thus use the single analytic
expression
\begin{equation}
  \label{eq:dnoidal_universal}
  \Psi_{\rm n}(z) = \frac{1}{\sqrt{2-k^2}}
  \dn\left(\frac{z}{\sqrt{2-k^2}};k\right), \quad k \in (0,\sqrt{2})
\end{equation}
to describe the solutions (letter ``n'' can be interpreted as
``noidal'' or as referring to the Neumann-type condition
$\Psi'(0)=0$).  In particular, setting $k=1$ reproduces the NLS soliton~\eqref{eq:sech_soliton}.

\begin{figure}[t]
  \centering
  \includegraphics[scale=0.75]{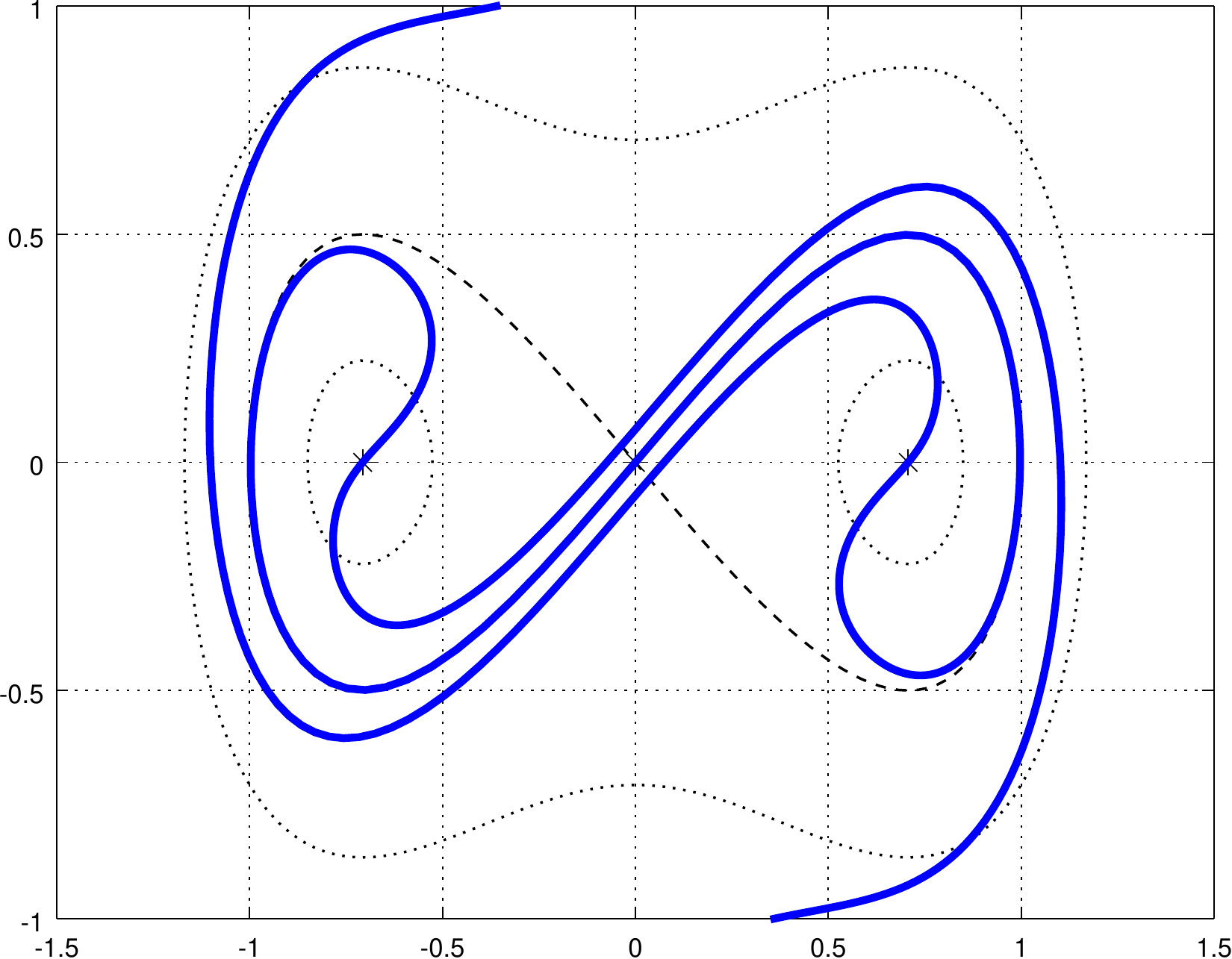}
  \caption{Nonlinear DtN manifold (thicker line) for (\ref{eq:DTN_nls_edge}) with $L=4$
  plotted on top of the phase portrait from Fig. \ref{fig:phase_port_nls} (dashed and dotted lines).}
  \label{fig:DTN_nls_edge}
\end{figure}

\begin{example}
  \label{ex:nlin_DTN}
  Consider the simple graph of Example~\ref{ex:DTN}.  The DtN manifold can be
  obtained by going through all real solutions of the second-order equation (\ref{eq:stationaryNLSstandard})
  on the interval $[0,L]$ with zero derivative and variable initial value at $z = 0$.  In other words,
  \begin{equation}
    \label{eq:DTN_nls_edge}
    N_L = \Big\{\big(\Psi(L), \Psi'(L)\big): \quad  -\Psi''+\Psi-2\Psi^3=0,\
    \Psi'(0)=0,\ \Psi(0) \in \bbR \Big\}.
  \end{equation}
  The DtN manifold is shown in Fig.~\ref{fig:DTN_nls_edge} for $L = 4$.  There are
  many peculiar and complex features, but we will concentrate on the
  three nearly straight parallel curves in the neighborhood of (0,0).  The
  middle curve is tangential to the corresponding \emph{linear} DtN
  map; the other two curves will allow us to construct stationary
  states localized on a single edge of the graph.
\end{example}

\subsection{Nonlinear DtN manifold in the almost linear regime}

Consider the nonlinear boundary value problem on a $\mu$-scaled graph $\Graph_\mu$ with a boundary $B$,
\begin{equation}
\left\{ \begin{array}{ll}
    \left(-\Delta + 1\right) \Psi = 2|\Psi|^2 \Psi,
      \qquad & \mbox{on every }e\in\Graph_{\mu},\\
    \Psi \mbox{ satisfies NK conditions} & \mbox{for every }v\in V\setminus B,\\
    \Psi(v_j) = p_j,\qquad & \mbox{for every }v_j \in B.
  \end{array} \right.
  \label{eq:nlin}
\end{equation}
We will establish the existence and uniqueness of small solutions
of this boundary value problem in the limit $\mu \to \infty$ and for
small boundary data
$\vecp = (p_1,\ldots,p_{|B|})$ in Theorem~\ref{thm:nlin_DTN} below.
But first we discuss some helpful properties of such small solutions.

\begin{lemma}
  \label{lem:suppressed_solution}
  Suppose $\Psi\in H^2(\Graph_{\mu})$ is a solution to
  the boundary-value problem \eqref{eq:nlin}
  satisfying the uniform bound
  \begin{equation}
    \label{eq:unif_bound}
    |\Psi(z)| < \frac{1}{\sqrt{2}}
    \qquad
    \mbox{for all }z\in \Graph_\mu.
  \end{equation}
  Then $|\Psi|$ has no internal local maxima in $\Graph_{\mu} \setminus B$
  and the maximum of $|\Psi|$ is attained on $B$.
  If, additionally, all $p_j \geq 0$, then $\Psi(z) \geq 0$ for all
  $z\in \Graph_\mu$.

  Conversely, if the boundary values $p_j$ of a function $\Psi$
  satisfy $|p_j| < \frac{1}{\sqrt{2}}$ and $|\Psi|$ has no internal
  local maxima in $\Graph_{\mu} \setminus B$, the global bound
  \eqref{eq:unif_bound} is satisfied.
\end{lemma}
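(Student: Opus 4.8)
The plan is to exploit a convexity property of the map $z\mapsto|\Psi(z)|^2$ which holds precisely because of the a priori bound \eqref{eq:unif_bound}, combine it with the Neumann--Kirchhoff conditions to obtain a maximum principle, and then upgrade to positivity by first showing that $\Psi$ is in fact real-valued. Concretely, on each edge set $w:=|\Psi|^2=\Psi\overline\Psi$. Using $\Psi''=(1-2|\Psi|^2)\Psi=(1-2w)\Psi$ from \eqref{eq:nlin} and the fact that $\overline\Psi\Psi''=(1-2w)w$ is real, a direct differentiation gives
\[
  w''=2\,\Real\!\big(\overline{\Psi}\,\Psi''\big)+2|\Psi'|^2=2w(1-2w)+2|\Psi'|^2 .
\]
Under the hypothesis $0\le w<\tfrac12$ the right-hand side is nonnegative, so $w$ is convex on every edge, and strictly convex unless $\Psi\equiv\Psi'\equiv0$ at some interior point, in which case $\Psi$ vanishes identically on that edge by uniqueness for the ODE. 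A (strictly) convex function on an interval has no interior local maximum and attains its maximum at an endpoint, and since $|\Psi|$ is a monotone function of $w$ the same holds for $|\Psi|$ on each edge.

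It remains to rule out local maxima at interior vertices. Suppose $|\Psi|$ has a local maximum at $v\in V\setminus B$. Then $\partial_e w(v)\le0$ along every edge $e$ incident to $v$, yet continuity of $\Psi$ at $v$ together with the Neumann--Kirchhoff condition $\sum_{e\sim v}\partial_e\Psi(v)=0$ yields
\[
  \sum_{e\sim v}\partial_e w(v)=2\,\Real\!\Big(\overline{\Psi(v)}\sum_{e\sim v}\partial_e\Psi(v)\Big)=0 ,
\]
so $\partial_e w(v)=0$ for each $e\sim v$; by strict convexity $w$ would then have to increase away from $v$ along $e$, contradicting the local maximum — unless $\Psi\equiv0$ on every edge at $v$, i.e.\ the maximal value is the trivial value $0$. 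Hence $|\Psi|$ has no (genuine) internal local maximum in $\Graph_\mu\setminus B$, and, since there are finitely many edges and $\Psi\to0$ along any half-lines (as $\Psi\in H^2$), the global maximum of $|\Psi|$ is attained on $B$.

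For positivity when all $p_j\ge0$ I would first show that $\Psi$ is real. Its imaginary part $\chi:=\Imag\Psi$ solves the linear problem $(-\Delta+1-W)\chi=0$ with $W:=2|\Psi|^2$, with $\chi(v_j)=\Imag p_j=0$ on $B$ and Neumann--Kirchhoff conditions elsewhere; since $\|W\|_{L^\infty(\Graph_\mu)}<1$ by \eqref{eq:unif_bound}, the uniqueness statement in Lemma~\ref{lem:Neumann_est_nonhom} (applied with $g=0$, $\vecp=0$) forces $\chi\equiv0$. With $\Psi$ real, test the equation $-\Psi''+(1-2\Psi^2)\Psi=0$ against the negative part $\Psi_-:=\max(-\Psi,0)$, which lies in $H^1$ on each edge, is continuous at the vertices, and vanishes on $B$ because $p_j\ge0$. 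Integrating over $\Graph_\mu$, the vertex boundary terms disappear (Neumann--Kirchhoff at interior vertices, $\Psi_-=0$ on $B$, decay along half-lines), leaving
\[
  \int_{\Graph_\mu}\Big(|\Psi_-'|^2+(1-2\Psi^2)\,\Psi_-^2\Big)\,dx=0 .
\]
Because $1-2\Psi^2>0$ everywhere, this gives $\Psi_-\equiv0$, i.e.\ $\Psi\ge0$. (Alternatively one can argue pointwise: $\Psi$ cannot attain a negative minimum in the interior of an edge, where $\Psi''=(1-2\Psi^2)\Psi<0$, nor at an interior vertex by the same Neumann--Kirchhoff balancing used above, nor on $B$ since $p_j\ge0$.)

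Finally, the converse is essentially immediate and uses only continuity: if $|p_j|<\tfrac1{\sqrt2}$ for all $j$ and $|\Psi|$ has no internal local maximum in $\Graph_\mu\setminus B$, pick a point $z^\star$ realizing $\sup|\Psi|$ (it exists by the same compactness/decay argument). Were $|\Psi(z^\star)|\ge\tfrac1{\sqrt2}$, then $z^\star\notin B$ since $|\Psi(v_j)|=|p_j|<\tfrac1{\sqrt2}$, so $z^\star$ would be a local maximum lying in $\Graph_\mu\setminus B$, contradicting the hypothesis; hence $|\Psi|<\tfrac1{\sqrt2}$ on $\Graph_\mu$. The only genuinely non-routine points are the use of the vertex identity $\sum_{e\sim v}\partial_e|\Psi|^2=2\,\Real(\overline{\Psi}\sum_{e\sim v}\partial_e\Psi)=0$ to make the Neumann--Kirchhoff conditions interact with convexity, and the reduction to real $\Psi$ via Lemma~\ref{lem:Neumann_est_nonhom} before positivity can even be phrased; the degenerate situation in which $\Psi$ vanishes identically on a whole edge is harmless and only affects what one chooses to call a ``local maximum''.
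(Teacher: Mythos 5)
Your proof is correct, and it reaches the conclusion by a genuinely different mechanism than the paper. The paper views the equation as $(-\Delta+V)\Psi=0$ with $V=1-2|\Psi|^2>0$ and invokes the quoted maximum principle of Appendix~\ref{sec:maximum_principle} (Lemma~\ref{lem:max_principle}) applied to $\max(\Psi,0)$ and $\max(-\Psi,0)$; positivity is then obtained by the observation that a negative value would force a negative interior minimum of $\Psi$, hence an interior local maximum of $|\Psi|$, which has just been excluded. You instead work with $w=|\Psi|^2$, compute $w''=2w(1-2w)+2|\Psi'|^2\geq 0$ under \eqref{eq:unif_bound}, and rule out vertex maxima through the Neumann--Kirchhoff balancing $\sum_{e\sim v}\partial_e w(v)=2\,\Real(\overline{\Psi(v)}\sum_{e\sim v}\partial_e\Psi(v))=0$; positivity comes from testing against $\Psi_-$ after first reducing to real-valued $\Psi$ via the uniqueness clause of Lemma~\ref{lem:Neumann_est_nonhom}. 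Your route is self-contained (no appeal to the external maximum principle), the passage to $w$ avoids the non-differentiability of $|\Psi|$ at its zeros, and the explicit reality reduction addresses a point the paper's proof passes over silently (splitting into $\max(\pm\Psi,0)$ only makes sense for real $\Psi$). The paper's route is shorter and reuses a standard quantum-graph tool. Both arguments share the same harmless degeneracy --- non-strict ``local maxima'' at points where $\Psi$ vanishes identically on the incident edges --- which the paper's remark after Lemma~\ref{lem:max_principle} and your closing sentence each dispose of in essentially the same way. One small point worth making explicit in your vertex argument: when $w(v)>0$ the conclusion is an outright contradiction (since $\partial_e w(v)=0$ and $w''\geq 2w(1-2w)>0$ force $w$ to strictly increase away from $v$), so the degenerate branch only arises when $\Psi(v)=0$.
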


\begin{remark}
The upper bound in \eqref{eq:unif_bound} comes from the
location of the rightmost fixed point in the phase portrait in
Fig.~\ref{fig:phase_port_nls}.
\end{remark}

\begin{proof}
  If $|\Psi|$ has no internal local maxima on $\Graph_{\mu} \setminus B$,
  the maximum of $|\Psi|$ is attained on the boundary $B$, since $|\Psi(z)| \to 0$ along the unbounded edges of
  $\Graph_{\mu}$.

  Since $\Psi$ satisfies \eqref{eq:nlin}, we can view it as a solution
  to $(-\Delta + V) \Psi = 0$ with $V=1 - 2|\Psi|^2$.  By assumption,
  $V > 0$ and we can use a maximum principle in the form quoted in
  Appendix~\ref{sec:maximum_principle}, Lemma $B.1$ to conclude that
  $\max(\Psi, 0)$ has no local maxima in $\Graph_{\mu} \setminus B$.
  Similarly, $\max(-\Psi,0)$ has no local maxima.

  If, additionally,
  all boundary values $p_j$ are non-negative and $\Psi(z)<0$ is achieved for some
  $z \in \Gamma_{\mu}$, the function $\Psi$ must have a negative
  internal local minimum.  Therefore $|\Psi|$ would have an internal
  local maximum, a possibility that we just ruled out. Hence,
  $\Psi(z) \geq 0$ for all $z \in \Gamma_{\mu}$ if $p_j \geq 0$ for all $j$.
\end{proof}

We also prove a useful ``reverse Sobolev estimate'' which is so
called because it goes in the reverse direction to the usual
Sobolev-type estimates of $L^\infty$ norm in terms of a Sobolev norm.
The ``reverse'' inequality becomes possible if we assume a priori that
the function satisfies NLS and is sufficiently small.

\begin{lemma}
  \label{lem:reverse_Sobolev_graph}
  There exist $c_0$, $\mu_0$ and $C$ (which may depend on the graph
  $\Gamma_{\mu}$), such that every real solution
  $\Psi\in H^2(\Gamma_\mu)$ of the stationary NLS equation
  $-\Psi'' + \Psi = 2 |\Psi|^2 \Psi$ satisfying
  \begin{equation}
    \label{eq:bound-on-Psi}
    |\Psi(z)| < c_0, \quad z \in \Gamma_\mu, \quad \mu > \mu_0
  \end{equation}
  also satisfies
  \begin{equation}
    \label{eq:H2_from_Linf}
    \| \Psi \|_{H^2(\Gamma_\mu)} \leq C \|\Psi\|_{L^\infty(\Gamma_\mu)}.
  \end{equation}
\end{lemma}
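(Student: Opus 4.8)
The plan is to work edge by edge, reducing everything to the $L^\infty$ norm by exploiting the ODE $-\Psi'' + \Psi = 2|\Psi|^2\Psi$ algebraically and then integrating. Write $m := \|\Psi\|_{L^\infty(\Gamma_\mu)}$, which by hypothesis is less than $c_0$. The $L^2$ bound is immediate up to a factor involving the total (scaled) length of the bounded part of the graph plus the decaying tails on the half-lines; the real content is controlling $\|\Psi''\|_{L^2}$ and $\|\Psi'\|_{L^2}$. For $\Psi''$ this is free: pointwise $|\Psi''| = |\Psi - 2|\Psi|^2\Psi| \le (1 + 2m^2)m \le (1+2c_0^2)m$, so $\|\Psi''\|_{L^2(e)} \le (1+2c_0^2)\,m\,|e|^{1/2}$ on each bounded edge, and on a half-line one uses the same pointwise bound together with the exponential decay of $\Psi$ (and hence of $\Psi''$) established by the maximum-principle arguments of Lemma~\ref{lem:suppressed_solution} and the standard comparison with $\cosh/\sech$ profiles. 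The only genuinely delicate step is the first-derivative bound, because $\Psi'$ is not pointwise controlled by $\Psi$ on a single edge without boundary information.

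For $\|\Psi'\|_{L^2}$ I would use the conserved quantity of the ODE. Multiplying $-\Psi'' + \Psi = 2\Psi^3$ by $\Psi'$ and integrating gives the first integral $(\Psi')^2 = \Psi^2 - \Psi^4 + E$ on each edge, where $E$ is an edge-dependent constant. Evaluating at a point where $\Psi'$ is smallest in absolute value on that edge — on a bounded edge $e$ of scaled length $|e|$ there is, by the mean value theorem, a point $z_*$ with $|\Psi'(z_*)| \le 2m/|e|$, and on a half-line one may take $z_* \to \infty$ where $\Psi'\to 0$ — one obtains $|E| \le (2m/|e|)^2 + m^2 + m^4 \lesssim m^2$ provided $|e|$ is bounded below, which it is since all scaled edge lengths are at least $\mu\,\Lmin \ge \mu_0 \Lmin \ge 1$ for $\mu_0$ chosen large enough (this is exactly where $\mu_0$ enters). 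Then $(\Psi')^2 = \Psi^2 - \Psi^4 + E \le m^2 + |E| \lesssim m^2$ pointwise on the edge, so $\|\Psi'\|_{L^\infty(e)} \le C m$ and therefore $\|\Psi'\|_{L^2(e)} \le C m |e|^{1/2}$; on the half-lines the exponential decay again makes the tail contribution finite and bounded by $Cm$.

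Summing the three contributions over the finitely many edges gives $\|\Psi\|_{H^2(\Gamma_\mu)}^2 \le C(\Gamma_\mu)\, m^2$, which is the claimed inequality; here $C$ depends on $\Gamma_\mu$ through the number of edges and the total finite length (the half-line contributions being uniformly bounded), while $c_0$ is chosen small enough that $1 - 2c_0^2 > 0$ so that all the pointwise estimates hold with the stated constants, and $\mu_0$ large enough that every scaled edge has length at least $1$. The main obstacle, as indicated, is the first-derivative estimate: one must locate a point of small $|\Psi'|$ on each bounded edge and quantify it in terms of $m$ and the (lower bound on the) edge length, which is the reason the statement requires $\mu > \mu_0$ rather than holding for all $\mu > 0$.
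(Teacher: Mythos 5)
Your pointwise estimates are correct and the first-integral argument for $\|\Psi'\|_{L^\infty(e)}\leq Cm$ is a nice, valid observation (the mean-value-theorem location of a point with $|\Psi'|\leq 2m/|e|$ does control the energy constant $E$ by $Cm^2$ once the scaled edge lengths are bounded below). The problem is the final integration step: you pass from $\|\Psi\|_{L^\infty(e)},\|\Psi'\|_{L^\infty(e)},\|\Psi''\|_{L^\infty(e)}\leq Cm$ to $L^2$ bounds by multiplying by $|e|^{1/2}$, and on $\Gamma_\mu$ every finite edge has length $\mu\ell_e$, so your constant grows like $\mu^{1/2}$ (you acknowledge this when you say $C$ depends on the ``total finite length''). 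That is not the statement being proved: the quantifiers are $\exists\, C\ \forall\,\mu>\mu_0$, the underlying single-edge estimate (Proposition~\ref{prop:H2_from_Linf}) explicitly has $C$ independent of the interval length $L$, and the application in the uniqueness part of Theorem~\ref{thm:nlin_DTN} needs the constant to be uniform as $\mu\to\infty$. So there is a genuine gap, not just a suboptimal constant.

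The missing idea is that a small solution of $-\Psi''+\Psi=2\Psi^3$ on a long edge is not spread out: it is a piece of a dnoidal/cnoidal profile with one interior minimum, growing exponentially toward the two endpoints, so its $L^2$ mass is concentrated in an $O(1)$ neighborhood of the endpoints and $\|\Psi\|_{L^2(e)}^2\leq C\,\|\Psi\|_{L^\infty(e)}^2$ with \emph{no} length factor (heuristically, in the linear regime $\Psi\approx Ae^{-z}+Be^{-(|e|-z)}$, whose sup is $\max(A,B)$ and whose $L^2$ norm squared is $\approx(A^2+B^2)/2$). The paper proves exactly this in Proposition~\ref{prop:H2_from_Linf} by writing $\Psi$ explicitly as $\frac{1}{\sqrt{2-k^2}}\dn(\cdot;k)$, computing $\|\Psi\|_{L^2}^2$ via incomplete elliptic integrals of the second kind, and checking that the ratio to $\|\Psi\|_{L^\infty}^2\approx\mathrm{sech}^2(L_2)$ stays bounded as the edge length grows (the cancellation $\tanh(L_1+L_2)-\tanh(L_2)\approx\mathrm{sech}^2(L_2)$). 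You use this exponential-decay structure on the half-lines but not on the long finite edges, where it is equally essential; without it your argument only yields $\|\Psi\|_{H^2(\Gamma_\mu)}\leq C\mu^{1/2}\|\Psi\|_{L^\infty(\Gamma_\mu)}$.
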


Lemma~\ref{lem:reverse_Sobolev_graph} follows from the corresponding
inequality on every edge of the graph $\Gamma_\mu$, see
Proposition~\ref{prop:H2_from_Linf}.  The proof of
Proposition~\ref{prop:H2_from_Linf} is rather technical and is postponed to
Appendix~\ref{sec:elliptic}.


We now formulate and prove the main result of this section.

\begin{theorem}
  \label{thm:nlin_DTN}
  There are $C_0 > 0$, $p_0 > 0$ and $\mu_0>0$ such that for every
  $\vecp = (p_1, \ldots, p_{|B|})$ with $\|\vecp\| < p_0$ and every
  $\mu>\mu_0$, there exists a solution $\Psi \in H^2(\Graph_{\mu})$
  to the boundary-value problem (\ref{eq:nlin}) which
  is unique among functions satisfying the uniform bound \eqref{eq:unif_bound}.

  The solution $\Psi$ satisfies the estimate
  \begin{equation}
    \label{eq:nlin_DTN_solution}
    \| \Psi \|_{H^2(\Graph_\mu)} \leq C_0 \|\vecp\|,
  \end{equation}
  while its Neumann data
  ${\bf q} = (q_1, \ldots, q_{|B|}) := \Neu(\Psi)$ satisfies
  \begin{equation}
    \label{eq:nlin_DTN_value}
    |q_j - d_j p_j| \leq C_0 \left( \|\vecp\| e^{-\mu \Lmin} +
      \|\vecp\|^3 \right),
    \qquad 1 \leq j \leq |B|,
  \end{equation}
  where $d_j$ is the degree of the $j$-th boundary vertex and $\Lmin$
  is the length of the shortest edge in $\Graph$.  The Neumann data
  ${\bf q}$ is $C^1$ with respect to $\vecp$ and $\mu$.  The partial
  derivatives satisfy the following estimates:
  \begin{equation}
    \label{eq:nlin_DTN_value_der}
    \left| \frac{\partial q_j}{\partial p_i} - d_j \delta_{ij} \right| \leq C_0 \left( e^{-\mu \Lmin} + \|\vecp\|^2 \right),
    \quad 1 \leq i,j \leq |B|,
  \end{equation}
  and
  \begin{equation}
    \label{eq:nlin_DTN_value_der_mu}
    \left|\frac{\partial q_j}{\partial\mu}\right|
    \leq C_0 \mu^{-1} \|\vecp\|, \quad 1 \leq j \leq |B|.
  \end{equation}
 Furthermore, if $p_j \geq 0$ for every $j$, then $\Psi(z) \geq 0$ for
 all $z \in \Graph$.
\end{theorem}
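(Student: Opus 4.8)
The plan is to set up the boundary-value problem \eqref{eq:nlin} as a fixed-point problem and solve it with the contraction mapping principle, using the linear solution $u$ from \eqref{eq:DtN_linear_rescaled} as the leading term. Write $\Psi = u + \xi$ where $u$ solves the linear problem with the same boundary data $\vecp$; by Theorem~\ref{thm:asymptotic_dtn_lengths} we have $\|u\|_{H^2(\Graph_\mu)} \leq C\|\vecp\|$ uniformly in $\mu$. Then $\xi$ must satisfy a non-homogeneous equation with homogeneous Dirichlet data on $B$, namely $(-\Delta+1)\xi = 2|u+\xi|^2(u+\xi)$. Invoking the invertibility of $-\Delta+1$ with Dirichlet conditions on $B$ and NK conditions elsewhere (uniformly bounded inverse from $L^2$ to $H^2$ for $\mu \geq \mu_0$, as used in Lemma~\ref{lem:Neumann_est_nonhom}), this becomes $\xi = \mathcal{F}(\xi) := (-\Delta+1)^{-1}_D \big[2|u+\xi|^2(u+\xi)\big]$. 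I would show $\mathcal{F}$ maps a ball of radius $\sim \|\vecp\|^3$ in $H^2(\Graph_\mu)$ into itself and is a contraction there: the cubic nonlinearity gives $\|\mathcal{F}(\xi)\|_{H^2} \lesssim \|u+\xi\|_{H^2}^3 \lesssim \|\vecp\|^3$ using the algebra property of $H^2$ (in one dimension, on each edge, with constants controlled because $\Graph_\mu$ has a fixed number of edges and we only integrate), and similarly for the Lipschitz estimate, shrinking $p_0$ if necessary. This yields a solution with $\|\xi\|_{H^2} \leq C\|\vecp\|^3$, hence \eqref{eq:nlin_DTN_solution}.

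For uniqueness among functions satisfying \eqref{eq:unif_bound}: if $\Psi_1,\Psi_2$ both solve \eqref{eq:nlin} with the same $\vecp$ and both obey $|\Psi_i| < 1/\sqrt2$, then by Lemma~\ref{lem:suppressed_solution} they have no internal maxima, so $\|\Psi_i\|_{L^\infty}<1/\sqrt2$, and by Lemma~\ref{lem:reverse_Sobolev_graph} $\|\Psi_i\|_{H^2} \leq C\|\Psi_i\|_{L^\infty}$. One then needs the $L^\infty$ (equivalently $H^2$) norms to be genuinely small, which I would extract by applying Lemma~\ref{lem:suppressed_solution}'s converse direction together with the maximum principle to bound $\|\Psi_i\|_{L^\infty}$ in terms of $\|\vecp\|$ (the small boundary values propagate inward). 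Then the difference $w = \Psi_1 - \Psi_2$ has zero Dirichlet data on $B$ and satisfies $(-\Delta + 1 - W)w = 0$ with $W = 2(\Psi_1^2 + \Psi_1\Psi_2 + \Psi_2^2)$ having $\|W\|_{L^\infty} < 1$; by the invertibility in Lemma~\ref{lem:Neumann_est_nonhom} (with $g=0$, $\vecp = 0$), $w \equiv 0$. This also shows the solution built by contraction coincides with any other small solution, and the positivity claim $\Psi(z)\geq 0$ when all $p_j \geq 0$ follows directly from the last part of Lemma~\ref{lem:suppressed_solution}, since our constructed $\Psi$ satisfies \eqref{eq:unif_bound} for $p_0$ small.

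The Neumann data estimate \eqref{eq:nlin_DTN_value} follows by writing $\Neu(\Psi) = \Neu(u) + \Neu(\xi)$: the first term gives $d_j p_j + \mathcal{O}(e^{-\mu\Lmin})\|\vecp\|$ by \eqref{eq:asymptotics_dtn_lengths}, and the second is bounded by $C\|\xi\|_{H^2} \leq C\|\vecp\|^3$ using the Sobolev trace bound on derivatives (as in the proof of Lemma~\ref{lem:Neumann_est_nonhom}). For the $C^1$ dependence and the derivative estimates \eqref{eq:nlin_DTN_value_der}--\eqref{eq:nlin_DTN_value_der_mu}, I would apply the implicit function theorem to the map $(\vecp,\mu,\Psi) \mapsto$ [residual of \eqref{eq:nlin}] in the appropriate Banach spaces: the linearization at the solution is $-\Delta + 1 - 6\Psi^2$ with Dirichlet data on $B$, invertible for $\|\vecp\|$ small since $\|6\Psi^2\|_{L^\infty}$ is then below $1$; this gives smooth dependence of $\Psi$, hence of $\vecq$, on $(\vecp,\mu)$. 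Differentiating the identity $\vecq = \Neu(u(\vecp,\mu)) + \Neu(\xi(\vecp,\mu))$ and using that $\partial u/\partial p_i$ solves the linear problem with boundary data $e_i$ (so $\partial \Neu(u)_j/\partial p_i = d_j\delta_{ij} + \mathcal{O}(e^{-\mu\Lmin})$), while $\partial\xi/\partial p_i$ is $\mathcal{O}(\|\vecp\|^2)$ in $H^2$ and $\partial u/\partial\mu$, $\partial\xi/\partial\mu$ are $\mathcal{O}(\mu^{-1}\|\vecp\|)$ (the $\mu$-derivative of $u$ costs an extra factor $\mu^{-1}$ because the $\mu$-dependence enters through edge lengths and decaying exponentials — this is visible already in Example~\ref{ex:DTN} where differentiating $\tanh(\mu\ell)$ in $\mu$ produces $\ell\,\mathrm{sech}^2(\mu\ell)$, exponentially small, but here we only claim the cruder $\mu^{-1}$ bound), yields the stated estimates.

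\textbf{Main obstacle.} The genuinely delicate point is the $\mu$-derivative estimate \eqref{eq:nlin_DTN_value_der_mu}: one must quantify how $u$ (and thus $\xi$ and $\vecq$) depends on $\mu$ with the right power. This requires either differentiating the asymptotic expansion in Appendix~\ref{section:DtN_appendix} carefully, or — cleaner — rescaling back to a $\mu$-independent graph where $\mu$ appears as a coefficient $\mu^2$ in $(-\Delta + \mu^2)u = 0$, differentiating in $\mu$ there, and tracking how the $\mu^{-1}$ gain arises from the structure of the resolvent. Making the uniformity in $\mu$ of all resolvent bounds (for $-\Delta+1-W$ with Dirichlet data on $B$, and of its derivative with respect to $\mu$) precise is the part that needs the most care; the rest is a routine contraction-mapping and implicit-function-theorem argument once the linear estimates of Theorem~\ref{thm:asymptotic_dtn_lengths} and Lemma~\ref{lem:Neumann_est_nonhom} are in hand.
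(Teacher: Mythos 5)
Your proposal follows essentially the same route as the paper: the decomposition $\Psi = u + \xi$ with $u$ the linear solution, a contraction mapping for $\xi$ in the Dirichlet domain using the $H^2$ Banach-algebra property, the maximum principle (Lemma~\ref{lem:suppressed_solution}) plus the reverse Sobolev estimate for uniqueness and positivity, splitting the Neumann data, and rescaling to the fixed graph $\Graph$ to differentiate in $\mu$ and extract the $\mu^{-1}$ factor. The only cosmetic differences are that you finish uniqueness via invertibility of $-\Delta+1-W$ for the difference of two solutions rather than re-invoking the contraction, and you apply the implicit function theorem directly to the residual rather than through the paper's Corollary~\ref{cor:contraction_smooth}; both variants are sound.
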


\begin{proof}
For the nonlinear boundary value problem (\ref{eq:nlin}) we decompose
  \begin{equation}
  \label{decomp-psi}
    \Psi = u + \tpsi,
  \end{equation}
  where $u \in H^2(\Graph_{\mu})$ satisfies the linear boundary value problem
  \eqref{eq:DtN_linear_rescaled} and $\tpsi\in H^2(\Graph_\mu)$ satisfies
  \begin{equation}
    \label{eq:nlin-term}
    \begin{cases}
      \left(-\Delta + 1 \right) \tpsi
      = 2 |u + \tpsi|^2 (u + \tpsi),   \qquad
      & \mbox{on every }e\in\Graph_{\mu},\\
      \tpsi \mbox{ satisfies NK conditions}
      & \mbox{for every }v\in V\setminus B,\\
      \tpsi(v_j) = 0,\qquad & \mbox{for every }v_j \in B.
    \end{cases}
  \end{equation}
  Let us denote by $\Dom(\Graph_\mu^D) \subset H^2(\Graph_\mu)$ the
  domain of the Laplacian $-\Delta$ on the graph $\Graph_\mu$ with
  Dirichlet conditions at the boundary $B$ (the rest of the vertices
  retain their NK conditions).  This is a self-adjoint positive
  operator, therefore $-\Delta + 1$ is invertible with
  $(-\Delta + 1)^{-1}$ bounded as an operator from $L^2(\Graph_\mu)$ to
  $H^2(\Graph_\mu)$.

  Since $H^2(\Graph_\mu)$ is a Banach algebra by an application of the
  Sobolev inequality (see Lemma 3.1 in \cite{GilgPS} for the periodic
  graphs setting), the mapping
  $T: \Dom(\Graph_\mu^D) \to \Dom(\Graph_\mu^D)$ defined by
  \begin{equation}
    \label{system-ode-persistence}
    T: \tpsi \mapsto 2 (-\Delta + 1)^{-1} |u + \tpsi|^2
    (u + \tpsi)
  \end{equation}
  satisfies the estimates
  \begin{align}
    \label{eq:estimate_norm}
    \| T(\tpsi) \|_{H^2(\Graph_{\mu})}
    \leq C_1 \|u + \tpsi\|^3_{H^2(\Graph_{\mu})},
  \end{align}
  and
  \begin{align}
    \| T(\tpsi_1) - T(\tpsi_2)\|_{H^2(\Graph_{\mu})}
    \leq C_2 \left(\|u + \tpsi_1\|_{H^2(\Graph_{\mu})}^2 + \|u +
      \tpsi_2\|_{H^2(\Graph_{\mu})}^2 \right)
      \|\tpsi_1 - \tpsi_2\|_{H^2(\Graph_{\mu})}.
  \end{align}
  The latter estimate follows from the elementary inequality
  $$
  |a^3-b^3| = |a-b| |a^2+ab+b^2| \leq \frac32 |a-b| (a^2 + b^2)
  $$
  thanks to the fact that all functions in (\ref{system-ode-persistence}) are real.

  It follows from Theorem~\ref{thm:asymptotic_dtn_lengths} that
  $\|u\|_{_{H^2(\Graph_{\mu})}} \leq C_3\|\vecp\| < C_3 p_0$, hence, taking $p_0$ small
  enough we obtain that $T$ satisfies the conditions of the
  Contraction Mapping Principle (see Theorem \ref{thm:contraction_mapping} in
  Appendix~\ref{sec:contraction_mapping}) in the ball
  $\|\tpsi\|_{H^2(\Graph_{\mu})} < p_0$.  This yields a unique solution
  $\tpsi \in \Dom(\Graph_\mu^D)$ as a fixed point of $T$ satisfying thanks to
  \eqref{eq:fp_estimate} the following estimate:
  \begin{eqnarray}
    \label{estimate-psi}
    \| \tpsi \|_{H^2(\Graph_{\mu})} \leq C_4 \|u\|_{H^2(\Graph_{\mu})}^3
    \leq C_5 \|\vecp\|^3,
  \end{eqnarray}
  for some $\vecp$-independent $C_4,C_5 > 0$.  These estimates,
  together with Theorem~\ref{thm:asymptotic_dtn_lengths}, immediately
  yield estimate \eqref{eq:nlin_DTN_solution} for $\Psi = u + \psi$.

  In order to confirm that $\Psi$ satisfies the uniform bound \eqref{eq:unif_bound}, we use the
    classical Sobolev's inequality (see, for example, \cite[Lemma 1.3.8]{BerKuc_graphs})
    \begin{equation*}
      \|\Psi\|_{L^\infty(0,L)} \leq C \|\Psi\|_{H^2(0,L)},
    \end{equation*}
    where $C$ is independent of $L$ as long as $L>L_0$.
    Hence
    \begin{equation*}
      \|\Psi\|_{L^\infty(\Graph_{\mu})} \leq C \|\Psi\|_{H^2(\Graph_{\mu})},
    \end{equation*}
    where constant $C$ is independent of $\mu$. Then, the bound \eqref{eq:nlin_DTN_solution}
    implies estimate \eqref{eq:unif_bound}.

    In order to show that the small solution $\Psi$ with the given small
    boundary data $\vecp$ is unique, we use
    Lemma~\ref{lem:suppressed_solution} to conclude that
    \begin{equation*}
      \|\Psi\|_{L^\infty(\Graph_\mu)} \leq \max_{j} p_j \leq p_0,
    \end{equation*}
    and then use Proposition~\ref{prop:H2_from_Linf} to get a bound on
    $\|\Psi\|_{H^2(\Graph_\mu)}$.  We conclude that $\psi := \Psi - u$
    is $H^2$-small which puts it into the domain of contraction of
    $T$.  Uniqueness of $\psi$ and hence of $\Psi$ then follows from the unique solution in the
    Contraction Mapping Principle.

  The Neumann data for $\Psi$ is the sum of the Neumann data for $u$
  and the Neumann data for $\psi$. The former is bounded by \eqref{eq:asymptotics_dtn_lengths}.
  The latter is estimated using \eqref{estimate-psi} and the
  continuity in $H^2(\Graph_{\mu})$ of the Neumann trace.  Combining the two
  estimates, we obtain \eqref{eq:nlin_DTN_value}.

  We now apply Corollary~\ref{cor:contraction_smooth} to the mapping
  $T$ defined in \eqref{system-ode-persistence} to conclude that the
  fixed point $\psi$ is $C^1$ in $u \in H^2(\Graph_{\mu})$. In turn, $u \in H^2(\Graph_{\mu})$ is $C^1$ in ${\bf p}$
  because the boundary value problem \eqref{eq:DtN_linear_rescaled}
  is linear in ${\bf p}$. The derivative $\partial_{p_i} u$ satisfies equation
  \eqref{eq:DtN_linear_rescaled} with $\vecp =
  (\delta_{ij})_{j=1}^b$.  By
  Theorem~\ref{thm:asymptotic_dtn_lengths}, we have
  \begin{equation}
    \label{derivative-bound-1}
    \|\partial_{p_i} u\|_{H^2(\Graph_{\mu})} \leq C_5
    \qquad \mbox{and} \qquad
    \partial_{p_i} \Neu(u)_j
    = \Neu\left(\partial_{p_i} u\right)_j
    = d_j \delta_{ij}  + \bigO{e^{-\mu \Lmin}}.
  \end{equation}
  To estimate the derivative of $\Neu(\tpsi)$ we differentiate equation
  \eqref{eq:nlin-term} in $p_j$ (allowed since we already
  established smoothness in $p_j$), to obtain
  \begin{equation}
    \label{eq:dpsi_dp_equation}
    \begin{cases}
      \left(-\Delta + 1 -6 \Psi^2 \right) \partial_{p_i} \tpsi
      = 6 \Psi^2 \partial_{p_i} u,
      & \mbox{on every }e\in\Graph_{\mu},\\
      \partial_{p_i} \tpsi \mbox{ satisfies NK
        conditions}
      & \mbox{for every }v\in V\setminus B,\\
      \partial_{p_i} \tpsi(v) = 0,\qquad & \mbox{for
        every }v \in B.
    \end{cases}
  \end{equation}
  Taking small enough $p_0$ we can ensure,
  see~\eqref{eq:nlin_DTN_solution}, that $\Psi$ is uniformly bounded
  on $\Graph_\mu$ by, say, $1/\sqrt{12}$ and therefore
  \begin{equation}
    \label{eq:Linf_bound}
    12 \Psi^2(z) \leq 1, \qquad z \in \Graph_{\mu}.
  \end{equation}
  We can now apply Lemma~\ref{lem:Neumann_est_nonhom} with
  $\|\vecp\|=0$, $W = 6\Psi^2$ and $g=6 \Psi^2 \partial_{p_i} u$ to
  estimate
  \begin{equation*}
    \|\Neu(\partial_{p_i} \tpsi)\| \leq C_6 \| \Psi^2 \partial_{p_i} u\|_{L^2(\Graph_{\mu})}
    \leq C_7 \|\vecp\|^2,
  \end{equation*}
  using our bounds on $\Psi$ and $\partial_{p_i} u$, see
  \eqref{eq:nlin_DTN_solution} and \eqref{derivative-bound-1}.  Combining this estimate with
  the derivative of $\Neu(u)$ in \eqref{derivative-bound-1} we obtain (\ref{eq:nlin_DTN_value_der}).

  To establish smoothness of ${\bf q}$ in $\mu$ we have to overcome a
  technical difficulty.  The Banach spaces $H^2(\Graph_{\mu})$ and
  $\Dom(\Graph_\mu^D)$ containing $u$ and $\psi$ depend on the parameter
  $\mu$.  To circumvent this problem, we rescale
  \begin{equation}
    \label{scaling}
    \Phi(x) = \mu \Psi(\mu x), \qquad x \in \Graph,
  \end{equation}
  and obtain the boundary value problem on the original graph $\Graph$:
  \begin{equation}
    \begin{cases}
      \left(-\Delta + \mu^2 \right) \Phi = 2|\Phi|^2 \Phi,
      & \mbox{on every }e\in\Graph,\\
      \Phi \mbox{ satisfies NK conditions},
      & \mbox{for every }v\in V\setminus B,\\
      \Phi(v_j) = \mu p_j,
      & \mbox{for every }v_j \in B.
    \end{cases}
    \label{eq:nlin-scaled}
  \end{equation}
  We already established that there exists a unique solution
  $\Phi \in H^2_{\Graph}$ to the boundary-value problem (\ref{eq:nlin-scaled})
  for every $\mu > \mu_0$.  Moreover, bound
  \eqref{eq:Linf_bound} on $\Psi$ translates into the similar bound
  on $\Phi$, namely
    \begin{equation}
    \label{eq:Linf_bound-Gamma}
    12 \Phi^2(x) \leq \mu^2, \qquad x \in \Graph.
  \end{equation}
  We will now fix $\mu$ and reformulate \eqref{eq:nlin-scaled} in a
  form where we can apply the Implicit Function Theorem (see
  Theorem~\ref{thm:implicit_function}).  In particular,
  to get a mapping smooth in $\Phi$ (the Jacobian must be a
  \emph{bounded} operator) we need to invert
  $\left(-\Delta + \mu^2 \right)$ which means that we have to fix the
  boundary conditions first.  Similarly to previous decomposition $\Psi = u + \psi$, we decompose
  $\Phi = w + \phi$, where $w(x) = \mu u(\mu x)$ satisfies
  the inhomogeneous boundary-value problem:
  \begin{equation}
  \begin{cases}
    \left(-\Delta + \mu^2 \right) w = 0, \qquad
    & \mbox{on every }e\in\Graph,\\
    w \mbox{ satisfies NK conditions}
    & \mbox{for every} \;\; v\in V\setminus B,\\
    w(v_j) = \mu p_j,\qquad
    & \mbox{for every} \;\; v_j \in B.
  \end{cases}
  \label{eq:DtN_linear_w}
\end{equation}
  The remainder $\phi$ belongs to
  $H^2(\Graph)$ with Dirichlet conditions at $B$ and NK conditions
  elsewhere; we denote this space by $\Dom(\Graph^D) \subset H^2(\Graph)$.
  Let $F$ be the following mapping from $X\times Y := \bbR^1 \times \Dom(\Graph^D)$ to $Z := \Dom(\Graph^D)$:
  \begin{equation}
  \label{map-F-tech}
    F: \left(\mu, \phi\right) \mapsto
    \phi - 2 \left( -\Delta + \mu^2 \right)^{-1} |w + \phi|^2 (w + \phi).
  \end{equation}
 Note that the map $F$ in (\ref{map-F-tech}) can be derived from the map \eqref{system-ode-persistence}
 after rescaling \eqref{scaling} and rewriting the fixed-point problem as the root-finding problem.

  There exists a solution $\phi \in \Dom(\Graph^D)$ given by
  $\phi(x) = \mu \psi(\mu x)$, where $\psi \in H^2(\Graph_{\mu})$ is the
  fixed point of $T$ in (\ref{system-ode-persistence}). We check that
  the Jacobian $D_\phi F(\mu,\phi)$ has a bounded inverse.
  The Jacobian applied to $h \in \Dom(\Graph^D)$ is given by
  \begin{equation}
    \label{eq:diff_F_phi}
    D_\phi F(\mu,\phi) h := h - 6 \left( -\Delta + \mu^2 \right)^{-1} | w + \phi |^2 h,
  \end{equation}
  and solving $D_\phi F(\mu,\phi) h = g$ results in
  \begin{equation}
    \label{eq:diff_F_phi_inverse}
    h = g + 6 \left( -\Delta + \mu^2 - 6 |w + \phi |^2 \right)^{-1} |w + \phi |^2 g.
  \end{equation}
  The right-hand side is a bounded operator from
  $\Dom(\Graph^D)$ to $\Dom(\Graph^D)$ because of the bound (\ref{eq:Linf_bound-Gamma}).
  In addition, since $w$ is $C^1$ in $\mu$ as follows from (\ref{eq:DtN_linear_w}),
  we have that $F(\mu,\phi)$ is $C^1$ in $\mu$. By
  the Implicit Function Theorem (Theorem \ref{thm:implicit_function}), $\phi$ is $C^1$ in $\mu$, so that
  $\Phi = w + \phi \in H^2_{\Graph}$ is also $C^1$ in $\mu$.

  Having proved smoothness of $\Phi \in H^2_{\Graph}$ in $\mu$, we can now differentiate
  equation~\eqref{eq:nlin-scaled} in $\mu$, resulting in the following
  equation for $\hat{\Phi} := \partial_\mu \Phi$:
  \begin{equation}
    \begin{cases}
      \left( -\Delta + \mu^2 - 6 |\Phi|^2 \right) \hat{\Phi} + 2 \mu \Phi
      = 0 \quad & \mbox{on every }e\in\Graph,\\
      \hat{\Phi} \mbox{ satisfies NK conditions}
      & \mbox{for every }v\in V\setminus B,\\
      \hat{\Phi}(v_j) = p_j
      & \mbox{for every }v_j \in B,
    \end{cases}
    \label{eq:nlin-scaled-dmu}
  \end{equation}
  We undo the rescaling (\ref{scaling}) and
  introduce $\hat{\Phi}(x) = \hat{\Psi}(\mu x)$ satisfying
  \begin{equation}
    \begin{cases}
      \left(-\Delta + 1 - 6 |\Psi|^2 \right) \hat{\Psi} + 2 \Psi
      = 0, \qquad & \mbox{on every }e\in\Graph_\mu,\\
      \hat{\Psi} \mbox{ satisfies NK conditions}
      & \mbox{for every }v\in V\setminus B,\\
      \hat{\Psi}(v_j) = p_j, & \mbox{for every }v_j \in B.
    \end{cases}
    \label{eq:nlin-hat}
  \end{equation}
  We are again in a position to apply
  Lemma~\ref{lem:Neumann_est_nonhom}, with $W = 6 |\Psi|^2$ and $g = -2 \Psi$, obtaining
  from (\ref{eq:Neumann_est_nonhom}):
  \begin{equation}
    \label{eq:bound_Neu_Psihat}
    \big\|\Neu\big(\hat\Psi\big)\big\| \leq C_8 \|\vecp\|.
  \end{equation}
  We unwind all rescalings, first $\vecq = \Neu(\Psi) = \mu^{-2} \Neu(\Phi)$
  and then
  \begin{equation*}
    \frac{\partial \vecq}{\partial \mu}
    = \frac1{\mu^2} \Neu\left(\partial_\mu\Phi\right) - \frac2\mu
    \vecq
    = \frac1{\mu} \left(\Neu\big(\hat\Psi\big) - 2 \vecq\right).
  \end{equation*}
  Both terms in the brackets are bounded by $\|\vecp\|$, due to
  \eqref{eq:bound_Neu_Psihat} and \eqref{eq:nlin_DTN_value}, resulting in \eqref{eq:nlin_DTN_value_der_mu}.
\end{proof}

\begin{remark}
  \label{rem:estimate_on_dPhi}
  Applying Lemma~\ref{lem:Neumann_est_nonhom}
  to \eqref{eq:dpsi_dp_equation} yields
  \begin{equation*}
    \| \partial_{p_i} \psi \|_{L^2(\Graph_{\mu})} \leq C \|\Psi^2 \partial_{p_i} u\|_{L^2(\Graph_{\mu})}
    \leq C \|\vecp\|^2.
  \end{equation*}
  Combining this with (\ref{derivative-bound-1}), we get for
  $\Psi = u + \psi$ and its rescaled version $\Phi$,
  \begin{equation}
    \label{eq:dPhi_dp_est}
    \| \partial_{p_i} \Psi\|_{L^2(\Graph_{\mu})} \leq C, \qquad
    \| \partial_{p_i} \Phi\|_{L^2(\Graph)} \leq C \mu^{1/2}.
  \end{equation}
  Similarly, it follows from \eqref{eq:nlin-hat} that
  \begin{equation}
    \label{eq:dPhi_dmu_est}
    \| \hat{\Psi} \|_{L^2(\Graph_{\mu})} \leq C \| \vecp\|, \qquad
    \| \partial_\mu \Phi \|_{L^2(\Graph)} \leq C \mu^{-1/2} \|\vecp\|,
  \end{equation}
  where the constant $C > 0$ is independent of $\mu$ as $\mu \to \infty$.
\end{remark}

\begin{remark}
  The back-and-forth rescaling in the proof of
  Theorem~\ref{thm:nlin_DTN} may seem superfluous, but there are
  limitations to each setting.  For example, we cannot differentiate
  $\Psi$ with respect to $\mu$ since the domain $\Graph_\mu$ of $\Psi$
  depends on $\mu$.  On the other hand, the Jacobian
  \eqref{eq:diff_F_phi} may not be
  bounded uniformly in $H^2(\Graph)$ as
  $\mu \to \infty$ since the $H^2(\Graph)$ norm of $\Phi$
  grows fast in $\mu$.
\end{remark}

\subsection{Single bump part of the DtN manifold of a Neumann edge}
\label{sec:single_bump_DTN}

We now describe the part of the DtN manifold for the single edge of Example~\ref{ex:nlin_DTN}
that corresponds to single bump solutions.

\begin{figure}[t]
  \centering
  \includegraphics[scale=0.75]{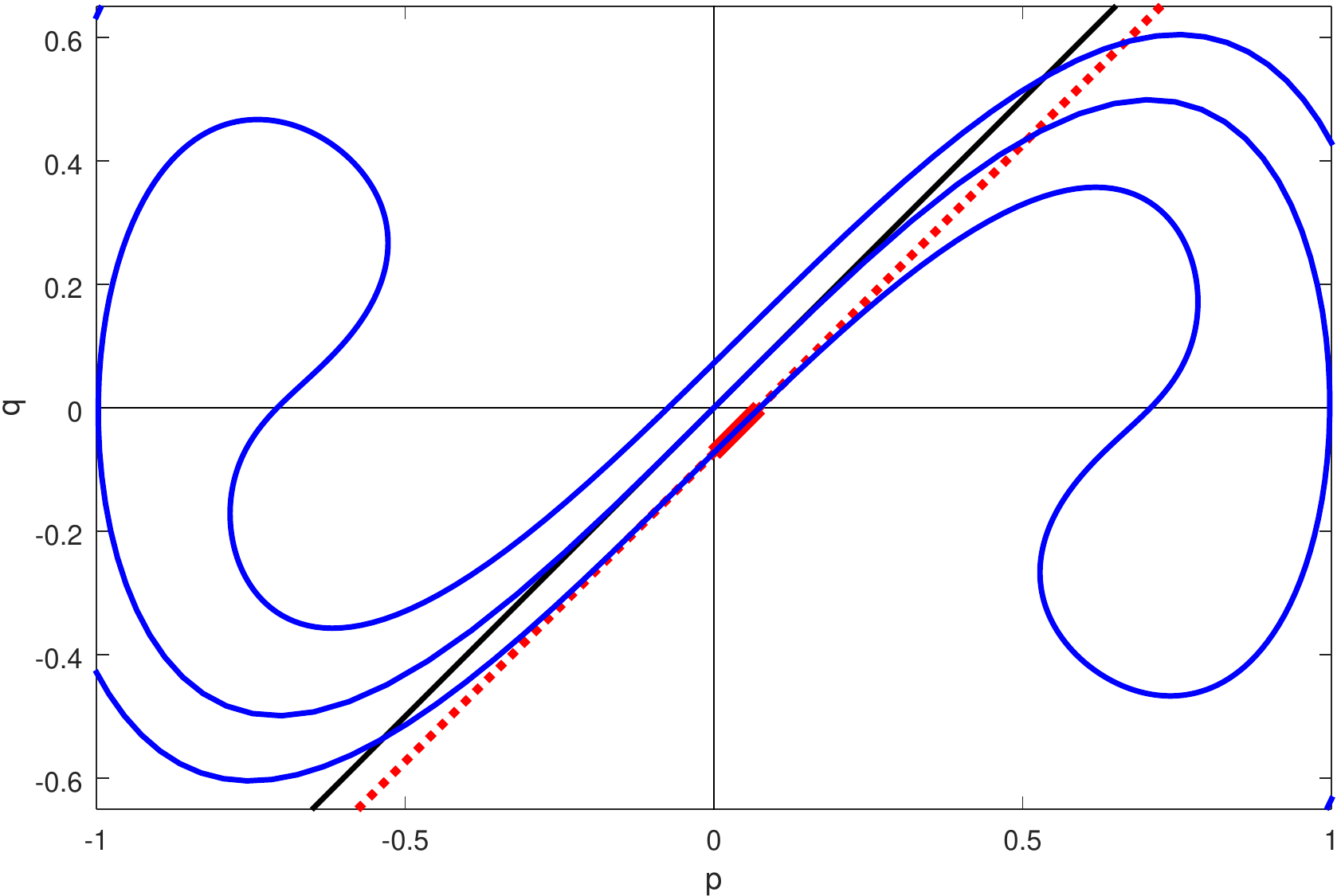}
  \caption{Nonlinear DtN manifold for a single
    interval of length $L=4$ (blue curve) superimposed with the
    asymptotic approximations provided by
    equation~\eqref{eq:nlin_DTN_value} of Theorem~\ref{thm:nlin_DTN}
    (solid straight line) and equations
    \eqref{eq:single-bump-DtN} of
    Lemma~\ref{lem:single_bump_DTN} (dotted straight line).  Thick segment highlights the part
    of the dotted line corresponding to $k\in(k_-,k_+)$ in equation~\eqref{eq:kpm_asympt}.}
  \label{fig:DTN_nls_edge_asympt}
\end{figure}

\begin{lemma}
  \label{lem:single_bump_DTN}
  Consider the DtN manifold in (\ref{eq:DTN_nls_edge}) for the graph $\Graph_{\mu}$ consisting of
  a single edge $[0,L]$ under the Neumann condition at $z = 0$ and the boundary vertex at $z = L$.
  Parameterize $\Psi(0) \in \left(\frac{1}{\sqrt{2}},\infty \right)$ by
  \begin{equation}
    \label{eq:NLS_edge}
   \Psi(0) = \frac{1}{\sqrt{2-k^2}},
  \end{equation}
  where $k \in (0, \sqrt{2})$ is a parameter.  There is an interval
  $(k_-, k_+)$ such that
  a solution $\Psi$ in (\ref{eq:dnoidal_universal}) satisfies
  \begin{align}
    \label{eq:single_bump}
    \Psi(z) > 0,\quad \Psi'(z) < 0, \quad z\in(0,L]
  \end{align}
  if and only if $k\in(k_-, k_+)$.  The boundaries $k_\pm$ have
  the asymptotic expansion
  \begin{equation}
    \label{eq:kpm_asympt}
    k_\pm = 1 \pm 8 e^{-2L} + \bigO{L e^{-4 L}}
    \quad \mbox{\rm as} \quad L \to \infty,
  \end{equation}
  while the boundary values of $\Psi$ are given asymptotically as
  $L \to \infty$ by
  \begin{equation}
    \left\{  \begin{array}{l}
    p_L := \Psi(L) =  2e^{-L} - \frac{1}{4} (k-1) e^{L}
      + \bigO{Le^{-3L}},\\
    q_L :=  \Psi'(L) = -2e^{-L} - \frac{1}{4} (k-1) e^{L }
      + \bigO{Le^{-3L}},
  \end{array} \right.
  \label{eq:single-bump-DtN}
  \end{equation}
where the correction terms denoted by $\bigO{Le^{-3L}}$ are bounded in absolute value by
  $C Le^{-3L }$ for some constant $C$ which is independent of $L$ and
  of $k$, provided $k \in (k_-,k_+)$. Furthermore, the boundary values are $C^1$ functions
  with respect to $k$ and their derivatives are given asymptotically as $L \to \infty$ by
  \begin{equation}
\frac{\partial p_L}{\partial k},  \frac{\partial q_L}{\partial k}  =  - \frac{1}{4} e^{L} + \bigO{Le^{-L}}.
  \label{eq:single-bump-DtN-der}
  \end{equation}
\end{lemma}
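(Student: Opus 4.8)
The plan is to work entirely with the explicit elliptic-function solution $\Psi_{\rm n}$ from \eqref{eq:dnoidal_universal} and to extract the asymptotics of $\Psi(L)$, $\Psi'(L)$ and the condition \eqref{eq:single_bump} from the known behavior of $\dn$ near $k=1$, where it degenerates to $\sech$. First I would set $s := \sqrt{2-k^2}$ and write $\Psi(z) = s^{-1}\dn(z/s;k)$, so that $\Psi'(z) = -s^{-2} k^2 \sn(z/s;k)\cn(z/s;k)$. Since $\sn,\cn,\dn$ are all positive on $(0,K(k))$ and vanish/stay positive appropriately, the monotonicity statement \eqref{eq:single_bump} holds precisely when the whole interval $[0,L]$ is mapped into $[0, s\,K(k))$, i.e.\ when $L < s\,K(k)$ — together with the requirement $k>0$ so that $\Psi$ is non-constant and $\Psi' < 0$ on the open interval. (For $k=1$ one has $K=\infty$ and the soliton is monotone on all of $[0,L]$; the boundaries $k_\pm$ arise from $k \ne 1$.) Thus the description of the admissible interval $(k_-,k_+)$ reduces to analyzing the equation $L = s(k)\,K(k)$, or rather the two branches $k<1$ and $k>1$ that bound the soliton value $k=1$, using the standard expansions $K(k) = \ln\frac{4}{\sqrt{1-k^2}} + \bigO{(1-k^2)\ln(1-k^2)}$ for $k<1$ and the Jacobi imaginary/real transformation for $k>1$ (cf.\ the asymptotics collected in Appendix~\ref{sec:elliptic}). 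Solving $L = s(k)K(k)$ asymptotically for $k$ near $1$ gives $1-k^2 \sim 16 e^{-2L}$, hence $k_\pm = 1 \pm 8 e^{-2L} + \bigO{Le^{-4L}}$ as in \eqref{eq:kpm_asympt}; the sign distinction between the two roots comes from whether the argument $L/s$ approaches $K(k)$ from below (giving the boundary of the monotone region on one side) versus the analytic continuation past it on the other.

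Next, for the boundary values \eqref{eq:single-bump-DtN}, I would use the uniform asymptotic expansion of $\dn(\zeta;k)$ as $k\to 1$ in terms of $\sech$ plus an $\bigO{1-k^2}$ correction, valid for $\zeta$ in a bounded-times-$K(k)$ range — this is exactly what Appendix~\ref{sec:elliptic} is set up to provide. Writing $k = 1 + \delta$ with $|\delta| \lesssim e^{-2L}$, one has $s = \sqrt{2-k^2} = 1 - \delta + \bigO{\delta^2}$, so $1/s = 1+\delta+\bigO{\delta^2}$ and $L/s = L + \delta L + \bigO{\delta^2 L}$. Plugging into $\dn(L/s;k)$: the leading term is $\sech(L/s) = \sech(L)(1 + \bigO{\delta L}) = 2e^{-L} + \bigO{Le^{-3L}}$ since $\delta L \lesssim Le^{-2L}$, and the first-order-in-$\delta$ correction to $\dn$ contributes a term of size $\delta \cosh$-type, which after multiplying by suitable hyperbolic factors yields the $-\tfrac14(k-1)e^{L}$ term. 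Collecting, $p_L = \Psi(L) = s^{-1}\dn(L/s;k) = 2e^{-L} - \tfrac14(k-1)e^{L} + \bigO{Le^{-3L}}$. For $q_L = \Psi'(L) = -s^{-2}k^2\sn(L/s;k)\cn(L/s;k)$, I would similarly use $\sn(\zeta;k) \to \tanh(\zeta)$ and $\cn(\zeta;k) \to \sech(\zeta)$ with $\bigO{1-k^2}$ corrections; the leading term is $-\tanh(L)\sech(L) = -2e^{-L} + \bigO{e^{-3L}}$, and the $\bigO{\delta}$ corrections again produce $-\tfrac14(k-1)e^{L}$, giving the second line of \eqref{eq:single-bump-DtN}. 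The apparent surprise that the two $e^{L}$-coefficients coincide is explained by the fact that the $(k-1)e^L$ term is governed by the unstable-manifold direction of the fixed point $\Psi = 1/\sqrt2$ in the phase portrait (Fig.~\ref{fig:phase_port_nls}), where $p$ and $q$ grow proportionally.

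Finally, the $C^1$-dependence on $k$ and the derivative formula \eqref{eq:single-bump-DtN-der}: smoothness of $p_L, q_L$ in $k$ on $(k_-,k_+)$ is immediate since $\dn,\sn,\cn$ are real-analytic in the modulus and in the argument, and $s(k)$ is smooth and nonzero for $k \in (0,\sqrt2)$. Differentiating the expansions term by term — or, more carefully, differentiating the exact expressions $p_L = s^{-1}\dn(L/s;k)$ and $q_L = -s^{-2}k^2 \sn(L/s;k)\cn(L/s;k)$ using $\partial_k \dn$, $\partial_\zeta \dn$ and $s'(k) = -k/s$ — the dominant contribution comes from differentiating the $-\tfrac14(k-1)e^L$ term, producing $-\tfrac14 e^L$, while all other terms differentiate to something of order $Le^{-L}$ (the $2e^{-L}$ term has no $k$-dependence to leading order, and its subleading $k$-dependence is $\bigO{Le^{-3L}}$, whose $k$-derivative is $\bigO{Le^{-L}}$ after the $e^L$-type amplification inherent in $\partial_k$). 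This yields $\partial_k p_L = \partial_k q_L = -\tfrac14 e^L + \bigO{Le^{-L}}$.

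I expect the main obstacle to be making the elliptic-function asymptotics genuinely \emph{uniform} in $k$ across the full admissible window $k \in (k_-,k_+)$ — that is, controlling the error terms in the $\dn \approx \sech$ (and $\sn\approx\tanh$, $\cn\approx\sech$) expansions by $C L e^{-3L}$ with $C$ independent of both $L$ and $k$, rather than just for fixed $k$. Near $k=1$ the period $K(k)$ blows up logarithmically, so the argument $L/s$ stays a bounded multiple of $K(k)$ and the naive "$\dn = \sech + \bigO{1-k^2}$" would only give $\bigO{(1-k^2)}\cdot(\text{something growing})$; one must instead track the correction's precise exponential profile (it decays like $e^{-L}$ times a polynomial) and use $|1-k^2| \lesssim e^{-2L}$ to land on $Le^{-3L}$. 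This uniformity is precisely the content one borrows from the careful statements in Appendix~\ref{sec:elliptic}, so in the write-up I would isolate the needed uniform expansions as a lemma there and quote them here.
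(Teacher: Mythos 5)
Your proposal is correct and follows essentially the same route as the paper: write $p_L$ and $q_L$ via the exact $\dn$ and $\sn\cdot\cn$ formulas, determine $(k_-,k_+)$ from the equation $L = \sqrt{2-k^2}\,K(k)$ (with the binding constraint being $\Psi'\le 0$ for $k<1$ and $\Psi\ge 0$ for $k>1$ after the Jacobi real transformation), and expand near $k=1$ using the uniform $k\to 1$ asymptotics of the elliptic functions and their $k$-derivatives. The uniformity issue you flag is indeed the crux, and it is exactly what Proposition~\ref{prop-elliptic} in Appendix~\ref{sec:elliptic} (in particular the bound \eqref{bound-second-derivative}) supplies.
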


\begin{remark}
  \label{rem:monotonicity_k_interval}
  By definition of the interval $(k_-,k_+)$ as the
  maximal set satisfying conditions~\eqref{eq:single_bump}, it is
  monotone in $L$, namely
  \begin{equation*}
    \big(k_-(L'), k_+(L') \big) \subset
    \big(k_-(L), k_+(L)\big)
    \qquad\mbox{if } L > L'.
  \end{equation*}
\end{remark}

\begin{remark}
  Because $k$ is exponentially close to $1$ in \eqref{eq:kpm_asympt}, the two terms in the expansion of
  $p$ and $q$ in \eqref{eq:single-bump-DtN} are of the same order.  These equations give a
  parametric description (the parameter being $k$) of a piece of DtN
  manifold as a line plus smaller order corrections.  This line is
  shown in Fig.~\ref{fig:DTN_nls_edge_asympt} by dotted line together with the
  ``linear approximation'' $q=p$ from Theorem~\ref{thm:nlin_DTN} shown on
  Fig.~\ref{fig:DTN_nls_edge_asympt} by solid straight line.
  The part of the dotted line which corresponds to $k \in (k_-,k_+)$
  is shown on Fig.~\ref{fig:DTN_nls_edge_asympt} by thick solid line.
\end{remark}

\begin{proof}[Proof of Lemma~\ref{lem:single_bump_DTN}]
By using the exact solution (\ref{eq:dnoidal_universal}) satisfying the initial
condition (\ref{eq:NLS_edge}), we obtain
\begin{equation}
\label{p-dn}
p_L =   \frac{1}{\sqrt{2-k^2}} {\rm dn}\left(\frac{L}{\sqrt{2-k^2}};k\right),
\end{equation}
and
\begin{equation}
\label{q-dn}
q_L = -\frac{k^2}{2-k^2} {\rm sn}\left(\frac{L}{\sqrt{2-k^2}};k\right) {\rm cn}\left(\frac{L}{\sqrt{2-k^2}};k\right).
\end{equation}
Let us consider the case $k < 1$. It follows from the single-bump condition (\ref{eq:single_bump})
that $q_L \leq 0$ if and only if
$$
L \leq \sqrt{2-k^2} K(k),
$$
where $K(k)$ is the complete elliptic integral of the first
kind. We will give a brief review of elliptic integrals in Appendix~\ref{sec:elliptic}.
By using the asymptotic expansion (see 8.113 in \cite{GraRyzh_table})
\begin{equation}
\label{expansion-K}
K(k) = \log\left(\frac{4}{\sqrt{1-k^2}}\right) + \mathcal{O}\left((1-k^2)|\log(1-k^2)|\right)\quad
\mbox{\rm as} \quad k \to 1,
\end{equation}
we verify that $k_- = 1 - 8 e^{-2L} + \mathcal{O}(L e^{-4 L})$ is an asymptotic
solution to $L = \sqrt{2-k^2} K(k)$ in the limit $L \to \infty$ and that
the condition $L \leq \sqrt{2-k^2} K(k)$ is satisfied for all $k \in (k_-,1)$.
By Proposition \ref{prop-elliptic}, the asymptotic expansions
\eqref{eq:single-bump-DtN} follow from expansion of (\ref{p-dn}) and (\ref{q-dn}) as $k \to 1$
uniformly in $k \in [k_-,1]$.

The case $k > 1$ is obtained similarly but the condition $k \in [1,k_+]$
appears from the requirement that $p_L \geq 0$ in the single-bump condition (\ref{eq:single_bump}).

The asymptotic expansions for derivatives (\ref{eq:single-bump-DtN-der}) follow
from differentiation of (\ref{p-dn}) and (\ref{q-dn}) with respect to $k$ and substitution
of the asymptotic results of Proposition \ref{prop-elliptic}.
\end{proof}

\section{Constructing the edge-localized stationary solutions}
\label{sec:constructing}

We now prove the existence of edge-localized solutions of the stationary NLS equation (\ref{statNLS})
in the limit $\Lambda \to -\infty$.
We will match the single-bump parts of the DtN manifold
on a single edge of the graph $\Graph$ with the almost linear parts of the DtN manifold on the remainder of the graph,
henceforth denoted $\Graph^c$.  The solution will then be small on
$\Graph^c$ while it will be large and localized on the single edge of $\Graph$.

The scaling transformation (\ref{scaling})  transforms the stationary NLS equation (\ref{statNLS})
with $\Lambda = -\mu^2 < 0$ on the graph $\Graph$
to the stationary NLS equation on the $\mu$-scaled graph $\Graph_{\mu}$,
\begin{equation}
\label{statNLS-limit-graph}
(-\Delta + 1) \Psi = 2 |\Psi|^2 \Psi.
\end{equation}
$\Phi \in H^2_{\Graph}$ is a solution of (\ref{statNLS}) if and only if
$\Psi \in H^2_{\Graph_{\mu}}$ is a solution of (\ref{statNLS-limit-graph}).
We shall now develop the asymptotic solution for $\Psi \in H^2_{\Graph_{\mu}}$
separately for the three types of edges on Fig.~\ref{fig:edge_cases}.

\subsection{Pendant edge}
\label{sec:contructing_pendant}

\begin{theorem}
  \label{thm:localized_pendant}
  Let $\Graph_\mu$ be a graph with $\mu$-scaled edge lengths and with
  a pendant edge of length $L=\mu\ell$ attached to the remainder of
  the graph, $\Graph_\mu^c$, by a vertex $v$ of degree $N+1$, see
  Fig.~\ref{fig:edge_cases}(a).  Then, for large enough $\mu$, there
  is a unique solution $\Psi \in H^2_{\Graph_{\mu}}$ to the stationary
  NLS equation (\ref{statNLS-limit-graph}) with the following properties:
  \begin{itemize}
  \item the solution is strictly positive on the pendant edge and decreases monotonically
  from its maximum at the vertex of degree one to the attachment vertex $v$,
  \item it is positive and has no internal local maxima on the remainder graph $\Graph_\mu^c$.
  \end{itemize}
  On the pendant edge, the solution is described by
  \eqref{eq:dnoidal_universal} with
  \begin{equation}
    \label{eq:k_answer_pendant}
    k = 1 + 8\frac{N-1}{N+1}e^{-2 \mu \ell} + \bigO{e^{-2\mu \ell - \mu \Lmin}},
  \end{equation}
  where $\Lmin$ is the length of the shortest edge in $\Graph^c$.
  The corresponding solution $\Phi \in H^2_{\Graph}$
  to the stationary NLS equation (\ref{statNLS}) with $\Lambda = -\mu^2$
  on the original graph $\Graph$ concentrates on the pendant edge, so that
    \begin{equation}
    \label{eq:norm_remainder_pendant}
    \|\Phi\|_{L^2(\Graph^c)}^2 \leq C \mu e^{-2 \mu \ell}.
  \end{equation}
whereas the mass and energy integrals $\mathcal{Q} := \mathcal{Q}(\Phi)$
and $\mathcal{E} := \mathcal{E}(\Phi)$ in (\ref{energy}) are expanded asymptotically by
  \begin{equation}
    \label{eq:mass_answer_pendant}
    \mathcal{Q} = \mu - 8 \frac{N-1}{N+1} \mu^2 \ell e^{-2 \mu \ell} + \bigO{\mu e^{-2 \mu \ell}}
  \end{equation}
and
  \begin{equation}
    \label{eq:energy_answer_pendant}
    \mathcal{E} = -\frac{1}{3} \mu^3 + \bigO{\mu^4 e^{-2 \mu \ell}}.
  \end{equation}
  The mass integral $\mathcal{Q}$ is a $C^1$ increasing function of $\mu$ when $\mu$ is large.
\end{theorem}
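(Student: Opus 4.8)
The plan is to construct $\Psi$ by gluing, at the attachment vertex $v$, a large single-bump dnoidal profile on the pendant edge to the small ``almost linear'' solution on $\Graph_\mu^c$ furnished by Theorem~\ref{thm:nlin_DTN}. Identify the pendant with $[0,L]$, $L=\mu\ell$, placing the degree-one vertex at $z=0$; the Neumann--Kirchhoff condition there is simply $\Psi'(0)=0$, so $\Psi$ restricted to the pendant must belong to the family \eqref{eq:dnoidal_universal}, parametrized by $k\in(0,\sqrt2)$. By Lemma~\ref{lem:single_bump_DTN} it is positive and strictly decreasing on $(0,L]$ exactly when $k\in(k_-,k_+)$, with boundary data $(p_L(k),q_L(k))$ satisfying \eqref{eq:single-bump-DtN} and $k$-derivatives \eqref{eq:single-bump-DtN-der}. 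On $\Graph_\mu^c$, whose only boundary vertex is $v$, of degree $N$ there, Theorem~\ref{thm:nlin_DTN} gives for every small Dirichlet value $p$ a unique small solution with Neumann data $q^c(p)=Np+\bigO{p\,e^{-\mu\Lmin}+p^3}$ and derivative bounds \eqref{eq:nlin_DTN_value_der}--\eqref{eq:nlin_DTN_value_der_mu}, which by Lemma~\ref{lem:suppressed_solution} is nonnegative and has no interior local maximum when $p\ge0$ is small. Continuity at $v$ forces $p=p_L(k)$, and the remaining Neumann--Kirchhoff condition at $v$ is that the two Neumann data sum to zero:
\begin{equation*}
  G(k):=q^c\big(p_L(k)\big)+q_L(k)=0 .
\end{equation*}

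The next step is to solve $G(k)=0$ near $k=1$. Inserting \eqref{eq:single-bump-DtN} and $q^c(p)=Np+\bigO{p\,e^{-\mu\Lmin}+p^3}$ gives $G(k)=2(N-1)e^{-L}-\tfrac14(N+1)(k-1)e^{L}+\bigO{Le^{-3L}+e^{-L-\mu\Lmin}}$, while \eqref{eq:single-bump-DtN-der} and \eqref{eq:nlin_DTN_value_der} give $G'(k)=-\tfrac14(N+1)e^{L}\bigl(1+o(1)\bigr)$, uniformly on a neighbourhood of $1$ of width $\lesssim e^{-2L}$. Thus $G$ is strictly monotone there, with a unique zero $k_*$ whose location the leading-order balance pins down as $k_*-1=8\tfrac{N-1}{N+1}e^{-2L}+\bigO{e^{-2L-\mu\Lmin}+Le^{-4L}}$, which is \eqref{eq:k_answer_pendant}. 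Since $8\tfrac{N-1}{N+1}<8$ (with gap $\tfrac{16}{N+1}$), $k_*\in(k_-,k_+)$ strictly for large $\mu$, so the pendant piece has the asserted positivity and monotonicity; the $\Graph_\mu^c$ piece is nonnegative and maximum-free, and as $p_L(k_*)=\tfrac{4}{N+1}e^{-L}+\cdots$ lies far below $\Psi(0)=(2-k_*^2)^{-1/2}$ the global maximum of $\Psi$ is at the degree-one vertex; so $\Psi\in H^2_{\Graph_\mu}$ solves \eqref{statNLS-limit-graph}. Uniqueness among solutions with the stated qualitative properties follows since any such solution restricts to a single-bump dnoidal (some $k\in(k_-,k_+)$) on the pendant and to the Theorem~\ref{thm:nlin_DTN} solution on $\Graph_\mu^c$, while $G$ has the unique root $k_*$. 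Finally $G$ is jointly $C^1$ in $(k,\mu)$---in $k$ by Lemma~\ref{lem:single_bump_DTN}, in $p$ and $\mu$ by Theorem~\ref{thm:nlin_DTN}---with $\partial_kG\neq0$, so by the implicit function theorem $k_*=k_*(\mu)$, and hence $\Psi$ and its rescaling $\Phi(x)=\mu\Psi(\mu x)$, is $C^1$ in $\mu$.

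It remains to extract the norms. Undoing the scaling, $\|\Phi\|_{L^2(\Graph^c)}^2=\mu\|\Psi\|_{L^2(\Graph_\mu^c)}^2\le C\mu\,p_L(k_*)^2=\bigO{\mu e^{-2\mu\ell}}$ by \eqref{eq:nlin_DTN_solution}, which is \eqref{eq:norm_remainder_pendant}. For the mass, $\mathcal{Q}=\mu\int_0^{L}\Psi_{k_*}(z)^2\,dz+\|\Phi\|_{L^2(\Graph^c)}^2$; the elliptic-integral asymptotics of Appendix~\ref{sec:elliptic} give $\int_0^{L}\Psi_k^2\,dz=1-L(k-1)+\bigO{e^{-2L}}$ uniformly in $k\in(k_-,k_+)$, and substituting $k_*-1$ produces \eqref{eq:mass_answer_pendant}. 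For the energy, integrating by parts with the help of the ODE gives $\int_0^{L}\bigl((\Psi_k')^2-\Psi_k^4\bigr)\,dz = p_Lq_L-\int_0^L\Psi_k^2\,dz+\int_0^L\Psi_k^4\,dz$, which the same appendix evaluates as $-\tfrac13+\bigO{Le^{-2L}}$; multiplying by $\mu^3$ gives \eqref{eq:energy_answer_pendant}, the $\Graph^c$ contributions to both functionals being absorbed into the error terms via \eqref{eq:nlin_DTN_solution}. For monotonicity, $\mathcal{Q}$ is $C^1$ in $\mu$ by the smoothness established above, and differentiating the representation gives $\tfrac{d}{d\mu}\bigl(\mu\int_0^{L}\Psi_{k_*}^2\,dz\bigr)=\int_0^{L}\Psi_{k_*}^2\,dz+\mu\bigl(\ell\,p_L(k_*)^2+k_*'(\mu)\,\partial_k\!\int_0^{L}\Psi_k^2\,dz\bigr)=1+o(1)$, using $\partial_k\!\int_0^{L}\Psi_k^2\,dz=\bigO{L}$, $k_*'(\mu)=\bigO{e^{-2\mu\ell}}$ and $\tfrac{dp_L}{d\mu}=\bigO{e^{-\mu\ell}}$, while $\tfrac{d}{d\mu}\|\Phi\|_{L^2(\Graph^c)}^2=\bigO{\mu e^{-2\mu\ell}}$ by \eqref{eq:dPhi_dp_est}--\eqref{eq:dPhi_dmu_est}; hence $\mathcal{Q}'(\mu)=1+o(1)>0$ for large $\mu$.

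I expect the main obstacle to be the quantitative bookkeeping: in the root-finding for $G$ one must control $G$ and $G'$ uniformly in $\mu$ on an interval of width $\sim e^{-2L}$ while keeping the two same-order terms of \eqref{eq:single-bump-DtN}, and the $\bigO{e^L}$-size $k$-derivatives, straight; and in the mass and energy expansions one must combine the elliptic asymptotics of Appendix~\ref{sec:elliptic} with the exponentially small but non-negligible shift $k_*-1$ and with the $\Graph^c$ remainder, and then justify term-by-term differentiation in $\mu$ via the $C^1$ estimates of Theorem~\ref{thm:nlin_DTN} and Remark~\ref{rem:estimate_on_dPhi}.
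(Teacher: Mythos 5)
Your proposal is correct and follows the paper's strategy in all essentials: you match the single-bump DtN data of Lemma~\ref{lem:single_bump_DTN} on the pendant with the almost-linear DtN data of Theorem~\ref{thm:nlin_DTN} on $\Graph_\mu^c$ at the attachment vertex, solve the resulting scalar equation for $k$ near $1$, and read off $\mathcal{Q}$ and $\mathcal{E}$ from the elliptic asymptotics. Two sub-steps differ from the paper. First, you solve the matching equation by monotonicity of $G$ in $k$ (via $G'(k)=-\tfrac14(N+1)e^{L}(1+o(1))$ and the implicit function theorem), whereas the paper rescales $k-1=8e^{-2\mu\ell}x$ and runs a contraction argument in $x$; these are equivalent, both resting on \eqref{eq:single-bump-DtN-der} and \eqref{eq:nlin_DTN_value_der}, and your version is slightly more economical. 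Second, for the energy you claim the direct evaluation $\int_0^L\bigl((\Psi_k')^2-\Psi_k^4\bigr)dz=-\tfrac13+\bigO{Le^{-2L}}$, while the paper only extracts $-\tfrac13+o(1)$ from the $L^\infty$ bound $\bigO{e^{-\mu\ell}}$ on the deviation of the elliptic functions from their hyperbolic limits, and then upgrades the remainder to $\bigO{\mu^4 e^{-2\mu\ell}}$ via the identity $\tfrac{d\mathcal{E}}{d\Lambda}=\Lambda\tfrac{d\mathcal{Q}}{d\Lambda}$ together with the already-established mass expansion. Your direct route is viable — the product structure $|\dn(\xi;k)^4-\sech(\xi)^4|\lesssim |k-1|\,e^{\xi}\cdot e^{-3\xi}$ integrates to $\bigO{e^{-2L}}$, and $p_Lq_L=\bigO{e^{-2L}}$ — but this pointwise refinement is not literally supplied by Proposition~\ref{prop-elliptic} as you suggest, so you would need to carry it out explicitly; the paper's ODE identity sidesteps exactly this bookkeeping. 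Everything else (uniqueness, positivity, the $\Graph^c$ remainders, and the monotonicity of $\mathcal{Q}$ via the chain rule in $\mu$, $k_*$, and $p$) matches the paper's argument.
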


\begin{remark}
  \label{rem:case_N1_pendant}
  Unless the graph $\Graph$, which we assume to be connected, is a
  single interval, the degree of the attachment vertex $v$ is
  $N+1 \geq 2$, that is, $N \geq 1$. It is well-known that a vertex of
  degree $2$ with NK conditions can be absorbed into the edge without
  affecting any solutions, while increasing effective edge length $\ell$ and thus
  making our estimates sharper.  Therefore, the result above is only
  useful with $N\geq 2$.  Still, it is valid for $N=1$.
\end{remark}

\begin{proof}
  Let $L = \mu \ell$ be the length of the pendant edge on the
  $\mu$-scaled graph $\Graph_{\mu}$.  All solutions $\Psi$ satisfying
  the desired properties in the pendant edge are described by
  Lemma~\ref{lem:single_bump_DTN} with $k$ in the allowed region
  $(k_-,k_+)$.  On the other hand, given a small boundary value $p$
  on the attachment vertex $v$, there is a unique solution in $\Psi \in H^2(\Graph^c)$,
  which is described by Lemma \ref{lem:suppressed_solution} and
  Theorem~\ref{thm:nlin_DTN}.  Matching two DtN manifolds in
  Theorem~\ref{thm:nlin_DTN} and Lemma~\ref{lem:single_bump_DTN} at
  the attachment vertex $v$ between the pendant edge and the graph
  $\Graph^c_{\mu}$ we get
  \begin{equation}
    \label{eq:matching_pendant_general}
    p = p_{L}, \qquad q = -q_{L}.
  \end{equation}
  The above discussion shows that the solutions $\Psi$ with the
  desired properties are \emph{in one-to-one correspondence} with the
  roots $k$ of equation~\eqref{eq:matching_pendant_general}, where $p_L$ and
  $q_L$ are functions of $k$ by Lemma \ref{lem:single_bump_DTN}
  and $q$ is a function of $p$ via Theorem~\ref{thm:nlin_DTN}.

  Since, by Lemma \ref{lem:single_bump_DTN}, the value $p_L$ at the
  vertex $v$ of the pendant edge is exponentially small in the large
  parameter $L$, we are indeed justified in using
  Theorem~\ref{thm:nlin_DTN} to conclude that $q \approx N p = N p_L$,
  where $N$ is the degree of the attachment vertex $v$ in the graph
  $\Graph^c$.  More precisely, we have
  \begin{equation}
    \label{eq:matching_pendant1}
    -q_L = N p_L + \mathcal{R}(p_L,\mu),
  \end{equation}
  where by \eqref{eq:nlin_DTN_value}, \eqref{eq:nlin_DTN_value_der}
  and (\ref{eq:nlin_DTN_value_der_mu}), the remainder function
  $\mathcal{R}$ satisfies the bounds
  \begin{eqnarray}
  \label{remainder-estimate-1}
    \left\{ \begin{array}{l}
    |\mathcal{R}(p,\mu)| \leq C(pe^{-\mu \Lmin} + p^3), \\
    |\partial_p \mathcal{R}(p,\mu)| \leq C(e^{-\mu \Lmin} + p^2), \\
    | \partial_{\mu} \mathcal{R}(p,\mu) | \leq C \mu^{-1} p,\end{array} \right.
  \end{eqnarray}
  for some $C > 0$ independently of large $\mu$ and small $p$.  Here
  $\Lmin$ is the minimal edge length in $\Graph^c$, but naturally the
  estimate remains valid if we take $\Lmin$ to be the minimal edge
  length in the whole of $\Graph$.  For large enough $\mu$ and any $k$
  in the allowed region $(k_-, k_+)$, it follows from
  \eqref{eq:single-bump-DtN} that $0 \leq p_L \leq c e^{-\mu \ell}$
  for some $c > 0$.  Therefore, the absolute value of
  $\mathcal{R}(p_L)$ (which depends on $k$) is uniformly bounded by
  $C (e^{-\mu(\ell +\Lmin)} + e^{-3 \mu \ell}) \leq C e^{-\mu(\ell
    +\Lmin)}$ because $\ell \geq \Lmin$.

  For convenience we rescale the parameter $k$ by substituting
  \begin{equation}
    \label{eq:k_rescaling}
    k-1 = 8e^{-2 \mu \ell} x,  \qquad x\in(x_-, x_+)
  \end{equation}
  with $x_{\pm} = \pm 1 + \bigO{\mu e^{-2 \mu \ell}}$. Thanks to the expansion \eqref{eq:single-bump-DtN} with $L = \mu \ell$ and
 the scaling (\ref{eq:k_rescaling}), we can write
 \begin{equation}
 \label{p_L-expansion}
 p_L = 2 e^{-\mu \ell} \left[ 1 - x + \mathcal{R}_p(x,\mu) \right],
 \end{equation}
  where the remainder function $\mathcal{R}_p$ satisfies the bounds
  \begin{eqnarray}
  \label{remainder-estimate-2}
\left\{ \begin{array}{l}
    |\mathcal{R}_p(x,\mu)| \leq C \mu e^{-2\mu \ell}, \\
    |\partial_x \mathcal{R}_p(x,\mu)| \leq C \mu e^{-2\mu \ell},\\
    |\partial_{\mu} \mathcal{R}_p(x,\mu) | \leq C \mu e^{-2\mu \ell},\end{array} \right.
  \end{eqnarray}
  for some $C > 0$ independently of large $\mu$ and $x \in (x_-,x_+)$.
  In order to derive (\ref{remainder-estimate-2}) for the derivatives of $\mathcal{R}_p(x,\mu)$,
  we use the chain rule and the estimates (\ref{eq:single-bump-DtN}) and (\ref{eq:single-bump-DtN-der}):
  \begin{eqnarray*}
  \frac{\partial p_L}{\partial x} = 8 e^{-2\mu \ell} \frac{\partial p_L}{\partial k} =
  -2 e^{-\mu \ell} \left[1 + \mathcal{O}(\mu e^{-2\mu \ell}) \right] = 2 e^{-\mu \ell}
  \left[ -1  + \partial_x \mathcal{R}_p(x,\mu)\right]
  \end{eqnarray*}
  and
  \begin{eqnarray*}
  \frac{\partial p_L}{\partial \mu} & = & \ell \frac{\partial p_L}{\partial L} -
  16 \ell x e^{-2\mu \ell} \frac{\partial p_L}{\partial k} =
  \ell q_L - 2 \ell x \frac{\partial p_L}{\partial x} =
  -2 \ell e^{-\mu \ell} \left[1 - x + \mathcal{O}(\mu e^{-2\mu \ell}) \right] \\
  & = & - 2 e^{-\mu \ell}
  \left[ \ell(1 - x + \mathcal{R}_p(x,\mu)) - \partial_{\mu} \mathcal{R}_p(x,\mu)\right].
  \end{eqnarray*}
 Similarly, we can write
 \begin{equation}
 \label{q_L-expansion}
 q_L = -2 e^{-\mu \ell} \left[ 1 + x + \mathcal{R}_q(x,\mu) \right],
 \end{equation}
  where the remainder function $\mathcal{R}_q$ satisfies the same bounds (\ref{remainder-estimate-2}) as $\mathcal{R}_p$.
  Upon substituting  (\ref{eq:k_rescaling}), (\ref{p_L-expansion}), and (\ref{q_L-expansion})
  into equation \eqref{eq:matching_pendant1}, we obtain
  \begin{equation}
    \label{eq:matching_pendant2-additional}
  1 + x + \mathcal{R}_q(x,\mu) = N \left[ 1 - x + \mathcal{R}_p(x,\mu) \right] + \hat{\mathcal{R}}(x,\mu),
  \end{equation}
  where $\mathcal{R}(p_L,\mu) = 2 e^{-\mu \ell} \hat{\mathcal{R}}(x,\mu)$
  and the new remainder term $\hat{\mathcal{R}}$ satisfies the bounds
  \begin{eqnarray}
  \label{remainder-estimate-3}
   \left\{ \begin{array}{l}
   |\hat{\mathcal{R}}(x,\mu)| \leq C e^{-\mu \Lmin}, \\
   |\partial_x \hat{\mathcal{R}}(x,\mu)| \leq C e^{-\mu \Lmin},\\
   |\partial_{\mu} \hat{\mathcal{R}}(x,\mu) | \leq C \left( \mu^{-1} + e^{-\mu \Lmin} \right),\end{array} \right.
  \end{eqnarray}
  for some $C > 0$ independently of large $\mu$ and $x \in (x_-,x_+)$.
  In order to derive (\ref{remainder-estimate-3}), we have used the fact that
  $\mu e^{-3 \mu \ell} \ll e^{-\mu (\ell+\Lmin)}$ because $\ell \geq \Lmin$,
  as well as the chain rule
  \begin{eqnarray*}
  \frac{\partial \hat{\mathcal{R}}}{\partial x} = \frac{1}{2} e^{\mu \ell} \frac{\partial \mathcal{R}}{\partial p_L} \frac{\partial p_L}{\partial x}
  =  \left[ -1  + \partial_x \mathcal{R}_p(x,\mu)\right] \frac{\partial \mathcal{R}}{\partial p_L}
  \end{eqnarray*}
  and
    \begin{eqnarray*}
  \frac{\partial \hat{\mathcal{R}}}{\partial \mu} =
  \ell \hat{\mathcal{R}} + \frac{1}{2} e^{\mu \ell} \frac{\partial \mathcal{R}}{\partial \mu}
  + \frac{1}{2} e^{\mu \ell}  \frac{\partial \mathcal{R}}{\partial p_L}  \frac{\partial p_L}{\partial \mu}.
  \end{eqnarray*}
   Rearranging (\ref{eq:matching_pendant2-additional}), we get
  \newcommand{\tcR}{\widetilde{\mathcal{R}}}
  \begin{equation}
    \label{eq:matching_pendant2}
    x = \frac{N-1}{N+1} + \tcR(x,\mu),
  \end{equation}
  where the remainder $\tcR := \frac{1}{N+1} (\hat{\mathcal{R}} + N \mathcal{R}_p - \mathcal{R}_q)$
  satisfies the bounds
  \begin{eqnarray}
    \label{remainder-estimate-last}
\left\{ \begin{array}{l} |\tcR(x,\mu)| \leq C \left( e^{-\mu \Lmin} + \mu e^{-2\mu \ell} \right), \\
|\partial_x \tcR(x,\mu)| \leq C \left( e^{-\mu \Lmin} + \mu e^{-2\mu \ell} \right), \\
|\partial_{\mu} \tcR(x,\mu) | \leq C \left( \mu^{-1} + e^{-\mu \Lmin} + \mu e^{-2\mu \ell}\right), \end{array} \right.
  \end{eqnarray}
  for some $C > 0$ independently of large $\mu$ and $x \in (x_-,x_+)$.
  For large enough $\mu$, the
  right-hand side of \eqref{eq:matching_pendant2} maps the interval
  $(x_-, x_+)$ into a subset of $(x_-,x_+)$, moreover, the map is contractive in $(x_-,x_+)$.
  By the Contraction Mapping Principle (see Theorem \ref{thm:contraction_mapping}),
  there exists a unique solution of the scalar equation (\ref{eq:matching_pendant2}) satisfying the estimate
  \begin{equation}
    \label{eq:matching_pendant3}
    \left| x - \frac{N-1}{N+1} \right| \leq C \left( e^{-\mu \Lmin} + \mu e^{-2\mu \ell} \right)
  \end{equation}
  where the constant $C > 0$ is independent of $\mu$ for large $\mu$. Since $p = p_L$ is expanded by
  (\ref{p_L-expansion}), the estimate (\ref{eq:k_answer_pendant}) follows from (\ref{eq:k_rescaling}) and (\ref{eq:matching_pendant3}).
Since the scalar equation (\ref{eq:matching_pendant2}) is $C^1$ in $\mu$,
Corollary~\ref{cor:contraction_smooth} implies that the root $x$ in (\ref{eq:matching_pendant3}) is $C^1$
in $\mu$ satisfying the estimate
\begin{equation}
\label{eq:matching_pendant4}
\left|\frac{dx}{d\mu} \right| \leq C \left( \mu^{-1} + e^{-\mu \Lmin} + \mu e^{-2\mu \ell}\right),
\end{equation}
where the constant $C > 0$ is independent of $\mu$ for large $\mu$.

The estimate (\ref{eq:norm_remainder_pendant}) follows from (\ref{eq:nlin_DTN_solution}) with $p = p_L$ given by
(\ref{p_L-expansion}) and (\ref{eq:matching_pendant3}) and the scaling transformation (\ref{scaling}).

We now turn to the expansion (\ref{eq:mass_answer_pendant}) for the mass $\mathcal{Q} := \mathcal{Q}(\Phi)$.  Thanks
to the scaling transformation (\ref{scaling}) and the estimate (\ref{eq:norm_remainder_pendant}), we
can split the mass $\mathcal{Q}$ as follows:
\begin{equation}
\label{estimate-charge-1-cn}
\mathcal{Q} = \| \Phi \|^2_{L^2(0,\ell)} + \| \Phi \|^2_{L^2(\Graph^c)} = \mu \| \Psi \|_{L^2(0,\mu \ell)}^2 + \mathcal{O}(\mu e^{-2\mu \ell}),
\end{equation}
where the first term is needed to be computed up to the accuracy of the remainder term of the $\mathcal{O}(\mu e^{-2\mu \ell})$ error.
The first term in the splitting (\ref{estimate-charge-1-cn}) is estimated from the explicit expression
(\ref{eq:dnoidal_universal}):
\begin{align}
\nonumber
\| \Psi \|_{L^2(0,\mu \ell)}^2 & =  \frac{1}{\sqrt{2-k^2}} \int_0^{\xi_0} {\rm dn}(\xi;k)^2 d\xi \\
& =  \frac{1}{\sqrt{2-k^2}} \int_0^{K(k)} {\rm dn}(\xi;k)^2 d\xi + \frac{1}{\sqrt{2-k^2}}
\int_{K(k)}^{\xi_0} {\rm dn}(\xi;k)^2 d\xi, \label{estimate-charge-2-cn}
\end{align}
where $\xi_0 := \frac{\mu \ell}{\sqrt{2-k^2}} < K(k)$ and $K(k)$ is the complete
elliptic integral of the first kind, see Appendix \ref{sec:elliptic}.
The second term of the decomposition (\ref{estimate-charge-2-cn}) is estimated by
\begin{eqnarray}
\label{estimate-charge-0-cn}
\left| \int_{K(k)}^{\xi_0} {\rm dn}(\xi;k)^2 d\xi \right| = \mathcal{O}(e^{-2 \mu \ell}),
\end{eqnarray}
since $\xi_0 = \mu \ell + \mathcal{O}(\mu e^{-2\mu \ell})$,
$k = 1 + \mathcal{O}(e^{-2\xi_0})$ by using (\ref{eq:k_answer_pendant})
and ${\rm dn}(\xi;k)^2 = \mathcal{O}(e^{-2 \xi})$ for every $\xi \in (\xi_0,K(k))$
thanks to Proposition \ref{prop-elliptic}. Thanks to the estimate (\ref{estimate-charge-0-cn}),
the second term in (\ref{estimate-charge-2-cn}) is comparable with the remainder term in (\ref{estimate-charge-1-cn})
and is much smaller than the first term in (\ref{estimate-charge-2-cn}).
To estimate the first term in (\ref{estimate-charge-2-cn}), we
consider the case $k < 1$ (computations for $k > 1$ are similar).
It follows from 8.114 in \cite{GraRyzh_table} for $k < 1$ and $k \to 1$ that
\begin{align*}
E(k) & :=  \int_0^{K(k)} {\rm dn}(\xi;k)^2 d\xi \\
 & =  1 + \frac{1}{2} (1-k^2) \left[ \log \frac{4}{\sqrt{1-k^2}} - \frac{1}{2} \right] + \mathcal{O}\left((1-k^2)^2|\log(1-k^2)|\right),
\end{align*}
where $E(k)$ is a complete elliptic integral of the second kind, see Appendix \ref{sec:elliptic}. Therefore, we have
\begin{eqnarray}
\nonumber
\| \Psi \|_{L^2(0,\mu \ell)}^2 & = & 1 - \frac{1}{2} (1-k) \log(1-k) + \mathcal{O}(e^{-2 \mu \ell}) \\
\label{estimate-charge-3-cn}
& = & 1 - 8 \frac{N-1}{N+1} \mu \ell e^{-2 \mu \ell} + \mathcal{O}(e^{-2\mu \ell}),
\end{eqnarray}
where the estimate (\ref{eq:k_answer_pendant}) has been used.
Combining (\ref{estimate-charge-1-cn}), (\ref{estimate-charge-2-cn}), (\ref{estimate-charge-0-cn}), and
(\ref{estimate-charge-3-cn}) yields the expansion (\ref{eq:mass_answer_pendant}).

Thanks to the differentiability of $\Phi \in H^2_{\Graph}$ and $k$ in $\mu$,
the map $\mu \mapsto \mathcal{Q}$ is $C^1$. In order to prove monotonicity of $\mathcal{Q}$ with respect to $\mu$,
we differentiate \eqref{estimate-charge-1-cn} in $\mu$ keeping in mind that the solution $\Psi$ (or its
rescaled form $\Phi$) depends on $\mu$ both directly and indirectly,
via the parameters $p$ and $k$, correspondingly.  We have from (\ref{estimate-charge-1-cn}):
  \begin{equation}
    \label{estimate-charge-dmu-2}
    \frac{d \mathcal{Q}}{d\mu} = \| \Psi \|_{L^2(0,\mu \ell)}^2
    + \mu \frac{d}{d\mu} \| \Psi \|_{L^2(0,\mu \ell)}^2
    + \frac{d}{d\mu} \|\Phi\|_{L^2(\Graph^c)}^2.
  \end{equation}
The first term in (\ref{estimate-charge-dmu-2}) yields $1 + \mathcal{O}(\mu e^{-2\mu \ell})$ due
to the estimate \eqref{estimate-charge-3-cn}.
The second term in (\ref{estimate-charge-dmu-2}) is estimated from the chain rule:
  \begin{equation}
    \label{estimate-charge-dmu-3}
    \frac{d}{d\mu} \| \Psi \|_{L^2(0,\mu \ell)}^2 =
    \left(\frac{\partial}{\partial\mu} + \frac{\partial k}{\partial\mu}
      \frac\partial{\partial k}\right)
    \frac{1}{\sqrt{2-k^2}} \int_0^{\xi_0} {\rm dn}(\xi;k)^2 d\xi,
  \end{equation}
  where $\xi_0 := \frac{\mu \ell}{\sqrt{2-k^2}}$.
It follows from (\ref{eq:k_rescaling}) and (\ref{eq:matching_pendant4}) that
$$
\left| \frac{\partial k}{\partial \mu} \right| \leq C e^{-2\mu \ell}.
$$
Furthermore, recall that since $\xi_0 = \frac{\mu \ell}{\sqrt{2-k^2}} = \mu \ell + \mathcal{O}(\mu e^{-2\mu \ell})$,
$k = 1 + \mathcal{O}(e^{-2\xi_0})$, $|{\rm dn}(\xi;k)| \leq C e^{-\xi}$, and $|\partial_k {\rm dn}(\xi;k)| \leq C e^{\xi}$.
As a result, we obtain from (\ref{estimate-charge-dmu-3}) that
\begin{equation*}
\left| \frac{d}{d\mu} \| \Psi \|_{L^2(0,\mu \ell)}^2 \right| \leq C \mu e^{-2\mu \ell}.
\end{equation*}
The last term in \eqref{estimate-charge-dmu-2} is estimated from another chain rule:
  \begin{equation}
    \label{eq:estimate_charge_dmu-4}
    \frac{d}{d\mu} \|\Phi\|_{L^2(\Graph^c)}^2
    = \int_{\Graph^c} \left( \frac{\partial \Phi}{\partial \mu}
      + \frac{d p}{d\mu} \frac{\partial \Phi}{\partial p} \right) \Phi dx,
      \end{equation}
      so that
      \begin{equation*}
   \left|     \frac{d}{d\mu} \|\Phi\|_{L^2(\Graph^c)}^2 \right| \leq C \mu e^{-2\mu \ell},
  \end{equation*}
thanks to the estimates \eqref{eq:dPhi_dp_est}, \eqref{eq:dPhi_dmu_est}, \eqref{eq:norm_remainder_pendant}, (\ref{p_L-expansion}), and (\ref{eq:matching_pendant3}). Combining all estimates together in (\ref{estimate-charge-dmu-2}),
we obtain that
$$
   \frac{d \mathcal{Q}}{d\mu} = 1 + \mathcal{O}(\mu^2 e^{-2\mu \ell}),
$$
hence $\mathcal{Q}$ is monotonically increasing in $\mu$.

Finally, we establish the expansion (\ref{eq:energy_answer_pendant}) for the energy $\mathcal{E} := \mathcal{E}(\Phi)$.
By using the scaling (\ref{scaling}), we split the energy $\mathcal{E}$ into two parts:
\begin{eqnarray}
\nonumber
\mathcal{E} & = & \| \Phi' \|^2_{L^2(0,\ell)} - \| \Phi \|^4_{L^4(0,\ell)} + \| \Phi' \|^2_{L^2(\Graph^c)} - \| \Phi \|^4_{L^4(\Graph^c)} \\
\label{estimate-energy-1}
& = & \mu^3 \left( \| \Psi' \|^2_{L^2(0,\mu \ell)} - \| \Psi \|^4_{L^4(0, \mu \ell)} \right) + {\rm o}(1),
\end{eqnarray}
where ${\rm o}(1)$ denote terms vanishing in the limit of $\mu \to \infty$ thanks to
the estimate (\ref{eq:nlin_DTN_solution}) with $|p| \leq C e^{-\mu \ell}$.
Thanks to the exact solution (\ref{eq:dnoidal_universal}), we estimate the expression in the bracket in (\ref{estimate-energy-1}):
\begin{equation}
\label{estimate-energy-2}
\| \Psi' \|^2_{L^2(0,\mu \ell)} - \| \Psi \|^4_{L^4(0, \mu \ell)} = \frac{1}{(2-k^2)^{3/2}} \int_0^{\xi_0} \left[
k^4 {\rm sn}(\xi;k)^2 {\rm cn}(\xi;k)^2 - {\rm dn}(\xi;k)^4 \right] d\xi,
\end{equation}
where $\xi_0 := \frac{\mu \ell}{\sqrt{2 - k^2}} = \mu \ell + \mathcal{O}(\mu e^{-2 \mu \ell})$.
Thanks to the estimate in Proposition \ref{prop-elliptic} with $k = 1 + \mathcal{O}(e^{-2\mu \ell})$
from (\ref{eq:k_answer_pendant}), the remainder terms to the limiting hyperbolic functions in
(\ref{sn-der}), (\ref{cn-der}), and (\ref{dn-der}) are as small as $\mathcal{O}(e^{-\mu \ell})$ in the $L^{\infty}(0,\xi_0)$ norm,
hence we obtain from (\ref{estimate-energy-2}) that
\begin{eqnarray}
\nonumber
\| \Psi' \|^2_{L^2(0,\mu \ell)} - \| \Psi \|^4_{L^4(0, \mu \ell)} & = & \int_0^{\xi_0} \left[
{\rm sech}(\xi)^2 \tanh(\xi)^2 - {\rm sech}(\xi)^4 \right] d\xi + {\rm o}(1) \\
\nonumber
& = & \int_0^{\infty} \left[
{\rm sech}(\xi)^2 \tanh(\xi)^2 - {\rm sech}(\xi)^4 \right] d\xi + {\rm o}(1) \\
\label{estimate-energy-3}
& = & -\frac{1}{3} + {\rm o}(1).
\end{eqnarray}
Combining (\ref{estimate-energy-1}) and (\ref{estimate-energy-3}) yields $\mathcal{E} = -\frac{1}{3} \mu^3 + {\rm o}(1)$.
Since $\Phi$ is a critical point of the augmented energy $S_{\Lambda}(U) := \mathcal{E}(U) - \Lambda \mathcal{Q}(U)$,
it follows that $\mathcal{Q}$ and $\mathcal{E}$ satisfy the differential equation:
\begin{equation}
\label{estimate-energy-4}
\frac{d \mathcal{E}}{d \Lambda} = \Lambda \frac{d \mathcal{Q}}{d \Lambda} \quad \Rightarrow \quad
\frac{d \mathcal{E}}{d \mu} = - \mu^2 \frac{d \mathcal{Q}}{d \mu}.
\end{equation}
Since $\mathcal{Q}$ is $C^1$ in $\mu$, then $\mathcal{E}$ is $C^1$ in $\mu$. It follows from
the balance of exponential terms in (\ref{eq:mass_answer_pendant}) and (\ref{estimate-energy-4})
that the remainder ${\rm o}(1)$ is given by $\mathcal{O}(\mu^4 e^{-2 \mu \ell})$, which completes
the proof of (\ref{eq:energy_answer_pendant}).
\end{proof}

\subsection{Looping edge}

\begin{theorem}
  \label{thm:localized_loop}
  Let $\Graph_\mu$ be a graph with $\mu$-scaled edge lengths and with
  a looping edge of length $2L=2\mu\ell$ attached to the remainder of
  the graph $\Graph_\mu^c$ by a vertex $v$ of degree $N+2$, see
  Fig.~\ref{fig:edge_cases}(b).  Then, for large enough $\mu$, there
  is a unique solution $\Psi \in H^2_{\Graph_{\mu}}$ to the stationary NLS
  equation (\ref{statNLS-limit-graph}) with the following properties:
  \begin{itemize}
  \item the solution is strictly positive on the looping edge and decreases monotonically
  from its maximum at the midpoint towards the attachment vertex $v$,
  \item it is positive and has no internal local maxima on the remainder graph $\Graph_\mu^c$.
  \end{itemize}
  On the looping edge, the solution is described by
  \eqref{eq:dnoidal_universal} with the origin (maximum) located at the midpoint and with
  \begin{equation}
    \label{eq:k_answer_loop}
    k = 1 + 8\frac{N-2}{N+2}e^{-2\mu \ell} + \bigO{e^{-2\mu \ell - \mu \Lmin}}
  \end{equation}
  where $\Lmin$ is the length of the shortest edge in $\Graph^c$.
  The corresponding solution $\Phi \in H^2_{\Graph}$
  to the stationary NLS equation (\ref{statNLS}) with $\Lambda = -\mu^2$
  satisfies the concentration estimate
  \begin{equation}
    \label{eq:norm_remainder_loop}
    \|\Phi\|_{L^2(\Graph^c)}^2 \leq C \mu e^{-2 \mu \ell},
  \end{equation}
  whereas the mass and energy integrals $\mathcal{Q} := \mathcal{Q}(\Phi)$
  and $\mathcal{E} := \mathcal{E}(\Phi)$ in (\ref{energy}) are expanded asymptotically by
  \begin{equation}
    \label{eq:mass_answer_loop}
   \mathcal{Q} = 2 \mu - 16\frac{N-2}{N+2} \mu^2 \ell e^{-2 \mu \ell}
    + \bigO{\mu e^{-2 \mu \ell}}.
  \end{equation}
and
  \begin{equation}
    \label{eq:energy_answer_loop}
    \mathcal{E} = -\frac{2}{3} \mu^3 + \bigO{\mu^4 e^{-2 \mu \ell}}.
  \end{equation}
The mass integral $\mathcal{Q}$ is a $C^1$ increasing function of
  $\mu$ when $\mu$ is large.
\end{theorem}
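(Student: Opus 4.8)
The plan is to follow the proof of Theorem~\ref{thm:localized_pendant} almost verbatim, the one structural novelty being a symmetry reduction of the loop to a half-edge. First I would observe that any solution with the claimed properties must be symmetric about the midpoint of the loop: parametrizing each of the two arms of the loop as $[0,\mu\ell]$ running from the midpoint (at $z=0$) to the attachment vertex $v$ (at $z=\mu\ell$), the hypothesis that the maximum sits at the midpoint forces $\Psi'(0)=0$ on both arms, and since both arms solve $-\Psi''+\Psi=2\Psi^3$ with the identical initial data $\Psi(0)=\max\Psi$, $\Psi'(0)=0$, they coincide. Hence on each arm the solution is one of the single-bump profiles \eqref{eq:dnoidal_universal} described by Lemma~\ref{lem:single_bump_DTN} with $L=\mu\ell$, origin at the midpoint, and boundary values $(p_L,q_L)$ parametrized by $k\in(k_-,k_+)$ as in \eqref{eq:single-bump-DtN}. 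On $\Graph_\mu^c$ the vertex $v$ has degree $N$ (the loop absorbs the other two units of degree), so for small boundary value $p$ Theorem~\ref{thm:nlin_DTN} furnishes a unique small, positive solution with no interior local maxima and Neumann data $q=Np+\mathcal{R}(p,\mu)$, with $\mathcal{R}$ obeying the bounds in \eqref{remainder-estimate-1}.

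Second, I would set up the matching at $v$. Continuity gives $p=p_L$, while the Neumann--Kirchhoff balance at $v$ reads $q+2q_L=0$, since the two arms of the loop contribute a combined outward derivative $2q_L$; thus the matching equation is
\begin{equation*}
  -2q_L = Np_L + \mathcal{R}(p_L,\mu),
\end{equation*}
the exact analogue of \eqref{eq:matching_pendant1} but with the extra factor $2$. Rescaling $k-1=8e^{-2\mu\ell}x$ and inserting the expansions \eqref{p_L-expansion}, \eqref{q_L-expansion}, which hold here unchanged with $L=\mu\ell$, the factor $2$ converts the pendant fixed-point equation \eqref{eq:matching_pendant2} into $2(1+x)=N(1-x)+\hat{\mathcal{R}}$, i.e.
\begin{equation*}
  x = \frac{N-2}{N+2} + \widetilde{\mathcal{R}}(x,\mu),
\end{equation*}
with $\widetilde{\mathcal{R}}$ controlled as in \eqref{remainder-estimate-last}. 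The Contraction Mapping Principle then yields a unique root $x$, which is $C^1$ in $\mu$, with the bounds of \eqref{eq:matching_pendant3}--\eqref{eq:matching_pendant4}; undoing \eqref{eq:k_rescaling} gives \eqref{eq:k_answer_loop}, and \eqref{eq:nlin_DTN_solution} with $p=p_L=\mathcal{O}(e^{-\mu\ell})$ together with the scaling \eqref{scaling} gives the concentration estimate \eqref{eq:norm_remainder_loop}. The one-to-one correspondence between solutions $\Psi$ with the desired properties and roots $x$ gives uniqueness.

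Third, I would compute $\mathcal{Q}$ and $\mathcal{E}$ as in the pendant case, using that the two arms contribute equally: $\|\Psi\|_{L^2(\mathrm{loop})}^2 = 2\|\Psi\|_{L^2(0,\mu\ell)}^2$ and likewise for $\|\Psi'\|_{L^2}^2$ and $\|\Psi\|_{L^4}^4$. The half-arm integrals are exactly those evaluated in the proof of Theorem~\ref{thm:localized_pendant}: with the value of $k$ from \eqref{eq:k_answer_loop}, the $E(k)$-expansion (treating $k<1$, i.e. $N<2$, and $k>1$, i.e. $N>2$, separately as there) gives $\|\Psi\|_{L^2(0,\mu\ell)}^2 = 1 - 8\frac{N-2}{N+2}\mu\ell e^{-2\mu\ell} + \mathcal{O}(e^{-2\mu\ell})$, while $\|\Psi'\|_{L^2(0,\mu\ell)}^2 - \|\Psi\|_{L^4(0,\mu\ell)}^4 = -\tfrac13 + \mathrm{o}(1)$. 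Doubling, multiplying by $\mu$ (resp. $\mu^3$), and adding the $\mathcal{O}(\mu e^{-2\mu\ell})$ contribution of $\Graph^c$ yields \eqref{eq:mass_answer_loop} and $\mathcal{E} = -\tfrac23\mu^3 + \mathrm{o}(1)$. Finally, the $C^1$-dependence of $\Phi$ and $k$ on $\mu$ together with the chain-rule estimates \eqref{eq:dPhi_dp_est}--\eqref{eq:dPhi_dmu_est} gives $d\mathcal{Q}/d\mu = 2 + \mathcal{O}(\mu^2 e^{-2\mu\ell}) > 0$, and the variational identity $d\mathcal{E}/d\mu = -\mu^2\, d\mathcal{Q}/d\mu$ sharpens the error in $\mathcal{E}$ to $\mathcal{O}(\mu^4 e^{-2\mu\ell})$, giving \eqref{eq:energy_answer_loop}.

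The only genuinely delicate point is the symmetry reduction and the bookkeeping of the factor $2$ in the Kirchhoff balance; beyond that, the argument is a line-by-line transcription of Theorem~\ref{thm:localized_pendant}, and all the elliptic-function asymptotics required have already been established there and in Lemma~\ref{lem:single_bump_DTN}.
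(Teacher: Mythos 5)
Your proposal is correct and follows the same route as the paper: symmetry reduction of the loop to two identical half-arms, the matching equation $-2q_L = Np_L + \mathcal{R}(p_L,\mu)$ at the attachment vertex, which is the pendant argument with $N \mapsto N/2$ (yielding $x_* = \frac{N-2}{N+2}$), and a doubling of the mass and energy contributions from the two arms via the splitting $L^2(\Graph_\mu)=L^2(-L,0)\oplus L^2(0,L)\oplus L^2(\Graph^c)$. The paper's own proof is essentially a one-paragraph reduction to Theorem~\ref{thm:localized_pendant}; your write-up fills in the same details explicitly.
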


\begin{remark}
  The wave on the looping edge is dnoidal for $N=1$ (since $k<1$) and
  cnoidal for $N \geq 3$ (since $k>1$).  Its character in the case
  $N=2$ is undetermined since the first correction vanishes and our
  results do not provide higher order corrections.  However, since
  neither solution changes sign on the edge thanks to the constraints
  \eqref{eq:single_bump} in Lemma~\ref{lem:single_bump_DTN}, the
  difference between the cnoidal and dnoidal waves is largely
  irrelevant.
\end{remark}

\begin{proof}
  Continuity of the solution at the attachment vertex $v$ coupled with
  its single-bump character implies that we can restrict our search to
  the solutions symmetric on the looping edge.  From the midpoint to
  the attachment vertex $v$, possible solutions are described by
  Lemma~\ref{lem:single_bump_DTN}; all of them are exponentially small
  in $L$ at $v$. Given a small boundary value $p$
  on the attachment vertex $v$, there is a unique solution $\Psi \in H^2(\Graph^c)$,
  which is described by Lemma \ref{lem:suppressed_solution} and
  Theorem~\ref{thm:nlin_DTN}.  Therefore, all solutions in the
  prescribed class of functions are in one-to-one correspondence with solutions of
  the following equation from the NK conditions:
  \begin{equation}
    \label{eq:matching_loop1}
    -2 q_L = N p_L + \mathcal{R}(p_L,\mu).
  \end{equation}
  This equation should be interpreted as an equation on $k$ (through
  $p_L$ and $q_L$ that depends on $k$).  This equation replaces equation
  \eqref{eq:matching_pendant1} in the proof of
  Theorem~\ref{thm:localized_pendant}. The rest of the proof is
  identical to the previous one with the change $N \mapsto N/2$ and
  the double factor in (\ref{eq:mass_answer_loop}) and \eqref{eq:energy_answer_loop}
  compared to (\ref{eq:mass_answer_pendant}) and \eqref{eq:energy_answer_pendant}
  thanks to the splitting
  \begin{equation}
    \label{estimate-charge-1-cn-loop}
    L^2(\Graph_{\mu})
    = L^2(-L,0) \oplus L^2(0,L) \oplus L^2(\Graph^c).
  \end{equation}
  Monotonicity of $\mathcal{Q}$ is established by a similar expansion of the
  derivative resulting in $\frac{d \mathcal{Q}}{d\mu} = 2 + \mathcal{O}(\mu^2 e^{-2\mu \ell})$.
\end{proof}

\subsection{Internal edge}

We finally arrive to the ``generic'' type of edge: an edge which
connects two distinct vertices of degree larger than two.  We call such
edges \emph{internal}.

\begin{figure}[t]
  \centering
  \includegraphics{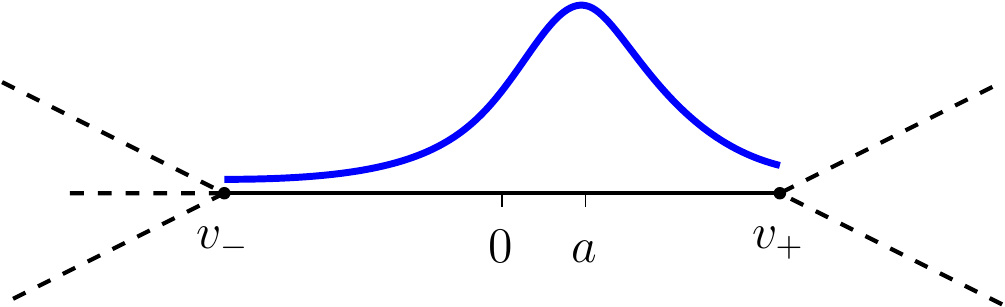}
  \caption{A localized solution localized on a internal edge for
    $N_-=3$ and $N_+=2$. The maximum of the solution is shifted
    to the right of the edge midpoint at the displacement $a$.}
  \label{fig:edge_generic}
\end{figure}

\begin{theorem}
  \label{thm:generic_edge}
  Let $\Graph_\mu$ be a graph with $\mu$-scaled edge lengths and with
  a internal edge $e$ connecting vertices $v_-$ and $v_+$ of degrees
  $N_-+1\geq3$ and $N_++1\geq3$ correspondingly, see
  Fig.~\ref{fig:edge_cases}(c).  Identify the edge $e$ with the
  interval $[-\mu \ell, \mu \ell]$, so that the length of $e$ is
  $2\mu \ell$.  Introduce the notation
  \begin{equation}
    \label{eq:internal_max_loc}
    a_* = \frac12 \tanh^{-1}\left(\frac{N_- - N_+}{N_+ N_- - 1}\right),
  \end{equation}
  and let $(-a_0,a_0)$ be an arbitrary interval containing $a_*$.
  
  Then, for large enough $\mu$,
  there is a unique solution $\Psi \in H^2_{\Graph_{\mu}}$ to the
  stationary NLS equation (\ref{statNLS-limit-graph}) with the following properties:
  \begin{itemize}
  \item on the internal edge $e$ the solution is strictly positive and
    achieves its maximum in the interval $[-a_0,a_0]$; it decreases
    monotonically from its maximum towards the attachment vertex $v$,
  \item on the remainder graph $\Graph_\mu^c$ the solution is positive
    and has no internal local maxima.
  \end{itemize}
  On the internal edge, the solution is described by
  \begin{equation}
    \label{eq:shifted_dnoidal}
    \Psi(z) = \frac{1}{\sqrt{2-k^2}}
    \dn\left(\frac{z-a}{\sqrt{2-k^2}};k\right),
    \quad z \in [-\mu \ell, \mu \ell],
  \end{equation}
  where
  \begin{equation}
    \label{eq:a_answer}
    a = a_* + \bigO{e^{-\mu \Lmin}},
  \end{equation}
  and
  \begin{equation}
    \label{eq:k_answer_generic}
    k = 1 + 8\sqrt{\frac{N_--1}{N_-+1}}
    \sqrt{\frac{N_+-1}{N_++1}} e^{-2 \mu \ell}
    + \bigO{e^{-2\mu \ell - \mu \Lmin}},
  \end{equation}
  where $\Lmin$ is the length of the shortest edge in $\Graph^c$.
  The corresponding solution $\Phi \in H^2_{\Graph}$
  to the stationary NLS equation (\ref{statNLS}) with $\Lambda = -\mu^2$
  on the original graph $\Graph$
  concentrates on the internal edge, so that
  \begin{equation}
    \label{eq:norm_remainder_generic}
    \|\Phi\|_{L^2(\Graph^c)}^2 \leq C \mu e^{-2 \mu \ell},
  \end{equation}
  whereas the mass and energy integrals $\mathcal{Q} := \mathcal{Q}(\Phi)$
and $\mathcal{E} := \mathcal{E}(\Phi)$ in (\ref{energy}) are expanded asymptotically by
\begin{equation}
    \label{eq:mass_answer_generic}
    \mathcal{Q} = 2 \mu  - 16\sqrt{\frac{N_--1}{N_-+1}}
    \sqrt{\frac{N_+-1}{N_++1}}  \mu^2 \ell e^{-2 \mu \ell}
+ \mathcal{O}\left(\mu e^{-2 \mu \ell}\right).
\end{equation}
and
  \begin{equation}
    \label{eq:energy_answer_generic}
    \mathcal{E} = -\frac{2}{3} \mu^3 + \bigO{\mu^4 e^{-2 \mu \ell}}.
  \end{equation}
  The mass integral $\mathcal{Q}$ is a $C^1$ increasing function of
  $\mu$ when $\mu$ is large.
\end{theorem}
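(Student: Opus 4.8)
The plan is to mirror the proofs of Theorems~\ref{thm:localized_pendant} and \ref{thm:localized_loop}: build the localized profile by matching the single-bump piece of the DtN manifold on the internal edge $e$ against the almost-linear DtN manifold of $\Graph^c_\mu$. The one genuinely new feature is that the profile on $e$ now depends on \emph{two} parameters --- the modulus $k$ and the location $a$ of the maximum --- so the matching produces a $2\times2$ system, one scalar equation for each end-vertex $v_-, v_+$. Verifying that this system is non-degenerate (which is precisely where the standing assumptions $N_\pm\geq2$ enter) is the main obstacle; once it is in place, the rest is a routine adaptation of the pendant and loop computations.

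\emph{Step 1 (matching system).} Identify $e$ with $[-\mu\ell,\mu\ell]$ and put the maximum of the shifted dnoidal \eqref{eq:shifted_dnoidal} at $z=a$, $a\in(-a_0,a_0)$. By evenness of $\dn$, the restriction of $\Psi$ to $[-\mu\ell,a]$ (resp.\ $[a,\mu\ell]$) is, after reflection about $z=a$, exactly the dnoidal of Lemma~\ref{lem:single_bump_DTN} on an interval of length $L_-:=\mu\ell+a$ (resp.\ $L_+:=\mu\ell-a$). For large $\mu$ and $|a|<a_0$ both lengths exceed any fixed bound, and by Remark~\ref{rem:monotonicity_k_interval} the single-bump constraints \eqref{eq:single_bump} hold on both halves precisely when $k$ lies in the common interval $\big(k_-(\min L_\pm),k_+(\min L_\pm)\big)$. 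Lemma~\ref{lem:single_bump_DTN} then supplies the boundary values $p_{L_-}=\Psi(-\mu\ell)$, $p_{L_+}=\Psi(\mu\ell)$ and the edge-$e$ outgoing derivatives at $v_-$ and $v_+$ in terms of $q_{L_-}$ and $q_{L_+}$. Feeding $(p_{L_-},p_{L_+})$ into Theorem~\ref{thm:nlin_DTN} applied to $\Graph^c_\mu$ with boundary $\{v_-,v_+\}$ --- the off-diagonal coupling $\partial q_-/\partial p_+$ there being $\mathcal O(e^{-\mu\Lmin}+\|\vecp\|^2)$, hence absorbable into the remainder, and $\Graph^c$ treated componentwise if $e$ happens to be a bridge --- the Neumann--Kirchhoff conditions at $v_-$ and $v_+$ become, exactly as \eqref{eq:matching_pendant1} was derived,
\begin{equation*}
  -q_{L_-}=N_-\,p_{L_-}+\mathcal R_-(k,a,\mu),\qquad
  -q_{L_+}=N_+\,p_{L_+}+\mathcal R_+(k,a,\mu),
\end{equation*}
with $\mathcal R_\pm$ obeying bounds of the type \eqref{remainder-estimate-1}.

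\emph{Step 2 (solving it).} Set $k-1=8e^{-2\mu\ell}x$ and substitute \eqref{eq:single-bump-DtN}--\eqref{eq:single-bump-DtN-der} with $L=L_\pm=\mu\ell\pm a$, using $e^{-L_\pm}=e^{-\mu\ell}e^{\mp a}$ and $e^{L_\pm}=e^{\mu\ell}e^{\pm a}$. Dividing through by $2e^{-\mu\ell}$ and collecting the exponentially small terms into remainders $\widetilde{\mathcal R}_\pm$ of the type \eqref{remainder-estimate-last}, the system reduces to
\begin{equation*}
  x e^{2a}=\frac{N_--1}{N_-+1}+\widetilde{\mathcal R}_-,\qquad
  x e^{-2a}=\frac{N_+-1}{N_++1}+\widetilde{\mathcal R}_+ .
\end{equation*}
Multiplying and dividing the two remainder-free equations gives the unique leading solution $(x_*,a_*)$ with $x_*=\sqrt{\tfrac{N_--1}{N_-+1}}\sqrt{\tfrac{N_+-1}{N_++1}}>0$ (positive thanks to $N_\pm\geq2$) and $a_*=\tfrac14\log\frac{(N_--1)(N_++1)}{(N_-+1)(N_+-1)}$, which equals \eqref{eq:internal_max_loc} by the elementary identity $\tfrac12\tanh^{-1}y=\tfrac14\log\frac{1+y}{1-y}$. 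The Jacobian of the leading-order map at $(x_*,a_*)$ has determinant $-4x_*\neq0$, so it is a local diffeomorphism; adding back the $\mathcal O(e^{-\mu\Lmin}+\mu e^{-2\mu\ell})$ remainders and invoking the Contraction Mapping Principle / Implicit Function Theorem (Theorem~\ref{thm:contraction_mapping}, Corollary~\ref{cor:contraction_smooth}, Theorem~\ref{thm:implicit_function}) exactly as in the pendant proof yields a unique root $(x,a)$ within $\mathcal O(e^{-\mu\Lmin})$ of $(x_*,a_*)$ (using $2\ell\geq2\Lmin>\Lmin$ to absorb the $\mu e^{-2\mu\ell}$ terms), depending $C^1$ on $\mu$. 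This proves \eqref{eq:a_answer} and \eqref{eq:k_answer_generic}; combining $p_{L_\pm}=\mathcal O(e^{-\mu\ell})$ with \eqref{eq:nlin_DTN_solution} and the scaling \eqref{scaling} gives the concentration bound \eqref{eq:norm_remainder_generic}.

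\emph{Step 3 (mass, energy, monotonicity).} These follow essentially verbatim from the pendant argument, now with the splitting $L^2(\Graph_\mu)=L^2(-\mu\ell,a)\oplus L^2(a,\mu\ell)\oplus L^2(\Graph^c)$ in the spirit of \eqref{estimate-charge-1-cn-loop}. Each of the first two summands contributes, through the expansion of $\int_0^{\xi}\dn(\cdot;k)^2$ in terms of $E(k),K(k)$ (Proposition~\ref{prop-elliptic}) with $\xi_\pm<K(k)$ and tails $\mathcal O(e^{-2L_\pm})$, the term $1-8x_*\mu\ell e^{-2\mu\ell}+\mathcal O(e^{-2\mu\ell})$ to $\|\Psi\|_{L^2}^2$, so that, summing the two halves and rescaling, $\mathcal Q=2\mu-16x_*\mu^2\ell e^{-2\mu\ell}+\mathcal O(\mu e^{-2\mu\ell})$, i.e.\ \eqref{eq:mass_answer_generic}. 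Differentiating the same splitting in $\mu$ and using the $C^1$ dependence of $k,a,p_{L_\pm},\Phi$ on $\mu$ together with \eqref{eq:dPhi_dp_est}--\eqref{eq:dPhi_dmu_est} gives $d\mathcal Q/d\mu=2+\mathcal O(\mu^2e^{-2\mu\ell})>0$. The leading energy contribution is $-\tfrac13\mu^3$ per half-bump by \eqref{estimate-energy-3}, hence $\mathcal E=-\tfrac23\mu^3+\mathrm o(\mu^3)$, and the critical-point identity $d\mathcal E/d\mu=-\mu^2\,d\mathcal Q/d\mu$ upgrades the error to $\mathcal O(\mu^4e^{-2\mu\ell})$, proving \eqref{eq:energy_answer_generic}. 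The only delicate point in the whole argument remains Step 2: isolating the clean leading system $xe^{\pm2a}=\tfrac{N_\mp-1}{N_\mp+1}$, checking that its solution reproduces the stated $a_*$ and $k$, and verifying non-degeneracy of the $2\times2$ Jacobian (where $N_\pm\geq2$, equivalently $x_*>0$, is essential).
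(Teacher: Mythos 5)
Your proposal follows essentially the same route as the paper's proof: the same two-equation matching system at $v_\pm$ via Lemma~\ref{lem:single_bump_DTN} and Theorem~\ref{thm:nlin_DTN}, the same rescaling $k-1=8e^{-2\mu\ell}x$, the same leading-order system $xe^{\pm 2a}=\frac{N_\mp-1}{N_\mp+1}$ solved by $(x_*,a_*)$, a contraction/implicit-function argument for the perturbed system, and the same half-bump splitting for the mass, energy and monotonicity estimates. Your explicit check that the leading-order Jacobian has determinant $-4x_*\neq 0$ makes precise a non-degeneracy that the paper only uses implicitly when it inverts the left-hand side of its system~\eqref{eq:matching_general2}; otherwise the two arguments coincide.
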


\begin{remark}
  \label{rem:case_N1_generic}
  Similar to Remark~\ref{rem:case_N1_pendant},
  estimate~\eqref{eq:mass_answer_generic} should also remain valid in
  the case $N_- = 1$ (or $N_+ = 1$).  Of course, if $N_- = 1$, then
  $v_-$ is a ``spurious'' vertex which can be absorbed into the edge
  thus increasing the length $\ell$ and producing a better estimate.
\end{remark}

\begin{proof}
  Denote by $a$ the location of the maximum of the solution on the internal edge
  $[-\mu \ell, \mu \ell]$ as in Fig.~\ref{fig:edge_generic}.  The
  distance from the vertex $v_-$ to the maximum is $\mu \ell+a$ and
  from the maximum to vertex $v_+$ is $\mu \ell-a$. Assume that $a$
  is defined in an $\mu$-independent interval $[-a_0,a_0]$ for
  a large $a_0 > 0$. Assume the degree of $v_-$ is $N_-+1$ and degree of $v_+$ is $N_++1$, with
  $N_\pm \geq 2$ and, without loss of generality, $N_- \geq N_+$.

  We will now find $a$ and $k$ such that there is a
  solution of the form $\Psi_{\rm n}(z-a)$ on the marked edge, see
  equation~\eqref{eq:dnoidal_universal}.  The distance from the
  maximum at $a$ to either vertex is of order $\mu$. Therefore the
  solution on both sides of the maximum satisfies the setting of
  Lemma~\ref{lem:single_bump_DTN} with the shared value of $k$ in the
  smaller of the two allowed regions, see
  Remark~\ref{rem:monotonicity_k_interval}. Solution values
  $p_{\mu \ell\pm a}$ on the vertices are exponentially small,
  $0 \leq p_{\mu \ell\pm a} \leq C_{\pm} e^{-\mu\ell}$ for $a \in (-a_0,a_0)$
  with $\mu$-independent constants $C_{\pm} > 0$.  Therefore
  we can apply Theorem~\ref{thm:nlin_DTN} resulting in the matching
  conditions
  \begin{equation}
    \label{eq:matching_general1}
    \begin{cases}
      -q_{\mu \ell+a} = N_- p_{\mu \ell+a} + \mathcal{R}^-(p_{\mu \ell + a},\mu),\\
      -q_{\mu \ell-a} = N_+ p_{\mu \ell-a} + \mathcal{R}^+(p_{\mu \ell - a},\mu),
    \end{cases}
  \end{equation}
  where the remainder terms satisfy the bounds
    \begin{eqnarray}
  \label{remainder-estimate-1-internal}
    \left\{ \begin{array}{l}
    |\mathcal{R}^{\pm}(p_{\mu \ell\pm a},\mu)| \leq C (p_{\mu \ell\pm a} e^{-\mu \Lmin} + p_{\mu \ell\pm a}^3), \\
    |\partial_{p_{\mu \ell\pm a}} \mathcal{R}^{\pm}(p_{\mu \ell\pm a},\mu)| \leq C(e^{-\mu \Lmin} + p_{\mu \ell\pm a}^2), \\
    | \partial_{\mu} \mathcal{R}^{\pm}(p_{\mu \ell\pm a},\mu) | \leq C \mu^{-1} p_{\mu \ell\pm a},\end{array} \right.
  \end{eqnarray}
  for some $C > 0$ independently of large $\mu$ and small $p$, similarly
  to the estimates (\ref{remainder-estimate-1}).  As before, solutions
  to (\ref{eq:matching_general1}) are in one-to-one correspondence
  with solutions $\Psi$ of the NLS with the desired properties.

  For convenience we rescale parameter $k$ by substituting
  \begin{equation}
  \label{scaling-internal}
    k-1 = 8e^{-2 \mu \ell} x,  \qquad x\in(x_-, x_+)
    \qquad x_{\pm} = \pm e^{-2 a_0} + \bigO{\mu e^{-2 \mu \ell}}.
  \end{equation}
Thanks to equations~\eqref{eq:single-bump-DtN} with $L = \mu \ell \pm a$ and scaling (\ref{scaling-internal})
we can write
  \begin{equation}
  \label{p-q-representation}
    \begin{cases}
     p_{\mu \ell \pm a} = 2 e^{-(\mu \ell \pm a)} \left[ 1 - x e^{\pm 2a} + \mathcal{R}^{\pm}_p(x,a,\mu) \right], \\
     q_{\mu \ell \pm a} = -2 e^{-(\mu \ell \pm a)} \left[ 1 + x e^{\pm 2a} + \mathcal{R}^{\pm}_q(x,a,\mu) \right],
    \end{cases}
  \end{equation}
  where $\mathcal{R}^{\pm}_p(x,a,\mu)$ and $\mathcal{R}^{\pm}_q(x,a,\mu)$ satisfy the same estimates as in
  \eqref{remainder-estimate-2} for every $a \in (-a_0,a_0)$ and $x \in (x_-,x_+)$ and
  an additional estimate due to the additional parameter $a$:
  \begin{eqnarray}
  \label{remainder-estimate-2-internal}
    |\partial_{a} \mathcal{R}^{\pm}_p(x,\mu) | \leq C \mu e^{-2\mu \ell},
  \end{eqnarray}
  for some $C > 0$ independently of large $\mu$ with $a \in (-a_0,a_0)$ and $x \in (x_-,x_+)$.
  This estimate comes from
  $$
  \frac{\partial p_{\mu \ell \pm a}}{\partial a} = \pm q_{\mu \ell \pm a}
  $$
  and \eqref{p-q-representation}, which gives
  $$
  \frac{\partial \mathcal{R}^{\pm}_p}{\partial a} = \pm \left( \mathcal{R}_p^{\pm} - \mathcal{R}_q^{\pm} \right)
  $$
  and thus gives \eqref{remainder-estimate-2-internal}. Substituting \eqref{scaling-internal} and
  \eqref{p-q-representation} into \eqref{eq:matching_general1} yields
  \begin{equation}
    \label{eq:matching_general2}
    \begin{cases}
      (1-N_-)e^{-a} + x (1+N_-) e^{a} = \widetilde{\mathcal{R}}^-(x,a,\mu),\\
      (1-N_+)e^{a} + x (1+N_+) e^{-a} = \widetilde{\mathcal{R}}^+(x,a,\mu),
    \end{cases}
  \end{equation}
  which is similar to \eqref{eq:matching_pendant2} and where the remainder terms
  $\widetilde{\mathcal{R}}^{\pm}(x,a,\mu)$ and their derivatives in $x$, $a$, and $\mu$
  satisfy estimates similar to the bounds (\ref{remainder-estimate-last}) with
  $|\partial_a  \widetilde{\mathcal{R}}^{\pm}| \leq C |\widetilde{\mathcal{R}}^{\pm}|$.

  Next, we prove that there is a
  solution to the system \eqref{eq:matching_general2}
  in the neighborhood of the point
  \begin{equation}
    \label{eq:solutions_ax}
    a_* = \frac12 \tanh^{-1}\left(\frac{N_- - N_+}{N_+ N_- - 1}\right),
    \qquad
    x_* = \sqrt{\frac{N_--1}{N_-+1}}\sqrt{\frac{N_+-1}{N_++1}},
  \end{equation}
  which are the solutions of system~\eqref{eq:matching_general2} with
  $\widetilde{\mathcal{R}}^{\pm}(x,a,\mu) \equiv 0$.  It can be easily
  seen that both $\tanh^{-1}$ and square roots are well-defined for
  $N_\pm \geq 2$.  Also, $a_* \in (-a_0,a_0)$ for a sufficiently large fixed $a_0 > 0$,
  whereas a neighborhood of $x_*$ belongs to the allowed region $(x_-, x_+)$
  for large enough $\mu$ because
  \begin{equation*}
    e^{-2 |a_*|} = \sqrt{ \frac{(N_+-1)(N_-+1)}{(N_--1)(N_++1)} } > x_*.
  \end{equation*}
  Applying the inverse of the (nonlinear) left-hand side
  of~\eqref{eq:matching_general2} to the right-hand side turns the
  system into a fixed-point problem. The map of the
  fixed-point problem is contractive in $(x_-,x_+) \times (-a_0,a_0)$.
  By the Contraction Mapping Principle
  (see Theorem \ref{thm:contraction_mapping}), there exists a unique
  solution of the system of two nonlinear equations
  (\ref{eq:matching_general2}) satisfying the estimate
 \begin{equation}
    \label{eq:x_answer_general}
    a = a_* + \bigO{e^{-\mu \Lmin}},
    \qquad
    x = x_* + \bigO{e^{-\mu \Lmin}},
  \end{equation}
  thus obtaining \eqref{eq:a_answer} and \eqref{eq:k_answer_generic}.
  Since $0 \leq p_{\mu \ell\pm a} \leq C_{\pm} e^{-\mu\ell}$ for $a \in (-a_0,a_0)$ and $x \in (x_-,x_+)$,
  the estimate (\ref{eq:norm_remainder_generic}) follows from (\ref{eq:nlin_DTN_solution})
  and the scaling transformation (\ref{scaling}).
In order to prove the expansion (\ref{eq:mass_answer_generic}), we partition
  \begin{equation}
\label{estimate-charge-1-cn-generic}
\| \Psi \|_{L^2(\Graph_{\mu})}^2 = \| \Psi \|_{L^2(-\mu \ell,a)}^2 + \| \Psi \|_{L^2(a,\mu \ell)}^2 + \| \Psi \|_{L^2(\Graph^c)}^2.
\end{equation}
Each of the two leading-order integrals in (\ref{estimate-charge-1-cn-generic}) is expanded
similarly to (\ref{estimate-charge-2-cn}), after which the expansion (\ref{estimate-charge-3-cn})
with (\ref{eq:k_answer_generic}) yields (\ref{eq:mass_answer_generic}).
The expansion (\ref{eq:energy_answer_generic}) is derived from a decomposition similar to
(\ref{estimate-charge-1-cn-generic}). Furthermore, in a similar fashion, the derivative
of $\mathcal{Q}$ may be estimated as $\frac{d \mathcal{Q}}{d\mu} = 2 + \mathcal{O}(\mu^2 e^{-2\mu \ell})$ establishing
monotonicity of $\mathcal{Q}$.
\end{proof}

\section{Search for the edge-localized state with smallest energy}
\label{sec:comparing}

The ground state of the constrained minimization problem
(\ref{minimizer}) with fixed mass $q$, when it exists, has the
following properties

\begin{proposition}
  \label{prop-ground-state}
  Let $\Phi \in H^1_{\Graph}$ be the ground state of the constrained
  minimization problem (\ref{minimizer}). Then
  \begin{enumerate}
  \item $\Phi$ is real and positive (up to a non-zero factor),
  \item $\Phi \in H^2_{\Graph}$ and is a solution to the stationary
    NLS equation (\ref{statNLS}) with some $\Lambda\in\mathbb{R}$,
  \item $\Phi$ is ``non-oscillatory'' on every edge $e$ of the graph:
    the number of preimages of any value $\phi \in \mathbb{R}$ in the
    open edge $e$ does not exceed 2,
    \begin{equation}
      \label{eq:number_preimages}
      \#\{x\in e : \Phi(x) = \phi\} \leq 2.
    \end{equation}
  \end{enumerate}
\end{proposition}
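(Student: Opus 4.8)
I plan to prove the three assertions in the order (2), (1), (3), since the Euler--Lagrange equation produced in (2) is used in the other two parts; these are by now standard structural facts about NLS ground states on metric graphs (cf.\ \cite{AdaSerTil_cvpde15,AdaSerTil_jfa16,Ada_mmnp16}). For part (2) I would apply the Lagrange multiplier rule: $\mathcal{E}$ and $\mathcal{Q}$ are $C^1$ on $H^1_{\Graph}$ thanks to the one-dimensional embeddings $H^1(e)\hookrightarrow L^\infty(e)\cap L^4(e)$ on each (finite or half-infinite) edge $e$, and since $\mathcal{Q}(\Phi)=q>0$ we have $\Phi\neq0$ and $d\mathcal{Q}(\Phi)\Phi=2q\neq0$, so $\{\mathcal{Q}=q\}$ is a $C^1$ manifold near $\Phi$ and there is $\Lambda\in\RR$ with $d\mathcal{E}(\Phi)=\Lambda\,d\mathcal{Q}(\Phi)$; this is the weak form of \eqref{statNLS}, and testing with functions supported near a single vertex yields the Neumann--Kirchhoff conditions. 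Regularity follows from a one-dimensional bootstrap: on each edge $\Phi\in H^1\hookrightarrow L^\infty$, so $(\Lambda+2|\Phi|^2)\Phi\in L^2(e)$, hence $\Phi''\in L^2(e)$ and $\Phi\in H^2(e)$; together with continuity and the Kirchhoff conditions, $\Phi\in H^2_{\Graph}$.

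For part (1) I would compare $\Phi$ with $|\Phi|$: one has $\mathcal{Q}(|\Phi|)=\mathcal{Q}(\Phi)$, $\||\Phi|\|_{L^4}=\|\Phi\|_{L^4}$, and the diamagnetic inequality $|\partial_x|\Phi||\le|\partial_x\Phi|$ a.e., so $\mathcal{E}(|\Phi|)\le\mathcal{E}(\Phi)=E_q$; hence $|\Phi|$ is also a minimizer and equality holds. By (2), $\Phi_0:=|\Phi|\ge0$ solves \eqref{statNLS}, and $\Phi_0>0$ everywhere on the connected graph $\Graph$: if $\Phi_0$ vanished at an interior point of an edge then its derivative would vanish there too, and uniqueness for the ODE $\Phi_0''=-(\Lambda+2\Phi_0^2)\Phi_0$ would force $\Phi_0\equiv0$ on that edge; if $\Phi_0$ vanished at a vertex, all outgoing derivatives there are $\ge0$ with vanishing sum, hence all vanish, and $\Phi_0\equiv0$ on every incident edge; propagating through $\Graph$ gives $\Phi_0\equiv0$, contradicting $q>0$ (one may instead invoke the strong maximum principle). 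Since $\Phi_0>0$, equality in the diamagnetic inequality forces the phase of $\Phi$ to be locally, hence globally, constant $e^{i\gamma}$ on $\Graph$, so $\Phi=e^{i\gamma}\Phi_0$ with $\Phi_0$ real and positive.

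For part (3), which genuinely exploits minimality rather than mere stationarity, I would argue by contradiction with a rearrangement localized to one edge, following \cite{AdaSerTil_cvpde15,AdaSerTil_jfa16}. Suppose some value is attained at least three times by $\Phi|_e$ on an edge $e$; then $\Phi|_e$, being real analytic (a solution of an ODE with polynomial nonlinearity) and non-constant, has isolated critical points and is \emph{not} unimodal, and I would extract a closed subinterval $[a,b]\subset e$ on which $\Phi$ is non-monotone and satisfies either $\Phi\ge\Phi(a)=\Phi(b)$ throughout, or $\Phi\le\Phi(a)=\Phi(b)$ throughout, with this common endpoint value equal to the extreme value of $\Phi$ on $[a,b]$. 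Replacing $\Phi|_{[a,b]}$ by its symmetric decreasing rearrangement (symmetric increasing in the second case) leaves $\Phi(a)$ and $\Phi(b)$ unchanged, so the new function $\widetilde\Phi$ still lies in $H^1_{\Graph}$; being equimeasurable with $\Phi$ on $[a,b]$ and equal to $\Phi$ elsewhere, it has $\mathcal{Q}(\widetilde\Phi)=q$ and $\|\widetilde\Phi\|_{L^4}=\|\Phi\|_{L^4}$, while P\'olya--Szeg\H{o} gives $\|\partial_x\widetilde\Phi\|_{L^2}\le\|\partial_x\Phi\|_{L^2}$, strictly since $\Phi|_{[a,b]}$ was not unimodal. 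Hence $\mathcal{E}(\widetilde\Phi)<E_q$, contradicting the minimality of $\Phi$.

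I expect the step of largest friction to be the combinatorial bookkeeping in (3): checking that a failure of \eqref{eq:number_preimages} really produces a subinterval $[a,b]$ of the special ``hump'' or ``valley'' type, so that the rearranged function glues back into $H^1_{\Graph}$ and the Dirichlet energy drops \emph{strictly} (the degenerate case $\Phi$ constant on $e$ being outside the intended scope). By contrast, parts (1) and (2) are routine once one has the $C^1$-differentiability of $\mathcal{E},\mathcal{Q}$ on $H^1_{\Graph}$ and the one-dimensional regularity bootstrap.
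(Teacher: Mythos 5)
Your treatments of (1) and (2) are standard and essentially correct (the paper simply cites \cite[Prop.~3.3]{AdaSerTil_cvpde15} for these), but the extraction step at the heart of your part (3) fails, and it fails in exactly the configuration the paper's proof is built to handle. You claim that a violation of \eqref{eq:number_preimages} yields a closed subinterval $[a,b]\subset e$ with $\Phi(a)=\Phi(b)$ equal to the extreme value of $\Phi$ on $[a,b]$ \emph{and} with $\Phi|_{[a,b]}$ not unimodal. Consider the realizable configuration (a piece of a dnoidal wave spanning slightly more than one period) in which, along $e=(u,v)$, $\Phi$ decreases from $\Phi(u)$ to an interior local minimum $m$, increases to an interior local maximum $M$, then decreases to $\Phi(v)$, with $m<\Phi(v)<\Phi(u)<M$. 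Every $\phi\in(\Phi(v),\Phi(u))$ then has exactly three preimages, so \eqref{eq:number_preimages} is violated. Yet since $m$ and $M$ are the global extrema of $\Phi$ on $e$, any subinterval with $\Phi(a)=\Phi(b)=\min_{[a,b]}\Phi$ is a single hump around the maximum, any with $\Phi(a)=\Phi(b)=\max_{[a,b]}\Phi$ is a single valley around the minimum, and the interval $[y_1,y_3]$ between the outer preimages of a triply-covered value is neither above nor below its endpoint value. So every admissible $[a,b]$ is unimodal and your strictness claim evaporates. Worse, you cannot fall back on asymmetry of a single hump: the first integral $(\Phi')^2+\Lambda\Phi^2+\Phi^4=\mathrm{const}$ on the edge makes $|\Phi'|$ a function of $\Phi$ alone, so every hump and valley of an NLS solution is exactly symmetric and its symmetric rearrangement changes nothing. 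In this case your argument produces no competitor with strictly smaller energy.

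This is precisely why the paper does not rearrange a single subinterval in place. It first extracts an interval of \emph{values} $(\phi_-,\phi_+)$ each attained at least three times, normalizes so that $\Phi(u)<\phi_-$ and $\Phi(v)\notin(\phi_-,\phi_+)$, and then, in each of the two resulting cases, performs a preliminary \emph{reordering} of the subintervals of $e$ --- e.g.\ collecting all of $\{\Phi<\phi_-\}$, then all preimages of $(\phi_-,\phi_+)$, then all of $\{\Phi>\phi_+\}$ --- which glues back into an admissible $H^1_\Graph$ function with unchanged vertex values and assembles the triply-covered set of values into a single block with controlled endpoint levels. Only then is a monotone (respectively symmetric) rearrangement applied to that block, where the preimage count $N(t)\geq 3$ versus $1$ (respectively $2$) forces the strict drop in Dirichlet energy. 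Your plan is missing this reordering idea; without it, the configuration above escapes the argument entirely.
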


\begin{proof}
  The first two properties are established in \cite[Proposition
  3.3]{AdaSerTil_cvpde15}. The last property is a consequence of the
  Polya--Szeg\"{o} inequality (see \cite[Proposition
  3.1]{AdaSerTil_cvpde15}).  Informally, if
  \eqref{eq:number_preimages} is violated, we can rearrange the
  function $\Phi \in H^1_{\Gamma}$ in a way that lowers energy while conserving the mass.

  To give full details, if \eqref{eq:number_preimages} is violated, one can
  find an interval $(\phi_-, \phi_+)$ of values with 3 or more
  preimages.  Starting with $\phi$, we can easily show this by
  considering three cases: all preimages of $\phi$ are local extrema,
  none of preimages of $\phi$ are local extrema, and at least one is a
  local extremum and at least one is not.

  Without loss of generality (and passing to a smaller interval if
  necessary) we can assume that $\Phi(u) < \phi_-$ and $\Phi(v)
  \not\in (\phi_-, \phi_+)$, where $e=(u,v)$.  There are now two cases
  to consider, $\Phi(v) > \phi_+$ and $\Phi(v) < \phi_-$, shown
  on the top left and bottom left panels of Fig. \ref{fig:rearrange} respectively.

  \begin{figure}
    \centering
    \includegraphics[scale=1]{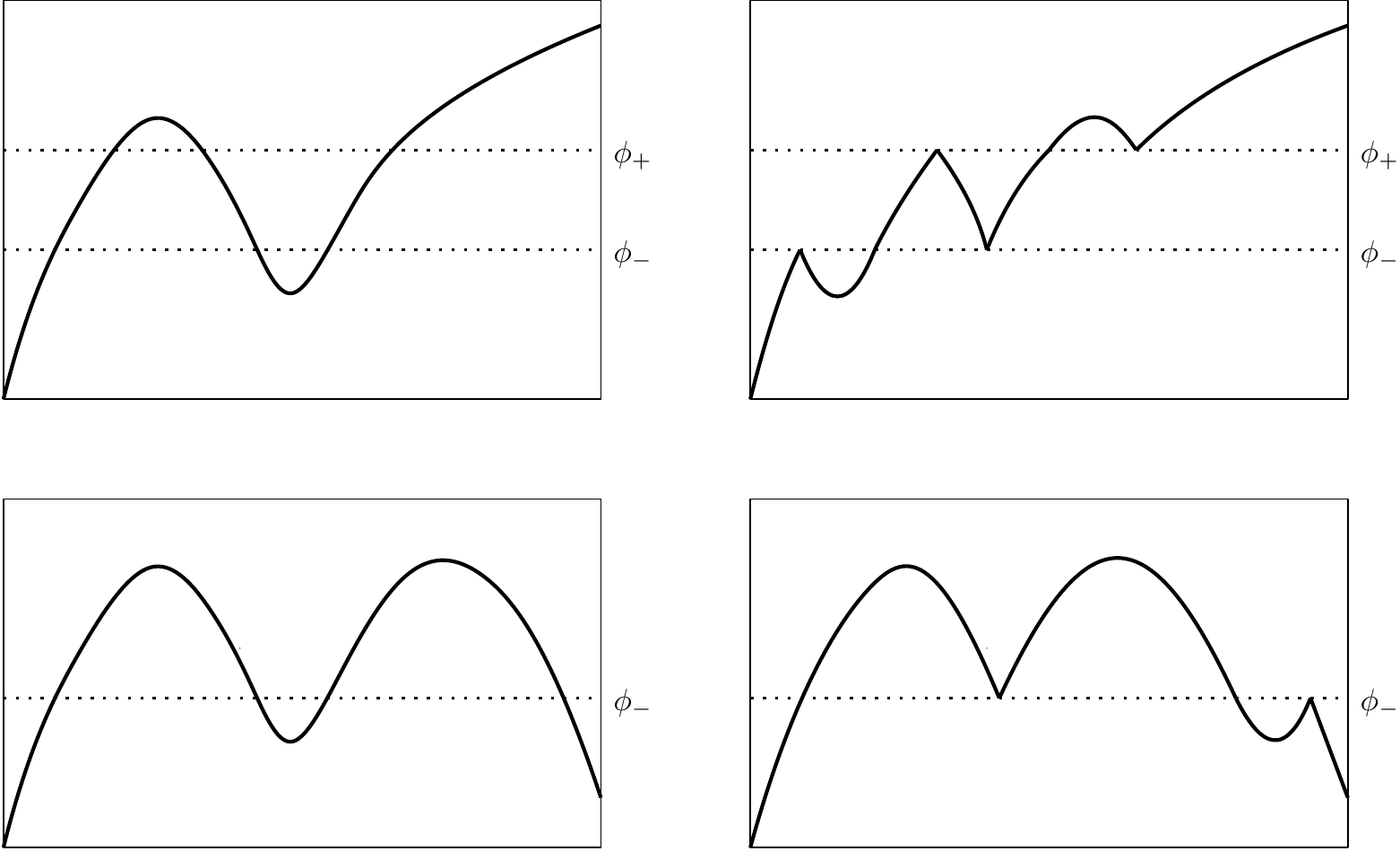}
    \caption{Two cases of the function $\Phi$ (left column) and their
      corresponding preliminary rearrangements (right column)
      described in the proof of Proposition~\ref{prop-ground-state}.}
    \label{fig:rearrange}
  \end{figure}

  In the former case, we rearrange the function $\Phi$ on the edge $e$
  by first collecting together all open intervals where $\Phi<\phi_-$
  (preserving their relative order in the edge $e$).  Then we collect
  all pre-images of the interval $(\phi_-, \phi_+)$ and, finally, all
  open intervals where $\Phi>\phi_+$, see Fig.~\ref{fig:rearrange} (top
  right). It is easy to see that the resulting function is continuous
  (in particular, its values on the vertices $u$ and $v$ are
  unchanged) and piecewise differentiable, thus an admissible
  $H^1_{\Graph}$ test function.  Applying monotone rearrangement to
  the middle part (pre-images of the interval $(\phi_-, \phi_+)$) we
  obtain an equimesurable with $\Phi$ function with a smaller
  derivative and therefore strictly smaller energy for the same mass.

  In the latter case (both $\Phi(u) < \phi_-$ and $\Phi(v) < \phi_-$),
  we retain the first interval of $\Phi^{-1}(-\infty, \phi_-)$, then
  place all preimages of $(\phi_-, \infty)$, then all remaining
  preimages of $(-\infty, \phi_-)$.  As before, we obtain an
  admissible $H^1(\Graph)$ function, see
  Fig.~\ref{fig:rearrange} (bottom right).  We apply \emph{symmetric
    rearrangement} to its middle part (preimages of
  $(\phi_-, \infty)$) which has at least 2 preimages for all values
  and at least 3 for values in $(\phi_-, \phi_+)$.  Therefore the
  energy becomes strictly smaller.
\end{proof}

The edge-localized states constructed in Theorems
\ref{thm:localized_pendant}, \ref{thm:localized_loop}, and
\ref{thm:generic_edge} satisfy the properties listed in Proposition~\ref{prop-ground-state} and are therefore good
candidates for the ground state. Moreover, by an application of Lemma~\ref{lem:suppressed_solution},
each edge-localized state has a unique maximum, which is also
suggestive of a ground state. (There is currently no proof
that a ground state must have unique maximum on the whole graph.)

It is therefore relevant to ask what characterizes an edge that
would support a localized solution $\Phi \in H^2_{\Graph}$ with the
smallest energy $E_q$ for a given mass $q$.  This is not
straightforward since we have expressions for both energy and mass
as functions of the Lagrange multiplier $\Lambda = -\mu^2$, which
now needs to be eliminated.  Additionally, in many cases the
asymptotic representations of $\mathcal{Q}(\mu)$ and
$\mathcal{E}(\mu)$ differ only in the exponentially-small correction
to the leadign term.

Section~\ref{sec:comparison_lemma} provides a tool that will enable
comparison of the energy $\mathcal{E} = \mathcal{E}(\Phi)$ for a given
mass based on comparison of mass and energy for given Lagrange
multiplier $\Lambda$.
Section~\ref{sec:comparison_results} applies
the tool to distinguish between the pendant, looping, and internal
edges of different lengths and to provide the proof of
Theorem~\ref{thm:main_modest2} in the case of bounded
graphs. Corollary~\ref{cor:main_modest3} is proven in
Section~\ref{sec:corollary} for the case of unbounded
graphs. Section~\ref{sec:ground_state_heuristics} discusses
the relevance of the edge-localized states in the search for the ground state.

\subsection{Comparison lemma}
\label{sec:comparison_lemma}

Let $\Phi \in H^2_{\Graph}$ be a solution to the stationary NLS equation (\ref{statNLS}) with the Lagrange multiplier
$\Lambda$ and define $\mathcal{Q} := \mathcal{Q}(\Phi)$ and
$\mathcal{E} := \mathcal{E}(\Phi)$ by (\ref{energy}).
Since $\Phi$ is a critical point of the augmented energy $S_{\Lambda}(U) := \mathcal{E}(U) - \Lambda \mathcal{Q}(U)$,
it follows that that $\mathcal{Q}$ and $\mathcal{E}$ satisfy the differential equation
\begin{equation}
\label{slope-E-versus-Lambda}
\frac{d \mathcal{E}}{d \Lambda} = \Lambda \frac{d \mathcal{Q}}{d \Lambda},
\end{equation}
provided they are $C^1$ in $\Lambda$. The following comparison lemma is deduced from analysis of the differential equation
(\ref{slope-E-versus-Lambda}).

\begin{lemma}
  \label{lem:comparison}
  Assume that there are two solution branches with the $C^1$ maps $\Lambda \mapsto \mathcal{Q}_{1,2}(\Lambda)$ and
  $\Lambda \mapsto \mathcal{E}_{1,2}(\Lambda)$, where $\Lambda \in (-\infty,\Lambda_0)$ for some $\Lambda_0 < 0$, satisfying
  \begin{align}
    \label{eq:Qcomparison}
    \lim_{\Lambda \to -\infty} |\Lambda| \left| \mathcal{Q}_2(\Lambda) - \mathcal{Q}_1(\Lambda) \right| = 0
  \end{align}
  and
    \begin{align}
    \label{eq:Ecomparison}
    \lim_{\Lambda \to -\infty} \left| \mathcal{E}_2(\Lambda) - \mathcal{E}_1(\Lambda) \right| = 0,
    \end{align}
  If $\mathcal{Q}_1(\Lambda) < \mathcal{Q}_2(\Lambda)$ for every $\Lambda \in (-\infty,\Lambda_0)$, then $\mathcal{E}_1(\Lambda) > \mathcal{E}_2(\Lambda)$.
  If, additionally, $\mathcal{Q}_{1,2}$ are decreasing on $(-\infty,\Lambda_0)$ and
  the values $\Lambda_1, \Lambda_2 \in (-\infty, \Lambda_0)$ are such that $\mathcal{Q}_2(\Lambda_2) = \mathcal{Q}_1(\Lambda_1) = q$, then
  $\mathcal{E}_1(\Lambda_1) > \mathcal{E}_2(\Lambda_2)$.
\end{lemma}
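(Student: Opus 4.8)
The plan is to rely on the single structural identity available on each branch, $\mathcal{E}_i'(\Lambda)=\Lambda\,\mathcal{Q}_i'(\Lambda)$ (the relation \eqref{slope-E-versus-Lambda} applied to each of the two families), and to convert the decay hypotheses \eqref{eq:Qcomparison} and \eqref{eq:Ecomparison} into sign information for suitable one-variable auxiliary functions obtained by integration. Throughout write $\Delta\mathcal{Q}(\Lambda):=\mathcal{Q}_2(\Lambda)-\mathcal{Q}_1(\Lambda)>0$ and $\Delta\mathcal{E}(\Lambda):=\mathcal{E}_2(\Lambda)-\mathcal{E}_1(\Lambda)$.

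For the first conclusion (same $\Lambda$), set $G(\Lambda):=\Delta\mathcal{E}(\Lambda)-\Lambda\,\Delta\mathcal{Q}(\Lambda)$. Differentiating and using $\mathcal{E}_i'=\Lambda\mathcal{Q}_i'$ on both branches, the $\Lambda(\mathcal{Q}_2'-\mathcal{Q}_1')$ contributions cancel, so $G'(\Lambda)=-\Delta\mathcal{Q}(\Lambda)<0$ and $G$ is strictly decreasing on $(-\infty,\Lambda_0)$. Hypotheses \eqref{eq:Qcomparison} (note $|\Lambda\,\Delta\mathcal{Q}|=|\Lambda|\,|\Delta\mathcal{Q}|$) and \eqref{eq:Ecomparison} give $\lim_{\Lambda\to-\infty}G(\Lambda)=0$. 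A strictly decreasing function tending to $0$ at $-\infty$ is strictly negative on the rest of the interval, so $\Delta\mathcal{E}(\Lambda)<\Lambda\,\Delta\mathcal{Q}(\Lambda)$; since $\Lambda<0$ and $\Delta\mathcal{Q}(\Lambda)>0$, the right-hand side is negative, whence $\mathcal{E}_1(\Lambda)>\mathcal{E}_2(\Lambda)$.

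For the second conclusion, use that the extra hypothesis (each $\mathcal{Q}_i$ decreasing) makes $\mathcal{Q}_i$ invertible; write $\Lambda_i(\cdot)$ for the inverse and put $\widetilde{\mathcal{E}}_i(q):=\mathcal{E}_i(\Lambda_i(q))$, the branch energy as a function of mass. The chain rule together with $\mathcal{E}_i'=\Lambda\mathcal{Q}_i'$ collapses to the clean relation $\widetilde{\mathcal{E}}_i{}'(q)=\Lambda_i(q)$. From $\mathcal{Q}_1<\mathcal{Q}_2$ pointwise and monotonicity one gets $\Lambda_1(q)<\Lambda_2(q)$ (evaluate $\mathcal{Q}_1$ at $\Lambda_2(q)$ and compare with $\mathcal{Q}_1(\Lambda_1(q))=q$; since $\mathcal{Q}_1$ is decreasing this forces $\Lambda_1(q)<\Lambda_2(q)$). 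Hence $h(q):=\widetilde{\mathcal{E}}_1(q)-\widetilde{\mathcal{E}}_2(q)$ satisfies $h'(q)=\Lambda_1(q)-\Lambda_2(q)<0$ on the common range of masses, which contains the given $q=\mathcal{Q}_1(\Lambda_1)=\mathcal{Q}_2(\Lambda_2)$. If we show $h(q)\to0$ as $q$ runs up to the top of that range --- equivalently, as $\Lambda_1(q)\to-\infty$ --- then strict monotonicity of $h$ forces $h(q)>0$, that is, $\mathcal{E}_1(\Lambda_1)>\mathcal{E}_2(\Lambda_2)$.

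Establishing this last limit is the one genuinely delicate point, because $\widetilde{\mathcal{E}}_1(q)$ and $\widetilde{\mathcal{E}}_2(q)$ individually diverge (like $-\tfrac13|\Lambda|^{3/2}$ in the applications), so it is their difference that must be controlled. Split $h(q)=\big(\mathcal{E}_1(\Lambda_1(q))-\mathcal{E}_2(\Lambda_1(q))\big)+\big(\mathcal{E}_2(\Lambda_1(q))-\mathcal{E}_2(\Lambda_2(q))\big)$. The first bracket tends to $0$ directly by \eqref{eq:Ecomparison}. For the second, write it as $\int_{\Lambda_2(q)}^{\Lambda_1(q)}\Lambda\,\mathcal{Q}_2'(\Lambda)\,d\Lambda$; because $\mathcal{Q}_2'$ does not change sign and $|\Lambda|\le|\Lambda_1(q)|$ on the interval of integration, this is bounded in absolute value by $|\Lambda_1(q)|\cdot\big|\mathcal{Q}_2(\Lambda_1(q))-\mathcal{Q}_2(\Lambda_2(q))\big|=|\Lambda_1(q)|\cdot\big(\mathcal{Q}_2(\Lambda_1(q))-\mathcal{Q}_1(\Lambda_1(q))\big)$, which tends to $0$ by \eqref{eq:Qcomparison} as $\Lambda_1(q)\to-\infty$. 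This closes the argument; the only mild care needed elsewhere is the bookkeeping that the common range of masses is nonempty and that $\Lambda_i(q)\to-\infty$ at its upper end, both of which follow from monotonicity of $\mathcal{Q}_{1,2}$ and the pointwise inequality $\mathcal{Q}_1<\mathcal{Q}_2$.
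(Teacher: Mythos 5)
Your proof is correct. For the first assertion you and the paper are doing the same computation in different clothing: your auxiliary function $G(\Lambda)=\big(\mathcal{E}_2-\mathcal{E}_1\big)-\Lambda\big(\mathcal{Q}_2-\mathcal{Q}_1\big)$, with $G'=-(\mathcal{Q}_2-\mathcal{Q}_1)<0$ and $G\to 0$ at $-\infty$, is precisely the integration-by-parts identity \eqref{eq:E12_diff} of the paper read as a monotonicity statement, with \eqref{eq:Qcomparison}--\eqref{eq:Ecomparison} killing the boundary term in both versions. For the second assertion your route differs genuinely in execution: you reparametrize by mass, use $\frac{d}{dq}\mathcal{E}_i(\Lambda_i(q))=\Lambda_i(q)$ to conclude that $h(q)=\widetilde{\mathcal{E}}_1(q)-\widetilde{\mathcal{E}}_2(q)$ is strictly decreasing, and then show $h\to 0$ at the top of the common mass range; the paper instead stays in the $\Lambda$ variable and chains the lower bound from the first part with an explicit evaluation of $\mathcal{E}_2(\Lambda_1)-\mathcal{E}_2(\Lambda_2)$ to arrive at $\mathcal{E}_1(\Lambda_1)-\mathcal{E}_2(\Lambda_2)>\int_{\Lambda_1}^{\Lambda_2}\big(\mathcal{Q}_2(s)-q\big)\,ds\ge 0$. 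Your packaging makes the mechanism transparent (the slope of energy versus mass is the Lagrange multiplier, and the branch with smaller $\mathcal{Q}$ at fixed $\Lambda$ carries the more negative multiplier at fixed $q$), but it costs you the invertibility of $\mathcal{Q}_i$: the lemma only assumes $\mathcal{Q}_{1,2}$ decreasing, so you need strict monotonicity for $\Lambda_i(q)$ to exist, and the chain rule as you invoke it additionally needs $\Lambda_i$ differentiable, i.e.\ $\mathcal{Q}_i'\neq 0$; this is harmless in every application in the paper and can in any case be patched by a mean-value argument on the difference quotient of $\widetilde{\mathcal{E}}_i$. The paper's version is marginally more elementary and works verbatim for weakly decreasing $\mathcal{Q}_i$. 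Note finally that the two arguments converge at the decisive step: both pass through the splitting involving $\mathcal{E}_2(\Lambda_1)-\mathcal{E}_2(\Lambda_2)$ and both use \eqref{eq:Qcomparison} to control the quantity $|\Lambda_1|\big(\mathcal{Q}_2(\Lambda_1)-\mathcal{Q}_1(\Lambda_1)\big)$, so the underlying estimates are identical.
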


\begin{proof}
  Integrating equation (\ref{slope-E-versus-Lambda}) by parts we get
  \begin{align}
  \nonumber
    \mathcal{E}_1(\Lambda) - \mathcal{E}_2(\Lambda)
    & =  \int_{-\infty}^{\Lambda} \frac{d}{d s} \left[ \mathcal{E}_1(s) - \mathcal{E}_2(s) \right] d s \\
    \nonumber
    & =  \int_{-\infty}^{\Lambda} s \frac{d}{d s} \left[ \mathcal{Q}_1(s) - \mathcal{Q}_2(s) \right] d s \\
    & =  \Lambda \left[ \mathcal{Q}_1(\Lambda) - \mathcal{Q}_2(\Lambda) \right]
      - \int_{-\infty}^{\Lambda} \left[ \mathcal{Q}_1(s) - \mathcal{Q}_2(s) \right] ds,
    \label{eq:E12_diff}
  \end{align}
  where the boundary terms as $\Lambda \to -\infty$ vanish due to
  (\ref{eq:Qcomparison}) and (\ref{eq:Ecomparison}).  If
  $\mathcal{Q}_1(\Lambda) < \mathcal{Q}_2(\Lambda)$ for every
  $\Lambda \in (-\infty,\Lambda_0)$ with negative $\Lambda_0$, then
  the right-hand side of (\ref{eq:E12_diff}) is strictly positive and
  $\mathcal{E}_1(\Lambda) > \mathcal{E}_2(\Lambda)$ for every
  $\Lambda \in (-\infty,\Lambda_0)$.

 In order to prove the second assertion, we observe the following.
 It follows from $\mathcal{Q}_1(\Lambda) < \mathcal{Q}_2(\Lambda)$ for every
  $\Lambda \in (-\infty,\Lambda_0)$ that if $\mathcal{Q}_2(\Lambda_2) = \mathcal{Q}_1(\Lambda_1) = q$,
  then $\Lambda_1 < \Lambda_2$, see Fig.~\ref{figQ}.  We can now expand
  \begin{equation*}
    \mathcal{E}_1(\Lambda_1) - \mathcal{E}_2(\Lambda_2)
    = \mathcal{E}_1(\Lambda_1) - \mathcal{E}_2(\Lambda_1) + \mathcal{E}_2(\Lambda_1) - \mathcal{E}_2(\Lambda_2).
  \end{equation*}
  Using (\ref{eq:E12_diff}) we can estimate
  \begin{equation*}
    \mathcal{E}_1(\Lambda_1) - \mathcal{E}_2(\Lambda_1)
    > \Lambda_1 \left[ \mathcal{Q}_1(\Lambda_1) - \mathcal{Q}_2(\Lambda_1) \right].
  \end{equation*}
  We also have
  \begin{align*}
    \mathcal{E}_2(\Lambda_1) - \mathcal{E}_2(\Lambda_2)
    &= -\int_{\Lambda_1}^{\Lambda_2} \frac{d\mathcal{E}_2}{d s} ds \\
    &= -\int_{\Lambda_1}^{\Lambda_2} s \frac{d \mathcal{Q}_2}{d s} d s \\
    &= \Lambda_1 \mathcal{Q}_2(\Lambda_1) - \Lambda_2 \mathcal{Q}_2(\Lambda_2) +
      \int_{\Lambda_1}^{\Lambda_2} \mathcal{Q}_2(s) ds.
  \end{align*}
  Combining the two expressions and denoting $\mathcal{Q}_2(\Lambda_2) =
  \mathcal{Q}_1(\Lambda_1) = q$ we get
  \begin{align*}
    \mathcal{E}_1(\Lambda_1) - \mathcal{E}_2(\Lambda_2)
    &> \Lambda_1 \mathcal{Q}_1(\Lambda_1) - \Lambda_2 \mathcal{Q}_2(\Lambda_2)
      + \int_{\Lambda_1}^{\Lambda_2} \mathcal{Q}_2(s) d s \\
    &= -(\Lambda_2-\Lambda_1) q + \int_{\Lambda_1}^{\Lambda_2} \mathcal{Q}_2(s) d s \\
    &= \int_{\Lambda_1}^{\Lambda_2}  (\mathcal{Q}_2(s)-q) d s.
  \end{align*}
  Since $\mathcal{Q}_2$ is decreasing on $(-\infty,\Lambda_0)$, we have $\mathcal{Q}_2(\Lambda) \geq \mathcal{Q}_2(\Lambda_2) = q$
  for every $\Lambda \in (\Lambda_1,\Lambda_2)$, see Fig.~\ref{figQ}, so that the previous expression implies that
  $\mathcal{E}_1(\Lambda_1) - \mathcal{E}_2(\Lambda_2) > 0$.
\end{proof}

\begin{figure}
\begin{center}
  \includegraphics[width=12cm]{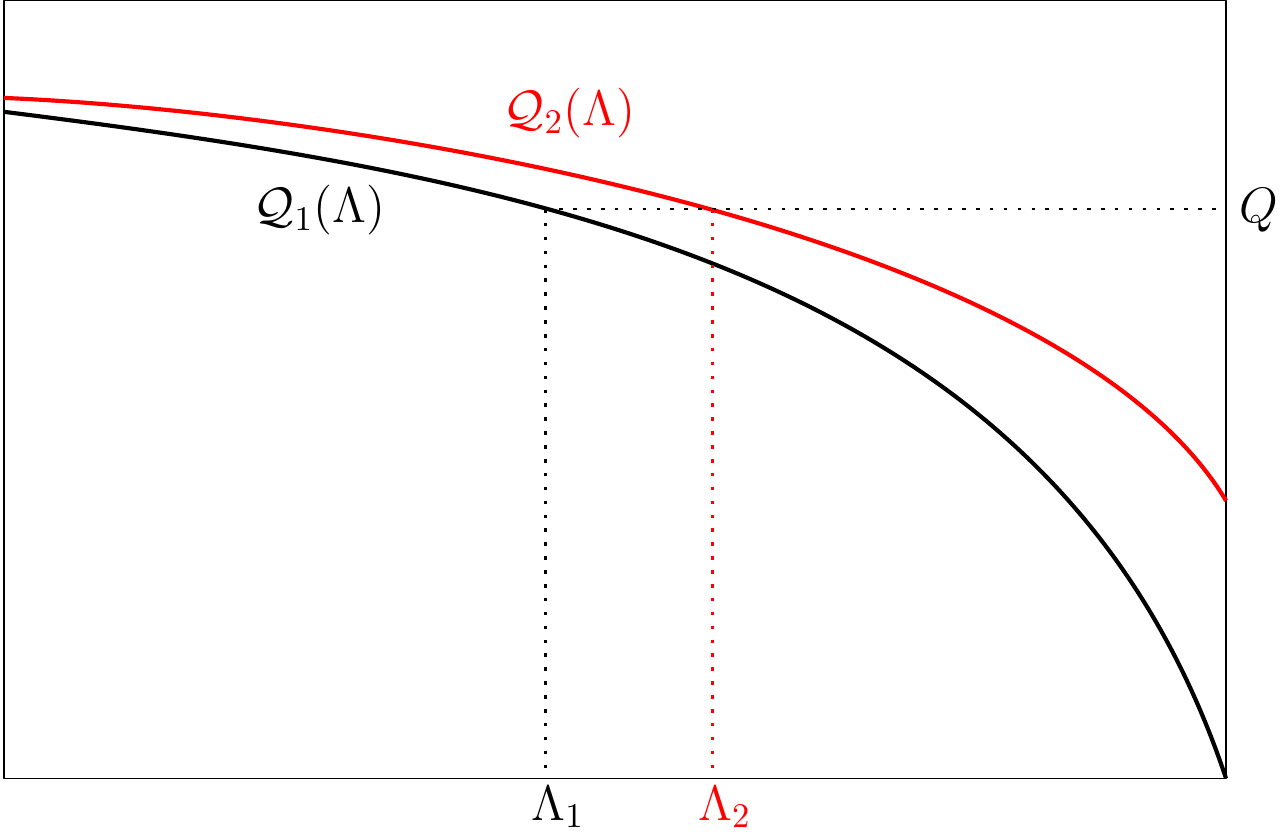}
\end{center}
\caption{Schematic diagram of the maps $\Lambda \mapsto \mathcal{Q}_{1,2}$ in Lemma \ref{lem:comparison}.}
\label{figQ}
\end{figure}

\begin{remark}
Lemma \ref{lem:comparison} presents a surprising fact that if two $C^1$ monotonic maps $\Lambda \mapsto \mathcal{Q}_{1,2}$
for the two branches of the edge-localized states converge to each other, then
the stationary state with the minimal $\mathcal{Q}$ for fixed (large negative) Lagrange multiplier $\Lambda$
corresponds to the maximal $\mathcal{E}$ for fixed (large positive) mass $q$.
Because of a trivial sign error, the swap between the two branches of stationary solutions on the $(\Lambda,\mathcal{Q})$
and $(\mathcal{Q},\mathcal{E})$ diagrams was overlooked in \cite{MarPel_amrx16} for the particular case of the dumbbell graph.
\end{remark}

\subsection{Proof of Theorem~\ref{thm:main_modest2}}
\label{sec:comparison_results}

Consider a compact graph $\Graph$, i.e.~a graph with finitely
many edges, each of finite length.  We will deduce which edge of the
graph $\Graph$ gives an edge-localized state of smallest energy
$\mathcal{E}$ for a given (large) mass $\mathcal{Q}$ providing the
proof of Theorem~\ref{thm:main_modest2}.

The first comparison is between a pendant and a non-pendant (looping or internal) edge.
For a pendant edge, Theorem \ref{thm:localized_pendant} gives to the leading term,
\begin{equation}
  \label{eq:QE_pendant}
  \mathcal{Q} \sim \mu, \qquad \mathcal{E} \sim -\frac13 \mu^3, \quad \Rightarrow \quad \mathcal{E}_p \sim -\frac{1}{3} \mathcal{Q}^3,
\end{equation}
whereas for a non-pendant edge, Theorems~\ref{thm:localized_loop} and
\ref{thm:generic_edge} give to the leading term
\begin{equation}
  \label{eq:QE_nonpendant}
  \mathcal{Q} \sim 2\mu, \qquad \mathcal{E} \sim -\frac23 \mu^3, \quad
  \Rightarrow \quad \mathcal{E}_{np} \sim -\frac{1}{12} \mathcal{Q}^3
  > \mathcal{E}_p.
\end{equation}
Therefore, \emph{for a given mass $\mathcal{Q}$, localizing on a pendant edge of
  any length is preferable to localizing on a non-pendant edge}.

For comparing similar edges, we can apply Lemma~\ref{lem:comparison}
where the assumptions (\ref{eq:Qcomparison}), (\ref{eq:Ecomparison}), and
the monotonicity of the map $\Lambda \mapsto \mathcal{Q}$ have been
verified in Theorems \ref{thm:localized_pendant}, \ref{thm:localized_loop}, and
\ref{thm:generic_edge}.

Comparing two pendant edges via
equation~\eqref{eq:mass_answer_pendant}, we see from the
exponentially small term that \emph{the state localized on a longer
  pendant edge} has larger mass $\mathcal{Q}$ for fixed $\mu$.  Hence,
by Lemma \ref{lem:comparison}, it has smaller $\mathcal{E}$ at fixed
mass $\mathcal{Q}$.  If two edges have the same length, \emph{the
  pendant edge incident to fewer edges is more energetically optimal}.

Comparing two non-pendant edges via
equations~\eqref{eq:mass_answer_loop} and
\eqref{eq:mass_answer_generic} we see that \emph{the looping edges
  incident to $N=1$ or $2$ edges} are energetically favorable since
$\mathcal{Q} > 2 \mu$ for $N = 1$ or $\mathcal{Q} \approx 2 \mu$ for
$N = 2$, whereas $\mathcal{Q} < 2\mu$ for a looping edge with
$N \geq 3$ or an internal edge. Moreover, \emph{the shorter looping
  edge with $N = 1$} has smaller energy $\mathcal{E}$ at fixed mass
$\mathcal{Q}$.  No conclusion on the lengths can be drawn for the
looping edge with $N = 2$ unless the higher-order exponentially small
correction is computed and analyzed.

For the looping edge incident to $N \geq 3$ edges and for the internal edges,
we can see from equations~\eqref{eq:mass_answer_loop} and
\eqref{eq:mass_answer_generic} that the length of the
edge is the primary factor (\emph{the longer the edge, the lower the
energy}).  To break a tie in the case of two edges of the same length
\emph{the energy is lowest on the edge with the smaller}
\begin{equation}
  \label{eq:N_factors}
  \frac{N-2}{N+2} \qquad \mbox{or} \qquad \sqrt{\frac{N_--1}{N_-+1}}
    \sqrt{\frac{N_+-1}{N_++1}}.
\end{equation}

Combining all comparisons together provides the proof of
Theorem~\ref{thm:main_modest2}.

\subsection{Proof of Corollary~\ref{cor:main_modest3}}
\label{sec:corollary}

Consider an unbounded graph $\Graph$ with finitely many edges and finitely many vertices
such that at least one edge as a half-line.  By \cite[Corollary 3.4]{AdaSerTil_jfa16},
if there exists a stationary state with energy $\mathcal{E}$ satisfying
\begin{equation}
\label{criterion-ground-state}
\mathcal{E} \leq -\frac{1}{12} \mathcal{Q}^3
\end{equation}
for a given mass, then there exists a ground state in the constrained minimization problem (\ref{minimizer}).
The energy level $\mathcal{E} = -\frac{1}{12} \mathcal{Q}^3$ is the energy of the NLS soliton (\ref{eq:sech_soliton})
after scaling (\ref{scaling}) on the infinite line.

In the case of a pendant, the criterion (\ref{criterion-ground-state}) is always satisfied thanks
to the estimate (\ref{eq:QE_pendant}), in agreement with \cite[Proposition 4.1]{AdaSerTil_jfa16}.
If no pendant edges are present in the graph $\Graph$,
the criterion (\ref{criterion-ground-state}) can be restated with the help of the comparison lemma
(Lemma \ref{lem:comparison}) as follows.
If there exists an edge-localized state with mass $\mathcal{Q}$ satisfying
\begin{equation}
\label{criterion-ground-state-Q}
\mathcal{Q} \geq 2\mu
\end{equation}
for a given Lagrange multiplier $\Lambda = -\mu^2$, then there exists
a ground state on the graph $\Graph$.

Thanks to the estimate~\eqref{eq:mass_answer_loop} and
\eqref{eq:mass_answer_generic}, the criterion
(\ref{criterion-ground-state-Q}) is satisfied for the edge-localized state on the looping edge
incident to $N=1$ edge since $\mathcal{Q} > 2 \mu$ and is definitely
not satisfied for the edge-localized states on the looping edge incident to $N \geq 3$ edges or on
the internal edge since $\mathcal{Q} < 2 \mu$.  The case of the
looping edge incident to $N = 2$ edges is borderline since
$\mathcal{Q} \approx 2 \mu$ and no conclusion can be drawn without
further estimates.

Comparison between masses of edge-localized states on the pendant edges
of different lengths or on the looping edges incident to $N = 1$ edge
of different lengths is the same as in the case of bounded graphs.

\subsection{Searching for the ground state}
\label{sec:ground_state_heuristics}

In this section we outline some heuristic arguments why the
edge-localized states should be the only candidates for the ground
state.  Making these arguments mathematically precise remains a
challenging open question of high priority.

The condition of being positive and non-oscillatory (see
Proposition~\ref{prop-ground-state}) on a long edge of the rescaled
graph $\Graph_\mu$ imposes restrictions on the solutions $\Psi$ to
equation~\eqref{eq:stationaryNLSstandard}.  On every edge $e$ they
must be either identically constant or to be close to a portion of the
shifted $\sech$-solution \eqref{eq:sech_soliton} on every interval
between a local maximum and a local minimum on the edge $e$.

Considering only the second possibility, and ignoring all parts of the
graph where the solution falls below the small value $p_0$ from
Theorem~\ref{thm:nlin_DTN}, we can break the solution into $M$
portions of half-$\sech$ solutions.  The mass and energy of these
portions are given at the leading-order by
\begin{equation}
  \label{eq:QE_vertex_general-0}
  \mathcal{Q} \sim M \mu, \qquad
  \mathcal{E} \sim -\frac{M}{3} \mu^3,
\end{equation}
so that
\begin{equation}
  \label{eq:QE_vertex_general}
  \mathcal{E}_g \sim -\frac{1}{3 M^2}  \mathcal{Q}^3.
\end{equation}
We conclude that any value of $M$ above $2$ results in a worse energy
than that achievable by any edge-localized state, see
equations~\eqref{eq:QE_pendant} and \eqref{eq:QE_nonpendant}.  But the
value $M=1$ is only possible when the maximum is achieved on a vertex
of degree 1 (i.e.\ a pendant edge) and the value $M=2$ corresponds to
a single point of maximum.  This means that the solution localizes on a single edge.
(A solution localizing on \emph{two} pendant edges  would also result in $M=2$, but
they have larger energy compared to the solution localizing on one of the pendants.)

\section{Numerical examples}
\label{sec:examples}

We now discuss in detail the graph $\Gamma$ of
Fig.~\ref{db_Kgeq2_sv_fig} and its edge-localized states. The graph
$\Gamma$ consists of two identical side loops and three identical
internal edges connected at a single vertex of each loop.  This
corresponds to the case $(iv)$ of Theorem~\ref{thm:main_modest2}.

Let us generalize the graph $\Gamma$ with two side loops and $K$ internal edges.
For any stationary state $\Phi \in H^2_{\Gamma}$ of the stationary NLS equation (\ref{statNLS})
centered on one of the $K$ internal edges, Theorem \ref{thm:generic_edge} with
$N_- = N_+ = K + 1$ implies that the mass integral ${\mathcal Q}_{\rm int} := Q(\Phi)$
in (\ref{energy}) is expanded asymptotically as $\mu \to \infty$ in the form:
\begin{equation}
\label{Q-cn-dumbbell-1}
\mathcal{Q}_{\rm int} = 2 \mu - \frac{16 K}{K+2} \mu^2 \ell_0 e^{-2 \mu \ell_0}
+ \bigO{\mu e^{-2 \mu \ell_0}},
\end{equation}
where $\mu := |\Lambda|^{1/2}$ and $\ell_0$ is the half-length of each internal edge.
On the other hand, for any stationary state $\Phi \in H^2_{\Gamma}$ centered at one of the two side loops,
Theorem \ref{thm:localized_loop} with $N = K$ implies that
the mass integral ${\mathcal Q}_{\rm loop} := Q(\Phi)$ in (\ref{energy}) is expanded asymptotically as $\mu \to \infty$
in the form:
\begin{equation}
\label{Q-cn-dumbbell-2}
\mathcal{Q}_{\rm loop} = 2 \mu - \frac{16 (K-2)}{K+2} \mu^2 \ell_* e^{-2 \mu \ell_*}
+ \bigO{\mu e^{-2 \mu \ell_*}},
\end{equation}
where $\ell_*$ is the half-length of the side loop.

\begin{figure}
\begin{tabular}{cc}
\includegraphics[width=6cm]{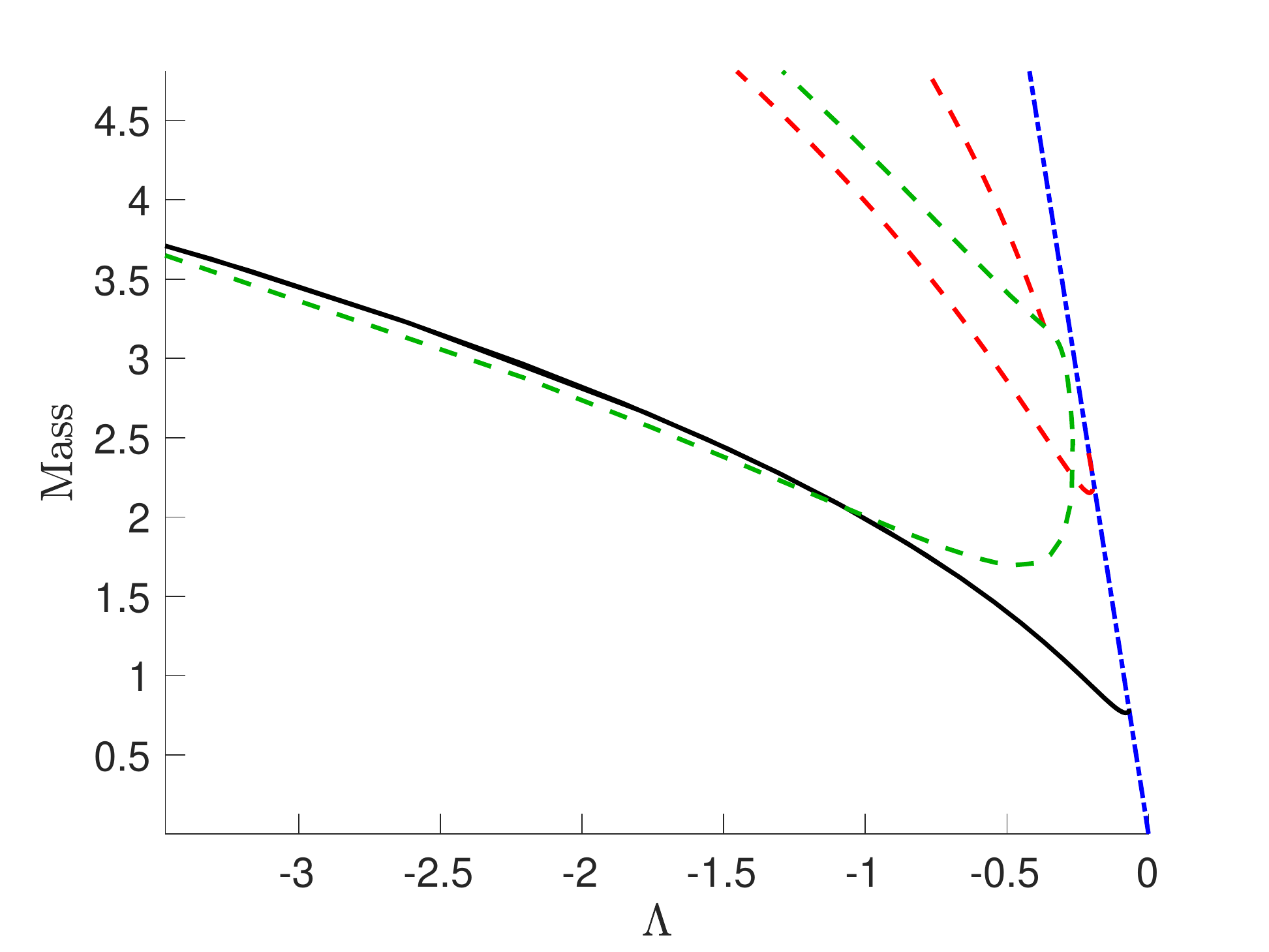} &
\includegraphics[width=6cm]{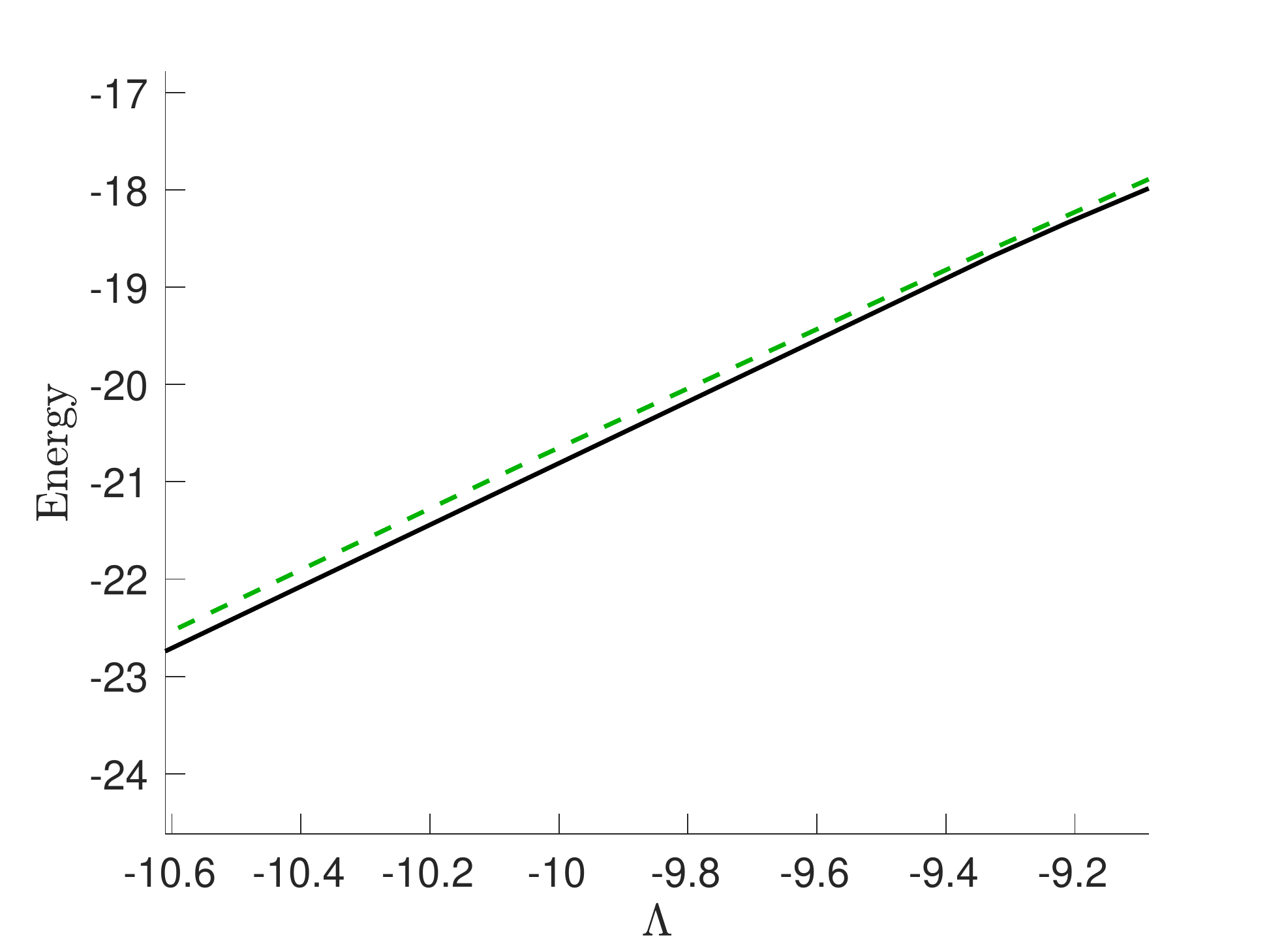} \\
\includegraphics[width=6cm]{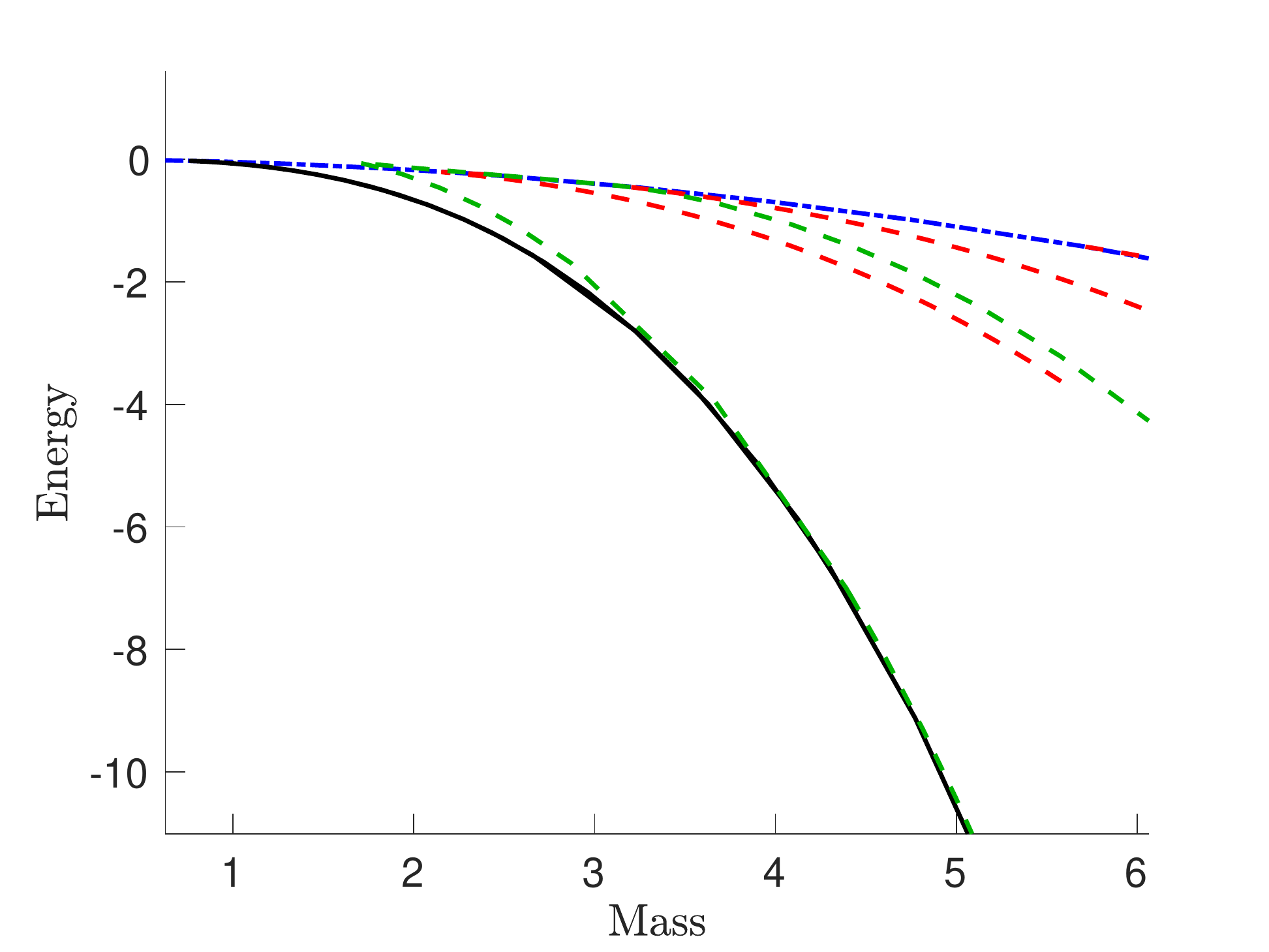} &
\includegraphics[width=6cm]{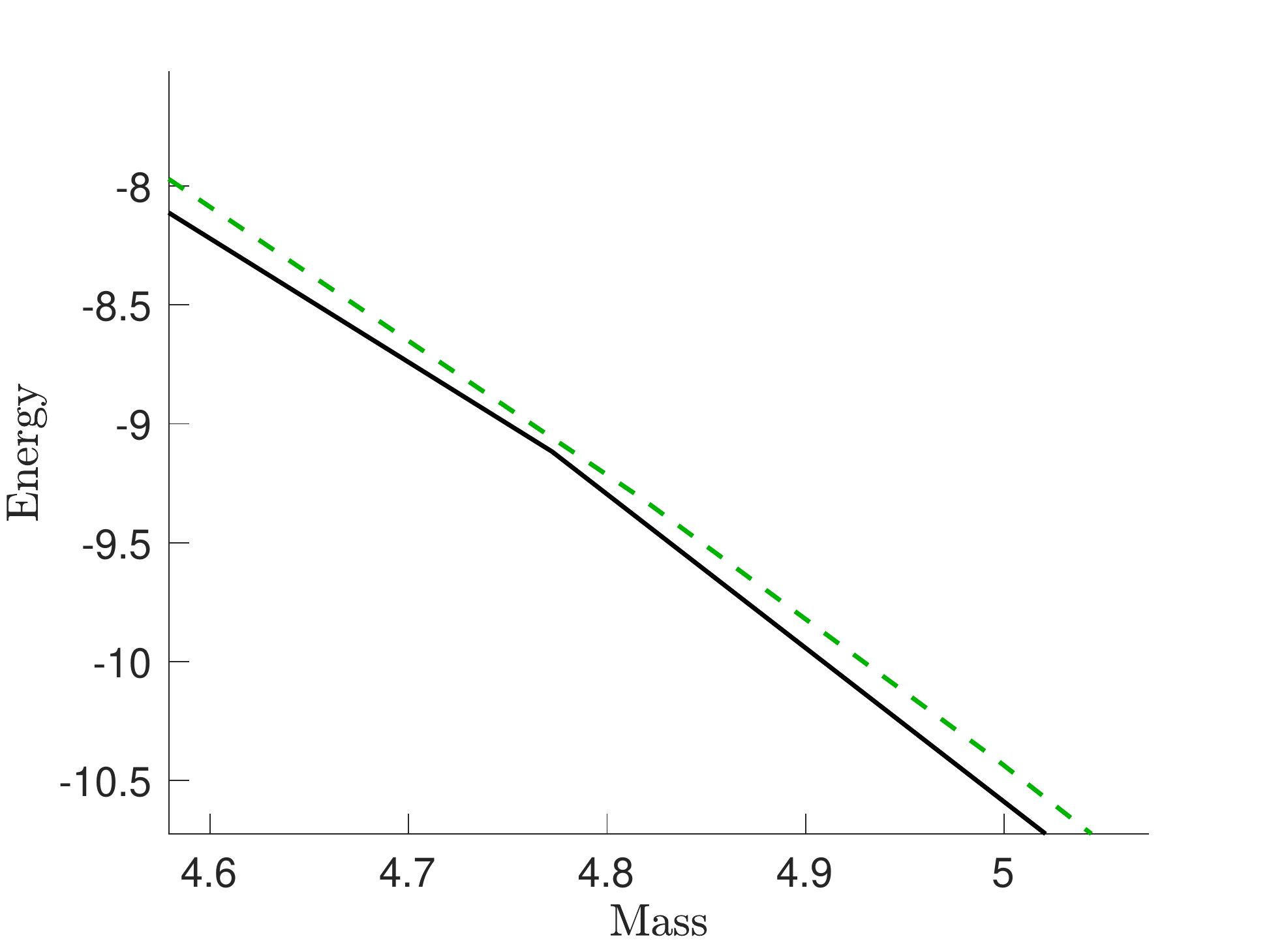} \\
\end{tabular}
\caption{Stationary states for the dumbbell graph with two loops and three internal edges
shown on Fig.~\ref{db_Kgeq2_sv_fig}, where the internal edges are shorter relative to the loops
(edge length is $\pi$ and loop lengths is $2\pi$).
We plot (top left) Mass $\mathcal{Q}$ vs $\Lambda$; (top right) Energy $\mathcal{E}$ vs $\Lambda$;
(bottom left) $\mathcal{E}$ versus $\mathcal{Q}$; and (bottom right)
the blow-up of the previous graph around mass $4.8$.
The black (---) (color online) line shows the loop-centered state,
the green dashed ({\color{green}$- -$}) (color online) line shows the edge-centered state,
and the dashed blue {\color{blue} --} solid line (color online) shows the constant
state from which the loop-centered state bifurcates.  The dashed red ({\color{red} $- -$})
lines show the state bifurcating off the constant state along the second eigenfunction
(concentrated on two edges) and undergoing a pitchfork bifurcation as in \cite{G19}.}
\label{db_Kgeq2_sv_fig3}
\end{figure}

\begin{figure}
\begin{tabular}{cc}
\includegraphics[width=6cm]{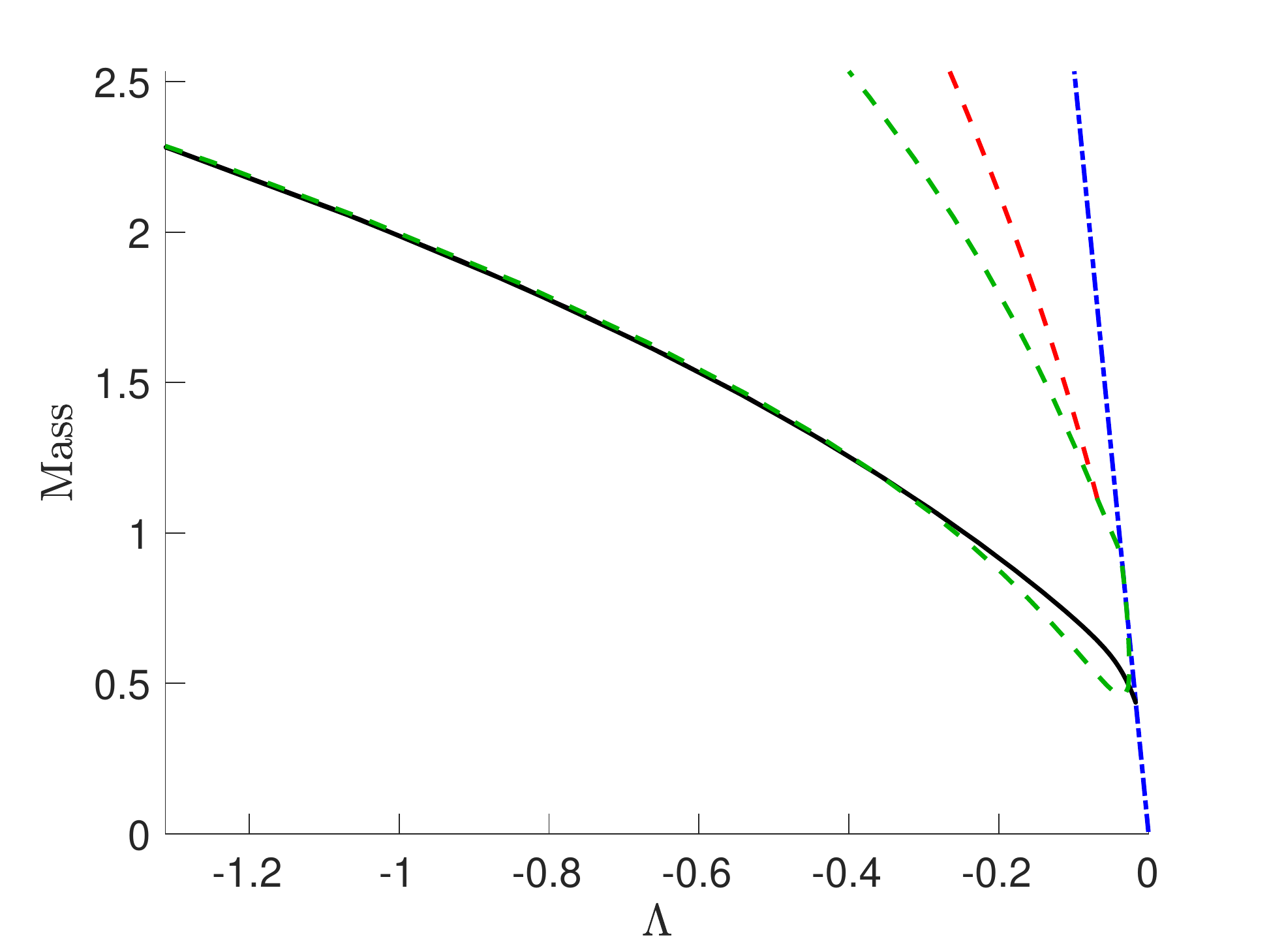} &
\includegraphics[width=6cm]{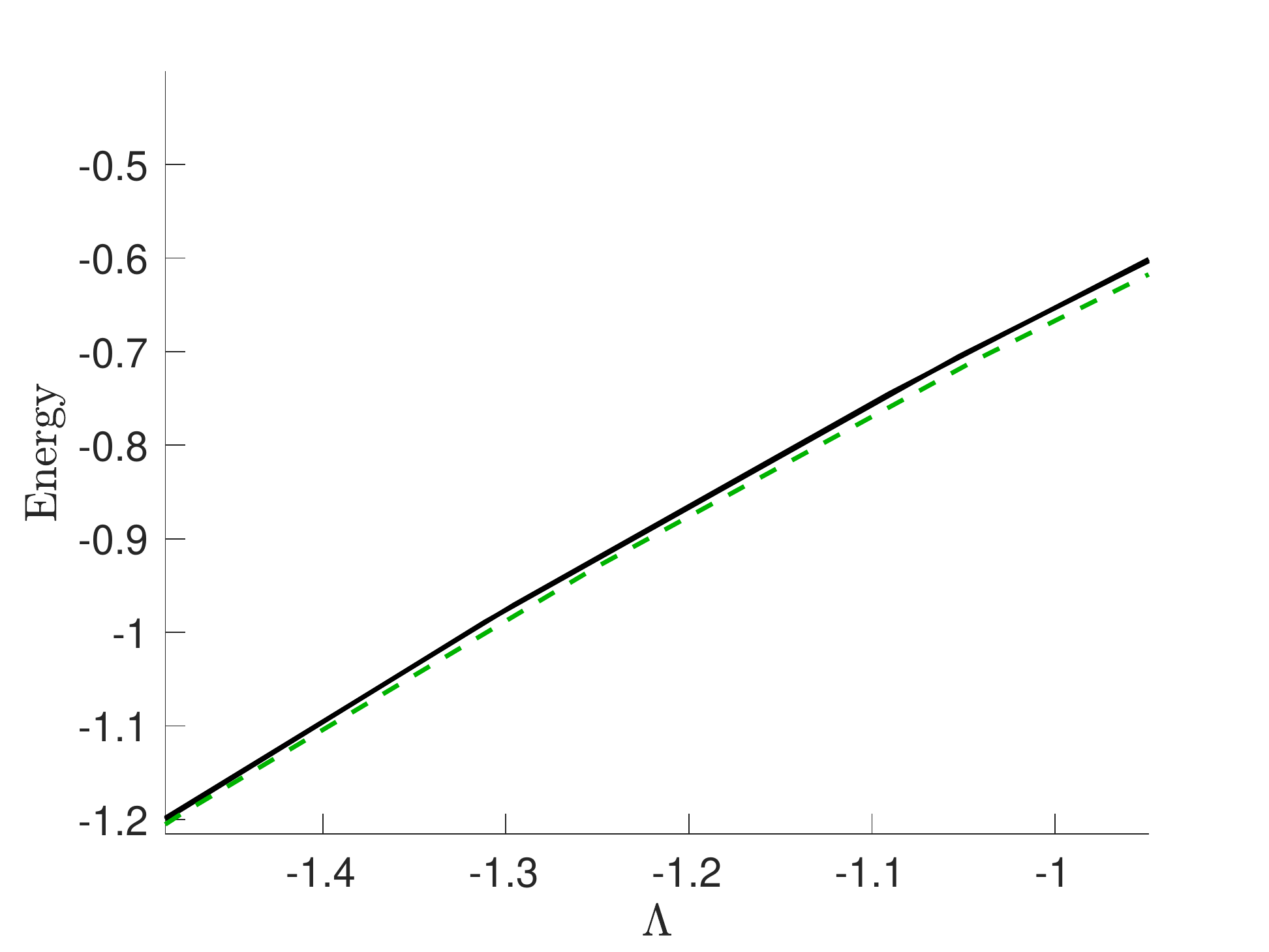} \\
\includegraphics[width=6cm]{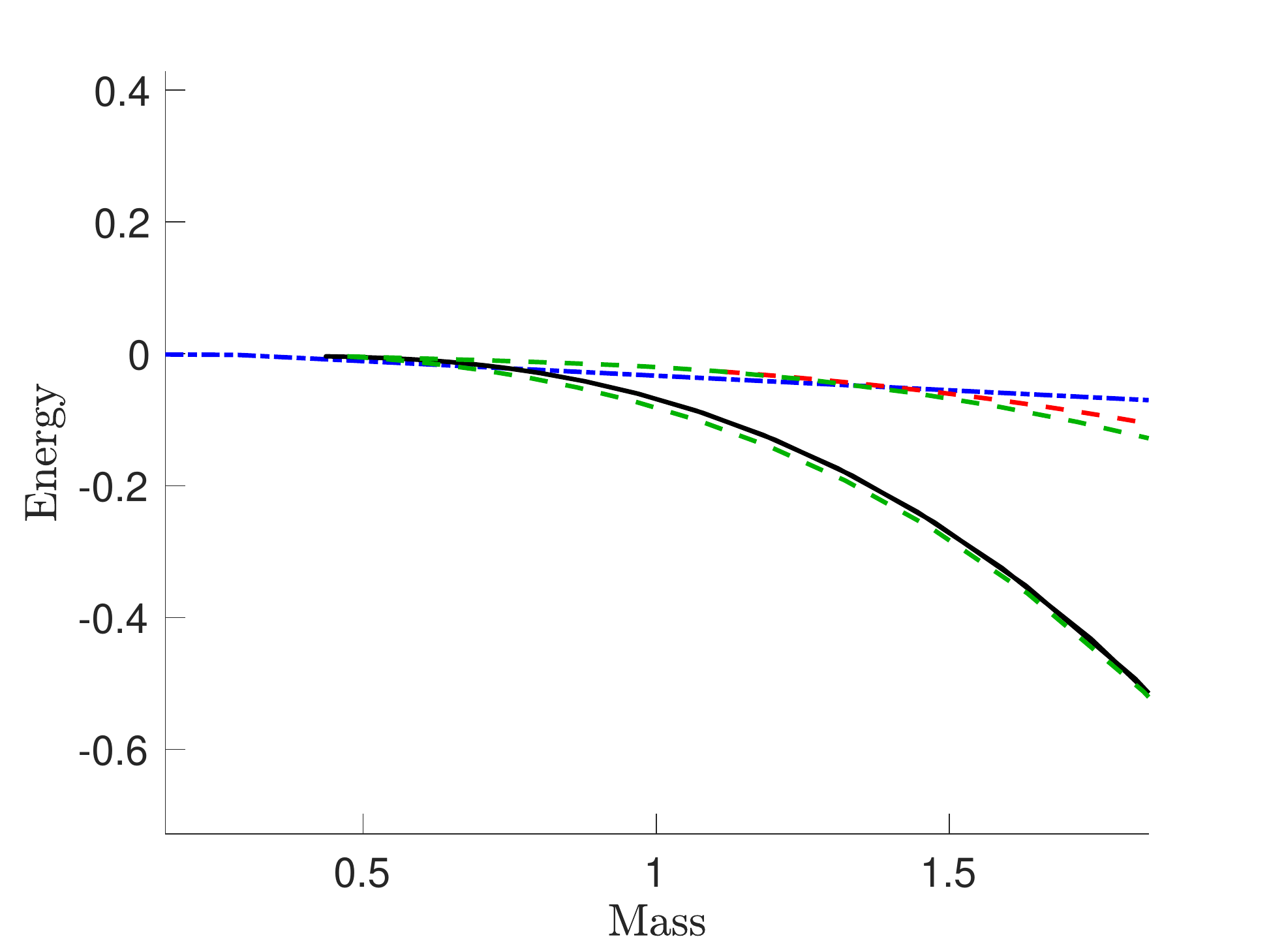} &
\includegraphics[width=6cm]{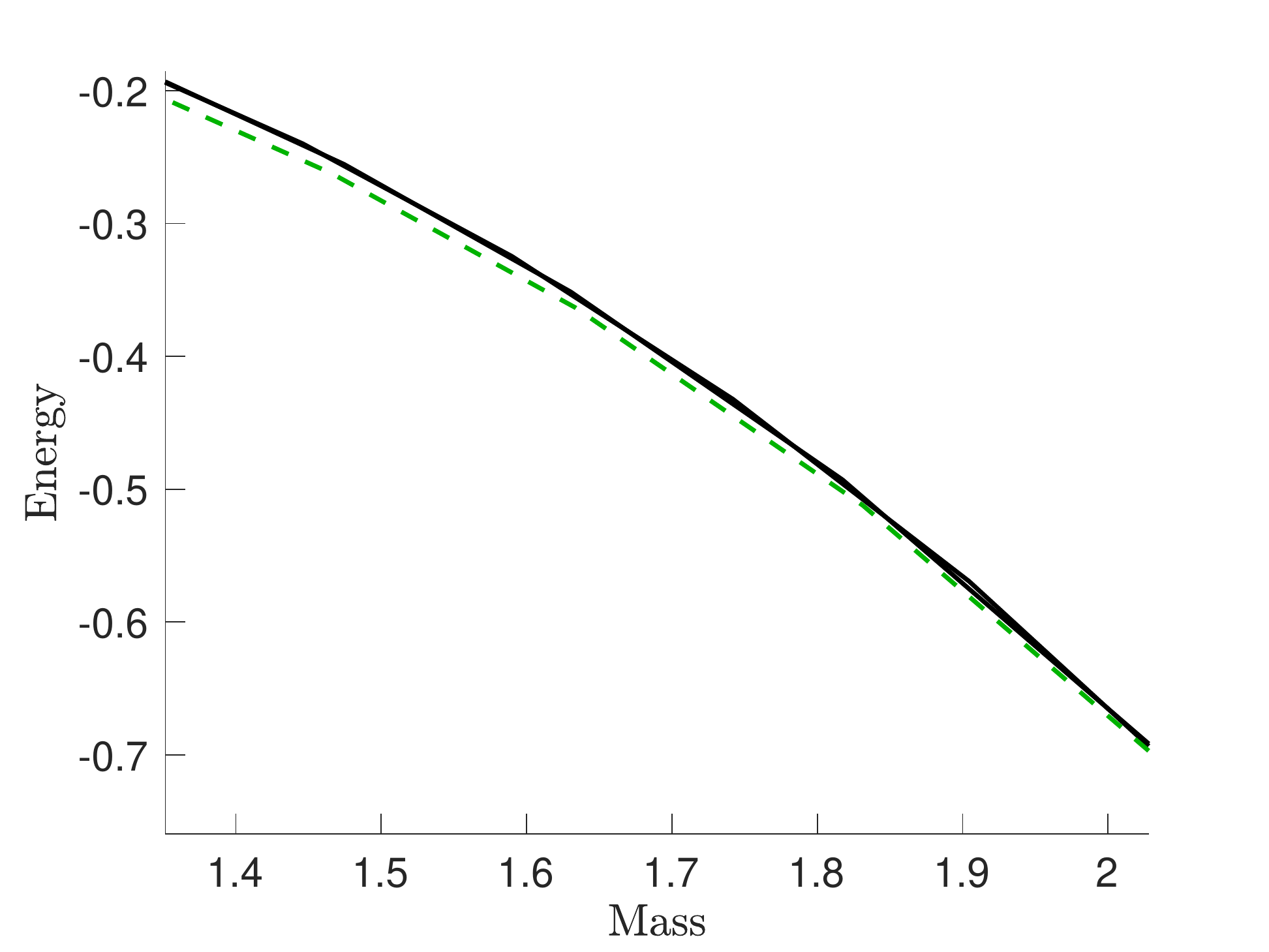} \\
\end{tabular}
\caption{The same as Fig. \ref{db_Kgeq2_sv_fig3} but such that the three internal edges are now longer relative to the loops
(edge length is $4\pi$ and loop lengths is $2\pi$).
A switch has occurred so that the edge-centered state has smaller energy
in the large mass limit.  The dashed red ({\color{red} $- -$}) line shows the state undertaking the pitchfork bifurcation
with the edge-centered state.  In this case, the edge-centered state bifurcates directly from the constant state.}
\label{db_Kgeq2_sv_fig4}
\end{figure}

It is clear from comparing (\ref{Q-cn-dumbbell-1}) and (\ref{Q-cn-dumbbell-2}) with $K = 3$ that the lengths $\ell_0$ and $\ell_*$ determine
the state with smaller energy. The loop-centered state has smaller energy for large mass when the the internal edges are short
relative to the loops and the internal edge-centered state has smaller energy for large mass
when the the internal edges are long relative to the loops.
The former case is illustrated in Fig.~\ref{db_Kgeq2_sv_fig3} (with $2
\ell_0 = \pi$ and $2 \ell_* = 2 \pi$) and the latter case is shown in 
Fig.~\ref{db_Kgeq2_sv_fig4} (with $2 \ell_0 = 4 \pi$ and $2 \ell_* = 2 \pi$).

In addition to plotting the branches of loop-localized and
edge-localized states we also show the branches of other states bifurcating off the constant solution.
The edge-localized states were found for large mass by using Petviashvili's method, see \cite{pelinovsky2004convergence}
and \cite{olson2016petviashvilli}, then continued to small mass.  The constant solution and its bifurcations
were constructed by using an arclength parametrization, see \cite{G19} based on \cite{nayfeh2008applied}.
In both cases, the constant state is the ground state for small mass \cite{CDS} which undertakes two
bifurcations considered in \cite{MarPel_amrx16} and \cite{G19}. After the first bifurcation,
the loop-centered state becomes the state with smaller energy and it remains such for every larger mass
if the loop is long relative to the internal edge (Fig. \ref{db_Kgeq2_sv_fig3}). On the other hand,
for long internal edges relative to the loops, the edge-centered state has the smaller energy
for very large mass (Fig. \ref{db_Kgeq2_sv_fig4}).

Figures~\ref{db_Kgeq2_sv_fig}, \ref{db_Kgeq2_sv_fig3} and \ref{db_Kgeq2_sv_fig4} all solve
the stationary NLS equation (\ref{statNLS}) approximated numerically using the quantum graphs
software package by R. Goodman \cite{G_QG}.

\subsection{Other examples of dumbbell graphs}

The case on one internal edge corresponds to the canonical dumbbell graph considered in \cite{MarPel_amrx16} and \cite{G19}.
It corresponds to the case $(ii)$ of Theorem~\ref{thm:main_modest2}.
It follows from (\ref{Q-cn-dumbbell-1}) and (\ref{Q-cn-dumbbell-2}) with $K = 1$ that $Q_{\rm int} < 2 |\Lambda|^{1/2} < Q_{\rm loop}$.
By the comparison lemma (Lemma \ref{lem:comparison}), the loop-centered state has a smaller energy $\mathcal{E}$
at a fixed large mass $\mathcal{Q}$ independently of lengths of the edges and loops. We reiterate here that the opposite incorrect conclusion
was reported in \cite{MarPel_amrx16} because of a trivial sign error, however, the fact that the edge-localized
state cannot be the ground state for the dumbbell graph can be shown with the technique of energy-decreasing symmetric
rearrangements from \cite{AdaSerTil_cvpde15}.  

The same conclusion holds for the dumbbell graph with two internal edges, since
expansions (\ref{Q-cn-dumbbell-1}) and (\ref{Q-cn-dumbbell-2}) with $K = 2$ imply
$Q_{\rm int} < 2 |\Lambda|^{1/2} \approx Q_{\rm loop}$. This example
corresponds to the case $(iii)$ of Theorem~\ref{thm:main_modest2}.
Hence the loop-centered state has a smaller energy
at a fixed large mass independently of lengths of the loops and the edges.

For the dumbbell graphs with more than three internal edges, $K > 3$,
the comparison is similar to Figs. \ref{db_Kgeq2_sv_fig3} and \ref{db_Kgeq2_sv_fig4} for $K = 3$.
The longest of the internal edges or loops is selected for the edge-localized state of smaller
energy at fixed large mass. The dumbbell graphs with $K \geq 3$ corresponds to the case $(iv)$ of Theorem~\ref{thm:main_modest2}.

\subsection{Example of the tadpole graphs}
\label{sec:tadpole}

\begin{figure}
  \centering
  \includegraphics{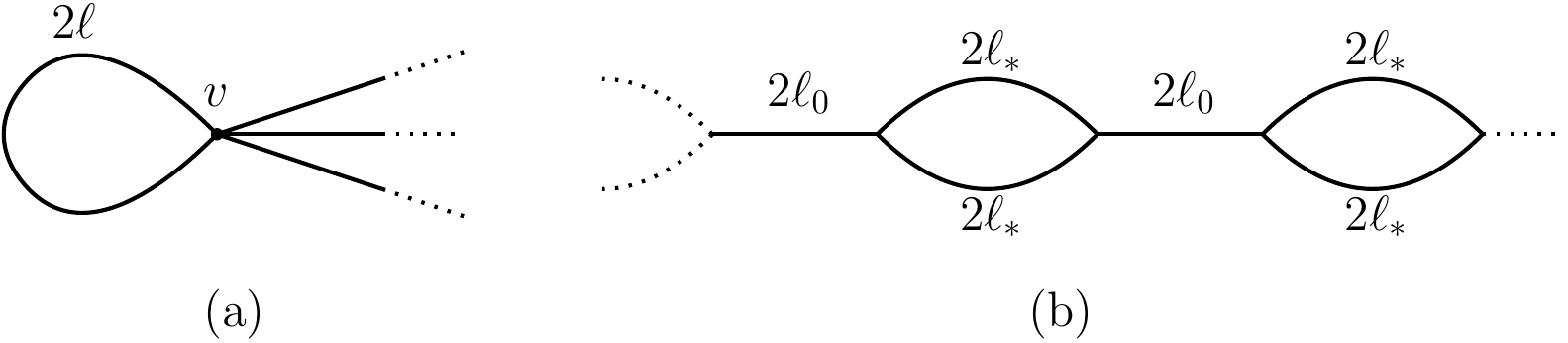}
  \caption{Example graphs considered in Sections~\ref{sec:tadpole} $(a)$ and
    \ref{sec:periodic} $(b)$.}
  \label{fig:example_graphs}
\end{figure}

As a particular unbounded graph, we consider a tadpole graph with a
single loop connected at one vertex point with $K \geq 1$ half-lines,
see Fig.~\ref{fig:example_graphs}(a) for an example. There is only one
edge of a finite length with reflection symmetry.

The case $K = 1$ corresponds to the canonical tadpole graph considered
in \cite{NPS} and in \cite{AdaSerTil_jfa16}.  Since
$\mathcal{Q}_{\rm loop} > 2 \mu$ in this case,
Corollary~\ref{cor:main_modest3} states that there exists a ground
state. The loop-centered state is a proper candidate for the ground state.
Indeed, it was proven in \cite{AdaSerTil_jfa16}, see Corollary 3.4 and Fig.~4 of \cite{AdaSerTil_jfa16}, by
using energy-decreasing symmetry rearrangements that the loop-centered states if the ground
state of the tadpole graph.

For the case $K = 2$, Corollary~\ref{cor:main_modest3} is
inconclusive because $\mathcal{Q}_{\rm loop} \approx 2 \mu$.  However,
this is an exceptional case, for which the tadpole graph with two
half-lines can be unfolded to an infinite line, for which the NLS
soliton is a valid stationary state with
$\mathcal{Q}_{\rm loop} = 2 \mu$. This loop-centered state is the
ground state for any value of $\mathcal{Q}$, see Example~2.4 and Fig.~3 of \cite{AdaSerTil_cvpde15} and Fig.~1 of
\cite{AdaSerTil_jfa16}.

For $K \geq 3$, we have $\mathcal{Q}_{\rm loop} < 2 \mu$ and the loop-centered state
is not a proper candidate for the ground state by Corollary~\ref{cor:main_modest3}.
Indeed, there is no ground state according to Theorem 2.5 of \cite{AdaSerTil_cvpde15}.

\subsection{Example of a periodic graph}
\label{sec:periodic}

Here we consider the periodic graphs \cite{D19,GilgPS,P18,PS17}, the
basic cell of which consists of one internal edge and one loop
repeated periodically, see Fig.~\ref{fig:example_graphs}(b).  We will use the convention that a connecting edge is of length $2\ell_0$ and the loop components of length $2 \ell_*$ (for a total loop length of $4 \ell_*$).   Existence
of stationary states pinned to the symmetry points of the internal
edge and the two halves of the loop was proven in the small-mass limit
in Theorem 1.1 in \cite{PS17}.  Characterization of stationary states
as critical points of a certain variational problem was developed in
Theorem 3.1 in \cite{P18}.

This example of the periodic graph is beyond validity of the
variational theory in \cite{AdaSerTil_jfa16,AST2019} or the comparison
theory in our Corollary~\ref{cor:main_modest3}. However, existence of
the ground state at every mass was proven for the periodic graph in
\cite{D19} without elaborating the symmetry of the ground state.  We
thus expect the estimates of Section~\ref{sec:constructing}, in
particular equation \eqref{eq:mass_answer_generic}, to hold without
any changes.

Under this assumption, we show that the symmetry of the edge-localized state of smallest energy
depends on the relative lengths between the internal edge and the
half-loop.  It follows from \eqref{eq:mass_answer_generic} that the
edge-localized state at the internal edge has the mass
$\mathcal{Q}_{\rm int}$ given by
\begin{equation}
\label{Q-periodic-1}
\mathcal{Q}_{\rm int} = 2 \mu - \frac{16}{3} \mu^2 \ell_0 e^{-2 \mu \ell_0}
+ \bigO{\mu e^{-2 \mu \ell_0}},
\end{equation}
where $\ell_0$ is the half-length of the internal edge, whereas the edge-localized state at the half-loop has
the mass $\mathcal{Q}_{\rm loop}$ given by
\begin{equation}
\label{Q-periodic-2}
\mathcal{Q}_{\rm loop} = 2 \mu - \frac{16}{3} \mu^2 \ell_* e^{-2 \mu \ell_*}
+ \bigO{\mu e^{-2 \mu \ell_*}},
\end{equation}
where $\ell_*$ is the quarter-length of the loop. Comparing (\ref{Q-periodic-1}) and (\ref{Q-periodic-2}) yields that
$\mathcal{Q}_{\rm int} < \mathcal{Q}_{\rm loop}$ if $\ell_0 < \ell_*$ and
$\mathcal{Q}_{\rm int} > \mathcal{Q}_{\rm loop}$ if $\ell_0 > \ell_*$.
By the Comparison Lemma (Lemma \ref{lem:comparison}), the loop-centered state
has smaller energy if $\ell_0 < \ell_*$ and the edge-centered state
has larger energy if $\ell_0 > \ell_*$, hence the state of smaller energy
localizes at the longer edge. The symmetric case $\ell_* = \ell_0$ is not conclusive
because $\mathcal{Q}_{\rm int} \approx \mathcal{Q}_{\rm loop}$ and computations of the
higher-order exponentially small terms are needed.

\begin{figure}[htp]
\begin{tabular}{cc}
\includegraphics[width=7cm]{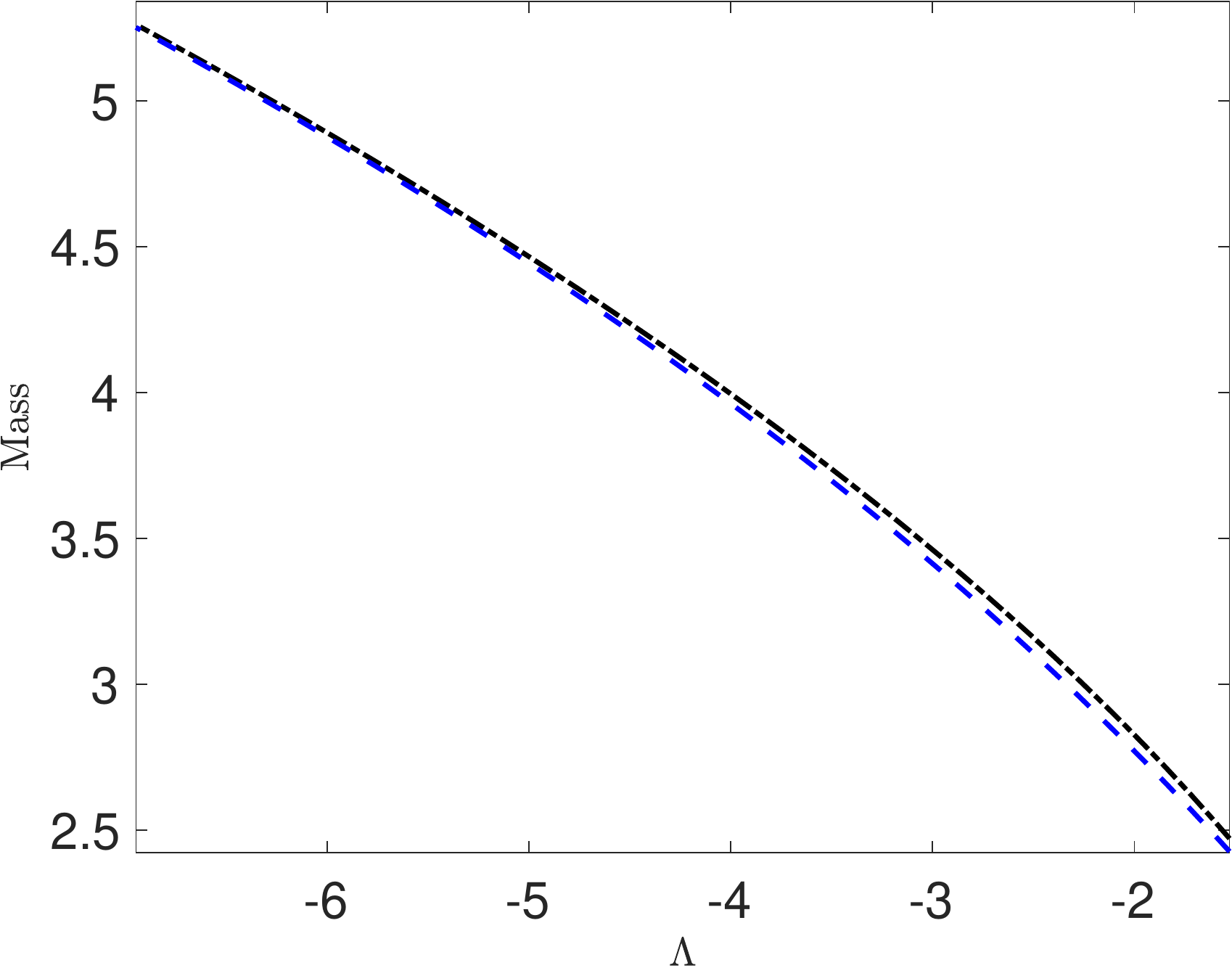} &
\includegraphics[width=7cm]{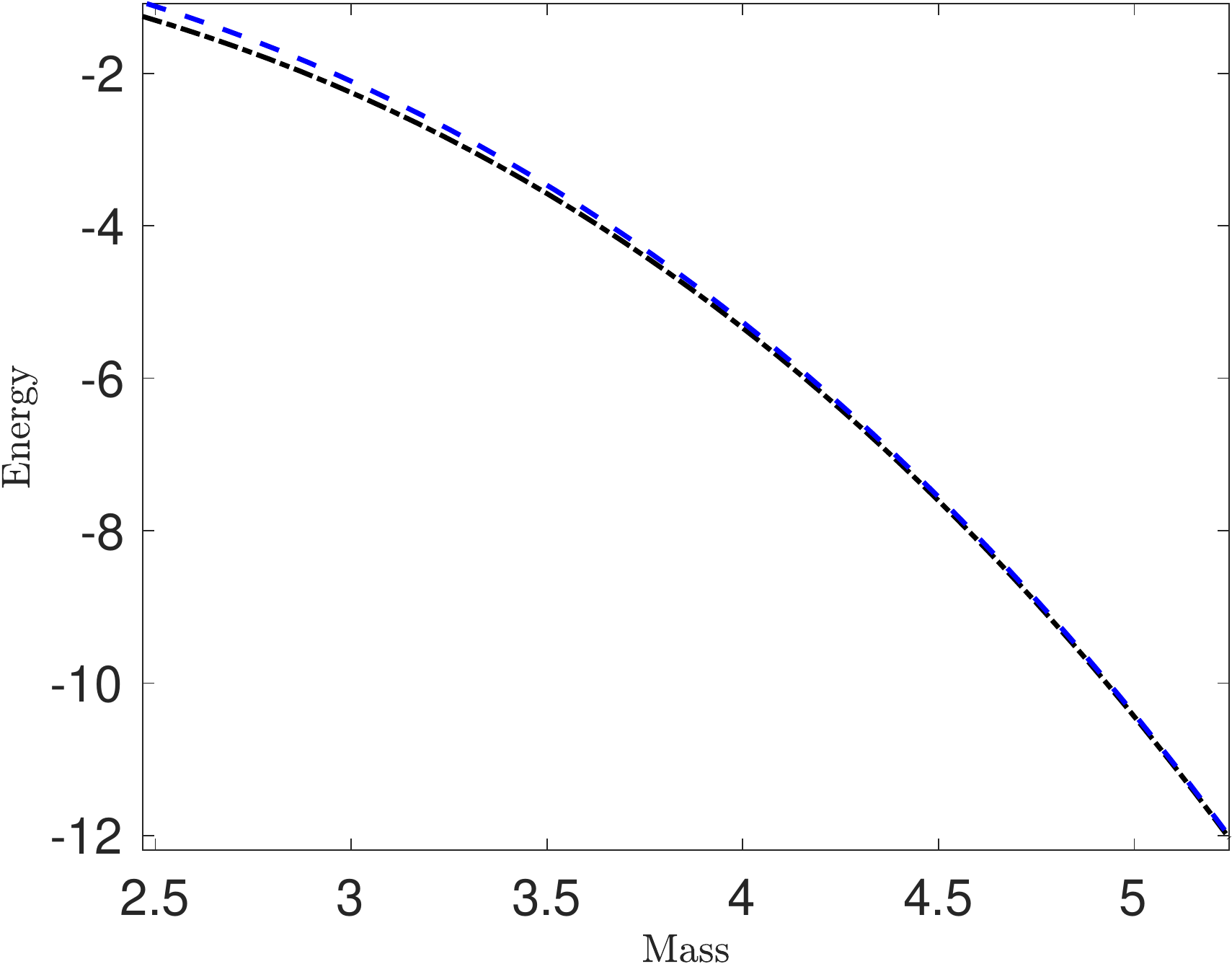} \\
\includegraphics[width=7cm]{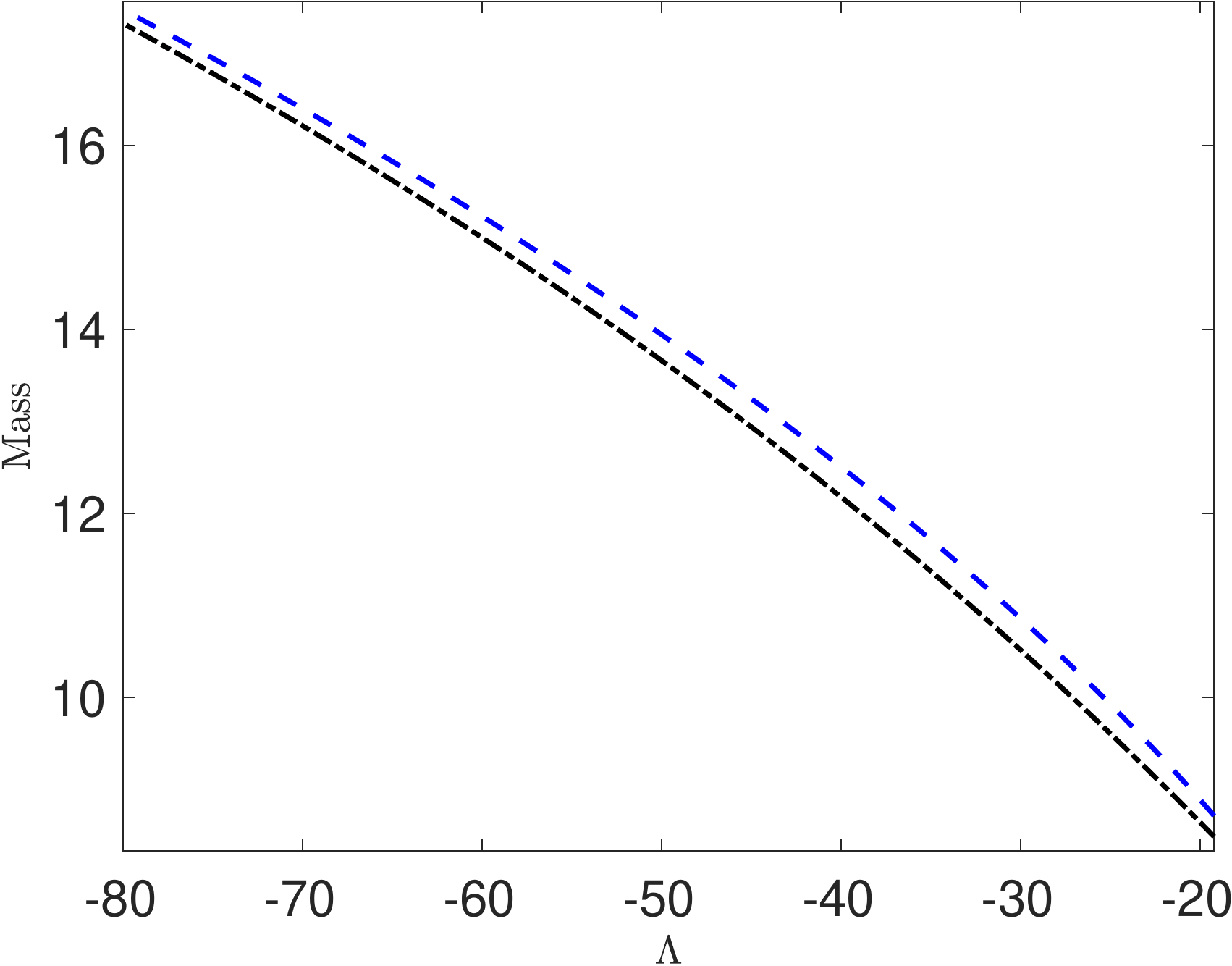} &
\includegraphics[width=7cm]{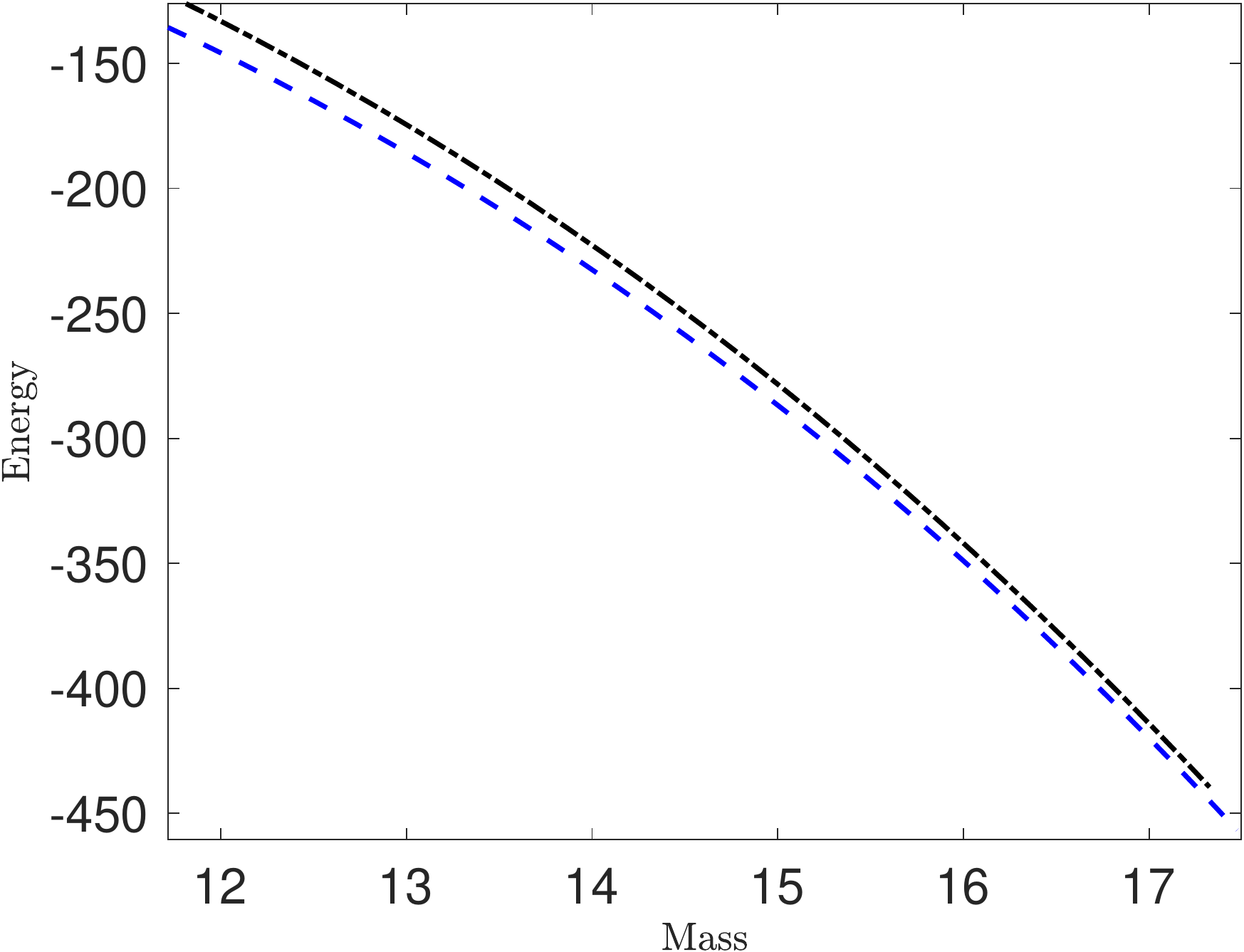} \\
\end{tabular}
\caption{Stationary states in the stationary NLS equation (\ref{statNLS}) for a periodic graph
showing the mass $\mathcal{Q}$ vs Lagrange multiplier $\Lambda$ (left panels) and the
energy $\mathcal{E}$ versus the mass $\mathcal{Q}$ (right panels).
Top panels show computations for $\ell_0 =4 \pi$ and $\ell_* = \pi/4$. Bottom panels show
computations for $\ell_0 =\pi/8$ and $\ell_* = \pi/4$.  The black dot-dash ($- \cdot$) (color online) line shows the edge-centered state,
the blue dashed ({\color{blue}$- -$}) (color online) line shows the loop-centered state.}
\label{periodic_fig}
\end{figure}

Figure \ref{periodic_fig} shows results of numerical computations of stationary states on
the periodic graph with a loop of length $4 \ell_* = \pi$ and a horizontal edge of length $2\ell_0$.
If $\ell_0 = 4 \pi > \ell_*$, the state of smaller energy is centered at the horizontal edge (top panels).
If $\ell_0 = \pi/8 < \ell_*$, the state of smaller energy is centered at the half-loop, as predicted above.

In order to compute the stationary solutions on a periodic graph, we returned
  to the finite difference scheme discussed in \cite{MarPel_amrx16}
  and implemented an approximation to the graph by truncating the
  periodic system after a small number of cells in the middle of the
  internal edges and connecting the two endpoints with periodic
  boundary conditions.  For large $\mu$ we observed that the
  predicted asymptotics are verified numerically even in the case of
  one cell.

\appendix
\section{Proof of Theorem~\ref{thm:asymptotic_dtn_lengths}}
\label{section:DtN_appendix}

Consider a graph $\Graph$ with a finite number of vertices and a
finite number of edges, which either connect a pair of vertices and
have finite length or have only one vertex and are identified with the
half-line.  We impose Neumann-Kirchhoff (NK) conditions at every
vertex.  Declare a subset $B$ of the graph's vertices to be the
\emph{boundary}.  We are interested in the asymptotics of the DtN map
on the boundary $B$ for the operator $-\Delta + \mu^2$ as
$\mu \to \infty$. The parameter $\mu$ is treated as the spectral
parameter $\lambda := -\mu^2$ for the spectrum of $-\Delta$.

Let the boundary vertices be denoted $b_1, \ldots, b_{|B|}$, and
let $\vecp = (p_1, \ldots, p_{|B|})^T \in \bbR^{|B|}$ be a vector of
``Dirichlet values'' on the vertices.  Assume $\mu>0$ and
consider a function $f \in H^2(\Graph)$ satisfying
\begin{equation}
  \begin{cases}
    \left(-\Delta + \mu^2 \right) f = 0, \qquad
    & \mbox{on every }e\in\Graph,\\
    f \mbox{ satisfies NK conditions}
    & \mbox{for every} \;\; v\in V\setminus B,\\
    f(v_j) = p_j,\qquad
    & \mbox{for every} \;\; v_j \in B.
  \end{cases}
    \label{eq:DtN_eig_equationA}
\end{equation}
Let $\Neu(f)_j = \sum_{e \sim v_j} \partial f(v_j)$ be the Neumann data of the function $f$ at the vertex $v_j \in B$,
where $\partial$ denotes the outward derivative from the vertex $v_j$.
Note that $f$ is not required to satisfy the current conservation conditions at $v_j \in B$.

The map $\DTN_\Graph(\mu) : \vecp \mapsto \Neu(f) \in \bbR^{|B|}$ is
called the DtN map at the spectral level $\lambda = -\mu^2$.  We will
derive its asymptotics as $\mu\to\infty$ by investigating the
scattering solutions in the same regime.  We refer to
\cite[Sec~3.5]{BerKuc_graphs} for more information on DtN map on a
compact quantum graph.  We remark that in the presence of infinite
edges all definitions work when $\lambda = -\mu^2 < 0$ is below the
absolutely continuous spectrum of $-\Delta$ but cease to work (in
general) when $\lambda \geq 0$.

Closely related to the DtN map is the scattering matrix $\Sigma(\mu)$,
see \cite[Sec~5.4]{BerKuc_graphs}, defined on a compact graph.
Attaching an infinite edge (a \emph{lead}) to each boundary vertex (a
single edge per vertex), we look for $\tilde{f}$ solving
$(-\Delta + \mu^2)\tilde{f}=0$ on the augmented graph and satisfying
NK vertex conditions at every vertex.
The space of such solutions is $b$-dimensional; writing the solution on the lead $e$ in the form
\begin{equation}
  \label{eq:scattering_sol_on_leads}
  \tilde{f}(x_e) = c_e^{in} e^{\mu x_e} + c_e^{out} e^{-\mu x_e},
\end{equation}
the space of solutions may be parametrized by the vectors
${\bf c}^{in} = (c_1^{in},\ldots,c_b^{in})^T$.  The scattering matrix
$\Sigma(\mu)$ describes the scattering on incoming waves into
the outgoing ones,
\begin{equation}
  \label{eq:scat_def}
  {\bf c}^{out} = \Sigma(\mu) {\bf c}^{in}.
\end{equation}
For the graph with NK vertex conditions, there is a fairly explicit
formula for the scattering matrix $\Sigma(\mu)$, derived in
\cite{KotSmi_prl00,BanBerSmi_ahp12,BerKuc_graphs},
\begin{equation}
  \label{eq:formula_scattering}
  \Sigma(\mu) = R +
  T_o e^{-\mu L} \left(I - \tilde{U} e^{-\mu L}\right)^{-1} T_i,
\end{equation}
where, informally speaking, $R$ governs reflection of waves from a
lead back into a lead, $T_i$ transmits incoming waves into the
interior of the graph, $T_o$ transmits interior waves into the
outgoing lead waves and $\tilde{U}$ describes scattering of waves in
the interior.  For a graph with scale-invariant vertex conditions
(such as NK), all these matrices have constant entries.  The
dependence on $\mu$ enters through the diagonal matrix
$e^{-\mu L}$ where $L$ is the diagonal matrix of internal edge
lengths.

We can also obtain a formula for the solution in the interior by
writing the solution on the edge $e$ as
\begin{equation}
  \label{eq:scattering_sol_inside}
  f_e(x) = a_e e^{-\mu x} + a_{\overline{e}} e^{-\mu(\ell_e-x)}.
\end{equation}
The vector ${\bf a}$ of the coefficients $a_e$ and $a_{\overline{e}}$
satisfies (see \cite[Thm.~2.1]{BanBerSmi_ahp12})
\begin{equation}
  \label{eq:formula_internal}
  {\bf a} = \left(I - \tilde{U} e^{-\mu L} \right)^{-1} T_i {\bf c}^{in}.
\end{equation}

\begin{theorem}
  \label{thm:asymptotics_scat}
  The scattering matrix at $\lambda = -\mu^2$ of a compact graph $\Gamma$ with the boundary
  set $B$ has the asymptotic
  expansion
  \begin{equation}
    \label{eq:scat_matr_expansion}
    \Sigma(\mu) = \diag\left(\frac{2}{d_b+1}-1\right)_{b\in B} +
    O\left(e^{-\mu \Lmin}\right),
    \ \mu \to \infty,
  \end{equation}
  where $d_b$ is the degree of the boundary vertex $b$ not counting
  the lead, $\Lmin$ is the length of the shortest edge and the
  remainder term is a matrix with the norm bounded by
  $Ce^{-\mu \Lmin}$.

  In the same asymptotic regime, the vector ${\bf a}$ of interior
  coefficients has the expansion
  \begin{equation}
    \label{eq:interior_asymptotics}
    {\bf a} = \left(T_i + O(e^{-\mu \Lmin})\right) {\bf c}^{in},
  \end{equation}
  where the correction is a matrix with the specified norm bound.
\end{theorem}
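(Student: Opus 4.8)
The plan is to read the asymptotics directly off the explicit representation \eqref{eq:formula_scattering}--\eqref{eq:formula_internal}, exploiting that for Neumann--Kirchhoff (hence scale-invariant) vertex conditions the matrices $R$, $T_i$, $T_o$ and $\tilde U$ are independent of $\mu$, so that the whole dependence on $\mu$ sits in the diagonal factor $e^{-\mu L}$, whose largest entry is $e^{-\mu \Lmin}$. First I would recall from \cite{KotSmi_prl00,BanBerSmi_ahp12,BerKuc_graphs} the meaning of these four matrices; in particular $R$ is block-diagonal over the boundary vertices, its block at a boundary vertex $b$ being the reflection coefficient of a lead wave in the NK vertex-scattering matrix $\sigma_b = \frac{2}{\deg b}J - I$. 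Because a single lead is attached at $b$, the degree of $b$ in the augmented graph is $d_b+1$, so this entry equals $\frac{2}{d_b+1}-1$; moreover $R$ is genuinely diagonal, since a scattering path returning from a lead to a lead without entering the interior can only reflect at one boundary vertex, whereas any path joining two distinct leads must traverse at least one internal edge and thus contributes only at order $e^{-\mu \Lmin}$ or smaller.

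The analytic core is a Neumann-series estimate for the resolvent $(I - \tilde U e^{-\mu L})^{-1}$. Since $e^{-\mu L}$ is diagonal with entries $e^{-\mu \ell_e} \le e^{-\mu \Lmin}$ and $\tilde U$ is a fixed matrix, there is $\mu_0$ with $\|\tilde U e^{-\mu L}\| \le \|\tilde U\|\, e^{-\mu \Lmin} < \tfrac12$ for all $\mu > \mu_0$; hence $I - \tilde U e^{-\mu L}$ is invertible with $\|(I - \tilde U e^{-\mu L})^{-1}\| \le 2$, and the identity $(I - \tilde U e^{-\mu L})^{-1} = I + \tilde U e^{-\mu L}(I - \tilde U e^{-\mu L})^{-1}$ yields
\begin{equation*}
  \left\| (I - \tilde U e^{-\mu L})^{-1} - I \right\| \le 2\,\|\tilde U\|\, e^{-\mu \Lmin}.
\end{equation*}
Substituting this into \eqref{eq:formula_internal} and using that $T_i$ is a fixed matrix gives at once ${\bf a} = \left(T_i + O(e^{-\mu \Lmin})\right){\bf c}^{in}$, which is \eqref{eq:interior_asymptotics}.

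For the scattering matrix itself I would subtract $R$ from \eqref{eq:formula_scattering} and estimate
\begin{equation*}
  \left\| \Sigma(\mu) - R \right\|
  = \left\| T_o\, e^{-\mu L}\, (I - \tilde U e^{-\mu L})^{-1}\, T_i \right\|
  \le 2\, \|T_o\|\, \|T_i\|\, e^{-\mu \Lmin},
\end{equation*}
using $\|e^{-\mu L}\| = e^{-\mu \Lmin}$ together with the resolvent bound above. Combined with the identification $R = \diag\!\left(\frac{2}{d_b+1}-1\right)_{b\in B}$ from the first step, this is exactly \eqref{eq:scat_matr_expansion}. I expect the only point requiring real care to be that first step: tracking leads versus interior edges precisely enough to confirm both that $R$ is diagonal and that the relevant vertex degree is $d_b+1$ rather than $d_b$; once that bookkeeping is settled, everything else is the one-line Neumann-series estimate above, uniform in $\mu > \mu_0$.
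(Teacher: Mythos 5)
Your proposal is correct and follows essentially the same route as the paper: read the leading term off the explicit block $R$ of formula \eqref{eq:formula_scattering}, bound $\|e^{-\mu L}\|$ by $e^{-\mu\Lmin}$, control the resolvent $(I-\tilde U e^{-\mu L})^{-1}$ (the paper uses sub-unitarity of $\tilde U$ to get the bound $(1-e^{-\mu\Lmin})^{-1}$ for all $\mu>0$, where you instead take $\mu$ large enough that $\|\tilde U\|e^{-\mu\Lmin}<\tfrac12$ — both suffice for the asymptotic claim), and expand via the identity $(I-\tilde U e^{-\mu L})^{-1}=I+\tilde U e^{-\mu L}(I-\tilde U e^{-\mu L})^{-1}$. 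Your extra bookkeeping justifying that $R$ is diagonal with entries $\frac{2}{d_b+1}-1$ is a detail the paper simply asserts.
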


\begin{proof}
  In our setting --- all vertex conditions are Neumann-Kirchhoff,
  there are $|B|$ leads with at most one lead per vertex --- the matrix
  $R$ in equation~\eqref{eq:formula_scattering} is the $|B| \times |B|$
  diagonal matrix with entries $2/(d_b + 1) - 1$; this matrix provides
  the leading order term in \eqref{eq:scat_matr_expansion}.  To
  estimate the remainder, we note that
  \begin{equation}
    \label{eq:operator_norm_exp}
    \left\| e^{-\mu L} \right\| \leq e^{-\mu \Lmin},
  \end{equation}
  in the operator sense from $\mathbb{R}^{2E}$ to $\mathbb{R}^{2E}$,
  where $E$ is the number of edges of $\Gamma$.
  Since the matrix $\tilde{U}$ is sub-unitary
  (it is a submatrix of a unitary matrix), we have
  \begin{equation}
    \label{eq:resolvent_estimates}
    \left\|\tilde{U}\right\| \leq 1,
    \qquad
    \left\| \left(I - \tilde{U} e^{-\mu L}\right)^{-1} \right\|
    \leq \frac{1}{1 - e^{-\mu \Lmin}},
  \end{equation}
  and, overall,
  \begin{equation*}
    \left\| T_o e^{-\mu L}
    \left(I - \tilde{U} e^{-\mu L}\right)^{-1} T_i \right\|
    \leq \frac{\tilde{C} e^{-\mu \Lmin}}{1 - e^{-\mu \Lmin}}
    \leq C e^{-\mu \Lmin}.
  \end{equation*}
  The asymptotic expansion for ${\bf a}$ is obtained from
  equation~\eqref{eq:formula_internal}, expansion
  \begin{equation}
    \label{eq:resolvent_bootstrap}
    \left(I - \tilde{U} e^{-\mu L} \right)^{-1}
    = I + \tilde{U} e^{-\mu L}
    \left(I - \tilde{U} e^{-\mu L} \right)^{-1},
  \end{equation}
  and estimates \eqref{eq:operator_norm_exp} and \eqref{eq:resolvent_estimates}.
\end{proof}

In order to prove Theorem \ref{thm:asymptotic_dtn_lengths}, we establish asymptotics
of the DtN map defined by problem \eqref{eq:DtN_eig_equationA} and then use the scaling transformation.
The following theorem presents the asymptotic estimates for the boundary-value problem (\ref{eq:DtN_eig_equationA}).

\begin{theorem}
  \label{thm:asymptotic_dtn}
  There exists a unique solution $f \in H^2(\Graph)$ to the
  boundary-value problem~\eqref{eq:DtN_eig_equationA}
  which satisfies asymptotically, as $\mu \to \infty$,
  \begin{equation}
    \label{eq:asymptotics_L2_norm}
    \left\| f \right\|_{L^2(\Graph)}^2
    \leq C \left(\frac1{2\mu}
      + \mathcal{O}(\Lmin e^{-\mu \Lmin}) \right)\|\vecp\|^2
  \end{equation}
  and
  \begin{equation}
    \label{eq:asymptotics_dtn}
    \DTN_\Graph(\mu) = \mu \diag(d_b)_{b\in B}
    + \mathcal{O}\left(\mu e^{-\mu \Lmin}\right),
  \end{equation}
  where $d_j$ is the degree of the $j$-th boundary vertex, $\Lmin$
  is the length of the shortest edge in $\Graph$ and the remainder term is a
  matrix with the norm bounded by $C\mu e^{-\mu \Lmin}$.
\end{theorem}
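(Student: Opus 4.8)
The plan is to reduce the boundary-value problem~\eqref{eq:DtN_eig_equationA} to the scattering problem already analysed in Theorem~\ref{thm:asymptotics_scat}, and to read off both the Neumann-data asymptotics and the $L^2$ bound from the representation of the scattering solution. First I would settle existence and uniqueness directly: the operator $-\Delta + \mu^2$ with homogeneous Dirichlet conditions on $B$ and NK conditions at the remaining vertices is self-adjoint and strictly positive, hence boundedly invertible from $L^2(\Graph)$ onto its domain in $H^2(\Graph)$, so lifting the boundary values by a fixed $H^2$-extension yields a unique $f\in H^2(\Graph)$ solving~\eqref{eq:DtN_eig_equationA} for every $\mu>0$.

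Next I would attach an auxiliary lead to each boundary vertex $v_j$ (a half-line already present in $\Graph$ is treated as a lead carrying no incoming wave, since $f\in H^2$ forces decay there), parametrise it by $x\in[0,\infty)$ with $x=0$ at $v_j$, and write the scattering solution on it in the form~\eqref{eq:scattering_sol_on_leads}, $\tilde f(x)=c_j^{in}e^{\mu x}+c_j^{out}e^{-\mu x}$. Continuity of $\tilde f$ at $v_j$ gives $p_j=c_j^{in}+c_j^{out}$, i.e.\ $\vecp=(I+\Sigma(\mu))\,{\bf c}^{in}$, while the NK condition at $v_j$ in the augmented graph identifies the outward Neumann datum of $f=\tilde f|_{\Graph}$ with the derivative of $\tilde f$ along the lead, namely $\Neu(f)_j=\mu\,(c_j^{in}-c_j^{out})$, so $\Neu(f)=\mu\,(I-\Sigma(\mu))\,{\bf c}^{in}$. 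The key identity is therefore $\DTN_\Graph(\mu)=\mu\,(I-\Sigma(\mu))(I+\Sigma(\mu))^{-1}$, valid once $I+\Sigma(\mu)$ is invertible. Invoking Theorem~\ref{thm:asymptotics_scat}, $\Sigma(\mu)=\diag\!\big(\tfrac{2}{d_j+1}-1\big)+\mathcal{O}(e^{-\mu\Lmin})$, so $I+\Sigma(\mu)=\diag\!\big(\tfrac{2}{d_j+1}\big)+\mathcal{O}(e^{-\mu\Lmin})$ is invertible for large $\mu$ with inverse $\diag\!\big(\tfrac{d_j+1}{2}\big)+\mathcal{O}(e^{-\mu\Lmin})$, and $I-\Sigma(\mu)=\diag\!\big(\tfrac{2d_j}{d_j+1}\big)+\mathcal{O}(e^{-\mu\Lmin})$; multiplying the diagonal leading terms collapses to $\diag(d_j)$, yielding
\[
\DTN_\Graph(\mu)=\mu\,\diag(d_j)+\mathcal{O}\!\big(\mu e^{-\mu\Lmin}\big),
\]
which is~\eqref{eq:asymptotics_dtn}. (As a consistency check, the single edge of Example~\ref{ex:DTN} has $\Sigma(\mu)=e^{-2\mu\ell}$, giving $\DTN_\Graph(\mu)=\mu\tanh(\mu\ell)\to\mu$ with $d=1$.)

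For the $L^2$ estimate I would use the interior representation~\eqref{eq:scattering_sol_inside} together with~\eqref{eq:interior_asymptotics}, which give ${\bf a}=(T_i+\mathcal{O}(e^{-\mu\Lmin}))\,{\bf c}^{in}$ and hence $\|{\bf a}\|\le C\|{\bf c}^{in}\|\le C\|(I+\Sigma(\mu))^{-1}\|\,\|\vecp\|\le C\|\vecp\|$. On a finite edge $e$ of length $\ell_e$ one computes $\int_0^{\ell_e}\big|a_e e^{-\mu x}+a_{\overline e}e^{-\mu(\ell_e-x)}\big|^2\,dx\le\big(|a_e|^2+|a_{\overline e}|^2\big)\big(\tfrac1{2\mu}+\ell_e e^{-\mu\ell_e}\big)$, while a half-line contributes $|a_e|^2/(2\mu)$; summing over the finitely many edges and using $\ell_e e^{-\mu\ell_e}\le\Lmin e^{-\mu\Lmin}$ for $\mu$ large (each finite edge has $\ell_e\ge\Lmin$ and $t\mapsto te^{-\mu t}$ is decreasing past $1/\mu$) gives $\|f\|_{L^2(\Graph)}^2\le\big(\tfrac1{2\mu}+\mathcal{O}(\Lmin e^{-\mu\Lmin})\big)\|{\bf a}\|^2\le C\big(\tfrac1{2\mu}+\mathcal{O}(\Lmin e^{-\mu\Lmin})\big)\|\vecp\|^2$, which is~\eqref{eq:asymptotics_L2_norm}.

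I expect the main obstacle to be bookkeeping rather than analysis: one must pin down the orientation conventions for the lead coordinate and the outward Neumann data so that the two identities $\vecp=(I+\Sigma(\mu)){\bf c}^{in}$ and $\Neu(f)=\mu(I-\Sigma(\mu)){\bf c}^{in}$ carry the correct signs — a single sign slip would turn $\diag(d_j)$ into $-\diag(d_j)$ — and one must be careful to restrict the compact-graph scattering matrix of Theorem~\ref{thm:asymptotics_scat} to the leads over $B$, treating any pre-existing half-lines of $\Graph$ as leads with vanishing incoming coefficient (whose contributions to the Neumann data at $v_j$ cancel, so the formula is unchanged). Once Theorem~\ref{thm:asymptotics_scat} is in hand, the remaining steps are elementary matrix perturbation and a one-line integral estimate.
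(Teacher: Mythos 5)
Your proposal is correct and follows essentially the same route as the paper: reduce to the scattering matrix via $\vecp=(I+\Sigma(\mu))\,\vecc^{in}$ and $\Neu(f)=\mu(I-\Sigma(\mu))\,\vecc^{in}$, apply Theorem~\ref{thm:asymptotics_scat}, and bound $\|f\|_{L^2(\Graph)}$ through the interior coefficients $\mathbf{a}$ using the edge-by-edge integral. The one point you flag but do not execute --- handling pre-existing half-lines --- is resolved in the paper by inserting a dummy degree-$2$ vertex on each infinite edge so that Theorem~\ref{thm:asymptotics_scat} (stated for compact graphs with one lead per vertex) applies verbatim to the compact core, after which $\Sigma(\mu)$ is taken to be the $B$-block of that scattering matrix with the incoming coefficients on the converted half-lines set to zero, exactly as you describe.
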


\begin{proof}
  We intend to use the asymptotics we derived for the scattering
  matrix and a formula linking it to the DtN map $\DTN_\Graph(\mu)$
  (see, for example, \cite[Sec.~5.4]{BerKuc_graphs}).  However, we
  allow our graph to have infinite edges, a situation which is not
  covered in the results for the scattering matrix.  To overcome this
  limitation, we covert infinite edges into leads.  To avoid a
  situation when two leads join the same boundary vertex, we create,
  on each infinite edge, a dummy vertex $w$ of degree 2, see Fig. \ref{fig:dtn_map_inf}.
  The resulting graph we still denote by $\Graph$; by $W$ we denote
  the set of the newly created vertices and by $\Graph^c$ the
  compact graph containing all finite edges of the graph $\Graph$.  We also
  attach leads to the boundary vertices $b\in B$ and define the
  scattering matrix $\Sigma^c(\mu)$ of the compact graph $\Graph^c$
  with respect to \emph{all infinite edges}.

  \begin{figure}[t]
    \centering
    \includegraphics{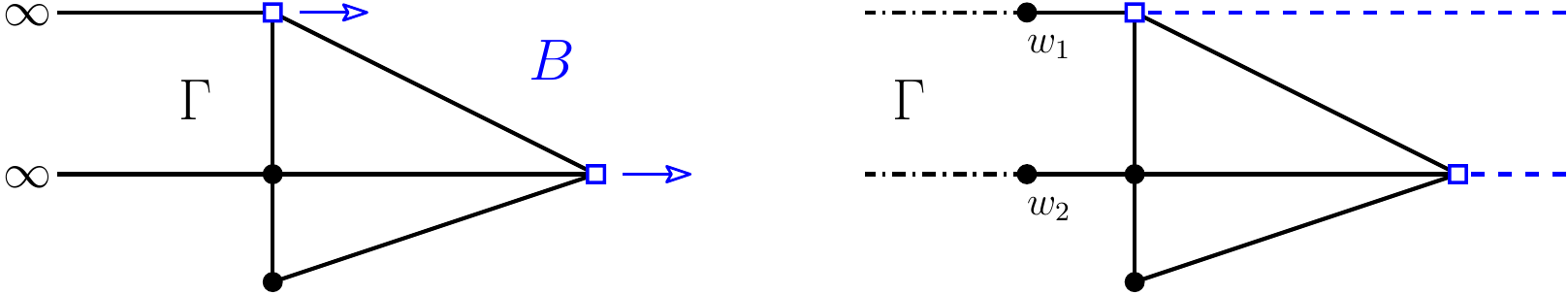}
    \caption{Left: a graph $\Graph$ with boundary verices $B$ marked as
      empty squares.  Right: after introducing dummy vertices $w_1$ and
      $w_2$ we obtain a compact graph $\Graph^c$ (solid edges only).
      The graph $\Graph$ combines solid and dash-dotted edges.  Dashed
      edges correspond to ``true'' leads corresponding to the boundary
      vertices $B$. }
    \label{fig:dtn_map_inf}
  \end{figure}

  The matrix $\Sigma^c(\mu)$ maps a vector ${\bf c}^{in}$ of incoming
  wave coefficients to the vector ${\bf c}^{out}$ of outgoing ones.
  The coefficients will be labelled by the attachment vertices of the
  corresponding lead, namely by $B \sqcup W$.  Denote by $P$ the
  operator from $\bbC^{|B|+|W|}$ to $\bbC^{|B|}$ acting as the orthogonal
  projection followed by restriction.

  Since we are looking for an $H^2(\Graph)$ solution of the boundary-value problem
  \eqref{eq:DtN_eig_equationA}, on the infinite edges of $\Graph$ the
  solution must have the \emph{purely radiating} form
  \begin{equation}
    \label{eq:purely_radiating}
    f(x_w) = c_w^{out} e^{-\mu x_w}.
  \end{equation}
  We now need to solve a ``mixed'' problem: on the vertices $w\in W$
  we are prescribing the purely radiating condition $c_w^{in}=0$ while
  on the vertices $b\in B$ of $\Graph^c$ we are prescribing the
  solution values $p_j$.  The latter condition may be expressed as
  \begin{equation}
    \label{eq:wave_to_value}
    c_b^{in} + c_b^{out} = \tilde{f}(v_b) = p_b,
  \end{equation}
  by substituting $x_e=0$ in \eqref{eq:scattering_sol_on_leads}.
  Splitting the vectors $\vecc$ into two parts corresponding to $B$
  and $W$, we get
  \begin{equation*}
    \begin{pmatrix}
      \vecc^{out}_B \\ \vecc^{out}_W
    \end{pmatrix}
    = \Sigma^c(\mu)
    \begin{pmatrix}
      \vecc^{in}_B \\ \mathbf{0}
    \end{pmatrix}.
  \end{equation*}
  In particular,
  \begin{equation}
    \label{eq:subunitary_scattering_def}
    \vecc^{out}_B = \Sigma(\mu) \vecc^{in}_B,
    \qquad\mbox{where}\quad
    \Sigma(\mu) := P \Sigma^c(\mu) P^*.
  \end{equation}
  We note that $\Sigma(\mu)$ is a $|B|\times|B|$ block of the matrix
  $\Sigma^c(\mu)$.  It is an analogue of the scattering matrix for
  the graph $\Graph$.

  Combining \eqref{eq:subunitary_scattering_def} and
  \eqref{eq:wave_to_value} we have
  \begin{equation}
    \label{eq:a_asympt}
    {\bf c}^{in} = \left(I + \Sigma(\mu)\right)^{-1} \vecp.
    \qquad\mbox{and}\qquad
    {\bf a} = \left(T_i + O(e^{-\mu \Lmin})\right)
    \left(I + \Sigma(\mu)\right)^{-1} \vecp,
  \end{equation}
  which, together with expansion \eqref{eq:scat_matr_expansion} for
  $\Sigma^c(\mu)$, implies
  \begin{equation}
    \label{eq:a_bound_p}
    \left\| {\bf a} \right\| \leq C \left\| \vecp \right\|.
  \end{equation}
  We note that coefficients ${\bf a}$ give the expansion of the
  solution on all edges of the graph $\Graph$, including the infinite
  edges (the same value of the coefficients applies on the finite and
  infinite portion, because the connecting vertex $w$ has degree 2).
  We can now estimate the norm of the solution $f$.  From expansion
  \eqref{eq:scattering_sol_inside} on the finite edges, we have
  \begin{equation*}
    \left\| f_e \right\|_{L^2(\Graph)}^2 = \frac{1-e^{-2\mu \ell_e}}{2\mu}
    \left( |a_e|^2 + |a_{\overline{e}}|^2 \right)
    + 2 \Real\left(a_e \overline{a_{\overline{e}}}\right) e^{-\mu \ell_e} \ell_e
    \leq \frac{1 + 2\ell_e\mu e^{-\mu \ell_e}}{2\mu}
    \left( |a_e|^2 + |a_{\overline{e}}|^2 \right).
  \end{equation*}
  On an infinite edge the solution has the form
  \eqref{eq:purely_radiating} with $c_w^{out}$ equal to $a_e$ on the
  finite edge ending in $w$.  Therefore, on the infinite edge together
  with the corresponding finite part,
  \begin{equation*}
    \left\| f_w \right\|_{L^2(\Graph)}^2 = \frac{1}{2\mu} |a_e|^2.
  \end{equation*}

  On the whole of $\Graph$, the norm of the function $f$
  satisfies the bound
  \begin{equation}
    \label{eq:L2_norm_bound}
    \left\| f \right\|_{L^2(\Graph)}^2 \leq \left(\frac1{2\mu} +
      O(\Lmin e^{-\mu \Lmin}) \right) \|{\bf a}\|^2.
  \end{equation}
  Here we used that
  \begin{equation*}
    \ell_e\mu e^{-\mu \ell_e} \leq \mu\Lmin e^{-\mu \Lmin}
    \qquad \mbox{if }\mu\Lmin > 1.
  \end{equation*}
  Combining \eqref{eq:L2_norm_bound} with \eqref{eq:a_bound_p} yields
  the desired estimate on $f$, equation~\eqref{eq:asymptotics_L2_norm}.

  We now express the DtN map from the matrix $\Sigma(\mu)$.  From the
  expansion~\eqref{eq:scattering_sol_on_leads} we get
  \begin{equation*}
    \Neu(f) = \mu \left(\vecc_B^{in} - \vecc_B^{out}\right)
     = \mu (I - \Sigma(\mu)) \vecc_B^{in}.
  \end{equation*}
  Combining this with \eqref{eq:a_asympt} we obtain
  \begin{equation}
    \label{eq:DtN_from_scat}
    \DTN_\Graph(\mu) = \mu \frac{I - \Sigma(\mu)}{I + \Sigma(\mu)},
  \end{equation}
  were the fraction notation can be used because two matrices
  commute.  This is the same expression as in
  \cite[Eq.~(5.4.8)]{BerKuc_graphs} only now $\Graph$ is allowed to
  have infinite edges and $\Sigma$ is defined via (\ref{eq:subunitary_scattering_def}).

  We recall that $\Sigma(\mu)$ is the $B$-block of the matrix
  $\Sigma^c(\mu)$ to which Theorem~\ref{thm:asymptotics_scat} applies.
  We denote $\Sigma(\mu) = R + S(\mu)$, where $R$ is the diagonal
  matrix and $S(\mu)$ is the remainder term in the asymptotic
  expansion of $\Sigma(\mu)$, equation~\eqref{eq:scat_matr_expansion}.
  Using the formula \eqref{eq:DtN_from_scat} we write
  \begin{equation}
    \label{eq:DtN_expand}
    \DTN_\Graph(\mu) = \mu \frac{I - R}{I + R} + \mu Q,
    \qquad Q :=
    (I-R-S)(I+R+S)^{-1} - (I+R)^{-1}(I-R).
  \end{equation}
  Factoring out the inverse matrices, we estimate the norm of $Q$ as
  \begin{align*}
    \|Q\|
    & \leq \left\|(I+R)^{-1}\right\|
    \|(I+R)(I-R-S) - (I-R)(I+R+S)\|
    \left\| (I+R+S)^{-1} \right\| \\
    & \leq 2 \|S\| \left\|(I+R)^{-1}\right\|
    \left\| (I+R+S)^{-1} \right\|.
  \end{align*}
  We have
  \begin{equation*}
    \left\| (I+R+S)^{-1} \right\| \to \left\|(I+R)^{-1}\right\| \leq
    \frac{\max d_b+1}{2},
  \end{equation*}
  and, combining with the estimate on $\|S\|$ from
  \eqref{eq:scat_matr_expansion}, we get $\|Q\| \leq C\mu
  e^{-\mu \Lmin}$. The first term in the expansion \eqref{eq:DtN_expand} can be
  evaluated explicitly,
  \begin{equation*}
    \frac{I - R}{I + R} = \frac{\diag\left(2 - \frac{2}{d_b+1}\right)}
    {\diag\left(\frac2{d_b+1}\right)}
    = \diag\left(d_b\right),
  \end{equation*}
  yielding \eqref{eq:asymptotics_dtn}.
\end{proof}

We will now rescale the problem by $\mu$ to obtain the results of
Theorem~\ref{thm:asymptotic_dtn_lengths}.

\begin{proof}[Proof of Theorem~\ref{thm:asymptotic_dtn_lengths}]
  The solution to problem~\eqref{eq:DtN_linear_rescaled} is obtained
  from the solution $f$ of the boundary-value problem \eqref{eq:DtN_eig_equationA} by the
  rescaling
  \begin{equation}
    \label{eq:rescale_u_f}
    u(\mu x) = f(x).
  \end{equation}
  This rescaling has the following effect on the Neumann data, the DtN map
  and the $L^2$ norm:
  \begin{equation}
    \label{eq:after_rescale_u_f}
    \Neu(u) = \frac1\mu \Neu(f),\qquad
    \DTN = \frac1\mu\DTN_\Graph,\qquad
    \|u\|_{L^2(\Graph_\mu)}^2 = \mu \|f\|_{L^2(\Graph)}^2,
  \end{equation}
  where $\DTN$ denotes the DtN map of $\Graph_\mu$.
  Asymptotics in Theorem~\ref{thm:asymptotic_dtn_lengths} now
  immediately follow from the corresponding asymptotics in
  Theorem~\ref{thm:asymptotic_dtn}.  Finally, we observe that $u$
  satisfies the differential equation $\Delta u = u$ and we can use
  the estimate (see \cite[Ch.~4, Eq.~(4.40)]{Burenkov_Sobolev_spaces})
  \begin{equation}
    \label{eq:Sobolev_norm_equiv}
    \|u\|_{L^2(\Graph_\mu)}^2 \leq
    \|u\|_{H^2(\Graph_\mu)}^2 \leq C
    \left( \|u\|_{L^2(\Graph_\mu)}^2 +
      \|u''\|_{L^2(\Graph_\mu)}^2\right)
    = 2C \|u\|_{L^2(\Graph_\mu)}^2,
  \end{equation}
  where $C$ is uniform in edge lengths as long as they are bounded
  away from 0 (which is clearly the case as $\mu\to\infty$).
\end{proof}

\section{Maximum principle for quantum graphs}
\label{sec:maximum_principle}

There are several results in the quantum graphs literature
establishing different versions of ``maximum principle'', see
\cite[2.4.3, Cor 2]{PokPry_umn04} and \cite[Thm 2]{BakFab_incol06}.
For our purposes, the most convenient form is that given in \cite[Lem
2.1]{HarMal_prep18} which we cite verbatim below.

\begin{lemma}[Lemma 2.1 in \cite{HarMal_prep18}]
  \label{lem:max_principle}
  Let $V(x)\geq0$ on an open subset $S$ of $\Graph$. Suppose that
  $w\in C^1$, and let $Hw := -w'' + V(x)w$ (in the weak
  sense) on edges, with ``super--Kirchhoff'' conditions at the
  vertices, namely,
  \begin{equation*}
    \sum_{e\sim v} w_e'(v) \geq 0, \qquad v\in S,
  \end{equation*}
  i.e., the sum of the outgoing derivatives of $w$ at every vertex is
  nonnegative. If $Hw \leq 0$ on the edges contained in S, then $\max(w,
  0)$ does not have a strict local maximum on $S$.
\end{lemma}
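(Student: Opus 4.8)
The plan is to reduce the statement to the elementary fact that a convex function on an interval has no strict interior local maximum, supplemented by a one-sided-derivative sign argument at the vertices using the super--Kirchhoff condition. The key preliminary observation is this: on any portion of an edge where $w>0$ one has $V(x)w(x)\geq0$, so the weak inequality $-w''+V(x)w\leq0$ yields $w''\geq0$ in the sense of distributions; since $w$ is continuous (indeed $C^1$) there, $w$ is convex on that portion.

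Argue by contradiction: suppose $\max(w,0)$ has a strict local maximum at some $z\in S$. If $w(z)\leq0$, then $\max(w,0)(z)=0$, and since $\max(w,0)\geq0$ on all of $\Graph$ this point cannot be a strict local maximum. Hence $w(z)>0$, and by continuity $w>0$ on a neighborhood $\calU\subset S$ of $z$, so that $\max(w,0)=w$ on $\calU$ and, by the preliminary observation, $w$ is convex on each edge-segment contained in $\calU$.

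If $z$ lies in the interior of an edge, then $w$ is convex on an open interval around $z$ and has a strict local maximum there, which is impossible. Therefore $z$ must be a vertex $v\in S$. Parametrize each edge $e\sim v$ by arclength $x\in[0,\varepsilon)$ with $x=0$ at $v$; then $w_e$ is convex on $[0,\varepsilon)$, and since $v$ is a \emph{strict} local maximum and a neighborhood of $v$ in $\Graph$ contains an initial segment of each incident edge, we have $w_e(x)<w_e(0)$ for $0<x<\varepsilon$. Convexity makes $w_e'$ nondecreasing, so if $w_e'(0^+)\geq0$ then $w_e$ would be nondecreasing and hence $w_e(x)\geq w_e(0)$, a contradiction; thus $w_e'(0^+)<0$ for \emph{every} edge $e$ incident to $v$. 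Summing gives $\sum_{e\sim v}w_e'(v)<0$, contradicting the super--Kirchhoff inequality $\sum_{e\sim v}w_e'(v)\geq0$. This completes the proof.

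The step I expect to be the main obstacle is the vertex case, specifically getting the bookkeeping right: one must use the metric-graph topology to see that strictness of the maximum propagates into each incident edge, and one must work cleanly with the merely weak second derivative (a continuous function with nonnegative distributional second derivative is convex, and a convex function has monotone one-sided derivatives). Degenerate configurations --- a vertex $v$ of degree one, or $S$ not containing any whole edge --- require no separate treatment, since the entire argument is local around $z$.
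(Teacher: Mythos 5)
Your proof is correct. Note that the paper itself offers no proof of this lemma: it is quoted verbatim from the cited reference (Lemma 2.1 of Harrell--Maltsev), so there is no in-paper argument to compare against. Your self-contained route --- observe that $w''\geq V w\geq 0$ distributionally wherever $w>0$, hence $w$ is convex there since it is continuous, then rule out a strict interior maximum by convexity and a strict vertex maximum by noting that convexity forces every outgoing derivative at such a vertex to be strictly negative, contradicting the super--Kirchhoff inequality --- is exactly the standard argument and all the delicate points are handled: the case $w(z)\leq 0$ is dismissed correctly because $\max(w,0)\geq 0$ everywhere, the reduction to a neighborhood where $\max(w,0)=w$ is legitimate, and the monotonicity of the one-sided derivative of a convex function is the right tool at the vertex. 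The only bookkeeping remark worth adding is that for a looping edge at $v$ the sum $\sum_{e\sim v}w_e'(v)$ runs over edge-\emph{ends}, so your per-end argument applies twice to such an edge; this changes nothing in the conclusion.
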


\begin{remark}
  In our setting, we have Kirchhoff conditions $\sum_{e\sim v} w_e'(v) = 0$ and
  the homogeneous equation $Hw = 0$.  Thus, Lemma \ref{lem:max_principle} is directly
  applicable except we would like to exclude non-strict maxima as
  well.  This is almost automatic if we impose the strict positivity
  $V > 0$ on $S$.  Indeed, let a maximum be acheived at
  point $x$ and $w(x)>0$.  If the maximum is non-strict, there is a
  sequence of points converging to $x$ where $w$ takes the same value
  as $w(x)$, therefore (possibly one-sided) derivative of $w$ at $x$
  is zero and $Hw > 0$ (in the weak sense) close to $x$.
\end{remark}

\section{Contraction mapping principle}
\label{sec:contraction_mapping}

In this section we collect classical results of nonlinear functional
analysis (see, for example \cite{Zeidler_vol1of5}) in the setting most
immediately applicable to our problem.

\begin{theorem}[Contraction Mapping Principle]
  \label{thm:contraction_mapping}
  Let $T$ be a map on a Banach space $Y$ with the norm $\| \cdot \|$
  mapping a ball $B_R = \{y\in Y: \|y\| < R\}$ to itself.  If $T$ is a contraction
  with a parameter $\lambda < 1$, i.e.
  \begin{equation}
    \label{eq:contraction_def}
    \|T(y_1) - T(y_2)\| \leq \lambda \|y_1-y_2\|,
    \qquad \forall y_1, y_2 \in B_R,
  \end{equation}
  then there exists a fixed point $y^* = T(y^*)$, which is unique in
  $B_R$.  The fixed point satisfies the estimate
  \begin{equation}
    \label{eq:fp_estimate}
    \|y^*\| \leq \frac{1}{1-\lambda} \| T(0) \|.
  \end{equation}
\end{theorem}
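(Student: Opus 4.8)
The plan is to run the classical Picard iteration argument. First I would fix the base point $y_0 := 0$ --- which lies in $B_R$ since $R > 0$ --- and define $y_{n+1} := T(y_n)$; because $T$ maps $B_R$ into itself, the entire sequence $\{y_n\}$ stays in $B_R$, and in particular $y_1 = T(0) \in B_R$. Next I would estimate the increments inductively using the contraction bound~\eqref{eq:contraction_def}: for $n \geq 1$,
\[
  \|y_{n+1} - y_n\| = \|T(y_n) - T(y_{n-1})\| \leq \lambda\|y_n - y_{n-1}\| \leq \cdots \leq \lambda^n \|y_1 - y_0\| = \lambda^n \|T(0)\|.
\]
Summing the geometric series shows that for $m > n$ one has $\|y_m - y_n\| \leq \|T(0)\|\,\lambda^n/(1-\lambda)$, which tends to $0$; hence $\{y_n\}$ is Cauchy, and completeness of $Y$ produces a limit $y^* := \lim_n y_n$. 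Telescoping from $y_0$ instead gives $\|y_m\| = \|y_m - y_0\| \leq \|T(0)\| \sum_{k=0}^{m-1}\lambda^k$, and passing to the limit $m \to \infty$ yields the bound~\eqref{eq:fp_estimate}, namely $\|y^*\| \leq \|T(0)\|/(1-\lambda)$.

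It then remains to identify $y^*$ as a fixed point and to prove uniqueness. A contraction is Lipschitz, hence continuous, so letting $n \to \infty$ in the recursion $y_{n+1} = T(y_n)$ gives $y^* = T(y^*)$. For uniqueness, if $y^*$ and $z^*$ are both fixed points in $B_R$, then $\|y^* - z^*\| = \|T(y^*) - T(z^*)\| \leq \lambda\|y^* - z^*\|$, and since $\lambda < 1$ this forces $\|y^* - z^*\| = 0$.

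I do not anticipate a genuine obstacle: the argument is entirely classical. The one point that deserves a word of care is bookkeeping --- ensuring that the limit $y^*$ lies in the set on which the contraction hypothesis is available, so that $T(y^*)$ and the Lipschitz estimate make sense. This is cleanest if one works from the start on a \emph{closed} ball $\overline{B_\rho}$ that $T$ maps into itself (in the applications in this paper such a ball is exhibited explicitly together with its self-mapping and contraction properties); since $\overline{B_\rho}$ is then a complete metric space in its own right, the argument above applies verbatim and delivers a fixed point inside $\overline{B_\rho}$, with the stated norm estimate.
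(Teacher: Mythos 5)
The paper does not prove this statement at all: Appendix~\ref{sec:contraction_mapping} simply quotes it as a classical result with a reference to \cite{Zeidler_vol1of5}, so there is no in-paper argument to compare against. Your Picard-iteration proof is the standard one and is correct: the increment estimate, the geometric-series bound giving both the Cauchy property and \eqref{eq:fp_estimate}, and the uniqueness argument are all fine. You are also right to flag the one genuine delicacy, namely that $B_R$ as written is an \emph{open} ball, so the limit $y^*$ could a priori land on the sphere $\|y\|=R$, where neither the contraction estimate \eqref{eq:contraction_def} (needed to pass to the limit in $y_{n+1}=T(y_n)$) nor the phrase ``unique in $B_R$'' applies; replacing $B_R$ by a closed invariant ball, as you propose, is the standard repair, and in the paper's actual applications (e.g.\ the fixed point $\tpsi$ in the proof of Theorem~\ref{thm:nlin_DTN}) the a posteriori bound \eqref{eq:fp_estimate} places the fixed point strictly inside the ball, so the issue is harmless there.
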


In the case when the contraction mapping $T$ smoothly depends on a
parameter $x$, the fixed point will also depend on the parameter
smoothly.  We remind some standard facts and definitions leading to
this result.

\begin{definition}
  The map $f : U \subseteq X \to Z$, with $X$ and $Z$ Banach spaces is
  \emph{Fr\'echet-differentiable} at $x\in U$ if there exists a bounded
  linear operator which we denote $D_x f: X\to Z$ such that
  \begin{equation}
    \label{eq:frechet_def}
    f(x+h) - f(x) = f'(x) h + o(\|h\|), \quad h\to 0,
  \end{equation}
  for all $h$ in some neighborhood of 0.

  If $U$ is open and the derivative $D_x f(x)$ exists for all $x\in U$
  and depends continuously (in the operator norm) on $x$, the map $f$
  is called $C^1$.
\end{definition}

The partial Fr\'echet derivatives for a mapping
$F: X \times Y \to Z$ are defined analogously.  A map $F$ is $C^1$ in
an open $U \subseteq X\times Y$ if and only if the partial derivatives
$D_x F$ and $D_y F$ are continuous in $U$.

\begin{theorem}[Smooth Implicit Function Theorem]
  \label{thm:implicit_function}
  Suppose that the mapping $F: U \subseteq X \times Y \to Z$, where
  $U$ is open and $X$, $Y$ and $Z$ are Banach spaces over $\bbR$ or
  $\bbC$, is such that
  \begin{enumerate}
  \item there is a point $(x_0,y_0) \in U$ satisfying $F(x_0,y_0)=0$,
  \item $F$ is $C^1$ in $U$,
  \item the partial derivative $D_y F(x_0,y_0) : Y \to Z$ is bijective.
  \end{enumerate}
  Then there is a positive number $r_0$ and a $C^1$ map
  $y(\cdot): B_{r_0}(x_0) \subset X \to Y$ such that $F(x,y(x)) = 0$ and
  $y(x_0)=y_0$.  Furthermore, there is a number $r>0$ such that for
  any $x \in B_{r_0}(x_0)$, $y(x)$ is the only solution of $F(x,y)=0$
  satisfying $\|y-y_0\| < r$.
\end{theorem}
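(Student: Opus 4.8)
The plan is to prove Theorem~\ref{thm:implicit_function} by turning $F(x,y)=0$ into a parametrized fixed-point equation, applying the Contraction Mapping Principle (Theorem~\ref{thm:contraction_mapping}) to obtain the implicit function $y(\cdot)$, and then bootstrapping its $C^1$ regularity from the $C^1$ hypothesis on $F$.

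First I would set $A := D_yF(x_0,y_0)$; by hypothesis it is a bounded bijection, hence by the open mapping theorem $A^{-1}$ is bounded. Introduce $G(x,y) := y - A^{-1}F(x,y)$, so that for each fixed $x$ the solutions of $F(x,y)=0$ are exactly the fixed points of $G(x,\cdot)$. Since $F$ is $C^1$, the partial derivative $D_yG(x,y) = I - A^{-1}D_yF(x,y)$ is continuous in $(x,y)$ and vanishes at $(x_0,y_0)$; continuity then yields radii $r,r_0>0$ with $B_{r_0}(x_0)\times B_r(y_0)\subset U$ and $\|D_yG(x,y)\|\le\tfrac12$ there, so by the mean value inequality on the convex ball $B_r(y_0)$ each $G(x,\cdot)$ is a $\tfrac12$-contraction on it. Shrinking $r_0$ and using continuity of $F$ together with $F(x_0,y_0)=0$ gives $\|A^{-1}F(x,y_0)\|\le r/2$, hence $\|G(x,y)-y_0\|\le\tfrac12\|y-y_0\|+r/2\le r$, i.e. $G(x,\cdot)$ maps $B_r(y_0)$ into itself. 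Theorem~\ref{thm:contraction_mapping} then produces a unique fixed point $y(x)\in B_r(y_0)$, giving $F(x,y(x))=0$, and $y(x_0)=y_0$ by uniqueness; the uniqueness clause of the theorem is exactly the uniqueness of the fixed point.

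The remaining work is the $C^1$ regularity of $y(\cdot)$. I would first get continuity (indeed local Lipschitzness) of $y$ by writing $y(x)-y(x') = \big(G(x,y(x))-G(x,y(x'))\big) + \big(G(x,y(x'))-G(x',y(x'))\big)$, bounding the first term by $\tfrac12\|y(x)-y(x')\|$ and the second by continuity of $F$ in $x$, and rearranging. Then, after shrinking the neighborhood so that $D_yF(x,y)$ stays close enough to $A$ to be invertible with uniformly bounded inverse (Neumann series), I would propose the candidate derivative $D_xy(x) = -\big(D_yF(x,y(x))\big)^{-1}D_xF(x,y(x))$, which is continuous in $x$ as a composition of continuous maps (using the continuity of $y$ and of operator inversion). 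The step I expect to be the main obstacle is verifying that this candidate is actually the Fr\'echet derivative: one expands $0 = F(x+h,y(x+h)) - F(x,y(x))$ using $C^1$-differentiability of $F$ in both slots to obtain $0 = D_xF(x,y(x))\,h + D_yF(x,y(x))\,(y(x+h)-y(x)) + o\big(\|h\| + \|y(x+h)-y(x)\|\big)$, solves for the increment of $y$, and absorbs the error into $o(\|h\|)$ using the Lipschitz bound $\|y(x+h)-y(x)\| = O(\|h\|)$ from the previous step. Continuity of the derivative formula then shows $y\in C^1$ and completes the proof.
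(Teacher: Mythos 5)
Your proof is correct. Note, however, that the paper does not actually prove Theorem~\ref{thm:implicit_function}: it is quoted in Appendix~\ref{sec:contraction_mapping} as a classical result with a reference to \cite{Zeidler_vol1of5}, and the only thing proved there is Corollary~\ref{cor:contraction_smooth}, which \emph{uses} the implicit function theorem together with Theorem~\ref{thm:contraction_mapping}. Your argument is the standard textbook proof --- reformulating $F(x,y)=0$ as the fixed-point equation for $G(x,y)=y-A^{-1}F(x,y)$ with $A=D_yF(x_0,y_0)$, invoking the Contraction Mapping Principle uniformly in $x$, extracting local Lipschitz continuity of $y(\cdot)$, and then verifying the candidate derivative $-\bigl(D_yF(x,y(x))\bigr)^{-1}D_xF(x,y(x))$ --- and all the steps (open mapping theorem for $A^{-1}$, mean value inequality for the contraction constant, Neumann series for uniform invertibility of $D_yF$, absorption of the $o(\|h\|+\|y(x+h)-y(x)\|)$ error via the Lipschitz bound) are sound and fit naturally with the paper's stated version of the Contraction Mapping Principle.
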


Combining the above two theorems gives a smooth dependence of the
fixed point on a parameter.

\begin{corollary}[Contraction Mapping with a Parameter]
  \label{cor:contraction_smooth}
  Let $T: X\times Y \to Y$ be a $C^1$ mapping on an open set
  $U \subseteq X\times Y$.  Suppose for some $x_0\in X$ and
  $V \subseteq Y$ such that $\{x_0\}\times V \subset U$, the mapping
  $T(x_0,y) : Y \to Y$ is a contraction which maps $V$ into itself.

  Then there is a positive number $r_0$ and a $C^1$ map
  $y(\cdot): B_{r_0}(x_0) \subset X \to Y$ such that
  \begin{equation*}
    T\Big(x,y(x)\Big) = y(x).
  \end{equation*}
\end{corollary}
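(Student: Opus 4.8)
The plan is to deduce the statement from the Smooth Implicit Function Theorem (Theorem~\ref{thm:implicit_function}) applied to the map
\[
  F : U \subseteq X \times Y \to Y, \qquad F(x,y) := y - T(x,y).
\]
First I would locate the base point. By hypothesis $T(x_0,\cdot)$ is a contraction with some constant $\lambda < 1$ that maps $V$ into itself, so the Contraction Mapping Principle (Theorem~\ref{thm:contraction_mapping}) supplies a fixed point $y_0 \in V$ with $T(x_0,y_0)=y_0$; when $V$ is a ball, as in all our applications, the estimate $\|T(x_0,y)\|<\sup_{V}\|\cdot\|$ forces $y_0$ to lie in the interior, so $(x_0,y_0)\in U$. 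Then $F(x_0,y_0)=0$, and $F$ is $C^1$ on $U$ because $T$ is.

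The one point that needs an argument is hypothesis (3) of Theorem~\ref{thm:implicit_function}: that $D_y F(x_0,y_0) = \operatorname{Id}_Y - D_y T(x_0,y_0)$ is a bijection of $Y$. The key observation is that the operator norm of $D_y T(x_0,y_0)$ does not exceed the contraction constant $\lambda$. Indeed, for any $h \in Y$ and small $t$ one has $\|T(x_0,y_0+th)-T(x_0,y_0)\| \le \lambda\,|t|\,\|h\|$; dividing by $|t|$ and letting $t\to 0$ (using that the partial Fréchet derivative exists since $T$ is $C^1$) gives $\|D_y T(x_0,y_0)h\| \le \lambda\|h\|$. Consequently $\operatorname{Id}_Y - D_y T(x_0,y_0)$ is invertible via the Neumann series $\sum_{n\ge 0}\bigl(D_y T(x_0,y_0)\bigr)^n$, with bounded inverse of norm at most $(1-\lambda)^{-1}$; in particular it is bijective.

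Finally I would invoke Theorem~\ref{thm:implicit_function} at $(x_0,y_0)$: it yields a radius $r_0 > 0$ and a $C^1$ map $y(\cdot) : B_{r_0}(x_0) \to Y$ with $y(x_0)=y_0$ and $F(x,y(x))=0$ for all $x \in B_{r_0}(x_0)$, which is exactly $T(x,y(x)) = y(x)$, the desired conclusion. (If a uniqueness clause were wanted, the local uniqueness part of Theorem~\ref{thm:implicit_function} supplies it, but the corollary as stated needs only existence of a $C^1$ branch.) The main obstacle is the operator-norm bound $\|D_y T(x_0,y_0)\|\le\lambda$ together with the bookkeeping of checking $(x_0,y_0)\in U$; once these are in hand, the result is a direct application of the two cited theorems.
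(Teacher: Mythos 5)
Your proposal is correct and follows essentially the same route as the paper: obtain the base fixed point $y_0$ from Theorem~\ref{thm:contraction_mapping}, then apply Theorem~\ref{thm:implicit_function} to $F(x,y)=y-T(x,y)$, using $\|D_yT(x_0,y_0)\|\leq\lambda<1$ to see that $\Id_Y-D_yT(x_0,y_0)$ is bijective. You simply spell out a few steps the paper leaves implicit (the difference-quotient derivation of the operator-norm bound, the Neumann series, and the interiority of $y_0$).
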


\begin{proof}
  We first apply Theorem~\ref{thm:contraction_mapping} to $T(x_0,y)$
  obtaining a fixed point $y_0 \in V$.  Then we apply
  Theorem~\ref{thm:implicit_function} with $F(x,y) = y - T(x,y)$.  The
  partial derivative $F_y(x_0,y_0)$ is bijective because it is
  identity minus an operator which strictly smaller than 1:
  $\|T_y(x_0,y_0)\|\leq \lambda < 1$ since $T(x_0,y)$ is a
  contraction.
\end{proof}

\section{Useful estimates on elliptic functions}
\label{sec:elliptic}

We introduce the elliptic integrals of the first and second kind, respectively:
\begin{equation}
\label{elliptic-integrals}
F(\varphi;k) := \int_0^{\varphi} \frac{d \theta}{\sqrt{1-k^2 \sin^2\theta}}, \quad
E(\varphi;k) := \int_0^{\varphi} \sqrt{1-k^2 \sin^2\theta} d \theta.
\end{equation}
From this definition, the complete elliptic integrals of the first and second kind
are given by
\begin{equation}
K(k) := F\left(\frac{\pi}{2};k\right) \quad \mbox{\rm and} \quad
E(k) := E\left(\frac{\pi}{2};k\right)
\end{equation}
respectively. In addition, Jacobi's elliptic functions are given by
\begin{equation}
{\rm sn}(u;k) = \sin \varphi, \quad
{\rm cn}(u;k) = \cos \varphi, \quad
{\rm dn}(u;k) = \sqrt{1-k^2 \sin^2 \varphi},
\end{equation}
where $u$ is related to the elliptic integrals by
\begin{equation}
\label{u:ellintdef}
F(\varphi;k) = u, \quad
E(\varphi;k)= \int_0^u {\rm dn}^2(s;k) ds.
\end{equation}
Many properties of elliptic integrals and Jacobi's elliptic functions are
collected together in \cite{GraRyzh_table}.

The following technical result was proven in Appendix of \cite{MarPel_amrx16}.
\begin{proposition}
\label{prop-elliptic}
For every $\xi \in \mathbb{R}$, it is true that
\begin{align}
\label{sn-der}
& {\rm sn}(\xi;1) =  \tanh(\xi), \quad \partial_k {\rm sn}(\xi;1) = -\frac{1}{2} \left[ \sinh(\xi) \cosh(\xi) - \xi \right] {\rm sech}^2(\xi), \\
\label{cn-der}
& {\rm cn}(\xi;1)  =  {\rm sech}(\xi), \quad \partial_k {\rm cn}(\xi;1) = \frac{1}{2} \left[ \sinh(\xi) \cosh(\xi) - \xi \right] \tanh(\xi) {\rm sech}(\xi), \\
\label{dn-der}
& {\rm dn}(\xi;1)  =  {\rm sech}(\xi), \quad \partial_k {\rm dn}(\xi;1) = -\frac{1}{2} \left[ \sinh(\xi) \cosh(\xi) + \xi \right] \tanh(\xi) {\rm sech}(\xi).
\end{align}
Moveover, for sufficiently large $\xi_0$, there is a positive constant $C$ such that
\begin{equation}
\label{bound-second-derivative}
|\partial_k {\rm sn}(\xi;k) - \partial_k {\rm sn}(\xi;1)| +
| \partial_k {\rm cn}(\xi;k) - \partial_k {\rm cn}(\xi;1)| +
|\partial_k {\rm dn}(\xi;k) - \partial_k {\rm dn}(\xi;1)| \leq C \xi_0 e^{-\xi_0},
\end{equation}
holds for every $\xi \in (\xi_0,K(k))$ and every $k \in (k_*,1)$ with $k_* = 1 - \mathcal{O}(e^{-2\xi_0})$.
\end{proposition}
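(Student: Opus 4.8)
The plan is to prove Proposition~\ref{prop-elliptic} in two stages: first the closed-form identities \eqref{sn-der}--\eqref{dn-der} at $k=1$, which come from the degeneration of the Jacobi functions to hyperbolic functions, and then the uniform remainder estimate \eqref{bound-second-derivative}, which follows by quantifying how fast the modulus-derivatives vary near $k=1$ together with the fact that $k$ is exponentially close to $1$ on the range of interest.

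For the identities, recall that $s:={\rm sn}(\xi;k)$, $c:={\rm cn}(\xi;k)$, $d:={\rm dn}(\xi;k)$ solve the autonomous system $s'=cd$, $c'=-sd$, $d'=-k^2sc$ with $s(0)=0$, $c(0)=d(0)=1$. Setting $k=1$ (or reading off \eqref{u:ellintdef} directly) gives ${\rm sn}(\xi;1)=\tanh\xi$ and ${\rm cn}(\xi;1)={\rm dn}(\xi;1)={\rm sech}\,\xi$. Differentiating the system in $k$ and then evaluating at $k=1$ produces a \emph{linear} inhomogeneous initial value problem for the triple $\big(\partial_k s,\partial_k c,\partial_k d\big)\big|_{k=1}$, with zero initial data and a forcing term $-2\tanh\xi\,{\rm sech}\,\xi$ entering only the $d$-component. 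I would then check by direct differentiation that the three expressions in \eqref{sn-der}--\eqref{dn-der} satisfy this system and the initial conditions; since the IVP is linear its solution is unique, so the identities follow. (Alternatively one may quote the standard closed forms, cf.~\cite{GraRyzh_table}, for $\partial_k{\rm sn}$, $\partial_k{\rm cn}$, $\partial_k{\rm dn}$ in terms of ${\rm sn},{\rm cn},{\rm dn}$, the Jacobi epsilon function $E(\xi,k)$, and $\xi$, and pass to the limit $k\to1$ using $E(\xi,1)=\tanh\xi$; the ODE-verification route has the advantage of never facing the resulting $0/0$ limits.)

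For the remainder bound, write $\Phi_f(\xi;k):=\partial_k f(\xi;k)$ for $f\in\{{\rm sn},{\rm cn},{\rm dn}\}$ and use the fundamental theorem of calculus in the modulus,
\begin{equation*}
\Phi_f(\xi;k)-\Phi_f(\xi;1)=-\int_k^1\partial_\kappa^2 f(\xi;\kappa)\,d\kappa ,
\end{equation*}
so that $\big|\Phi_f(\xi;k)-\Phi_f(\xi;1)\big|\le(1-k)\sup_{\kappa\in[k,1]}\big|\partial_\kappa^2 f(\xi;\kappa)\big|$. On the admissible range one has $1-k\le1-k_*=\mathcal{O}(e^{-2\xi_0})$, and, since $\xi<K(k)$ with $K(k)=\xi_0+\mathcal{O}(1)$ for the values of $k$ that arise (by \eqref{expansion-K}), also $e^{\xi}\le Ce^{\xi_0}$ and $\xi\le C\xi_0$ on this range. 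It therefore suffices to show $\sup_{\kappa\in[k,1]}\big|\partial_\kappa^2 f(\xi;\kappa)\big|\le C\xi_0 e^{\xi_0}$; multiplying by $1-k=\mathcal{O}(e^{-2\xi_0})$ then yields \eqref{bound-second-derivative}. This last bound I would obtain by differentiating the variational system for $\big(\partial_\kappa s,\partial_\kappa c,\partial_\kappa d\big)$ once more in $\kappa$: the second derivatives solve a linear system whose homogeneous part is the linearization of the Jacobi flow about the degenerate ($\kappa=1$) solution, with fundamental matrix of size $\mathcal{O}(e^{\xi})$, and whose inhomogeneity is assembled from $s,c,d$, their first $\kappa$-derivatives, and $\kappa^2-1$, and grows at most like $\xi e^{\xi}$; a Gr\"onwall estimate then gives $\mathcal{O}(\xi_0 e^{\xi_0})$ on the interval.

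The main obstacle is precisely this uniform control of the second modulus-derivative. Several of the objects involved individually blow up as $\kappa\to1$: the prefactor $(1-\kappa^2)^{-1}$ in the explicit derivative formulas, and the secular terms of order $e^{\xi}\sim e^{K(\kappa)}$ in the variational equations. One must therefore either exhibit the cancellation keeping the fundamental matrix of the linearized flow of size $e^{\xi}$ rather than worse, or equivalently show that ${\rm cn}(\xi;\kappa)\,{\rm dn}(\xi;\kappa)$ and the combination $E(\xi;\kappa)-(1-\kappa^2)\xi$ vanish at exactly the rate that compensates the singular prefactors, uniformly in $\xi\in(\xi_0,K(\kappa))$. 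Once this is in place, the exponential smallness of $1-k$ does the rest and the estimate is uniform in $k\in(k_*,1)$, as claimed.
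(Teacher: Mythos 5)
You should first note that the paper itself contains no proof of Proposition~\ref{prop-elliptic}: it is quoted as having been ``proven in Appendix of \cite{MarPel_amrx16}'', so there is no in-paper argument to compare routes with. Your treatment of the identities \eqref{sn-der}--\eqref{dn-der} is correct and complete. Differentiating the system $s'=cd$, $c'=-sd$, $d'=-k^2sc$ in $k$, evaluating at $k=1$, and verifying that the stated expressions solve the resulting linear inhomogeneous IVP with zero initial data is a clean, fully checkable route (all three equations close, with $A=\tfrac12[\sinh\xi\cosh\xi-\xi]$ and $B=\tfrac12[\sinh\xi\cosh\xi+\xi]$), and it avoids the $0/0$ limits in the classical closed forms.

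The proof of \eqref{bound-second-derivative}, however, has a genuine gap. First, your reduction uses the claim that $K(k)=\xi_0+\mathcal{O}(1)$ ``for the values of $k$ that arise'', hence $e^{\xi}\leq Ce^{\xi_0}$ on $(\xi_0,K(k))$. The hypothesis is only $k\in(k_*,1)$ with $1-k_*=\mathcal{O}(e^{-2\xi_0})$; as $k\uparrow 1$ the quantity $K(k)$ is unbounded, so the right endpoint of the $\xi$-range is not controlled by $\xi_0$ and the inequality $e^{\xi}\leq Ce^{\xi_0}$ fails on part of the stated domain. Second, and more seriously, you reduce everything to the bound $\sup_{\kappa\in[k,1]}|\partial_\kappa^2 f(\xi;\kappa)|\leq C\xi_0e^{\xi_0}$, which is precisely the hard part, and the Gr\"onwall argument you sketch does not deliver it: the twice-differentiated variational system has forcing terms such as $2(\partial_\kappa {\rm cn})(\partial_\kappa {\rm dn})$, which by your own first-order bounds are of size $e^{2\xi}$, while the fundamental matrix of the linearized Jacobi flow is of size $e^{\xi}$; Duhamel then yields only $\partial_\kappa^2 f=\mathcal{O}(e^{2\xi})$, and multiplying by $1-k\leq Ce^{-2\xi_0}$ gives $\mathcal{O}(1)$ already at $\xi=\xi_0$, not $\mathcal{O}(\xi_0e^{-\xi_0})$. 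You acknowledge in your closing paragraph that a cancellation ``must be exhibited'', but it is not exhibited, and the conclusion hinges on it. A more robust route is to start from the classical closed forms for $\partial_k{\rm sn}$, $\partial_k{\rm cn}$, $\partial_k{\rm dn}$ in terms of ${\rm sn},{\rm cn},{\rm dn}$, $\xi$, and the incomplete integral $E(\varphi;k)$, where the singular prefactor $(1-k^2)^{-1}$ is visibly compensated by factors of ${\rm cn}\,{\rm dn}$ and $E(\varphi;k)-(1-k^2)\xi$, and then expand each ingredient about $k=1$ with explicit remainders that are uniform on $\xi\in(\xi_0,K(k))$, rather than passing through an abstract second $k$-derivative bound.
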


We can now address the ``reverse Sobolev estimate'' on the real line
(see also Lemma~\ref{lem:reverse_Sobolev_graph}).

\begin{proposition}
  \label{prop:H2_from_Linf}
  There exist positive $L_0$, $c_0$, and $C$ such that every
  real positive solution $\Psi
  \in H^2(0,L)$ of the stationary
  NLS equation $-\Psi'' + \Psi = 2 |\Psi|^2 \Psi$ satisfying
  \begin{equation}
  \label{bound-on-Psi}
  |\Psi(z)| < c, \quad z \in [0,L],
  \end{equation}
  for every $c \in (0,c_0)$ and every $L \in (L_0,\infty)$, also satisfies the bound
  \begin{equation}
    \label{eq:H2_from_Linf_edge}
    \| \Psi \|_{H^2(0,L)} \leq C \|\Psi\|_{L^\infty(0,L)}.
  \end{equation}
\end{proposition}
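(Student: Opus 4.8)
The plan is to establish the $H^2$ bound edge-by-edge by exploiting the explicit structure of small positive solutions of the stationary NLS on an interval. First I would observe that a real positive solution $\Psi$ of $-\Psi'' + \Psi = 2\Psi^3$ on $(0,L)$ with $\|\Psi\|_{L^\infty} < c_0$ small is, by the phase-plane analysis recalled in Section~\ref{sec:nlinDTNexample}, either a constant (necessarily $\Psi\equiv 0$ when $c_0 < 1/\sqrt{2}$, which is trivial) or a monotone or single-humped piece of the $\dn$-type solution family \eqref{eq:dnoidal_universal} with elliptic parameter $k$ near $1$. Being small in $L^\infty$ and positive forces the relevant portion of $\Psi$ to lie in the exponentially decaying tails of the $\dn$ (equivalently $\sech$-like) profile. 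The strategy is then to control $\|\Psi\|_{L^2}^2$ and $\|\Psi''\|_{L^2}^2 = \|\Psi - 2\Psi^3\|_{L^2}^2$ in terms of $\|\Psi\|_{L^\infty}^2$, since the equivalence $\|\Psi\|_{H^2}^2 \asymp \|\Psi\|_{L^2}^2 + \|\Psi''\|_{L^2}^2$ together with the ODE reduces everything to the $L^2$ estimate.

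The key point is the $L^2$ estimate $\|\Psi\|_{L^2(0,L)} \leq C\|\Psi\|_{L^\infty(0,L)}$ with $C$ independent of $L$ and of which admissible solution we picked. I would prove this by a direct computation on the explicit solutions: writing $\Psi(z) = \frac{1}{\sqrt{2-k^2}}\dn\!\left(\frac{z-a}{\sqrt{2-k^2}};k\right)$ for an appropriate shift $a$, one has (as in the mass computations in the proofs of Theorems~\ref{thm:localized_pendant} and \ref{thm:generic_edge}, using 8.114 in \cite{GraRyzh_table})
\begin{equation*}
  \int \dn(\xi;k)^2\, d\xi = E(\varphi;k),
\end{equation*}
and on the tail region where $\dn$ is below a small threshold the integrand is comparable to $\sech^2$, whose integral over a half-line is bounded. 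More precisely, if $M := \|\Psi\|_{L^\infty(0,L)}$ is the maximal value of $\Psi$ on the interval (attained either at an endpoint or at an interior hump), then the profile decays like $M\,e^{-|z-z_{\max}|}$ away from the location $z_{\max}$ of that maximum — this follows from Proposition~\ref{prop-elliptic}, which pins the remainder of $\dn(\xi;k) - \sech(\xi)$ uniformly in the relevant range of $k$. Integrating this exponential envelope gives $\|\Psi\|_{L^2}^2 \leq C M^2$ with a universal $C$ (essentially $\int_{-\infty}^\infty \sech^2 = 2$, up to the harmless prefactor $\frac{1}{2-k^2}\to 1$), and crucially no $L$-dependence appears because the bound comes from integrating a decaying exponential over a region of any length. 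The constants $c_0$ and $L_0$ enter exactly to guarantee $k$ stays in the range where Proposition~\ref{prop-elliptic} applies and where the only admissible solutions are these tail pieces.

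With the $L^2$ bound in hand, the remaining steps are routine: from the equation, $\|\Psi''\|_{L^2} \leq \|\Psi\|_{L^2} + 2\|\Psi^3\|_{L^2} \leq (1 + 2c_0^2)\|\Psi\|_{L^2} \leq C'M$, and then
\begin{equation*}
  \|\Psi\|_{H^2(0,L)}^2 \leq C\big(\|\Psi\|_{L^2(0,L)}^2 + \|\Psi''\|_{L^2(0,L)}^2\big) \leq C'' M^2,
\end{equation*}
using the standard equivalence of norms on $H^2(0,L)$ with $L$-uniform constants for $L > L_0$ (the same estimate \eqref{eq:Sobolev_norm_equiv} from \cite{Burenkov_Sobolev_spaces} used in Appendix~A). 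The main obstacle — and the place where care is genuinely needed — is making the $L^2$-to-$L^\infty$ comparison uniform over \emph{all} admissible solutions simultaneously: one must rule out configurations where, say, the hump sits far inside the interval and the solution has long nearly-flat low pieces that could accumulate $L^2$ mass disproportionate to $M$; this is precisely why the positivity and the smallness hypothesis \eqref{bound-on-Psi} are invoked, forcing the solution onto the steeply-decaying part of the elliptic orbit where Proposition~\ref{prop-elliptic}'s uniform exponential control is available, and why the case analysis (monotone vs. single-humped, $k<1$ vs.\ $k>1$) must be carried out, each case reducing to the same bounded tail integral.
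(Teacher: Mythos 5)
Your overall strategy --- reduce everything to the $L^2$ bound via $\|\Psi''\|_{L^2}=\|\Psi-2\Psi^3\|_{L^2}$, classify the admissible small positive solutions through the phase portrait, and then control $\|\Psi\|_{L^2}$ using the explicit $\dn$ representation together with Proposition~\ref{prop-elliptic} --- is the same as the paper's. Where you diverge is at the decisive step: the paper evaluates $\int\dn^2=E(\varphi;k)$ exactly and bounds the ratio of that increment to $\dn^2$ at the endpoint of the monotone piece, whereas you propose the pointwise envelope $\Psi(z)\leq C\|\Psi\|_{L^\infty}e^{-|z-z_{\max}|}$ and integrate it. That route can be made to work and is arguably cleaner, but as written it asserts exactly what needs proving, in two places.

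First, since $\Psi''=\Psi(1-2\Psi^2)>0$ wherever $0<\Psi<1/\sqrt2$, a small positive solution is convex: it has no interior maximum, at most one interior minimum, and its supremum sits at an endpoint. Your ``interior hump'' alternative is vacuous --- an interior maximum of a $\dn$ orbit has height $1/\sqrt{2-k^2}\geq 1/\sqrt2$, contradicting \eqref{bound-on-Psi} --- and should be ruled out rather than folded into the envelope claim. Second, and more seriously, the envelope does not follow merely from ``being on the steeply-decaying part of the orbit.'' Writing $\Psi(z)=\tfrac{1}{\sqrt{2-k^2}}\dn\big(\tfrac{z+L_2}{\sqrt{2-k^2}};k\big)$ with the true peak at $-L_2<0$ and the interior minimum at $L_1$, the profile flattens out near $L_1$, where $\dn$ bottoms out at the value $\sqrt{1-k^2}$; this plateau is precisely the ``long nearly-flat low piece'' you worry about, it belongs to the solution on $[0,L]$, and positivity does not remove it. What controls it is the quantitative constraint $L_1+L_2=\sqrt{2-k^2}\,K(k)$ coming from $\Psi'(L_1)=0$ (or, in the monotone case, the inclusion of the whole interval in one quarter-period), which forces $1-k=O(e^{-2(L_1+L_2)})$. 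Only with this input do the $\partial_k$ bounds of Proposition~\ref{prop-elliptic}, whose error terms grow like $e^{\xi}$, yield $\dn(\xi;k)\leq C\sech(\xi)$ uniformly up to $\xi=K(k)$, and hence $\Psi(z)\leq C\Psi(0)e^{-z}$ on the monotone piece. You gesture at ``$c_0$ and $L_0$ keep $k$ in the admissible range'' but never identify this mechanism, and without it the envelope --- which carries the entire content of the proposition --- is unsupported. The paper's version of the same control is the explicit estimate $L_1(1-k)\cosh^2(L_2)\leq CL_1e^{-2L_1}$ for the plateau's contribution to the ratio $\|\Psi\|_{L^2}^2/\|\Psi\|_{L^\infty}^2$; supplying either that computation or the two facts above closes your argument.
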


\begin{proof}
It is sufficient to obtain the estimates on $\| \Psi \|^2_{L^2(0,L)}$
since the stationary NLS equation implies that
$$
\| \Psi'' \|_{L^2(0,L)} \leq (1 + 2 \| \Psi \|_{L^{\infty}(0,L)}^2) \| \Psi \|_{L^2(0,L)}
\leq (1 + 2 c^2) \| \Psi \|_{L^2(0,L)}.
$$

It follows from the phase portrait for $-\Psi'' + \Psi - 2 \Psi^3 = 0$,
see Fig.~\ref{fig:phase_port_nls}, that the solutions $\Psi \in H^2(0,L)$
satisfying \eqref{bound-on-Psi} for small $c > 0$ and large $L > 0$
have at most one local minimum and no internal maxima on $[0,L]$.
Moreover, either $\Psi$ is sign-definite on $[0,L]$ or $\Psi'$ is sign-definite on $[0,L]$.
Without loss of generality, we give the proof for sign-definite (positive) solutions
expressed by the ${\rm dn}$-elliptic functions (\ref{eq:dnoidal}). The proof for sign-indefinite 
solutions expressed by the ${\rm cn}$-elliptic functions (\ref{eq:cnoidal}) is similar. 

We partition $[0,L]$ into $[0,L_1]$ and $[L_1,L]$, where $L_1$ is the point
of minimum of $\Psi$, such that $\Psi'(z) < 0$ for $z \in [0,L_1)$
and $\Psi'(z) > 0$ for $z \in (L_1,L]$. Without loss of generality, assume
$L_1 \in \left[\frac{L}{2},L\right]$ so that
$$
\| \Psi \|^2_{L^2(0,L)} \leq 2 \| \Psi \|^2_{L^2(0,L_1)}.
$$
We use the exact solution \eqref{eq:dnoidal} and write for some $L_2 > 0$:
$$
\Psi(z) = \frac{1}{\sqrt{2-k^2}} {\rm dn}\left( \frac{z + L_2}{\sqrt{2-k^2}}; k \right), \quad z \in [0,L_1],
$$
where $-L_2 < 0$ is the location of the maximum of $\Psi(z)$ to the left of the interval $[0,L_1]$. 
Since $\Psi'(z) < 0$ for $z \in [0,L_1)$, we have 
$$
\| \Psi \|_{L^{\infty}(0,L_1)} = \frac{1}{\sqrt{2-k^2}} {\rm dn}\left( \frac{L_2}{\sqrt{2-k^2}}; k \right)
$$
and since $\Psi'(L_1) = 0$, we have 
$$
L_1 + L_2 = \sqrt{2 - k^2} K(k).
$$
This implies that if $c \in (0,c_0)$ for some small $c_0 > 0$, then $L_2 > 0$ is sufficiently large
and if $L \in (L_0,\infty)$ for some large $L_0 > 0$, then $L_1 > 0$ is also sufficiently large, whereas
$k \in (k_-,1)$ with $k_-$ satisfying $|k_- - 1| \leq A_1 e^{-2(L_1+L_2)}$, where 
a positive constant $A_1$ is independent on $L_1$ and $L_2$.
We obtain by direct substitution for $k \in (k_-,1)$ that
\begin{eqnarray*}
\| \Psi \|_{L^2(0,L_1)}^2 & = & \frac{1}{2-k^2} \int_0^{L_1} {\rm dn}^2\left(\frac{z+L_2}{\sqrt{2-k^2}};k\right) dz \\
& = & \frac{1}{\sqrt{2-k^2}} \left[ E\left( \phi \left[  \frac{L_1+L_2}{\sqrt{2-k^2}} \right]    ;k \right) - E\left( \phi \left[  \frac{L_2}{\sqrt{2-k^2}} \right]   ;k \right) \right],
\end{eqnarray*}
where $\varphi[u]$ is the inverse of the map $F(\varphi;k) = u$ in \eqref{u:ellintdef}.  Hence, we have
$$
\| \Psi \|_{L^2(0,L_1)}^2 \leq C(k;L_1,L_2) \| \Psi \|_{L^{\infty}(0,L_1)}^2,
$$
where
$$
C(k;L_1,L_2) := \sqrt{2-k^2} \frac{E\left( \phi \left[  \frac{L_1+L_2}{\sqrt{2-k^2}} \right]  ;k \right) - E\left( \phi \left[  \frac{L_2}{\sqrt{2-k^2}} \right]  ;k \right)}{
{\rm dn}^2 \left( \frac{L_2}{\sqrt{2-k^2}};k \right)}.
$$
We need to show that $C(k;L_1,L_2)$ is bounded uniformly for large $L_1$ and $L_2$ and for $k \in (k_-,1)$ 
with $|k_- - 1| \leq A_1 e^{-2(L_1+L_2)}$.
It follows from (\ref{dn-der}) and (\ref{bound-second-derivative}) of Proposition \ref{prop-elliptic} that
$$
E(  \phi [ \xi ];k) = \tanh(\xi) + (1-k) \xi + R_E(\xi;k),
$$
where $R_E$ is the remainder term satisfying $\| R_E(\cdot;k) \|_{L^{\infty}(0,K(k))} = \mathcal{O}(k-1)$
as $k \to 1$. In addition, we use the lower bounds:
$$
{\rm dn}^2(\xi;k) \geq 1 - k^2
$$
to estimate the remainder term $R_E$ and
$$
{\rm dn}^2(L_2;k) \geq {\rm sech}^2(L_2)
$$
to estimate the other two terms,
where the latter bound holds for sufficiently large $L_2$. Thus, it follows for every $k \in (k_-,1)$ with
$|k_- - 1| \leq A_1 e^{-2(L_1+L_2)}$ that
\begin{eqnarray*}
C(k;L_1,L_2) & \leq & \frac{\tanh(L_1+L_2) - \tanh(L_2)}{{\rm sech}^2(L_2)} + \frac{L_1 (1-k)}{{\rm sech}^2(L_2)} + A_2 \\
& = & \frac{\sinh(L_1) \cosh(L_2)}{\cosh(L_1 + L_2)} + L_1 (1-k) \cosh^2(L_2) + A_2 \\
& \leq & \frac{(1-e^{-2L_1}) (1 + e^{-2L_2})}{2(1 + e^{-2(L_1+L_2)})} + L_1 (1-k) e^{2L_2} + A_2 \\
& \leq & 1 + A_1 L_1 e^{-2L_1} + A_2,
\end{eqnarray*}
where a positive constant $A_2$ is independent on $L_1$ and $L_2$.
Hence, $C(k;L_1,L_2)$ is bounded
uniformly for sufficiently large $L_1$ and $L_2$, which proves the estimate for $\| \Psi \|_{L^2(0,L_1)}$ 
and hence the estimate \eqref{eq:H2_from_Linf_edge}.
\end{proof}


\bibliographystyle{hamsplain}
\bibliography{edge_loc}

\end{document}